\documentclass[12pt]{article}

\usepackage{amsfonts,graphicx,amsmath,amssymb,amsthm,geometry,
hyperref,color,nicefrac,enumerate,relsize,titling}
\usepackage[latin1]{inputenc}
\usepackage{bbm} 

\newcommand{\R}{\mathbb{R}}
\newcommand{\N}{\mathbb{N}}
\newcommand{\E}{\mathbb{E}}
\renewcommand{\P}{\mathbb{P}}
\newcommand{\F}{\mathbb{F}}
\newcommand{\Exp}[1]{\E \!\left[ #1 \right]}
\newcommand{\EXp}[1]{\E [ #1 ]}
\newcommand{\EXP}[1]{\E \big[  #1 \big]}
\newcommand{\EXPP}[1]{\E \Big[  #1 \Big]}
\newcommand{\EXPPP}[1]{\E \bigg[  #1  \bigg]}
\newcommand{\EXPPPP}[1]{\E \Bigg[ #1 \Bigg]}

\newcommand{\qandq}{\qquad\text{and}\qquad}
\newcommand{\andq}{\text{and}\qquad}
\newcommand{\norm}[1]{\Vert #1 \Vert}

\newcommand{\abs}[1]{| #1 |}  
\newcommand{\Abs}[1]{\big| #1 \big|}

\renewcommand{\lll}{\langle} 
\newcommand{\rrr}{\rangle}
\newcommand{\llll}{\big\langle}
\newcommand{\rrrr}{\big\rangle}
\newcommand{\lllll}{\Big\langle}
\newcommand{\rrrrr}{\Big\rangle}
\newcommand{\ind}[1]{\mathbbm{1}_{#1}}

\newtheorem{lemma}{Lemma}[section]
\newtheorem{remark}[lemma]{Remark}
\newtheorem{cor}[lemma]{Corollary}

\newtheorem{theorem}[lemma]{Theorem}
\newtheorem{prop}[lemma]{Proposition}
\newtheorem{setting}[lemma]{Setting}


\setcounter{secnumdepth}{4}
\setcounter{tocdepth}{5}

\begin{document}
\setlength{\droptitle}{-6em}
\title{Strong error analysis for stochastic\\ gradient descent optimization algorithms}
\author{Arnulf Jentzen$^1$,
	Benno Kuckuck$^2$,\\
	Ariel Neufeld$^3$, 
	and 
	Philippe von Wurstemberger$^{4}$
	\bigskip
	\\
	\small{$^1$Department of Mathematics, 
		ETH Zurich,}
	\\
		\small{e-mail: 
		arnulf.jentzen@sam.math.ethz.ch}
	\smallskip
	\\
	\small{$^2$Department of Mathematics,  
		Universit\"at D\"usseldorf,}
	\\
	\small{e-mail:
		kuckuck@math.uni-duesseldorf.de}
	\smallskip
	\\
	\small{$^3$Department of Mathematics, 
		ETH Zurich,}
	\\
		\small{e-mail: 
		ariel.neufeld@math.ethz.ch} 
\smallskip
\\
\small{$^4$Department of Mathematics, 
	ETH Zurich,}
\\
\small{e-mail: 
	vwurstep@student.ethz.ch}	
}
\maketitle

\begin{abstract}
\vspace{0.2cm}
Stochastic gradient descent (SGD) optimization algorithms are key ingredients in a series of machine learning applications. In this article we perform a rigorous strong error analysis for SGD optimization algorithms. In particular, we prove for every arbitrarily small $\varepsilon \in (0,\infty)$ and every arbitrarily large $p\in (0,\infty)$ that the considered SGD optimization algorithm converges in the strong $L^p$-sense with order $\nicefrac{1}{2}-\varepsilon$ to the global minimum of the objective function of the considered stochastic approximation problem under standard convexity-type assumptions on the objective function and relaxed assumptions on the moments of the stochastic errors appearing in the employed SGD optimization algorithm. 
%
%
The key ideas in our convergence proof are, first, to employ techniques from the theory of Lyapunov-type functions for dynamical systems to develop a general convergence machinery for SGD optimization algorithms based on such functions, then, to apply this general machinery to concrete Lyapunov-type functions with polynomial structures, and, thereafter, to perform an induction argument along the powers appearing in the Lyapunov-type functions in order to achieve for every arbitrarily large $ p \in (0,\infty) $ strong $ L^p $-convergence rates.
This article also contains an extensive review of results on SGD optimization algorithms in the scientific literature.
\end{abstract}
\newpage
\tableofcontents

\section{Introduction}

%
%
%
%

Stochastic gradient descent (SGD) type optimization algorithms are fundamental tools in many machine and deep learning applications such as object and speech recognition or image analysis (cf., for example, Ruder \cite{Ruder16}). To ensure the performance of such algorithms it is important to analyze their approximation errors and, in particular, to investigate their speeds of convergence. 
A very common approach to study SGD type optimization algorithms is to formulate them as so-called stochastic approximation algorithms (SAAs). SAAs were first introduced in Robbins \& Monro \cite{RobbinsMonro51} and SAAs and SGD type optimization algorithms, respectively, have been widely studied in the scientific literature; 
%
cf., for example,
 \cite{AmariParkFukumizu00,BrosseDurmusMoulinesSabanis17,Fabian68,LeRouxFitzgibbon10,LeRouxManzagolBengio08,Martens10,MokkademPelletier11,NeelakantanETAL15,Nesterov83,PenningtonSocherManning14,Ruppert82,Schraudolph02,Shalev-ShwartzTewari09,VinyalsPovey12} and the references mentioned therein for the derivation and the proposal of SAAs,
cf., for example,  \cite{BordesBottouGallinari09,DauphinDeVriesBengio15,DauphinPascanuGulcehreChoGanguliBengio14,DefossezBach17,Dozat16,DuchiHazanSinger11,KingmaBa14,LangfordLiZhang09,LeRouxSchmidtBach12,McmahanStreeter14,NiuRechtChristopherWright11,Polyak98,PolyakJuditsky92,PolyakTsypkin80,Schraudolph99,SchraudolphYuGunter07,Shalev-ShwartzShingerSrebroCotter11,Sohl-DicksteinPooleGanguli14,Sohl-DicksteinPooleGanguli14,Zeiler12,ZhangChoromanskaLeCun15} and the references mentioned therein for the derivation and the proposal of SGD type optimization algorithms,
cf., for example, \cite{Amari98,Bottou99,BottouLeCun05,BroadieCicekZeevi09,ChapelleErhan11,ChauKumarRasonyiSabanis17,Chung54,DereichGronbach2017,Dippon03,DipponRenz97,GaposhkinKrasulina74,Gerencser99,KieferWolfowitz52,KomlosRevesz73,KondaTsitsiklis04,KushnerHuang79,KushnerYang93,LaiRobbins78,LeBretonNovikov94,LeBretonNovikov95,LecuyerYin98,MartensSutskeverSwersky12,NemirovskiJuditskiLanShapiro09,Nevelson86,Pelletier98b,Pelletier98,Ruppert88} and the references mentioned therein
 for numerical simulations and convergence rates proofs for SAAs,
cf., for example, \cite{Bach14,BachMoulines11,BachMoulines13,Bottou12,BottouBousquet11,BottouLeCun04,DarkenChangMoody92,DieuleveutDurmusBach17,InoueParkOkada03,LeCunBottouOrrMuller98,MizutaniDreyfus10,PascanuBengio13,Pillaud-VivienRudiBach17,Qian99,RakhlinShamirSridharan12,RattraySaadAmari98,SutskeverMartensDahlHinton13,Sutton86,TangMonteleoni15,Xu11,Zhang04} and the references mentioned therein 
for numerical simulations and convergence rates proofs for
SGD type optimization algorithms,
cf., for example,  \cite{Benaim99,BenevisteMetivierPriouret90,BhatnagarPrasadPrashanth13,Borkar08,Chen02,Duflo96,Duflo97,ClarkKushner78,KushnerYin97,Lai03,LjungPflugWalk92,Ruppert91,Schmetterer61} and the references mentioned therein for overview articles and monographs on SAAs,
cf., for example, \cite{BercuFort13,BottouCourtisNocedal16,Ruder16} and the references mentioned therein for overview articles on SGD type optimization algorithms,
and
cf., for example, \cite{DeanETAL12,DengETAL13,Graves13,GravesMohamedHinton13,HintonETAL12,HintonSalakhutdinov06,KrizhevskySutskeverHinton12,LeCunBottouBengioHaffner98,SchaulZhangLeCun12} and the references mentioned therein 
for applications involving  neural networks and SGD type optimization algorithms.

%
%
In this paper we develop a rigorous strong error analysis for SAAs and SGD optimization algorithms. In particular, we prove for every arbitrarily small $\varepsilon \in (0,\infty)$ and every arbitrarily large $p \in (0,\infty)$ that the considered SGD optimization algorithm converges in the strong $L^p$-sense with order $\nicefrac{1}{2} - \varepsilon$ to the global minimum of the objective function of the considered stochastic approximation problem under standard convexity-type assumptions on the objective function (cf.\ \eqref{SGD_intro:assumption3} in Theorem~\ref{SGD_intro} below, \eqref{Lp_theorem:assumption1} in Theorem~\ref{Lp_theorem} in Subsection~\ref{subsection:Lp} below, and, e.g., Dereich \& Mueller-Gronbach \cite[Assumption~A.1]{DereichGronbach2017}) and relaxed assumptions on the moments of the stochastic errors appearing in the employed SGD optimization algorithm (cf.\ \eqref{SGD_intro:assumption4} in Theorem~\ref{SGD_intro} below and \eqref{Lp_theorem:assumption4} in Theorem~\ref{Lp_theorem} in Subsection~\ref{subsection:Lp} below). To illustrate the findings of this article, we now present in the following theorem a special case of our strong error analysis for SAAs and SGD optimization algorithms (cf.\ Theorem~\ref{Lp_theorem} in Subsection~\ref{subsection:Lp} below and Corollary~\ref{SGD} in Subsection~\ref{subsection:SGD} below).
%
\pagebreak
\begin{theorem}
\label{SGD_intro}
Let $d \in \N$, $p, \alpha,\kappa,c \in (0,\infty)$, $ \nu \in (0,1)$, $q=\min(\{2,4,6,\dots\} \cap [p,\infty))$, $\xi, \vartheta \in \R^d$,
let $ ( \Omega , \mathcal{F}, \P) $ be a probability space, let $(S, \mathcal{S})$ be a measurable space, let $X_{n} \colon \Omega \to S$, $n\in\N$, be i.i.d.\ random variables, let $F = ( F(\theta,x) )_{\theta \in \R^d, x \in S} \colon \R^d \times S \to \R$ be $(\mathcal{B}(\R^{d}) \otimes \mathcal{S}) / \mathcal{B}(\R)$-measurable, assume for all $x \in S$ that $(\R^d \ni \theta \mapsto F(\theta,x) \in \R) \in C^1(\R^d, \R)$, assume for all $\theta \in \R^d$ that
\vspace{-.2cm}
\begin{equation}
\label{SGD_intro:assumption2}
\EXP{|F(\theta,X_1)| + \norm{(\nabla_{\theta}F)(\theta, X_1)}_{\R^d}}<\infty,
\end{equation}
\vspace{-.6cm}
\begin{equation}
\label{SGD_intro:assumption3}
\lll \theta - \vartheta, \Exp{(\nabla_{\theta}F)(\theta, X_1)} \rrr_{\R^d}   \geq c\max\!\big\{\norm{\theta - \vartheta}_{\R^d}^2, \norm{\Exp{(\nabla_{\theta}F)(\theta, X_1)}\!}_{\R^d}^2 \big\},  
\end{equation}
\begin{equation}
\label{SGD_intro:assumption4}
\EXP{\norm{(\nabla_{\theta}F)(\theta, X_1) - \Exp{(\nabla_{\theta}F)(\theta, X_1)}\!}_{\R^d}^q } \leq \kappa \big(1+ \norm{\theta}_{\R^d}^q\big),
\end{equation}
and let $\Theta \colon \N_0 \times \Omega \to \R^d$ be the stochastic process which satisfies for all $n \in \N$ that 
\begin{equation}
\label{SGD_intro:assumption5}
\Theta_0 = \xi   \qquad \text{ and } 
\qquad \Theta_n = \Theta_{n-1} - \tfrac{\alpha}{n^\nu}(\nabla_{\theta}F)(\Theta_{n-1},X_n).
\end{equation}
Then 
\begin{enumerate}[(i)]
\item \label{SGD_intro:item1}
it holds that $\big\{\theta \in \R^d \colon \big(\Exp{F(\theta, X_1)} = \inf\nolimits_{v \in \R^d}\Exp{F(v, X_1)}\!\big)\!\big\} = \{\vartheta\}$ and 
\item \label{SGD_intro:item2}
there exists $C \in (0,\infty)$ such that for all $n \in \N$ it holds that
\begin{equation}
\left(\EXP{ \norm{\Theta_n-\vartheta}_{\R^d}^p }\right)^{\nicefrac{1}{p}} \leq C n^{-\nicefrac{\nu}{2}}.
\end{equation}
\end{enumerate} 
\end{theorem}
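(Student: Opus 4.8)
The plan is to analyze the squared-distance process $D_n := \norm{\Theta_n - \vartheta}_{\R^d}^2$ and, more generally, its powers $D_n^{q/2}$, via a recursive one-step estimate, then run an induction on the exponent. First I would establish part~\eqref{SGD_intro:item1}: assumption~\eqref{SGD_intro:assumption3} forces $\Exp{(\nabla_\theta F)(\theta,X_1)} = 0$ exactly when $\theta = \vartheta$, and since $g(\theta) := \Exp{F(\theta,X_1)}$ is $C^1$ (differentiation under the integral using \eqref{SGD_intro:assumption2}) with $\nabla g(\theta) = \Exp{(\nabla_\theta F)(\theta,X_1)}$, the convexity-type inequality \eqref{SGD_intro:assumption3} shows $\vartheta$ is the unique stationary point and the strict-convexity-type lower bound identifies it as the unique global minimizer.

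For part~\eqref{SGD_intro:item2}, I would write $\Theta_n - \vartheta = (\Theta_{n-1} - \vartheta) - \tfrac{\alpha}{n^\nu}\big(g'(\Theta_{n-1}) + R_n\big)$ where $g'(\theta) = \Exp{(\nabla_\theta F)(\theta,X_1)}$ and $R_n := (\nabla_\theta F)(\Theta_{n-1},X_n) - g'(\Theta_{n-1})$ is a martingale-difference term (conditionally on $\mathcal F_{n-1}$, using i.i.d.\ of $X_n$). Expanding $\norm{\Theta_n-\vartheta}^2$, the cross term $-\tfrac{2\alpha}{n^\nu}\lll \Theta_{n-1}-\vartheta, g'(\Theta_{n-1})\rrr$ is controlled by \eqref{SGD_intro:assumption3} to give a contraction of order $1 - \tfrac{2\alpha c}{n^\nu}$ on $\E[D_n\mid\mathcal F_{n-1}]$, the $\tfrac{\alpha^2}{n^{2\nu}}\norm{g'(\Theta_{n-1})}^2$ term is absorbed (again by \eqref{SGD_intro:assumption3}, which bounds $\norm{g'(\theta)}^2$ by $c^{-1}$ times the inner product, hence by $c^{-2}$ times... or directly $\le c^{-1}\lll\cdot,\cdot\rrr$), and the martingale part contributes $\tfrac{\alpha^2}{n^{2\nu}}\E[\norm{R_n}^2\mid\mathcal F_{n-1}]$ which by \eqref{SGD_intro:assumption4} (with $q\ge 2$, so by Jensen also in $L^2$) is $\lesssim n^{-2\nu}(1 + \norm{\Theta_{n-1}}^2)\lesssim n^{-2\nu}(1 + D_{n-1})$. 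This is precisely the structure handled by the general Lyapunov-type convergence machinery announced in the abstract: a supermartingale-type recursion $\E[D_n\mid\mathcal F_{n-1}] \le (1 - a_n)D_{n-1} + b_n$ with $a_n \asymp n^{-\nu}$ and $b_n \asymp n^{-2\nu}$, whose solution satisfies $\E[D_n]\lesssim n^{-\nu}$, i.e.\ the claimed $L^2$ rate. I would invoke the paper's abstract machinery (its Lyapunov-function lemmas) rather than re-derive this discrete Gronwall estimate by hand.

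The step from $L^2$ to general $L^p$ — equivalently, controlling $\E[D_n^{q/2}]$ for $q\in 2\N$ with $q\ge p$ — is where I expect the real work, and it is the induction along powers flagged in the abstract. Expanding $D_n^{k}$ for integer $k = q/2$ via the binomial theorem around $D_{n-1}^k$ produces a leading contraction term $(1 - \tfrac{2\alpha c k}{n^\nu})D_{n-1}^k$ plus lower-order terms: these involve products of the martingale increment $R_n$ (whose conditional moments up to order $q$ are controlled by \eqref{SGD_intro:assumption4}, after bounding $\norm{\Theta_{n-1}}^q \lesssim 1 + D_{n-1}^{q/2}$), the drift $g'(\Theta_{n-1})$ (bounded via \eqref{SGD_intro:assumption3} by powers of $D_{n-1}^{1/2}$), and powers of $1/n^\nu$. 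The key bookkeeping point is that every ``error'' monomial of the form $D_{n-1}^j \cdot (\text{stuff})$ with $j < k$ comes with a prefactor $n^{-m\nu}$ where $m \ge 1$, and — crucially — the terms where $j = k-1$ carry a prefactor $\asymp n^{-2\nu}$ (not $n^{-\nu}$), because the only way to reduce the power by exactly one from $D^k$ using $\norm{\Theta_n - \Theta_{n-1}}^2 \asymp n^{-2\nu}$-sized steps is to pick up two factors of the step. One then feeds in the inductive hypothesis $\E[D_{n-1}^j] \lesssim n^{-j\nu}$ for $j < k$, uses Hölder to split mixed terms, and checks the resulting recursion for $u_n := \E[D_n^k]$ has the form $u_n \le (1 - \tfrac{a}{n^\nu})u_{n-1} + O(n^{-(k+1)\nu})$ with $a = 2\alpha c k$, whose solution is $u_n \lesssim n^{-k\nu}$. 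Applying this with $k = q/2$ and then Jensen's inequality (since $q\ge p$) yields $(\E[\norm{\Theta_n-\vartheta}^p])^{1/p} \le (\E[\norm{\Theta_n-\vartheta}^q])^{1/q} \lesssim n^{-\nu/2}$, which is the claim. The main obstacle is organizing the binomial expansion and the Hölder splittings so that no error term secretly carries only a single factor of $n^{-\nu}$ with the full power $D_{n-1}^k$ — this is exactly what would destroy the rate, and it is avoided precisely because \eqref{SGD_intro:assumption3} gives the sign-definite leading contraction while all remaining terms are genuinely higher order in the step size.
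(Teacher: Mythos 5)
Your proposal follows essentially the same route as the paper: Lyapunov functions $\|\theta-\vartheta\|^{2k}$, an induction along the even powers in which the convexity-type assumption \eqref{SGD_intro:assumption3} supplies the one-step contraction, martingale orthogonality removes the first-order noise term, a discrete Gronwall recursion of the form $u_n\le(1-a\,n^{-\nu})u_{n-1}+O(n^{-(k+1)\nu})$ yields $u_n\lesssim n^{-k\nu}$, and Jensen's inequality passes from $L^q$ to $L^p$ (the paper organizes the expansion through the fundamental theorem of calculus in its general Lyapunov machinery rather than your binomial expansion, but the bookkeeping is the same). The only caveat is in item (i): the interchange $\nabla\,\mathbb{E}[F(\theta,X_1)]=\mathbb{E}[(\nabla_\theta F)(\theta,X_1)]$ is not justified by \eqref{SGD_intro:assumption2} alone — the paper obtains it from the moment growth condition \eqref{SGD_intro:assumption4} via uniform integrability (de la Vall\'ee Poussin and Vitali), which is available to you as well.
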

Theorem~\ref{SGD_intro} is an immediate consequence of Jensen's inequality and Corollary~\ref{SGD} in  Subsection~\ref{subsection:SGD} below. Corollary~\ref{SGD}, in turn, follows from 
Theorem~\ref{Lp_theorem} in Subsection~\ref{subsection:Lp} below, which is the main result of this article.
%
A strong convergence result related to Theorems~\ref{SGD_intro} and \ref{Lp_theorem} in this article has been obtained in Dereich \& Mueller-Gronbach \cite[Theorem 2.4]{DereichGronbach2017} (cf.\ also \cite[Proposition 2.2]{DereichGronbach2017}). A key difference between Theorem~2.4 in \cite{DereichGronbach2017} and Theorems~\ref{SGD_intro} and \ref{Lp_theorem} in this article is the hypothesis on the moments of the stochastic errors appearing in the employed SAA (cf.\ item (ii) in Assumption A.2 in \cite{DereichGronbach2017} with \eqref{SGD_intro:assumption4} in Theorem~\ref{SGD_intro} and \eqref{Lp_theorem:assumption4} in Theorem~\ref{Lp_theorem} in this article). More formally, in Theorem 2.4 in \cite{DereichGronbach2017} the stochastic errors appearing in the employed SAA are assumed to be bounded in the state space variable $ \theta \in \R^d $ (cf.\ item (ii) in Assumption A.2 in \cite{DereichGronbach2017}) while Theorems~\ref{SGD_intro} and \ref{Lp_theorem} in this article allow the $ L^p $-norm of the stochastic errors to grow linearly in the state space variable $ \theta \in \R^d $ (cf.\ \eqref{SGD_intro:assumption4} in Theorem~\ref{SGD_intro} and \eqref{Lp_theorem:assumption4} in Theorem~\ref{Lp_theorem} in this article). This relaxed hypothesis enables us to achieve for every arbitrarily small $ \varepsilon \in (0,\infty) $ and every arbitrarily large $ p \in (0,\infty) $ the essentially sharp strong $ L^p $-convergence rate $ \nicefrac{1}{2} - \varepsilon $ in the case of very natural stochastic optimization examples with quadratically growing loss functions such as in the case of linear regression; see Corollary~\ref{SA_for_linear_regression} in Subsection~\ref{subsection:LinReg} below for details.
%
%
The key ideas in our proofs of Theorem~\ref{SGD_intro} and Theorem~\ref{Lp_theorem}, respectively, are, first, to employ techniques from the theory of Lyapunov-type functions for dynamical systems to develop a general convergence result for SAAs and SGD optimization algorithms based on such functions (see Proposition~\ref{Lyapunov_convergence_explicit} and Corollary~\ref{Lyapunov_convergence_nonexplicit} in Subsection~\ref{subsection:Lyapunov} below for details), then, to apply this general convergence result to concrete Lyapunov-type functions of the form $ \R^d \ni \theta \mapsto V_q( \theta ) = \| \theta - \vartheta \|_{ \R^d }^q \in [0,\infty) $ for $ q \in \{ 2, 4, 6, 8, \dots \} $ (see \eqref{168intro} in the proof of Proposition~\ref{L2_convergence_explicit} in Subsection~\ref{subsection:L2} as well as \eqref{212intro} in the proof of Proposition~\ref{Lp_convergence} in Subsection~\ref{subsection:Lp} below for details), and, thereafter, to perform an induction argument on $ q \in \{ 2, 4, 6, 8, \dots \} \cap [0,p] $ in order to establish for every arbitrarily large $ p \in (0,\infty) $ strong $ L^p $-convergence rates. In previous error analysis results for SAAs and SGD optimization algorithms in the literature induction arguments have been frequently employed along the time variable (cf., e.g., also Lemma~\ref{main_estimate} in Subsection~\ref{subsection:Gronwall} below as well as \eqref{139intro} in the proof of Proposition~\ref{Lyapunov_convergence_explicit} in Subsection~\ref{subsection:Lyapunov} below). A key idea in this work is to perform an induction argument along the powers $ q \in \{ 2, 4, 6, 8, \dots \} $ appearing in the Lyapunov-type functions $ \R^d \ni \theta \mapsto V_q( \theta ) = \| \theta - \vartheta \|_{ \R^d }^q \in [0,\infty) $.

The remainder of this article is organized as follows. In Section~\ref{section:aux_results} we present several auxiliary results which we employ in our strong $ L^p $-error analysis. In Section~\ref{section:main_section} we develop our strong $ L^p $-error analysis for general SAAs. In particular, in Subsection~\ref{subsection:Lp} of Section~\ref{section:main_section} we present and prove Theorem~\ref{Lp_theorem}, which is the main result of this article. In Section~\ref{section:applications} we specialize the abstract findings of Section~\ref{section:main_section} to SGD optimization algorithms. In particular, in Corollary~\ref{SGD} in Subsection~\ref{subsection:SGD} we establish for every arbitrarily large $ p \in (0,\infty) $ strong $ L^p $-convergence rates for SGD optimization algorithms. Theorem~\ref{SGD_intro} above is in immediate consequence of Jensen's inequality and Corollary~\ref{SGD} in Subsection~\ref{subsection:SGD} below. In Subsection~\ref{subsection:LinReg} we also illustrate the statement of Corollary~\ref{SGD} by means of a simple example.


\section{Auxiliary Results}
\label{section:aux_results}

\subsection{Norms on Euclidean spaces}
\label{subsection:norms}
In this subsection we establish in Lemmas~\ref{weak_triangle}--\ref{derivative_of_norm} below some elementary and essentially well-known results for norms in Euclidean spaces. Lemmas~\ref{weak_triangle}, ~\ref{weak_reverse_triangle2}, and ~\ref{derivative_of_norm} are used in our strong error analysis for SGD methods in Proposition~\ref{Lp_convergence} in Subsection~\ref{subsection:Lp} below. Lemma~\ref{weak_reverse_triangle1}, in turn, is employed in the proof of Lemma~\ref{weak_reverse_triangle2}.
\begin{lemma}[Convexity of powers of the norm]
\label{weak_triangle}
Let $d \in \N$, $p \in [1,\infty)$, $v,w \in \R^d$ and let $\left \| \cdot \right \| \! \colon \R^d \to [0,\infty)$ be a norm. Then
\begin{equation}
\begin{split}
\norm{v+w}^p &\leq \left[\sup_{x,y \in (0,\infty)}\frac{(x+y)^p}{(x^p+y^p)}\right] (\norm{v}^p + \norm{w}^p)\\
&=2^{p-1} (\norm{v}^p + \norm{w}^p) \leq 2^{p} (\norm{v}^p + \norm{w}^p).
\end{split}
\end{equation}
\end{lemma}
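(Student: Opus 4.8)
The plan is to peel off the norm via the triangle inequality and then reduce everything to the elementary scalar inequality $(x+y)^p\le 2^{p-1}(x^p+y^p)$, which in turn is just the convexity of $t\mapsto t^p$. First I would dispose of the trivial case: if $v=w=0$ both sides vanish, so I may assume $\norm{v}^p+\norm{w}^p>0$.

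Next, the triangle inequality together with monotonicity of $t\mapsto t^p$ on $[0,\infty)$ yields $\norm{v+w}^p\le(\norm{v}+\norm{w})^p$. If $\norm{v},\norm{w}$ are both positive I write $(\norm{v}+\norm{w})^p = \tfrac{(\norm{v}+\norm{w})^p}{\norm{v}^p+\norm{w}^p}(\norm{v}^p+\norm{w}^p)$ and bound the first factor by the supremum over $x,y\in(0,\infty)$ of $\tfrac{(x+y)^p}{x^p+y^p}$; if one of them vanishes, say $\norm{w}=0$, then $(\norm{v}+\norm{w})^p=\norm{v}^p+\norm{w}^p$ and it suffices to note that the supremum is at least $\tfrac{(x+x)^p}{2x^p}=2^{p-1}\ge1$ (using $p\ge1$). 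This establishes the first displayed inequality.

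It then remains to compute the supremum. For the bound $\sup_{x,y\in(0,\infty)}\tfrac{(x+y)^p}{x^p+y^p}\le 2^{p-1}$ I invoke convexity of $[0,\infty)\ni t\mapsto t^p$ (valid since $p\ge1$): for $x,y\in(0,\infty)$ one has $\bigl(\tfrac{x+y}{2}\bigr)^p\le\tfrac12 x^p+\tfrac12 y^p$, hence $(x+y)^p\le 2^{p-1}(x^p+y^p)$. The reverse bound follows by plugging in $x=y=1$, which gives the value $\tfrac{2^p}{2}=2^{p-1}$; thus the supremum equals exactly $2^{p-1}$. Finally $2^{p-1}\le 2^p$ is immediate, which closes the chain of (in)equalities.

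I do not expect any genuine obstacle here; the statement is elementary. The only points deserving a line of care are the degenerate cases in which one of the norms is zero (so that the quotient in the argument is well defined) and the verification that the scalar supremum is attained, rather than merely bounded, at $x=y$.
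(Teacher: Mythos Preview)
Your proof is correct and follows the same overall architecture as the paper's: apply the triangle inequality to reduce to a scalar inequality, then identify the supremum $\sup_{x,y>0}\frac{(x+y)^p}{x^p+y^p}$ as $2^{p-1}$. The difference lies in how that supremum is computed. The paper reduces to the one-variable function $f(t)=\frac{(1+t)^p}{1+t^p}$, computes $f'(t)=\frac{p(1+t)^{p-1}(1-t^{p-1})}{(1+t^p)^2}$, locates the unique critical point at $t=1$, and checks the boundary behaviour $\lim_{t\searrow 0}f(t)=\lim_{t\to\infty}f(t)=1$ to conclude that the maximum is $f(1)=2^{p-1}$. Your argument is more direct: the convexity of $t\mapsto t^p$ gives $\bigl(\tfrac{x+y}{2}\bigr)^p\le\tfrac12(x^p+y^p)$, which is exactly the upper bound $2^{p-1}$, and equality at $x=y$ shows it is attained. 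Your route is shorter and avoids calculus entirely; the paper's route has the minor advantage of making explicit that the supremum is a genuine maximum achieved only along the diagonal $x=y$ (via the uniqueness of the critical point), though that refinement is not needed for the lemma as stated.
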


\begin{proof}[Proof of Lemma \ref{weak_triangle}]
Throughout this proof assume w.l.o.g.\ that $p > 1$ and let $f \colon (0,\infty) \to \R$ be the function which satisfies for all $t \in (0,\infty)$ that
\begin{equation}
f(t) = \frac{(1+t)^p}{1+t^p}.
\end{equation}
Note that 
\begin{equation}
\label{weak_triangle:eq1}
\begin{split}
\sup_{x,y \in (0,\infty)}\left[\frac{(x+y)^p}{x^p+y^p}\right] = \sup_{x,t \in (0,\infty)}\left[\frac{(x+tx)^p}{x^p+(tx)^p}\right] = \sup_{t \in (0,\infty)}\left[\frac{(1+t)^p}{1+t^p}\right] = \sup_{t \in (0,\infty)} f(t).
\end{split}
\end{equation}
Next observe that for all $t \in (0,\infty)$ it holds that
\begin{equation}
\begin{split}
f'(t) &= \frac{p(1+t)^{p-1}(1+t^p) - p(1+t)^{p}t^{p-1}}{(1+t^p)^2} \\
&= \frac{p(1+t)^{p-1}(1+t^p -(1+t)t^{p-1})}{(1+t^p)^2} \\
&= \frac{p(1+t)^{p-1} (1 -t^{p-1})}{(1+t^p)^2}.
\end{split}
\end{equation}
This implies that 
\begin{equation}
\label{weak_triangle:eq2}
\{t \in (0,\infty) \colon f'(t) = 0\} = \{1\}.
\end{equation} 
Moreover, observe that the fact that for all $t\in (0,\infty)$ it holds that
\begin{equation}
f(t) = \frac{(1+t)^p}{1+t^p}=\frac{(1+\nicefrac{1}{t})^p}{1+\nicefrac{1}{t^p}}
\end{equation}
assures that $\lim_{t \searrow 0}f(t) =  \lim_{t \to \infty}f(t) = 1$.
The fact that $f(1) = 2^{p-1} > 1$, \eqref{weak_triangle:eq1}, and (\ref{weak_triangle:eq2}) hence ensure that
\begin{equation}
\sup_{x,y \in (0,\infty)}\left[\frac{(x+y)^p}{x^p+y^p}\right] = \sup_{t \in (0,\infty)} f(t) = f(1) =  2^{p-1}.
\end{equation}
Therefore, we obtain that
\begin{equation}
\begin{split}
\norm{v+w}^p &\leq \big(\norm{v}+\norm{w}\big)^p \\
&\leq \left[\sup_{x,y \in (0,\infty)}\frac{(x+y)^p}{x^p+y^p}\right] \big(\norm{v}^p+\norm{w}^p\big) \\
&= 2^{p-1}\big(\norm{v}^p+\norm{w}^p\big).
\end{split}
\end{equation}
The proof of Lemma \ref{weak_triangle} is thus completed.
\end{proof}

\begin{lemma}
\label{weak_reverse_triangle1}
Let $p \in \N$. Then it holds for all $x,y \in [0,\infty)$ that
\begin{equation}
\label{weak_reverse_triangle1:conclusion1}
\abs{x^p - y^p} \leq 2^p \abs{x-y}\big(\! \min\{x^{p-1}, y^{p-1}\} + \abs{x-y}^{p-1}\big).
\end{equation}
\end{lemma}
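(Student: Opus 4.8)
The plan is to reduce to the case $x \ge y \ge 0$ by symmetry and then to expand $x^p$ around $y$ with the binomial theorem. First, I would observe that the claimed inequality \eqref{weak_reverse_triangle1:conclusion1} is symmetric in $x$ and $y$, so that it suffices to treat $x \ge y \ge 0$; in this case $x^p \ge y^p$ and hence $|x^p - y^p| = x^p - y^p$, $|x - y| = x - y$, and, since $t \mapsto t^{p-1}$ is nondecreasing on $[0,\infty)$ (with the convention $t^0 = 1$ when $p = 1$), $\min\{x^{p-1}, y^{p-1}\} = y^{p-1}$. Setting $h = x - y \ge 0$ I would then write $x^p - y^p = (y + h)^p - y^p = \sum_{\ell=1}^{p} \binom{p}{\ell} h^\ell y^{p-\ell}$.

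The heart of the argument is to estimate each summand $h^\ell y^{p-\ell}$, $\ell \in \{1, \dots, p\}$, against $h(y^{p-1} + h^{p-1})$ by a constant independent of $\ell$: one has $h^\ell y^{p-\ell} = h \cdot h^{\ell-1} y^{p-\ell} \le h \max\{h, y\}^{(\ell-1)+(p-\ell)} = h \max\{h, y\}^{p-1} \le h (h^{p-1} + y^{p-1})$, using only that $h^{\ell-1} \le \max\{h,y\}^{\ell-1}$, $y^{p-\ell} \le \max\{h,y\}^{p-\ell}$, and $\max\{h,y\}^{p-1} \le h^{p-1} + y^{p-1}$. Summing over $\ell$ and invoking $\sum_{\ell=1}^{p} \binom{p}{\ell} = 2^p - 1 \le 2^p$ would give $x^p - y^p \le (2^p - 1)\, h\,(y^{p-1} + h^{p-1}) \le 2^p |x-y|\big(\min\{x^{p-1}, y^{p-1}\} + |x-y|^{p-1}\big)$, which is \eqref{weak_reverse_triangle1:conclusion1}. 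The case $p = 1$ is subsumed (and is anyway trivial).

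The one point I would flag is that the seemingly more natural route through the factorization $x^p - y^p = (x-y)\sum_{k=0}^{p-1} x^k y^{p-1-k}$ and the crude bound $x^k y^{p-1-k} \le x^{p-1} \le 2^{p-1}(y^{p-1} + h^{p-1})$ only yields the constant $p\, 2^{p-1}$, which exceeds $2^p$ as soon as $p \ge 3$; the savings in the argument above come precisely from replacing the $p$ equal summands of that factorization by the binomial coefficients $\binom{p}{\ell}$, whose total mass over $\ell \ge 1$ is just $2^p - 1$. So the only real work is to set up the bookkeeping at the level of the full expansion of $(y+h)^p$, and no deeper obstacle arises; in particular, Lemma~\ref{weak_triangle} is not needed here.
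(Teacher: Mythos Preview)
Your proof is correct and follows essentially the same route as the paper: reduce by symmetry to $x \ge y$, binomially expand $(y+h)^p - y^p$ with $h = x-y$, bound each $h^{\ell} y^{p-\ell}$ by $h\max\{h,y\}^{p-1} \le h(h^{p-1}+y^{p-1})$, and sum the binomial coefficients to $2^p - 1 \le 2^p$. The paper's argument is identical up to notation (it sums $\binom{p}{k}$ from $k=0$ to $p$ directly to $2^p$ rather than noting the $-1$), and your remark on why the telescoping factorization only yields $p\,2^{p-1}$ is a nice piece of extra commentary.
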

\begin{proof}[Proof of Lemma \ref{weak_reverse_triangle1}]
First, observe that for all $x,y \in [0,\infty)$ with $x \geq y$ it holds that
\begin{equation}
\label{weak_reverse_triangle1:eq1a}
\begin{split}
\abs{x^p - y^p} &= x^p - y^p =  (y + (x-y))^p - y^p \\
&= \left[\sum_{k = 0}^p\binom{p}{k}y^{p-k}(x-y)^k\right] - y^p \\
&= \left[\sum_{k = 1}^{p}\binom{p}{k} y^{p-k}(x-y)^k\right] \\
&= (x-y)\left[\sum_{k = 1}^{p}\binom{p}{k} y^{p-k}(x-y)^{k-1}\right].
\end{split}
\end{equation}
This demonstrates that for all $x,y \in [0,\infty)$ with $x\geq y$ it holds that 
\begin{equation}
\label{weak_reverse_triangle1:eq1}
\begin{split}
\abs{x^p - y^p} &\leq (x-y)\left[\sum_{k = 1}^{p}\binom{p}{k} \big[\max\{y, x-y\}\big]^{p-1}\right]\\
&\leq (x-y)\max\!\big\{y^{p-1}, (x-y)^{p-1}\big\}\left[\sum_{k = 0}^{p}\binom{p}{k}\right]  \\
&= 2^p(x-y)\max\!\big\{y^{p-1}, (x-y)^{p-1}\big\} \\
&\leq 2^p (x-y)(y^{p-1}+(x-y)^{p-1})\\
&= 2^p \abs{x-y}(y^{p-1}+ \abs{x-y}^{p-1}).
\end{split}
\end{equation}
Hence, we obtain that for all $x,y \in [0,\infty)$ with $x \leq y$ it holds that
\begin{equation}
\begin{split}
\abs{x^p - y^p} &= \abs{y^p - x^p} \leq 2^p \abs{y-x}(x^{p-1}+ \abs{y-x}^{p-1})
\end{split}
\end{equation}
This and (\ref{weak_reverse_triangle1:eq1}) establish (\ref{weak_reverse_triangle1:conclusion1}). The proof of Lemma \ref{weak_reverse_triangle1} is thus completed.
\end{proof}

\begin{lemma}
\label{weak_reverse_triangle2}
Let $d , p \in \N$, $v,w \in \R^d$ and let $\norm{\!\cdot\!} \colon \R^d \to [0,\infty)$ be a norm. Then 
\begin{equation}
\label{weak_reverse_triangle2:conclusion1}
\begin{split}
\Abs{\norm{v}^p - \norm{w}^p} &\leq 2^p \norm{v-w}\big(\min\{\norm{v}^{p-1}, \norm{w}^{p-1}\}+ \norm{v-w}^{p-1}\big)\\
&\leq 2^p \norm{v-w}\big(\norm{w}^{p-1}+ \norm{v-w}^{p-1}\big).
\end{split}
\end{equation}
\end{lemma}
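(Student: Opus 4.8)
The plan is to derive the first inequality in \eqref{weak_reverse_triangle2:conclusion1} directly from Lemma~\ref{weak_reverse_triangle1} together with the reverse triangle inequality, and then to obtain the second inequality by a trivial monotonicity bound on the minimum. Concretely, first I would set $x = \norm{v}$ and $y = \norm{w}$, so that $x, y \in [0,\infty)$, and apply Lemma~\ref{weak_reverse_triangle1} to obtain
\begin{equation}
\Abs{\norm{v}^p - \norm{w}^p} = \abs{x^p - y^p} \leq 2^p \abs{x-y}\big(\min\{x^{p-1}, y^{p-1}\} + \abs{x-y}^{p-1}\big).
\end{equation}

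Next I would invoke the reverse triangle inequality $\abs{\norm{v} - \norm{w}} \leq \norm{v - w}$, i.e.\ $\abs{x - y} \leq \norm{v-w}$, and substitute this into the previous display. The one point requiring a word of justification is that this substitution is legitimate: the map $[0,\infty) \ni t \mapsto 2^p\, t\,(\min\{x^{p-1}, y^{p-1}\} + t^{p-1}) \in [0,\infty)$ is non-decreasing (since $x^{p-1}, y^{p-1} \geq 0$ and $p - 1 \in \N_0$), so replacing the argument $\abs{x-y}$ by the larger quantity $\norm{v-w}$ only increases the right-hand side. This yields
\begin{equation}
\Abs{\norm{v}^p - \norm{w}^p} \leq 2^p \norm{v-w}\big(\min\{\norm{v}^{p-1}, \norm{w}^{p-1}\} + \norm{v-w}^{p-1}\big),
\end{equation}
which is the first inequality. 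The second inequality then follows immediately from $\min\{\norm{v}^{p-1}, \norm{w}^{p-1}\} \leq \norm{w}^{p-1}$.

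There is essentially no substantive obstacle here; the lemma is a routine repackaging of Lemma~\ref{weak_reverse_triangle1} in terms of norms. The only mild subtlety is the monotonicity remark used when passing from $\abs{x-y}$ to $\norm{v-w}$, and the harmless convention $0^0 = 1$ in the case $p = 1$ (so that $\abs{x-y}^{p-1} = 1$), which is consistent with the statement of Lemma~\ref{weak_reverse_triangle1}.
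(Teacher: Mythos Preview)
Your proposal is correct and follows essentially the same route as the paper: apply Lemma~\ref{weak_reverse_triangle1} with $x=\norm{v}$, $y=\norm{w}$, then use the reverse triangle inequality $\abs{\norm{v}-\norm{w}}\leq\norm{v-w}$ to pass to the claimed bound. Your explicit monotonicity justification for the substitution is a nice touch that the paper leaves implicit.
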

\begin{proof}[Proof of Lemma \ref{weak_reverse_triangle2}]
Observe that Lemma \ref{weak_reverse_triangle1} ensures that
\begin{equation}
\begin{split}
\Abs{\norm{v}^p - \norm{w}^p} &\leq 2^p \Abs{\norm{v}-\norm{w}}\big(\min\{\norm{v}^{p-1}, \norm{w}^{p-1}\}+ \Abs{\norm{v}-\norm{w}}^{p-1}\big) \\
& \leq 2^p \norm{v-w}\big(\min\{\norm{v}^{p-1}, \norm{w}^{p-1}\}+ \norm{v-w}^{p-1}\big).
\end{split}
\end{equation}
The proof of Lemma \ref{weak_reverse_triangle2} is thus completed.
\end{proof}

\begin{lemma}[Derivative of the norm]
\label{derivative_of_norm}
Let $d \in \N$, $p\in \{2,3,\ldots \}$, $\vartheta \in \R^d$, let 
$\lll \cdot,\cdot \rrr \colon \R^d \times \R^d \to \R$
 be a scalar product, let 
 $\left \| \cdot \right \| \! \colon \R^d \to [0,\infty)$ be the function which satisfies for all $\theta\in \R^d$ that $\norm{\theta} = \sqrt{\lll \theta, \theta \rrr}$, and let $V \colon \R^d \to [0,\infty)$ be the function which satisfies for all $\theta \in \R^d$ that $V(\theta) = \norm{\theta+\vartheta}^p$. Then 
 \begin{enumerate}[(i)]
\item it holds that $V \in C^1(\R^d, [0,\infty))$ and
\item it holds for all $\theta,v \in \R^d$ that \begin{equation}
\label{derivative_of_norm:conclusion}
V'(\theta)(v) = 
p \norm{\theta+\vartheta}^{p-2} \lll\theta+\vartheta, v \rrr.
\end{equation}
 \end{enumerate}
\end{lemma}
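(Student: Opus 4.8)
The plan is to factor $V$ through the quadratic form $\theta\mapsto\norm{\theta+\vartheta}^2$ followed by a one-dimensional power function, and then to invoke the chain rule. First I would introduce $\psi\colon\R^d\to[0,\infty)$ given by $\psi(\theta)=\lll\theta+\vartheta,\theta+\vartheta\rrr=\norm{\theta+\vartheta}^2$, so that $V(\theta)=(\psi(\theta))^{p/2}$ for all $\theta\in\R^d$. Using the bilinearity and symmetry of $\lll\cdot,\cdot\rrr$ one obtains, for all $\theta,v\in\R^d$, the exact expansion $\psi(\theta+v)-\psi(\theta)=2\lll\theta+\vartheta,v\rrr+\norm{v}^2$; since $\lim_{v\to 0}(\norm{v}^2/\norm{v})=0$ and since $\R^d\ni\theta\mapsto(\R^d\ni v\mapsto 2\lll\theta+\vartheta,v\rrr\in\R)$ is affine and hence continuous, this shows $\psi\in C^1(\R^d,[0,\infty))$ with $\psi'(\theta)(v)=2\lll\theta+\vartheta,v\rrr$. (Alternatively, writing $\psi$ in coordinates with respect to an orthonormal basis displays it as a quadratic polynomial, which is even $C^\infty$.)

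Next I would deal with the outer function. If $p=2$, then $V=\psi\in C^1(\R^d,[0,\infty))$ and $V'(\theta)(v)=2\lll\theta+\vartheta,v\rrr=p\norm{\theta+\vartheta}^{p-2}\lll\theta+\vartheta,v\rrr$, so from now on assume $p\in\{3,4,\dots\}$, hence $\tfrac p2>1$. Then the function $\phi\colon\R\to[0,\infty)$ with $\phi(x)=\abs{x}^{p/2}$ lies in $C^1(\R,[0,\infty))$: for $x\neq 0$ it holds that $\phi'(x)=\tfrac p2\abs{x}^{(p/2)-1}\operatorname{sgn}(x)$, the difference quotients $\abs{h}^{p/2}/h=\abs{h}^{(p/2)-1}\operatorname{sgn}(h)$ tend to $0$ as $h\to 0$ because $\tfrac p2>1$ (so $\phi'(0)=0$), and $\phi'$ is continuous at $0$ for the same reason. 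Since $\psi\geq 0$ we have $V=\phi\circ\psi$, so the chain rule gives $V\in C^1(\R^d,[0,\infty))$ with $V'(\theta)(v)=\phi'(\psi(\theta))\,\psi'(\theta)(v)$ for all $\theta,v\in\R^d$. For $\theta\neq-\vartheta$ this equals $\tfrac p2(\norm{\theta+\vartheta}^2)^{(p/2)-1}\cdot2\lll\theta+\vartheta,v\rrr=p\norm{\theta+\vartheta}^{p-2}\lll\theta+\vartheta,v\rrr$, and for $\theta=-\vartheta$ both sides vanish (the left since $\phi'(0)=0$, the right since $\lll\theta+\vartheta,v\rrr=0$); hence \eqref{derivative_of_norm:conclusion} holds in all cases.

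The only genuinely delicate point is the regularity of the outer power function at the origin, namely that $x\mapsto\abs{x}^{p/2}$ is continuously differentiable through $0$, which is exactly where $p\geq 2$ (so $\tfrac p2\geq 1$, and $\tfrac p2>1$ once $p\geq 3$) is used. The rest is a routine application of the chain rule together with the smoothness of the quadratic form $\psi$, so I do not anticipate any real obstacle.
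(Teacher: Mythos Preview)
Your proposal is correct and follows essentially the same approach as the paper: factor $V$ as the composition of the quadratic form $\psi(\theta)=\norm{\theta+\vartheta}^2$ (the paper calls this $f$) and the outer power $\phi(x)=\abs{x}^{p/2}$ (the paper calls this $g$), then apply the chain rule. The paper handles the case $p=2$ by simply assuming $p\geq 3$ without loss of generality and is somewhat terser about the $C^1$-regularity of $\phi$ at the origin, whereas you spell both points out explicitly, but the argument is the same.
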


\begin{proof}[Proof of Lemma \ref{derivative_of_norm}]
Throughout this proof assume w.l.o.g.\ that $p\geq 3$  and let  $f \colon \R^d \to [0,\infty)$ and $g\colon \R \to [0,\infty)$ be the functions which satisfy for all $\theta \in \R^d$, $x\in \R$ that
\begin{equation}
f(\theta) = \norm{\theta + \vartheta}^2 \qandq g(x) = \abs{x}^{\nicefrac{p}{2}}.
\end{equation}
Note that for all $x \in \R$ it holds that $g \in C^1(\R, [0,\infty))$ and 
\begin{equation}
g'(x) = 
\begin{cases}
\tfrac{p}{2}\, \abs{x}^{\nicefrac{p}{2}-1} & : x\geq 0\\
-\tfrac{p}{2}\,\abs{x}^{\nicefrac{p}{2}-1} & : x<0.
 \end{cases}
\end{equation}
The chain rule hence implies that for all $\theta, v \in \R^d$ it holds that $g \circ f \in C^1(\R^d, [0,\infty))$ and
\begin{equation}
\begin{split}
\big( (g \circ f)'(\theta)\big)(v)
&= \tfrac{p}{2} \, \Abs{\norm{\theta+\vartheta}^2}^{\nicefrac{p}{2}-1} \big( 2\lll\theta+\vartheta,v\rrr\big) \\
&= p \norm{\theta+\vartheta}^{p-2} \lll\theta+\vartheta,v\rrr.
\end{split}
\end{equation}
Combining this with the fact that $V = g \circ f$ completes the proof of Lemma \ref{derivative_of_norm}.
\end{proof}

\subsection{Conditional expectation}
In this subsection we present in Lemma~\ref{Orthogonality} a well-known property associated to conditional expectations (cf., e.g., Klenke \cite[Theorem 8.14]{Klenke}), which we employ in our strong error analyses in Propositions~\ref{L2_convergence_explicit} and \ref{Lp_convergence} below. For completeness we also provide the proof of Lemma~\ref{Orthogonality} in this subsection.
%
\label{subsection:L2_orthogonality}
\begin{lemma}
\label{Orthogonality}
Let $d \in \N$, let 
$\lll \cdot,\cdot \rrr \colon \R^d \times \R^d \to \R$
 be a scalar product, let 
 $\left \| \cdot \right \| \! \colon \R^d \to [0,\infty)$ 
 be the function which satisfies for all $\theta \in \R^d$ that $\norm{\theta} = \sqrt{\lll \theta,\theta \rrr}$,
 let $ ( \Omega , \mathcal{F}, \P ) $ be a probability space, let $\mathcal{G} \subseteq \mathcal{F}$ be a sigma-algebra on $\Omega$, 
let $ X \colon \Omega \to \R^d$ be $\mathcal{F}/\mathcal{B}(\R^d)$-measurable, let $ Y \colon \Omega \to \R^d$ be $\mathcal{G}/\mathcal{B}(\R^d)$-measurable, and assume for all $A \in \mathcal{G}$ that   
\begin{equation}
\label{Orthogonality:assumption1}
\EXP{\norm{X} + \norm{Y} + \norm{X}\norm{Y}} < \infty \qandq\EXP{X \mathbbm{1}_A} = 0.
\end{equation}
Then it holds for all $A \in \mathcal{G}$ that
\begin{equation}
\label{Orthogonality:conclusion1}
\EXP{|\lll X,Y\rrr|}<\infty \qandq \EXP{\lll X,Y \rrr \ind{A}} = 0.
\end{equation} 
\end{lemma}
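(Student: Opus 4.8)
The plan is to reduce the vector statement to a scalar statement about the coordinate functions of $X$ and $Y$ so that the hypothesis $\EXP{X\ind{A}}=0$ for all $A\in\mathcal{G}$ can be applied componentwise. First I would fix an orthonormal basis $e_1,\dots,e_d$ of $\R^d$ with respect to the given scalar product $\lll\cdot,\cdot\rrr$ and write $X=\sum_{i=1}^d X_i e_i$ and $Y=\sum_{i=1}^d Y_i e_i$, where $X_i=\lll X,e_i\rrr$ is $\mathcal{F}/\mathcal{B}(\R)$-measurable and $Y_i=\lll Y,e_i\rrr$ is $\mathcal{G}/\mathcal{B}(\R)$-measurable. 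Then $\lll X,Y\rrr=\sum_{i=1}^d X_iY_i$, and by the Cauchy--Schwarz inequality $|X_i|\le\norm{X}$ and $|Y_i|\le\norm{Y}$, so $\EXP{|X_iY_i|}\le\EXP{\norm{X}\norm{Y}}<\infty$ by \eqref{Orthogonality:assumption1}; summing over $i$ gives $\EXP{|\lll X,Y\rrr|}<\infty$, the first claim in \eqref{Orthogonality:conclusion1}.

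For the second claim I would fix $A\in\mathcal{G}$ and, since only the expectation $\EXP{\lll X,Y\rrr\ind{A}}=\sum_{i=1}^d\EXP{X_iY_i\ind{A}}$ must be shown to vanish, it suffices to prove $\EXP{X_iY_i\ind{A}}=0$ for each $i$. The idea is to approximate $Y_i$ by simple $\mathcal{G}$-measurable functions. More precisely, for $m\in\N$ let $Y_i^{(m)}$ be a simple $\mathcal{G}$-measurable function of the form $\sum_{j}c_j\ind{B_j}$ with $B_j\in\mathcal{G}$, $B_j\subseteq A$, chosen so that $Y_i^{(m)}\to Y_i\ind{A}$ pointwise and $|Y_i^{(m)}|\le|Y_i|$ (e.g.\ the standard dyadic approximation of $Y_i$ on $A$, handling positive and negative parts separately). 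For each such simple function, $\EXP{X_iY_i^{(m)}}=\sum_j c_j\EXP{X_i\ind{B_j}}=\sum_j c_j\lll\EXP{X\ind{B_j}},e_i\rrr=0$ by the hypothesis $\EXP{X\ind{B}}=0$ for all $B\in\mathcal{G}$ (note $B_j\in\mathcal{G}$). Finally I would pass to the limit: $|X_iY_i^{(m)}|\le|X_i||Y_i|\le\norm{X}\norm{Y}$, which is integrable, so dominated convergence yields $\EXP{X_iY_i\ind{A}}=\lim_{m\to\infty}\EXP{X_iY_i^{(m)}}=0$.

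Summing over $i\in\{1,\dots,d\}$ then gives $\EXP{\lll X,Y\rrr\ind{A}}=0$, completing the proof. The only mildly delicate point is making the approximation argument clean: one must ensure the approximating simple functions are $\mathcal{G}$-measurable, supported on $A$, and dominated by an integrable function, so that the linearity-plus-hypothesis step applies to each piece and dominated convergence closes the limit. An alternative to the explicit simple-function approximation would be a monotone-class / $\pi$-$\lambda$ argument showing that the set of bounded $\mathcal{G}$-measurable $Z$ with $\EXP{X_iZ}=0$ contains all indicators $\ind{B}$, $B\in\mathcal{G}$, is closed under linear combinations and bounded monotone limits, hence contains all bounded $\mathcal{G}$-measurable functions, and then truncating $Y_i\ind{A}$; but the direct approximation is shorter here since everything is already dominated by $\norm{X}\norm{Y}\in L^1$.
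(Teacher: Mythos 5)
Your proof is correct, but it takes a genuinely different route from the paper. You choose a basis orthonormal with respect to the given scalar product, so that $\lll X,Y\rrr=\sum_i X_iY_i$ and $|X_i|\leq\norm{X}$, $|Y_i|\leq\norm{Y}$ follow at once from Cauchy--Schwarz; you then prove $\EXP{X_iY_i\ind{A}}=0$ by hand, approximating $Y_i\ind{A}$ by $\mathcal{G}$-measurable simple functions dominated by $|Y_i|$, applying the hypothesis $\EXP{X\ind{B}}=0$ to each indicator $\ind{B_j}$ with $B_j\in\mathcal{G}$, and closing the limit with dominated convergence against the integrable majorant $\norm{X}\norm{Y}$. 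The paper instead works with the standard basis and the Gram matrix $M_{i,j}=\lll e_i,e_j\rrr$, bounds $\EXP{|X_iY_j|}$ via a norm-equivalence constant, and then invokes the conditional-expectation machinery of Klenke (Theorem~8.14): the tower property and the pull-out property give $\EXP{\lll X,Y\rrr\ind{A}}=\EXP{\lll\EXP{X\mid\mathcal{G}},Y\rrr\ind{A}}$, and the hypothesis forces $\EXP{X\mid\mathcal{G}}=0$ almost surely. Your argument buys self-containedness: it needs only linearity, Cauchy--Schwarz, and dominated convergence, and it avoids both the Gram matrix and any appeal to existence or properties of conditional expectations; the price is the bookkeeping of the simple-function approximation (ensuring $\mathcal{G}$-measurability, support in $A$, and domination), which you correctly identify and handle. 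The paper's route is shorter once the cited conditional-expectation results are granted, and it is the version that meshes with how the lemma is later applied alongside the factorization results of the following subsection, but mathematically both proofs are complete.
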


\begin{proof}[Proof of Lemma \ref{Orthogonality}]
Throughout this proof let $c\in(0,\infty)$ satisfy
\begin{equation}
\label{Orthogonality:constant-norm-equiv}
c= \sup_{\theta=(\theta_1,\dots,\theta_d) \in \R^d\setminus\{0\}}\left(\frac{\big(\sum_{i=1}^d |\theta_i|\big)}{\norm{\theta}}\right),
\end{equation} 
let $X_{i}\colon \Omega \to \R$, $i \in \{1,2,\ldots,d\}$, and $Y_{i} \colon \Omega \to \R$, $i \in \{1,2,\ldots,d\}$, be the functions which satisfy that
\begin{equation}
X = (X_{1},X_{2},\ldots,X_{d})\qquad \text{and} \qquad Y= (Y_{1},Y_{2},\ldots,Y_{d}), 
\end{equation}
let $e_1=(1,0,\dots,0)$, $e_2=(0,1,0,\dots,0)$, $ \dots $, $e_d=(0,\dots,0,1) \in \R^d$,
and let $M = (M_{i,j})_{(i,j) \in \{1,2,\ldots,d\}^2} \in \R^{d\times d}$ be the $(d \times d)$-matrix which satisfies for all $i,j \in \{1,\dots,d\}$ that
\begin{equation}
M_{i,j}=\lll e_i,e_j\rrr.
\end{equation}
Observe that the Cauchy-Schwarz inequality and the hypothesis that $\EXP{\norm{X}\norm{Y}} < \infty$ imply that 
\begin{equation}
\label{conditioning-int}
\EXP{\abs{\lll X,Y \rrr}}\leq \EXP{\norm{X}\norm{Y}} <\infty.
\end{equation}
Next note that \eqref{Orthogonality:constant-norm-equiv} and the hypothesis that $\EXP{\norm{X}\norm{Y}} < \infty$ ensure that for all $i,j \in \{1,2,\ldots,d\}$ it holds that
\begin{equation}
\EXP{\abs{X_{i}Y_{j}}}
 = \EXP{\abs{X_{i}}\abs{Y_{j}}} 
 \leq \EXP{\big(\textstyle\sum_{k=1}^d\abs{X_{k}}\big) \big(\textstyle\sum_{k=1}^d\abs{Y_{k}}\big)} \leq c^2 \EXP{\norm{X}\norm{Y}} <\infty.
\end{equation}
Item (iii) in Theorem 8.14 in Klenke \cite{Klenke}, the hypothesis that the function $Y$ is $\mathcal{G}/\mathcal{B}(\R^d)$-measurable, (\ref{Orthogonality:assumption1}), and \eqref{conditioning-int} hence ensure that for all $A \in \mathcal{G}$ it holds that
\begin{equation}
\begin{split}
\EXP{\lll X,Y \rrr \ind{A}}
&= \EXPPPP{\lllll \textstyle\sum\limits_{i=1}^d X_i e_i, \textstyle\sum\limits_{j=1}^d Y_j e_j \rrrrr  \ind{A}}\\
&= \EXPPPP{\bigg( \textstyle\sum\limits_{i,j=1}^d X_i Y_j \, \lll e_i,e_j \rrr\bigg) \ind{A}}\\
 &= \EXPPPP{\EXPPP{\bigg(\textstyle\sum\limits_{i,j = 1}^d X_{i}Y_{j}M_{i,j}\bigg) \ind{A} \bigg| \mathcal{G}}} \\
 &= \EXPPPP{\EXPPP{ \textstyle\sum\limits_{i,j = 1}^d X_{i}Y_{j}M_{i,j} \Big| \mathcal{G}}\ind{A}} \\
&=\EXPPP{\bigg(\textstyle\sum\limits_{i,j = 1}^d\EXP{  X_{i}Y_{j} \big| \mathcal{G}}M_{i,j}\bigg)\ind{A}} \\
&=\EXPPP{\bigg(\textstyle\sum\limits_{i,j = 1}^d \EXP{  X_{i}\big| \mathcal{G}}Y_{j}M_{i,j}\bigg)\ind{A}} \\
&= \EXPP{  \llll \EXP{  X\big| \mathcal{G}}, Y \rrrr \ind{A}}= 0.
\end{split}
\end{equation}
This and \eqref{conditioning-int} establish  \eqref{Orthogonality:conclusion1}.
The proof of Lemma \ref{Orthogonality} is thus completed.
\end{proof}
\subsection{Factorization lemma for conditional expectations}
In this subsection we recall the statement and the proof of 
the well-known factorization lemma for conditional expectations from the literature (cf., e.g., Da Prato \& Zabczyk~\cite[Proposition~1.12]{DaPratoZabczyk92} and Pusnik \& Jentzen \cite[Subsection~2.1]{JentzenPusnik16}). 
\begin{lemma}\label{condexp_and_independence-nonneg:le1}
Let $ ( \Omega , \mathcal{F}, \P ) $ be a probability space, let $\mathcal{G} \subseteq \mathcal{F}$ be a sigma-algebra on $\Omega$, let $(\mathbb{X},\mathcal{X})
$ and $(\mathbb{Y},\mathcal{Y})$ be measurable spaces,
let $ X  \colon \Omega \to \mathbb{X}$ be $\mathcal{F}/\mathcal{X}$-measurable, assume that $X$ is independent of $\mathcal{G}$, let $ Y \colon \Omega \to \mathbb{Y}$ be $\mathcal{G}/\mathcal{Y}$-measurable, let
$B \in (\mathcal{X}\otimes \mathcal{Y})$, and let $\phi \colon \mathbb{Y} \to [0,\infty)$ be the function which satisfies for all $y\in \mathbb{Y}$ that $\phi(y)= \EXP{\ind{B}(X,y)}$.
Then
\begin{enumerate}[(i)]
	\item \label{condexp_and_independence-nonneg:le1:item1}
	it holds that the function $\phi$ 
	is $\mathcal{Y}/ \mathcal{B}([0,\infty))$-measurable and
	\item \label{condexp_and_independence-nonneg:le1:item2}
	it holds for all $A \in \mathcal{G}$ that
	\begin{equation}
	\EXP{\ind{B}(X,Y) \ind{A}} = \EXP{\phi(Y) \ind{A}}.
	\end{equation}
\end{enumerate}	
\end{lemma}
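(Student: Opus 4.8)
The plan is to prove the factorization lemma for conditional expectations by the standard measure-theoretic bootstrap: first establish it for product sets $B = B_1 \times B_2$ with $B_1 \in \mathcal{X}$, $B_2 \in \mathcal{Y}$, then promote it to the generated product $\sigma$-algebra $\mathcal{X} \otimes \mathcal{Y}$ via a Dynkin ($\pi$-$\lambda$) argument, treating item (i) (measurability of $\phi$) and item (ii) (the integral identity) in tandem throughout.

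\medskip

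\noindent\textbf{Step 1: product sets.}
First I would suppose $B = B_1 \times B_2$ with $B_1 \in \mathcal{X}$ and $B_2 \in \mathcal{Y}$. Then for every $y \in \mathbb{Y}$ one has $\ind{B}(X,y) = \ind{B_1}(X)\,\ind{B_2}(y)$, so $\phi(y) = \EXP{\ind{B_1}(X)}\,\ind{B_2}(y) = \P(X \in B_1)\,\ind{B_2}(y)$. This is a constant multiple of the indicator of a set in $\mathcal{Y}$, hence $\mathcal{Y}/\mathcal{B}([0,\infty))$-measurable, giving item (i) in this case. For item (ii), fix $A \in \mathcal{G}$ and observe that, because $X$ is independent of $\mathcal{G}$ and both $Y$ and $\ind{A}$ are $\mathcal{G}$-measurable, the random variables $\ind{B_1}(X)$ and $\ind{B_2}(Y)\ind{A}$ are independent; therefore
\begin{equation}
\EXP{\ind{B}(X,Y)\ind{A}} = \EXP{\ind{B_1}(X)\,\ind{B_2}(Y)\,\ind{A}} = \EXP{\ind{B_1}(X)}\,\EXP{\ind{B_2}(Y)\,\ind{A}} = \EXP{\phi(Y)\,\ind{A}}.
\end{equation}

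\medskip

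\noindent\textbf{Step 2: Dynkin class argument.}
Next I would let $\mathcal{D}$ be the collection of all $B \in \mathcal{X}\otimes\mathcal{Y}$ for which both conclusions hold, i.e.\ the function $\mathbb{Y}\ni y \mapsto \EXP{\ind{B}(X,y)}$ is $\mathcal{Y}/\mathcal{B}([0,\infty))$-measurable and the identity in item (ii) holds for every $A \in \mathcal{G}$. Step 1 shows $\mathcal{D}$ contains the $\pi$-system of measurable rectangles, which generates $\mathcal{X}\otimes\mathcal{Y}$. It remains to check that $\mathcal{D}$ is a $\lambda$-system (Dynkin system): it contains $\mathbb{X}\times\mathbb{Y}$ (covered by Step 1 with $B_1 = \mathbb{X}$, $B_2 = \mathbb{Y}$); it is closed under proper differences, using that for $B \subseteq C$ in $\mathcal{D}$ we have $\ind{C\setminus B} = \ind{C} - \ind{B}$ pointwise, so the associated $\phi$-functions subtract (and stay nonnegative since $\phi_B \le \phi_C$), measurability is preserved, and the integral identity subtracts (all expectations here are finite, being bounded by $\P(A) \le 1$); and it is closed under countable increasing unions, using that if $B_n \uparrow B$ with $B_n \in \mathcal{D}$ then $\ind{B_n}(X,y) \uparrow \ind{B}(X,y)$ for each $y$ so that monotone convergence gives $\phi_{B_n}(y) \uparrow \phi_B(y)$ (hence $\phi_B$ is measurable as an increasing limit of measurable functions), and two further applications of monotone convergence — one for $\EXP{\ind{B_n}(X,Y)\ind{A}}$ and one for $\EXP{\phi_{B_n}(Y)\ind{A}}$ — pass the identity to the limit. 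By Dynkin's theorem $\mathcal{D} = \mathcal{X}\otimes\mathcal{Y}$, which establishes both items for arbitrary $B \in \mathcal{X}\otimes\mathcal{Y}$ and completes the proof.

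\medskip

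\noindent The main obstacle is bookkeeping rather than genuine difficulty: one must be a little careful that the $\phi$-functions and the relevant expectations stay finite throughout the $\lambda$-system verification (which they do, since indicators are bounded by $1$ and $\P$ is a probability measure), and that the independence invoked in Step 1 is correctly phrased as independence of the $\sigma(X)$-measurable random variable $\ind{B_1}(X)$ from the $\mathcal{G}$-measurable random variable $\ind{B_2}(Y)\ind{A}$. No single step is hard; the value of the lemma lies in having it available in exactly this form (with a generic $B \in \mathcal{X}\otimes\mathcal{Y}$ and the explicit $\phi$) for later use.
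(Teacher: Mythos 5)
Your proposal is correct and follows essentially the same route as the paper: verify the claim on measurable rectangles using independence of $X$ from $\mathcal{G}$, then extend to all of $\mathcal{X}\otimes\mathcal{Y}$ by a Dynkin ($\pi$-$\lambda$) argument. The only cosmetic difference is that the paper dispatches the measurability of $y\mapsto \EXP{\ind{B}(X,y)}$ for every $B\in\mathcal{X}\otimes\mathcal{Y}$ at once via Fubini's theorem (and uses complements and countable disjoint unions in its Dynkin system), whereas you carry the measurability through the $\lambda$-system closure steps (proper differences and increasing unions); both versions are complete and correct.
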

\begin{proof}[Proof of Lemma \ref{condexp_and_independence-nonneg:le1}]
Throughout this proof
for every set $S$ and every subset $\mathcal{S} \subseteq \mathcal{P}(S)$ of the power set $\mathcal{P}(S)$ of $S$ let $\delta_S(\mathcal{S})$ be the set given by
%
\begin{equation}
\begin{split}
\delta_S(\mathcal{S})
=
\bigcap\nolimits_{ \mathcal{B}
	\in
	\big\{\substack{ 
		\mathcal{C}	
		\text{ is a Dynkin system}
		\\
		\text{on }
		S
		\text{ with }
		\mathcal{C} \supseteq \mathcal{S} 
	}
	\big\}
}
\mathcal{B}
,
\end{split}
\end{equation}
for every set $S$ and every subset $\mathcal{S} \subseteq \mathcal{P}(S)$ of the power set $\mathcal{P}(S)$ of $S$ let $\sigma_S(\mathcal{S})$ be the set given by 
%
\begin{equation}
\begin{split}
\sigma_{S}(\mathcal{S})
=
\bigcap\nolimits_{ \mathcal{B}
	\in
	\big\{\substack{ 
		\mathcal{C}	
		\text{ is a sigma-algebra}
		\\
		\text{on }
		S
		\text{ with }
		\mathcal{C} \supseteq \mathcal{S} 
	}
	\big\}
}
\mathcal{B}
,
\end{split}
\end{equation}
let $\mathcal{E}\subseteq (\mathcal{X}\otimes \mathcal{Y})$ be the set given by
\begin{equation}
\mathcal{E} = \big\{S \in (\mathcal{X}\otimes \mathcal{Y}) \colon  (\exists\, E_1 \in \mathcal{X}, E_2 \in \mathcal{Y} \colon S= E_1 \times E_2) \big\},
\end{equation}
and let $\mathcal{D}\subseteq (\mathcal{X}\otimes \mathcal{Y})$ be the set given by
\begin{equation}
\mathcal{D} = \left\{
\begin{split}
&D \in (\mathcal{X}\otimes \mathcal{Y})\colon\\
&\left[
\begin{split}
&\big(\mathbb{Y} \ni y \mapsto \Exp{\ind{D}(X,y)} \in[0,\infty) \big) \mbox{ is } \mathcal{Y}/\mathcal{B}([0,\infty))\mbox{-measurable}\\
&\mbox{and }\big( \forall\, A \in \mathcal{G} \colon \EXP{\ind{D}(X,Y) \ind{A}} = \EXP{(\Exp{\ind{D}(X,y)})|_{y = Y} \ind{A}} \big)
\end{split}
\right]
\end{split}
\right\}
\end{equation}
Note that Fubini's theorem (cf., e.g., Klenke \cite[(14.6) in Theorem 14.16]{Klenke}) and the assumption that the function $ X  \colon \Omega \to \mathbb{X}$ is $\mathcal{F}/\mathcal{X}$-measurable
demonstrate that for all $D \in (\mathcal{X}\otimes \mathcal{Y})$ it holds that the function
\begin{equation}
\mathbb{Y} \ni y \mapsto \EXP{\ind{D}(X,y)} = \int_\Omega \ind{D}(X(\omega),y) \,\P(\mathrm{d}\omega) \in [0,\infty)
\end{equation}
is $\mathcal{Y}/\mathcal{B}([0,\infty))$-measurable.
Hence, we obtain  that
\begin{equation}
\begin{split}
&\mathcal{D}=\\
&\left\{D \in(\mathcal{X}\otimes \mathcal{Y}) \colon  \big( \forall\, A \in \mathcal{G} \colon \EXP{\ind{D}(X,Y) \ind{A}} = \EXP{(\Exp{\ind{D}(X,y)})|_{y = Y} \ind{A}} \big) \right\}.
\end{split}
\end{equation}
Next observe that the hypothesis that the function $X$ is independent of $\mathcal{G}$ and the hypothesis that the function $Y$ is $\mathcal{G}/\mathcal{Y}$-measurable ensure that for all $E_1 \in \mathcal{X}, E_2 \in \mathcal{Y}, A \in \mathcal{G}$ it holds that
\begin{equation}
\begin{split}
&\EXPP{\big(\EXP{\ind{E_1 \times E_2}(X,y)} \big)\big|_{y = Y} \ind{A}} =  \EXPP{\big(\EXP{\ind{E_1}(X)\ind{E_2}(y)} \big)\big|_{y = Y} \ind{A}} \\ 
&=\EXP{\P(X \in E_1)\ind{E_2}(Y)  \ind{A}} =  \P(X \in E_1) \, \P(\{Y \in E_2 \}\cap A) \\
&= \P( \{X \in E_1\}\cap\{Y \in E_2 \}\cap A) = \EXP{\ind{E_1 \times E_2}(X,Y)\ind{A}}.
\end{split}
\end{equation}
Therefore, we obtain that $\mathcal{E} \subseteq \mathcal{D}$. Next observe that for all $D \in \mathcal{D}$, $A \in \mathcal{G}$ it holds that
\begin{equation}
\label{condexp_and_independence-nonneg:le1:eq1}
\begin{split}
&\EXP{\ind{((\mathbb{X}\times \mathbb{Y})\setminus D)}(X,Y)\ind{A}}
 = \EXP{(1-\ind{D}(X,Y))\ind{A}}  \\
& = \EXP{\ind{A}}-\EXP{\ind{D}(X,Y) \ind{A}}
= \EXP{\ind{A}} -\EXPP{\big(\EXP{\ind{D}(X,y)} \big)\big|_{y = Y}\ind{A}}\\
& = \EXPP{\big(1-\big(\EXP{\ind{D}(X,y)} \big)\big|_{y = Y}\big)\ind{A}}
=\EXPP{\big(\EXP{1-\ind{D}(X,y)} \big)\big|_{y = Y}\ind{A}} \\
&= \EXPP{\big(\EXP{\ind{((\mathbb{X}\times \mathbb{Y})\setminus D)}(X,y)} \big)\big|_{y = Y} \ind{A}}.
\end{split}
\end{equation}
Moreover, note that the monotone convergence theorem implies that for all $A \in \mathcal{G}$, $(D_k)_{k \in \N}\subseteq \mathcal{D}$ with $\forall\, i \in \N, j \in \N\backslash\{i\} \colon D_i \cap D_j =  \emptyset$ it holds that
\begin{equation}
\begin{split}
\EXPP{\ind{(\cup_{k = 1}^\infty D_k)} (X,Y)\, \ind{A}} 
&= \EXPP{\lim\limits_{n \to \infty}\left[\ind{(\cup_{k = 1}^n D_k)} (X,Y)\, \ind{A}\right]}
\\
&=\EXPP{\lim\limits_{n \to \infty}\left[\textstyle\sum_{k = 1}^n\ind{D_k}(X,Y)\, \ind{A}\right]}
\\
&=\lim\limits_{n \to \infty}\EXPP{\textstyle\sum_{k = 1}^n\ind{D_k}(X,Y)\, \ind{A}}
\\
&= \lim_{n \to \infty} \left[\sum_{k = 1}^n \EXP{\ind{ D_k} (X,Y)\, \ind{A}} \right] \\
&= \lim_{n \to \infty} \left[\sum_{k = 1}^n\EXPP{\big(\EXP{\ind{D_k}(X,y)} \big)\big|_{y = Y} \,\ind{A}}\right].
\end{split}
\end{equation}
Again the monotone convergence theorem hence implies that for all $A \in \mathcal{G}$, $(D_k)_{k \in \N}\subseteq \mathcal{D}$ with $\forall\, i \in \N, j \in \N\backslash\{i\} \colon D_i \cap D_j =  \emptyset$ it holds that
\begin{equation}
\begin{split}
\label{condexp_and_independence-nonneg:le1:eq2}
\EXPP{\ind{(\cup_{k = 1}^\infty D_k)} (X,Y)\, \ind{A}}
&=\EXPPP{\lim_{n \to \infty}\Big(\textstyle\sum_{k = 1}^n\EXP{\ind{D_k}(X,y)}\Big)\Big|_{y = Y} \,\ind{A}}\\
 &=  \EXPPP{\Big(\lim_{n \to \infty}\textstyle\sum_{k = 1}^n\EXP{\ind{D_k}(X,y)}\Big)\Big|_{y = Y} \,\ind{A}} \\
&=\EXPPP{ \Big(\lim_{n \to \infty}\EXP{\textstyle\sum_{k = 1}^n\ind{D_k}(X,y)} \Big)\Big|_{y = Y} \,\ind{A}} \\
&=\EXPP{\big(\EXP{\ind{(\cup_{k = 1}^\infty D_k)}(X,y)} \big)\big|_{y = Y} \,\ind{A}}.
\end{split}
\end{equation}
This, (\ref{condexp_and_independence-nonneg:le1:eq1}), and the fact that $(\mathbb{X}\times \mathbb{Y}) \in \mathcal{D}$ show that $\mathcal{D}$ is a Dynkin-system.
The fact that $\mathcal{E}$ is 
$\cap$-stable, the fact that $\mathcal{E}\subseteq\mathcal{D}$, and Dynkin's $\pi$-$\lambda$-Theorem (see, e.g., \cite[Theorem~2.5]{JentzenPusnik16})
therefore demonstrate that
\begin{equation}
(\mathcal{X}\otimes \mathcal{Y})  = \sigma_{\mathbb{X}\times \mathbb{Y}}(\mathcal{E}) = \delta_{\mathbb{X}\times \mathbb{Y}}(\mathcal{E}) \subseteq \mathcal{D} \subseteq (\mathcal{X}\otimes \mathcal{Y}).
\end{equation}
Hence, we obtain that $\mathcal{D} = \mathcal{X}\otimes \mathcal{Y}$. 
The assumption that $B \in (\mathcal{X} \otimes \mathcal{Y})$ hence assures that $B \in \mathcal{D}$.
 This completes the proof of Lemma~\ref{condexp_and_independence-nonneg:le1}.
\end{proof}
\begin{lemma}
	\label{condexp_and_independence-nonneg:le2}
	Let $N \in \N$, $c_1,\dots, c_N \in [0,\infty)$, let $ ( \Omega , \mathcal{F}, \P ) $ be a probability space, let $\mathcal{G} \subseteq \mathcal{F}$ be a sigma-algebra on $\Omega$, let $(\mathbb{X},\mathcal{X})
	$ and $(\mathbb{Y},\mathcal{Y})$ be measurable spaces,
	let $D_1,\dots,D_N \in (\mathcal{X}\otimes \mathcal{Y})$,
	let $ X  \colon \Omega \to \mathbb{X}$ be $\mathcal{F}/\mathcal{X}$-measurable, assume that $X$ is independent of $\mathcal{G}$, let $ Y \colon \Omega \to \mathbb{Y}$ be $\mathcal{G}/\mathcal{Y}$-measurable, let $\Phi \colon \mathbb{X}\times \mathbb{Y} \to [0,\infty)$ be $(\mathcal{X}\otimes \mathcal{Y}) / \mathcal{B}([0,\infty))$-measurable, assume for all $x\in \mathbb{X}$, $y\in \mathbb{Y}$ that
	\begin{equation}
	\Phi(x,y)= \sum_{k=1}^N c_k \ind{D_k}(x,y),
	\end{equation} 
 and let $\phi \colon \mathbb{Y} \to [0,\infty)$ be the function which satisfies for all $y\in \mathbb{Y}$ that $\phi(y)= \EXP{\Phi(X,y)}$.
	Then
	\begin{enumerate}[(i)]
		\item \label{condexp_and_independence-nonneg:le2:item1}
		it holds that the function $\phi$ 
		is $\mathcal{Y}/ \mathcal{B}([0,\infty))$-measurable and
		\item \label{condexp_and_independence-nonneg:le2:item2}
		it holds for all $A \in \mathcal{G}$ that
		\begin{equation}
		\EXP{\Phi(X,Y) \ind{A}} = \EXP{\phi(Y) \ind{A}}.
		\end{equation}
	\end{enumerate}
\end{lemma}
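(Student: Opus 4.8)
The plan is to reduce Lemma~\ref{condexp_and_independence-nonneg:le2} to the indicator-function case already established in Lemma~\ref{condexp_and_independence-nonneg:le1} by exploiting linearity of the expectation. First, for every $k \in \{1,2,\dots,N\}$ I would introduce the function $\phi_k \colon \mathbb{Y} \to [0,\infty)$ given for all $y \in \mathbb{Y}$ by $\phi_k(y) = \EXP{\ind{D_k}(X,y)}$ and apply Lemma~\ref{condexp_and_independence-nonneg:le1} with its set $B$ replaced by $D_k$. This yields that each $\phi_k$ is $\mathcal{Y}/\mathcal{B}([0,\infty))$-measurable and that $\EXP{\ind{D_k}(X,Y)\ind{A}} = \EXP{\phi_k(Y)\ind{A}}$ for all $A \in \mathcal{G}$.

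Next I would note that, since the sum defining $\Phi$ is finite, linearity of the expectation gives for all $y \in \mathbb{Y}$ that $\phi(y) = \EXP{\Phi(X,y)} = \EXP{\sum_{k=1}^N c_k \ind{D_k}(X,y)} = \sum_{k=1}^N c_k \phi_k(y)$. Hence $\phi$ is a finite linear combination, with non-negative coefficients, of $\mathcal{Y}/\mathcal{B}([0,\infty))$-measurable functions and is therefore itself $\mathcal{Y}/\mathcal{B}([0,\infty))$-measurable; this proves item~\ref{condexp_and_independence-nonneg:le2:item1}.

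For item~\ref{condexp_and_independence-nonneg:le2:item2} I would fix $A \in \mathcal{G}$ and chain the above facts: using linearity of the expectation, then the conclusion of Lemma~\ref{condexp_and_independence-nonneg:le1} for each $k$, and then linearity again together with the identity $\phi(y) = \sum_{k=1}^N c_k \phi_k(y)$ for all $y \in \mathbb{Y}$, one obtains $\EXP{\Phi(X,Y)\ind{A}} = \sum_{k=1}^N c_k \EXP{\ind{D_k}(X,Y)\ind{A}} = \sum_{k=1}^N c_k \EXP{\phi_k(Y)\ind{A}} = \EXP{(\sum_{k=1}^N c_k \phi_k(Y))\ind{A}} = \EXP{\phi(Y)\ind{A}}$.

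All manipulations involve only finite sums of non-negative (in particular $[0,\infty]$-valued) quantities, so no convergence or integrability issue arises; accordingly I do not expect a genuine obstacle here, and the whole argument amounts to invoking Lemma~\ref{condexp_and_independence-nonneg:le1} term by term and summing. The only point meriting a moment of care is verifying that the representation $\phi(y) = \sum_{k=1}^N c_k \phi_k(y)$ holds at every $y \in \mathbb{Y}$, which is immediate from the definitions of $\phi$ and of the $\phi_k$.
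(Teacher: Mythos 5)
Your proposal is correct and follows essentially the same route as the paper: both decompose $\Phi$ into the finite sum of indicator terms, apply Lemma~\ref{condexp_and_independence-nonneg:le1} to each $D_k$, and conclude by linearity of the expectation; your named functions $\phi_k$ are merely a notational variant of the paper's expressions $(\E[\ind{D_k}(X,y)])|_{y=Y}$. No gaps.
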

\begin{proof}[Proof of Lemma \ref{condexp_and_independence-nonneg:le2}]
First, note that for all $y \in \mathbb{Y}$ it holds that
\begin{equation}
\phi(y)= \EXP{\Phi(X,y)} = \sum_{k=1}^N c_k \Exp{\ind{D_k}(X,y)}.
\end{equation}
Item~(\ref{condexp_and_independence-nonneg:le1:item1}) in Lemma~\ref{condexp_and_independence-nonneg:le1} therefore ensures that the function $\phi$ 
is $\mathcal{Y}/ \mathcal{B}([0,\infty))$-measurable. This establishes item~(\ref{condexp_and_independence-nonneg:le2:item1}). In addition, observe that 
 Lemma~\ref{condexp_and_independence-nonneg:le1} implies that
\begin{equation}
\begin{split}
\EXP{\Phi(X,Y) \ind{A}} 
&= 
\EXP{\textstyle\sum_{k=1}^N c_k \ind{D_k}(X,Y) \ind{A}}\\
&= \sum_{k=1}^N c_k \EXP{ \ind{D_k}(X,Y) \ind{A}}\\
&= \sum_{k=1}^N c_k \EXP{(\Exp{\ind{D_k}(X,y)})|_{y=Y} \ind{A}}\\
&= \EXP{\big(\mathbb{E}\big[\textstyle\sum_{k=1}^N c_k \ind{D_k}(X,y)\big]\big) \big|_{y=Y} \ind{A}}\\
&= \EXP{(\Exp{\Phi(X,y)})|_{y=Y} \ind{A}}\\
&=\EXP{\phi(Y) \ind{A}}.
\end{split}
\end{equation}
This establishes item~(\ref{condexp_and_independence-nonneg:le2:item2}). The proof of Lemma~\ref{condexp_and_independence-nonneg:le2} is thus completed.
\end{proof}
\begin{lemma}
	\label{condexp_and_independence-nonneg}
	Let $ ( \Omega , \mathcal{F}, \P ) $ be a probability space, let $\mathcal{G} \subseteq \mathcal{F}$ be a sigma-algebra on $\Omega$, let $(\mathbb{X},\mathcal{X})
	$ and $(\mathbb{Y},\mathcal{Y})$ be measurable spaces,
	let $ X  \colon \Omega \to \mathbb{X}$ be $\mathcal{F}/\mathcal{X}$-measurable, assume that $X$ is independent of $\mathcal{G}$, let $ Y \colon \Omega \to \mathbb{Y}$ be $\mathcal{G}/\mathcal{Y}$-measurable, let $\Phi \colon \mathbb{X}\times \mathbb{Y} \to [0,\infty]$ be $(\mathcal{X}\otimes \mathcal{Y}) / \mathcal{B}([0,\infty])$-measurable, and let $\phi \colon \mathbb{Y} \to [0,\infty]$ be the function which satisfies for all $y\in \mathbb{Y}$ that $\phi(y)= \EXP{\Phi(X,y)}$.
	Then
	\begin{enumerate}[(i)]
		\item \label{condexp_and_independence-nonneg:item1}
		it holds that the function $\phi$ 
		is $\mathcal{Y}/ \mathcal{B}([0,\infty])$-measurable and
		\item \label{condexp_and_independence-nonneg:item2}
		it holds for all $A \in \mathcal{G}$ that
		\begin{equation}
		\EXP{\Phi(X,Y) \ind{A}} = \EXP{\phi(Y) \ind{A}}.
		\end{equation}
	\end{enumerate}
\end{lemma}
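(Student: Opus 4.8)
The plan is to deduce Lemma~\ref{condexp_and_independence-nonneg} from Lemma~\ref{condexp_and_independence-nonneg:le2} by a standard approximation of a nonnegative measurable function by an increasing sequence of nonnegative simple functions. First I would invoke the algebraic induction / simple function approximation theorem to obtain functions $\Phi_N \colon \mathbb{X} \times \mathbb{Y} \to [0,\infty)$, $N \in \N$, each of the form $\Phi_N = \sum_{k=1}^{M_N} c_{N,k} \ind{D_{N,k}}$ with $M_N \in \N$, $c_{N,k} \in [0,\infty)$, and $D_{N,k} \in (\mathcal{X}\otimes\mathcal{Y})$, such that $\Phi_N \leq \Phi_{N+1}$ for all $N \in \N$ and $\lim_{N\to\infty}\Phi_N(x,y) = \Phi(x,y)$ for all $x \in \mathbb{X}$, $y \in \mathbb{Y}$; for instance one may take the usual dyadic truncation $\Phi_N = \min\{N, 2^{-N}\lfloor 2^N \Phi \rfloor\}$. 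For each $N \in \N$ let $\phi_N \colon \mathbb{Y} \to [0,\infty)$ be the function given by $\phi_N(y) = \EXP{\Phi_N(X,y)}$. Lemma~\ref{condexp_and_independence-nonneg:le2} applied to each $\Phi_N$ then ensures that each $\phi_N$ is $\mathcal{Y}/\mathcal{B}([0,\infty))$-measurable and that for all $A \in \mathcal{G}$ it holds that $\EXP{\Phi_N(X,Y)\ind{A}} = \EXP{\phi_N(Y)\ind{A}}$.

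Next I would pass to the limit. By the monotone convergence theorem applied under the integral with respect to $\P(\mathrm{d}\omega)$ (for fixed $y \in \mathbb{Y}$), the fact that $\Phi_N(X,\cdot) \uparrow \Phi(X,\cdot)$ pointwise yields $\phi_N(y) = \EXP{\Phi_N(X,y)} \uparrow \EXP{\Phi(X,y)} = \phi(y)$ for all $y \in \mathbb{Y}$; in particular $\phi$ is the pointwise limit of the measurable functions $\phi_N$ and hence $\mathcal{Y}/\mathcal{B}([0,\infty])$-measurable, which establishes item~(\ref{condexp_and_independence-nonneg:item1}). For item~(\ref{condexp_and_independence-nonneg:item2}), fix $A \in \mathcal{G}$; since $\Phi_N(X,Y)\ind{A} \uparrow \Phi(X,Y)\ind{A}$ and $\phi_N(Y)\ind{A} \uparrow \phi(Y)\ind{A}$ pointwise on $\Omega$, two further applications of the monotone convergence theorem together with the identity from Lemma~\ref{condexp_and_independence-nonneg:le2} give
\begin{equation}
\EXP{\Phi(X,Y)\ind{A}} = \lim_{N\to\infty}\EXP{\Phi_N(X,Y)\ind{A}} = \lim_{N\to\infty}\EXP{\phi_N(Y)\ind{A}} = \EXP{\phi(Y)\ind{A}},
\end{equation}
which is the desired conclusion.

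The only point that requires a little care is the measurability needed to apply the monotone convergence theorem to $\phi_N(y) = \int_\Omega \Phi_N(X(\omega),y)\,\P(\mathrm{d}\omega)$: one needs $\Phi_N$ to be a genuine nonnegative measurable function on $\mathbb{X}\times\mathbb{Y}$, which is guaranteed by the simple-function approximation, and one needs the pointwise (in $\omega$, for fixed $y$) monotone convergence $\Phi_N(X(\omega),y)\uparrow\Phi(X(\omega),y)$, which is immediate from the corresponding property of the $\Phi_N$. I do not expect any real obstacle here; the lemma is essentially a routine extension of Lemma~\ref{condexp_and_independence-nonneg:le2} from simple integrands to general nonnegative integrands, and the main thing to be careful about is simply bookkeeping the three separate applications of monotone convergence (one defining $\phi$, and two in the limiting identity for the integrals over $A$).
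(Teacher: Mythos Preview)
Your proposal is correct and follows essentially the same approach as the paper: approximate $\Phi$ by an increasing sequence of nonnegative simple functions, apply Lemma~\ref{condexp_and_independence-nonneg:le2} to each, and pass to the limit via monotone convergence. The only cosmetic difference is that the paper establishes item~(\ref{condexp_and_independence-nonneg:item1}) directly from Fubini's theorem rather than as the pointwise limit of the $\phi_N$, but your route through the limit is equally valid.
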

\begin{proof}[Proof of Lemma \ref{condexp_and_independence-nonneg}]
First, note that Fubini's theorem (cf., e.g.,   Klenke \cite[(14.6) in Theorem 14.16]{Klenke}), the assumption that the function $ X  \colon \Omega \to \mathbb{X}$ is $\mathcal{F}/\mathcal{X}$-measurable, and the assumption that the function $\Phi \colon \mathbb{X}\times \mathbb{Y} \to [0,\infty]$ is $(\mathcal{X}\otimes \mathcal{Y}) / \mathcal{B}([0,\infty])$-measurable demonstrate that the function
\begin{equation}
\mathbb{Y} \ni y \mapsto \phi(y)= \EXP{\Phi(X,y)} = \int_\Omega \Phi(X(\omega),y) \,\P(\mathrm{d}\omega) \in [0,\infty]
\end{equation}
is $\mathcal{Y}/\mathcal{B}([0,\infty])$-measurable. 
This establishes item (\ref{condexp_and_independence-nonneg:item1}). It thus remains to prove item (\ref{condexp_and_independence-nonneg:item2}).
For this let $\Phi_n \colon \mathbb{X}\times \mathbb{Y} \to [0,\infty)$, $n \in \N$, be the functions which satisfy for all $n \in \N$, $x\in \mathbb{X}$, $y\in\mathbb{Y}$ that
\begin{equation}
\begin{split}
&\Phi_n(x,y)= \\
& 2^n \,\ind{ \{(v,w)\in \mathbb{X}\times\mathbb{Y}: \Phi(v,w)\geq 2^n\}}(x,y) + \sum_{k = 0}^{2^{2n}-1}\bigg[\frac{k}{2^n}\,\ind{ \{(v,w)\in \mathbb{X}\times\mathbb{Y}: \, k 2^{-n}\leq \Phi(v,w)<(k+1) 2^{-n}\!\}}(x,y)\bigg].
\end{split}
\end{equation}
Observe that the hypothesis that the function $\Phi \colon \mathbb{X} \times \mathbb{Y} \to [0,\infty]$ is $(\mathcal{X} \otimes \mathcal{Y})/\mathcal{B}([0,\infty])$-measurable assures that for all $n \in \N$, $k \in \{0,1, \ldots, 2^{2n}-1 \}$ it holds that 
\begin{equation}
\big\{ (v,w) \in \mathbb{X}\times \mathbb{Y} \colon \Phi(v,w)\geq k 2^{n} \!\big\} \in (\mathcal{X} \otimes \mathcal{Y})
\end{equation}
and
\begin{equation}
\big\{ (v,w) \in \mathbb{X}\times \mathbb{Y} \colon k2^{-n}\leq \Phi(v,w)< (k+1)2^{-n}\!\big\} \in (\mathcal{X} \otimes \mathcal{Y}).
\end{equation}
This and Lemma~\ref{condexp_and_independence-nonneg:le2}
 ensure  that for all $n \in \N$, $A \in \mathcal{G}$ it holds that
\begin{equation}
\EXP{\Phi_n(X,Y) \ind{A}}= \EXPP{\big(\EXP{\Phi_n(X,y)} \big)\big|_{y = Y} \ind{A}}.
\end{equation} 
The fact that  $\forall \, (x,y) \in \mathbb{X}\times \mathbb{Y}, n \in \N\colon \Phi_n(x,y)\leq \Phi_{n+1}(x,y)$, the fact that $\forall \, (x,y) \in \mathbb{X}\times \mathbb{Y}\colon\lim_{n\to \infty} \Phi_n(x,y)=\Phi(x,y)$, and the monotone convergence theorem therefore imply that for all $A \in \mathcal{G}$ it holds that
\begin{equation}
\begin{split}
\EXP{\Phi(X,Y) \ind{A}} &= \lim_{n \to \infty} \EXP{\Phi_n (X,Y) \ind{A}}
=  \lim_{n \to \infty} \EXPP{\big(\EXP{\Phi_n(X,y)} \big)\big|_{y = Y} \ind{A}} \\
&=\EXPP{\big(\lim\nolimits_{n \to \infty}\EXP{\Phi_n(X,y)} \big)\big|_{y = Y} \ind{A}}\\
&=  \EXPP{\big(\EXP{\Phi(X,y)} \big)\big|_{y = Y} \ind{A}}
=\EXP{\phi(Y) \ind{A}}.
\end{split}
\end{equation}
This establishes item (\ref{condexp_and_independence-nonneg:item2}). 
The proof of Lemma \ref{condexp_and_independence-nonneg} is thus completed.
\end{proof}
\begin{cor}
\label{condexp_and_independence-integrable}
Let $ ( \Omega , \mathcal{F}, \P ) $ be a probability space, let $\mathcal{G} \subseteq \mathcal{F}$ be a sigma-algebra on $\Omega$, let $(\mathbb{X},\mathcal{X})$ and $(\mathbb{Y},\mathcal{Y})$ be measurable spaces,
let $ X  \colon \Omega \to \mathbb{X}$ be $\mathcal{F}/\mathcal{X}$-measurable, assume that $X$ is independent of $\mathcal{G}$, let $ Y \colon \Omega \to \mathbb{Y}$ be $\mathcal{G}/\mathcal{Y}$-measurable, let $\Phi \colon \mathbb{X}\times \mathbb{Y} \to \R$ be  $(\mathcal{X}\otimes \mathcal{Y}) / \mathcal{B}(\R)$-measurable, assume that $\EXP{\abs{\Phi(X,Y)}} < \infty$, let $c \in \R$, let $\phi \colon \mathbb{Y} \to \R$ be a function, assume for all $y \in \mathbb{Y}$ with $\EXP{\abs{\Phi(X,y)}} < \infty$ that $\phi(y)= \EXP{\Phi(X,y)}$, and assume for all $y \in \mathbb{Y}$ with $\EXP{\abs{\Phi(X,y)}} = \infty$ that $\phi(y)= c$.
Then
\begin{enumerate}[(i)]
\item \label{condexp_and_independence-integrable:item0}
 it holds that $\big\{y \in \mathbb{Y}\colon \EXP{\abs{\Phi(X,y)}}<\infty\big\} \in \mathcal{Y}$,
 \item \label{condexp_and_independence-integrable:item0.5}
 it holds that $\P\big(Y \in \big\{y \in \mathbb{Y} \colon \EXP{|\Phi(X,y)|}<\infty\big\}\big)=1$,
\item \label{condexp_and_independence-integrable:item1}
it holds that the function  $\phi$
is $\mathcal{Y}/ \mathcal{B}(\R)$-measurable,
\item \label{condexp_and_independence-integrable:item2}
it holds that $\EXP{\abs{\phi(Y)}} < \infty$, and
\item \label{condexp_and_independence-integrable:item3}
it holds for all $A \in \mathcal{G}$ that 
\begin{equation}
\EXP{\Phi(X,Y) \ind{A}} = \EXP{\phi(Y)\ind{A}}.
\end{equation}
\end{enumerate}
\end{cor}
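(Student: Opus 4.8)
The plan is to reduce the signed, integrable case to the nonnegative case established in Lemma~\ref{condexp_and_independence-nonneg} by decomposing $\Phi$ into its positive part $\Phi^{+} = \max\{\Phi,0\}$, its negative part $\Phi^{-} = \max\{-\Phi,0\}$, and its absolute value $|\Phi| = \Phi^{+} + \Phi^{-}$, each of which is $(\mathcal{X}\otimes\mathcal{Y})/\mathcal{B}([0,\infty])$-measurable. First I would apply Lemma~\ref{condexp_and_independence-nonneg} to $|\Phi|$ to obtain that the function $\psi \colon \mathbb{Y} \to [0,\infty]$ given by $\psi(y) = \E[|\Phi(X,y)|]$ is $\mathcal{Y}/\mathcal{B}([0,\infty])$-measurable and satisfies $\E[|\Phi(X,Y)|\,\ind{A}] = \E[\psi(Y)\,\ind{A}]$ for all $A \in \mathcal{G}$. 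Since the set $E := \{y \in \mathbb{Y} \colon \E[|\Phi(X,y)|] < \infty\}$ equals $\psi^{-1}([0,\infty))$, this gives item~(\ref{condexp_and_independence-integrable:item0}); and choosing $A = \Omega$ together with the hypothesis $\E[|\Phi(X,Y)|] < \infty$ yields $\E[\psi(Y)] < \infty$, hence $\psi(Y) < \infty$ $\P$-a.s., which is item~(\ref{condexp_and_independence-integrable:item0.5}).

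Next I would apply Lemma~\ref{condexp_and_independence-nonneg} separately to $\Phi^{+}$ and to $\Phi^{-}$, obtaining $\mathcal{Y}/\mathcal{B}([0,\infty])$-measurable functions $\phi^{+}, \phi^{-} \colon \mathbb{Y} \to [0,\infty]$ with $\phi^{\pm}(y) = \E[\Phi^{\pm}(X,y)]$ and $\E[\Phi^{\pm}(X,Y)\,\ind{A}] = \E[\phi^{\pm}(Y)\,\ind{A}]$ for all $A \in \mathcal{G}$. On $E$ one has $\phi^{+}(y), \phi^{-}(y) \le \psi(y) < \infty$, so linearity of the integral for integrable functions gives $\phi(y) = \E[\Phi(X,y)] = \phi^{+}(y) - \phi^{-}(y)$, whereas off $E$ one has $\phi(y) = c$; hence $\phi = \phi^{+}\ind{E} - \phi^{-}\ind{E} + c\,\ind{\mathbb{Y}\setminus E}$. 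Since on $E$ the functions $\phi^{+}$ and $\phi^{-}$ are finite-valued, the products $\phi^{+}\ind{E}$ and $\phi^{-}\ind{E}$ take values in $[0,\infty)$ and are $\mathcal{Y}/\mathcal{B}(\R)$-measurable, so $\phi$ is $\mathcal{Y}/\mathcal{B}(\R)$-measurable, establishing item~(\ref{condexp_and_independence-integrable:item1}).

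For item~(\ref{condexp_and_independence-integrable:item2}) I would bound $|\phi(Y)| \le \phi^{+}(Y)\,\ind{\{Y \in E\}} + \phi^{-}(Y)\,\ind{\{Y \in E\}} + |c|\,\ind{\{Y \notin E\}}$, take expectations, discard the last summand via item~(\ref{condexp_and_independence-integrable:item0.5}), and invoke $\E[\phi^{\pm}(Y)] = \E[\Phi^{\pm}(X,Y)] \le \E[|\Phi(X,Y)|] < \infty$ (the first equality being the case $A = \Omega$ of Lemma~\ref{condexp_and_independence-nonneg}). Finally, for item~(\ref{condexp_and_independence-integrable:item3}) and fixed $A \in \mathcal{G}$, I would use $\P(Y \notin E) = 0$ to obtain $\phi(Y)\,\ind{A} = \phi^{+}(Y)\,\ind{A} - \phi^{-}(Y)\,\ind{A}$ $\P$-a.s., and then, all four of $\E[\phi^{\pm}(Y)\,\ind{A}]$ and $\E[\Phi^{\pm}(X,Y)\,\ind{A}]$ being finite, conclude
\begin{equation*}
\E[\phi(Y)\,\ind{A}] = \E[\phi^{+}(Y)\,\ind{A}] - \E[\phi^{-}(Y)\,\ind{A}] = \E[\Phi^{+}(X,Y)\,\ind{A}] - \E[\Phi^{-}(X,Y)\,\ind{A}] = \E[\Phi(X,Y)\,\ind{A}].
\end{equation*}
The only genuinely delicate point, and the one that demands the most care, is the bookkeeping around the null set $\mathbb{Y}\setminus E$ (null under the law of $Y$): at each step one must verify that the relevant finiteness hypotheses hold before applying linearity of the integral, so that no $\infty - \infty$ ambiguity arises, and one must confirm that the arbitrary value $c$ assigned to $\phi$ on $\mathbb{Y}\setminus E$ never enters the almost-sure or integral identities, which holds precisely because $Y$ avoids that set almost surely.
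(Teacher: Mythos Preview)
Your proposal is correct and follows essentially the same route as the paper: decompose $\Phi$ into positive and negative parts, apply Lemma~\ref{condexp_and_independence-nonneg} to $|\Phi|$, $\Phi^{+}$, and $\Phi^{-}$, and handle the exceptional set $E^{c}$ via the a.s.\ inclusion $\{Y\in E\}$. The only cosmetic difference is that the paper truncates at the level of $\mathbb{X}\times\mathbb{Y}$ (defining $\Psi_k=\Phi_k\,\ind{\mathbb{X}\times B}$ and then taking expectations) whereas you truncate after integrating (forming $\phi^{\pm}\ind{E}$); since $\psi_k=\phi^{\pm}\ind{E}$, the two are equivalent.
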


\begin{proof}[Proof of Corollary \ref{condexp_and_independence-integrable}]
Throughout this proof let $\Phi_k \colon \mathbb{X} \times \mathbb{Y} \to [0,\infty)$, $k\in \{1,2\}$, be the functions which satisfy for all $k\in \{1,2\}$, $x \in \mathbb{X}$, $y \in \mathbb{Y}$ that
\begin{equation}
\label{def-Phi-k}
 \Phi_k(x,y) = \max\!\big\{(-1)^{k+1}\Phi(x,y),0\big\}, 
\end{equation}
let $B\subseteq \mathbb{Y}$ be the set given by
\begin{equation}
\label{2.9-setB}
B=\big\{y \in \mathbb{Y}\colon \EXP{\abs{\Phi(X,y)}}<\infty\big\},
\end{equation}
let $\mu: \mathcal{Y} \to [0,1]$ be the measure which satisfies for all $E \in \mathcal{Y}$ that
\begin{equation}
\mu(E)=\P(Y^{-1}(E)) =\P(Y \in E),
\end{equation}
let $\Psi_k\colon\mathbb X\times\mathbb Y\to[0,\infty)$, $k\in \{1,2\}$,
be the functions which satisfy for all $k\in \{1,2\}$, $x\in\mathbb X$, $y\in\mathbb Y$ that
\begin{equation}
\label{2.9a}
\Psi_k(x,y)=
\begin{cases}
\Phi_k(x,y) & : y\in B\\
0 & : y\in\mathbb Y\setminus B,\\
\end{cases}
\end{equation}
and let $\psi_k\colon\mathbb Y\to[0,\infty)$, $k\in \{1,2\}$, 
be the functions which satisfy for all $k\in \{1,2\}$, $y\in \mathbb Y$ that
\begin{equation}
\label{2.9c}
\psi_k(y)=\E[\Psi_k(X,y)].
\end{equation}
Observe that the hypothesis that the function $\Phi \colon \mathbb{X}\times \mathbb{Y} \to \R$ is $(\mathcal{X}\otimes \mathcal{Y}) / \mathcal{B}(\R)$-measurable and the hypothesis that the function $X \colon \Omega \to \mathbb{X}$ is $\mathcal{F}/\mathcal{X}$-measurable assure that the function
\begin{equation}
\Omega\times\mathbb{Y}\ni(\omega,y) \mapsto \abs{\Phi(X(\omega),y)} \in [0,\infty)
\end{equation}
is $(\mathcal{F}\otimes \mathcal{Y}) / \mathcal{B}([0,\infty))$-measurable. Fubini's theorem (cf., e.g.,   Klenke \cite[(14.6) in Theorem 14.16]{Klenke})  hence proves that
\begin{equation}
\label{B-meas}
B=\big\{y \in \mathbb{Y}\colon \EXP{\abs{\Phi(X,y)}}<\infty\big\} \in \mathcal{Y}.
\end{equation}
This establishes item (\ref{condexp_and_independence-integrable:item0}). In addition, observe that for all $y\in \mathbb{Y}$ it holds that
\begin{equation}
\label{condexp_and_independence-integrable:eq1}
\phi(y)=
\begin{cases}
\EXP{\Phi(X,y)} & \colon y\in B\\
c & \colon y \in \mathbb{Y}\!\setminus\!B.
\end{cases}
\end{equation}
Next observe that the hypothesis that the function
$\Phi:\mathbb X\times\mathbb Y\to\R$ is $(\mathcal X\otimes\mathcal Y)/\mathcal B(\R)$-measurable
and 
the fact that $B\in\mathcal Y$ 
ensure that the functions 
$\Psi_k\colon \mathbb X\times\mathbb Y\to[0,\infty)$, $k\in \{1,2\}$, 
are $(\mathcal X\otimes\mathcal Y)/\mathcal B([0,\infty))$-measurable.
Moreover, note that \eqref{2.9-setB} implies that for all $k\in \{1,2\}$, $y\in B$ it holds that
\begin{equation}
\label{2.9d}
\E[\Psi_k(X,y)]=\E[\abs{\Psi_k(X,y)}]=\E[\Phi_k(X,y)]\leq\E[\abs{\Phi(X,y)}]<\infty.
\end{equation}
Hence,  we obtain that for all $y\in B$ it holds that
\begin{equation}
\label{cor29:new1}
\begin{split}
\psi_1(y)-\psi_2(y)+c\ind{\mathbb Y\setminus B}(y)
&=\psi_1(y) -\psi_2(y)\\
&=\E[\Psi_1(X,y)]-\E[\Psi_2(X,y)]\\
&=\E[\Psi_1(X,y)-\Psi_2(X,y)]\\
&=\E[\Phi_1(X,y)-\Phi_2(X,y)]\\
&=\E[\Phi(X,y)] =\phi(y).
\end{split}
\end{equation}
Furthermore, observe that \eqref{2.9a}, \eqref{2.9c}, and \eqref{condexp_and_independence-integrable:eq1} ensure that for all $y\in\mathbb Y\setminus B$ it holds that
\begin{equation}
\psi_1(y)-\psi_2(y)+c\ind{\mathbb Y\setminus B}(y)=c\ind{\mathbb Y\setminus B}(y) =c=\phi(y).
\label{cor29:new2}
\end{equation}
Moreover, note that
Fubini's theorem (cf., e.g., Klenke \cite[(14.6) in Theorem 14.16]{Klenke}), the fact that the function $X\colon\Omega\to \mathbb X$
is $\mathcal F/\mathcal X$-measurable, and the fact that the functions
$\Psi_k\colon \mathbb X\times\mathbb Y\to[0,\infty)$, $k\in\{1,2\}$,
are $(\mathcal X\otimes\mathcal Y)/\mathcal B([0,\infty))$-measurable
demonstrate that the functions
$\psi_k\colon\mathbb Y\to[0,\infty)$, $k\in\{1,2\}$, 
are $\mathcal Y/\mathcal B([0,\infty))$-measurable.
Combining this 
and the fact that $B\in\mathcal Y$ with
 \eqref{cor29:new1} and 
\eqref{cor29:new2} demonstrates that
the function $\phi$ is $\mathcal Y/\mathcal B(\R)$-measurable.
This establishes item (\ref{condexp_and_independence-integrable:item1}).
 Next observe that 
  Lemma~\ref{condexp_and_independence-nonneg}, (\ref{condexp_and_independence-integrable:eq1}), and the hypothesis that $\Exp{|\Phi(X,Y)|}<\infty$ ensure that
\begin{equation}
\begin{split}
\EXP{\abs{\phi(Y)}}
\leq \abs{c} +\EXPP{\big(\EXP{\abs{\Phi(X,y)}} \big)\big|_{y = Y}} =
\abs{c} + \EXP{\abs{\Phi(X,Y)}} < \infty.
\end{split}
\end{equation}
This establishes item (\ref{condexp_and_independence-integrable:item2}).
%
Moreover, note  that the hypothesis that $\E[\abs{\Phi(X,Y)}]<\infty$ and 
Lemma~\ref{condexp_and_independence-nonneg} assure that
\begin{equation}
\int_{\mathbb Y} \E[\abs{\Phi(X,y)}]\,\mu(\mathrm{d}y)
=\E\!\left[\big(\E[\abs{\Phi(X,y)}]\big)\big|_{y=Y}\right]
=\E[\abs{\Phi(X,Y)}]<\infty.
\end{equation}
Combining this with \eqref{2.9-setB} shows that
\begin{equation}
\mu(B)=\mu(\{y\in\mathbb Y:\E[\abs{\Phi(X,y)}]<\infty\})=1.
\label{cor29:new3}
\end{equation}
Hence, we obtain that
\begin{equation}
\label{69b}
\P(Y \in B) 
=1.
\end{equation}
This establishes item~\eqref{condexp_and_independence-integrable:item0.5}. It thus remains to prove item~\eqref{condexp_and_independence-integrable:item3}. For this observe that \eqref{def-Phi-k}, \eqref{2.9a}, \eqref{69b}, and the fact that 
$\Exp{\Psi_1(X,Y) + \Psi_2(X,Y)}\leq \Exp{\Phi_1(X,Y) + \Phi_2(X,Y)}$ $= \Exp{\abs{\Phi(X,Y)}}<\infty$ ensure that for all $A \in \mathcal{G}$ it holds that
\begin{equation}
\begin{split}
\EXP{\Phi(X,Y) \ind{A}} 
&= \EXP{(\Phi_1(X,Y)-\Phi_2(X,Y)) \ind{A}}\\
&= \EXP{\Phi_1(X,Y) \ind{A}} -\EXP{\Phi_2(X,Y) \ind{A}}\\
&= \EXP{\Phi_1(X,Y) \ind{B}(Y)\ind{A}} -\EXP{\Phi_2(X,Y) \ind{B}(Y) \ind{A}}\\
&= \EXP{\Psi_1(X,Y) \ind{A}} -\EXP{\Psi_2(X,Y) \ind{A}}.\\
\end{split}
\end{equation}
Combining the fact that $\Exp{\Psi_1(X,Y) + \Psi_2(X,Y)}
\leq  \E[\Phi_1(X,Y) + \Phi_2(X,Y)]
= \Exp{\abs{\Phi(X,Y)} }<\infty$ and Lemma~\ref{condexp_and_independence-nonneg} with \eqref{2.9d}, \eqref{cor29:new2}, and \eqref{69b} demonstrate that for all $A\in \mathcal{G}$ it holds that
\begin{equation}
\begin{split}
\EXP{\Phi(X,Y) \ind{A}} 
&= \EXPP{\big(\EXP{\Psi_1(X,y)} \big)\big|_{y = Y}\ind{A}} - \EXPP{\big(\EXP{\Psi_2(X,y)} \big)\big|_{y = Y}\ind{A}}\\
&= \EXP{\psi_1(Y) \ind{A}} - \EXP{\psi_2(Y) \ind{A}} \\
&= \EXP{\psi_1(Y) \ind{A}} - \EXP{\psi_2(Y) \ind{A}} 
+ c\,\P(\{Y \in \mathbb{Y}\setminus B \}\cap A)\\
&= \EXP{(\psi_1(Y)-\psi_2(Y)) \ind{A}} + c \, \EXP{\ind{\mathbb{Y}\setminus B}(Y) \ind{A}}\\
&= \EXP{(\psi_1(Y)-\psi_2(Y)+c \ind{\mathbb{Y}\setminus B}(Y)) \ind{A}} \\
&=\EXP{\phi(Y) \ind{A}}.
\end{split}
\end{equation}
This establishes item (\ref{condexp_and_independence-integrable:item3}). The proof of Corollary \ref{condexp_and_independence-integrable} is thus completed.
\end{proof}

\subsection{On convergence properties of a specific class of sequences}
\label{subsection:decay}
In this subsection we present in Lemma~\ref{decay_fraction} an elementary  auxiliary result on the convergence of a specific class of sequences. Lemma~\ref{decay_fraction} is used in the proof of Lemma~\ref{example_learningrate} in Subsection~\ref{section:applications1} below.
\begin{lemma}
\label{decay_fraction}
Let $\beta, \delta \in (0,\infty)$ with $\beta < \delta + 1$. Then 
\begin{equation}
\limsup_{n \to \infty} \left[\frac{\big|n^{-\delta}- (n-1)^{-\delta}\big|}{n^{-\beta}}\right] = 0.
\end{equation}
\end{lemma}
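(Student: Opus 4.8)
The plan is to estimate the numerator $\big|n^{-\delta} - (n-1)^{-\delta}\big|$ by means of the mean value theorem applied to the function $(0,\infty) \ni x \mapsto x^{-\delta} \in (0,\infty)$. For $n \in \{2,3,\dots\}$ this function is continuously differentiable on $[n-1,n]$ with derivative $x \mapsto -\delta x^{-\delta-1}$, and hence there exists $\xi_n \in (n-1,n)$ with $n^{-\delta} - (n-1)^{-\delta} = -\delta\, \xi_n^{-\delta-1}$. Since $\xi_n \geq n-1$ we obtain $\big|n^{-\delta} - (n-1)^{-\delta}\big| \leq \delta (n-1)^{-\delta-1}$ for all $n \in \{2,3,\dots\}$.

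Next I would plug this into the quotient to get, for all $n \in \{2,3,\dots\}$,
\begin{equation}
\frac{\big|n^{-\delta} - (n-1)^{-\delta}\big|}{n^{-\beta}} \leq \delta\, \frac{(n-1)^{-\delta-1}}{n^{-\beta}} = \delta\, n^{\beta}(n-1)^{-\delta-1}.
\end{equation}
To control the right-hand side I would compare $n$ and $n-1$: for $n \in \{2,3,\dots\}$ we have $n \leq 2(n-1)$, so $n^{\beta} \leq 2^{\beta}(n-1)^{\beta}$, and therefore
\begin{equation}
\frac{\big|n^{-\delta} - (n-1)^{-\delta}\big|}{n^{-\beta}} \leq \delta\, 2^{\beta} (n-1)^{\beta - \delta - 1}.
\end{equation}
The hypothesis $\beta < \delta + 1$ gives $\beta - \delta - 1 < 0$, so $(n-1)^{\beta-\delta-1} \to 0$ as $n \to \infty$, and the claim follows by taking $\limsup_{n\to\infty}$ together with the fact that the quotient is nonnegative.

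The only mildly delicate point is the bookkeeping around the index shift (the quotient is only defined for $n \geq 2$, but the $\limsup$ is unaffected by finitely many terms) and making sure the exponent inequality $n \leq 2(n-1)$ is valid precisely for $n \geq 2$; neither is a real obstacle. An alternative to the mean value theorem would be to write $n^{-\delta} - (n-1)^{-\delta} = \int_{n-1}^{n} (-\delta) x^{-\delta-1}\,\mathrm{d}x$ and bound the integrand by $\delta(n-1)^{-\delta-1}$, which yields the same estimate; I would use whichever is more convenient, but I expect no substantive difficulty in this lemma.
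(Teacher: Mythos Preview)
Your proof is correct and follows essentially the same approach as the paper: the paper uses the fundamental theorem of calculus (your mentioned alternative) to obtain the same bound $\delta\, n^{\beta}(n-1)^{-\delta-1}$, and then handles the limit by the index shift $n\mapsto n+1$ rather than your inequality $n\leq 2(n-1)$, but this is a cosmetic difference.
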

\begin{proof}[Proof of Lemma \ref{decay_fraction}]
First, note that the fundamental theorem of calculus ensures that for all $n \in \{2,3,\ldots \}$ it holds that
\begin{equation}
\label{decay_fraction:eq1}
\begin{split}
0 &\geq \frac{n^{-\delta}- (n-1)^{-\delta}}{n^{-\beta}} = n^\beta\left(\frac{1}{n^\delta}-\frac{1}{(n-1)^\delta}\right)
=n^\beta\big(\left[ x^{-\delta}\right]_{x=n-1}^{x=n}\big)\\
 &= n^\beta (-\delta)\left[\int_{n-1}^n\frac{1}{x^{\delta+1}}\, \mathrm{d}x\right] 
\geq -\frac{\delta n^\beta}{(n-1)^{\delta+1}}.
\end{split}
\end{equation}
The assumption that $\beta < \delta + 1$ therefore implies that
\begin{equation}
\begin{split}
0 &\leq \limsup_{n \to \infty} \left[\frac{\big|n^{-\delta}- (n-1)^{-\delta}\big|}{n^{-\beta}} \right]
\leq \limsup_{n \to \infty} \left[\frac{\delta n^\beta}{(n-1)^{\delta+1}}\right]\\
&= \limsup_{n \to \infty} \left[\frac{\delta (n+1)^\beta}{n^{\delta+1}}\right]
= \limsup_{n \to \infty} \left[\frac{\delta (1+\nicefrac{1}{n})^\beta}{n^{\delta+1-\beta}}\right]
 = 0.
\end{split}
\end{equation}
This completes the proof of Lemma \ref{decay_fraction}.
\end{proof}

\subsection[Stability properties of the Euler scheme for ODEs]{On stability properties of the Euler scheme for ordinary differential equations}
\label{subsection:conditions_on_f}
In this subsection we study in the elementary observations in Lemmas~\ref{fcond}--\ref{le:EoP-l2} and Proposition~\ref{Equivalence_of_properties} below necessary and sufficient conditions which ensure that the Euler scheme admits a suitable Lyapunov-stability-type property (cf. Lemma~\ref{le:Euler} below). Similar results can be found, e.g., in Dereich \& M\"uller-Gronbach \cite[Remark~2.1]{DereichGronbach2017} and the references mentioned therein. Lemma~\ref{fcond} is employed in our strong error analysis in Proposition~\ref{L2_convergence_explicit} in Subsection~\ref{subsection:L2} and Proposition~\ref{Lp_convergence} in Subsection~\ref{subsection:Lp} below.
\begin{lemma}[Lyapunov-stability for the Euler scheme]
\label{le:Euler}
Let $d \in \N$, $\vartheta \in \R^d$, $c, \varrho \in (0,\infty)$, let 
$\left \| \cdot \right \| \! \colon \R^d \to [0,\infty)$ be a norm, let $g\colon \R^d \to \R^d$ and $V\colon \R^d \to \R$ be functions which satisfy of all $\theta \in \R^d$ that
\begin{equation}
V(\theta)= \norm{\theta-\vartheta}^2,
\end{equation}
and let $(\Theta^{r,\theta}_n)_{n\in \N_0} \colon \N_0 \to \R^d$, $r\in [0,\infty)$, $\theta \in \R^d$, be the functions which satisfy for all $r\in [0,\infty)$, $\theta \in \R^d$, $n \in \N$ that
\begin{equation}
\Theta_0^{r,\theta}=\theta  \qandq \Theta_{n}^{r,\theta}=\Theta_{n-1}^{r,\theta} + r g(\Theta_{n-1}^{r,\theta}).
\end{equation}
Then the following three statements are equivalent:
\begin{enumerate}[(i)]
\item\label{le:Euler-item1} It holds for all $r \in [0,\varrho]$, $\theta \in \R^d$, $n\in \N$ that
\begin{equation}
V(\Theta_n^{r,\theta})
\leq (1-cr) V(\Theta_{n-1}^{r,\theta}) 
\leq e^{-cr} \,V(\Theta_{n-1}^{r,\theta}).
\end{equation}
\item\label{le:Euler-item2} It holds for all $r \in [0,\varrho]$, $\theta \in \R^d$ that
\begin{equation}
V(\Theta_1^{r,\theta})\leq (1-cr) V(\theta) 
\leq e^{-cr} \,V(\theta).
\end{equation}
\item\label{le:Euler-item3} It holds for all $r \in [0,\varrho]$, $\theta \in \R^d$ that 
\begin{equation}
\norm{\theta + r g(\theta)-\vartheta}^2 \leq (1-cr) \norm{\theta-\vartheta}^2.
\end{equation}
\end{enumerate}
\end{lemma}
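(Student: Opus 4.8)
The plan is to prove the equivalence of the three statements via the cycle $(\ref{le:Euler-item1}) \Rightarrow (\ref{le:Euler-item2}) \Rightarrow (\ref{le:Euler-item3}) \Rightarrow (\ref{le:Euler-item1})$, noting that the elementary inequality $1 - cr \le e^{-cr}$ holds for all $r \in [0,\infty)$ (since $e^x \ge 1 + x$ for all $x \in \R$), so in each statement the second inequality is automatic once the first is established; thus it suffices to track only the inequalities involving $(1-cr)$.

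First I would observe that $(\ref{le:Euler-item1}) \Rightarrow (\ref{le:Euler-item2})$ is immediate: specializing $n = 1$ in item~(\ref{le:Euler-item1}) and using $\Theta_0^{r,\theta} = \theta$, hence $V(\Theta_0^{r,\theta}) = V(\theta)$, yields exactly item~(\ref{le:Euler-item2}). Next, for $(\ref{le:Euler-item2}) \Rightarrow (\ref{le:Euler-item3})$, I would unwind the definitions: for $r \in [0,\varrho]$ and $\theta \in \R^d$ we have $\Theta_1^{r,\theta} = \Theta_0^{r,\theta} + r g(\Theta_0^{r,\theta}) = \theta + r g(\theta)$, so $V(\Theta_1^{r,\theta}) = \norm{\theta + r g(\theta) - \vartheta}^2$ and $V(\theta) = \norm{\theta - \vartheta}^2$; substituting these into item~(\ref{le:Euler-item2}) gives precisely the inequality in item~(\ref{le:Euler-item3}).

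The main step is $(\ref{le:Euler-item3}) \Rightarrow (\ref{le:Euler-item1})$, which I would handle by induction on $n \in \N$ for fixed $r \in [0,\varrho]$ and $\theta \in \R^d$. The base case $n = 1$ again reduces to the identity $\Theta_1^{r,\theta} = \theta + r g(\theta)$, so item~(\ref{le:Euler-item3}) directly gives $V(\Theta_1^{r,\theta}) \le (1-cr) V(\Theta_0^{r,\theta})$. For the induction step, the key point is that item~(\ref{le:Euler-item3}) is a statement quantified over \emph{all} $\theta \in \R^d$, so I may apply it with $\theta$ replaced by $\Theta_{n-1}^{r,\theta}$: this yields
\begin{equation}
V(\Theta_n^{r,\theta}) = \norm{\Theta_{n-1}^{r,\theta} + r g(\Theta_{n-1}^{r,\theta}) - \vartheta}^2 \le (1-cr)\norm{\Theta_{n-1}^{r,\theta} - \vartheta}^2 = (1-cr) V(\Theta_{n-1}^{r,\theta}),
\end{equation}
using the recursion $\Theta_n^{r,\theta} = \Theta_{n-1}^{r,\theta} + r g(\Theta_{n-1}^{r,\theta})$ in the first equality and the definition of $V$ in the last. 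This closes the induction and establishes item~(\ref{le:Euler-item1}), completing the cycle.

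I do not anticipate a genuine obstacle here; the only subtlety worth flagging is the universal quantifier over $\theta$ in item~(\ref{le:Euler-item3}), which is exactly what makes the one-step estimate bootstrap into the multi-step estimate — without it, the implication $(\ref{le:Euler-item3}) \Rightarrow (\ref{le:Euler-item1})$ would fail. Everything else is bookkeeping with the definitions of $V$ and of the Euler iteration, together with the scalar inequality $1 - cr \le e^{-cr}$.
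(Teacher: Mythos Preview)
Your proof is correct and matches the paper, which simply declares the result obvious. One minor remark: the step $(\ref{le:Euler-item3}) \Rightarrow (\ref{le:Euler-item1})$ does not actually require induction, since item~(\ref{le:Euler-item1}) is itself a one-step estimate (relating $V(\Theta_n^{r,\theta})$ to $V(\Theta_{n-1}^{r,\theta})$, not to $V(\Theta_0^{r,\theta})$), so applying (\ref{le:Euler-item3}) with $\theta$ replaced by $\Theta_{n-1}^{r,\theta}$ directly yields the claim for every $n$ without any appeal to an induction hypothesis.
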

The proof of Lemma~\ref{le:Euler} is obvious. The next result, Lemma~\ref{fcond}, provides a  condition (see \eqref{fcond:eq1} in Lemma~\ref{fcond} below) which is sufficient to ensure that the stability property in item~\eqref{le:Euler-item3} in Lemma~\ref{le:Euler} holds (see item~\eqref{fcond:item4} in Lemma~\ref{fcond} below).
\begin{lemma}
\label{fcond}
Let $d \in \N$, $\vartheta \in \R^d$, $c_1, c_2 \in \left(0,\infty \right)$, let 
$\lll \cdot,\cdot \rrr \colon \R^d \times \R^d \to \R$
 be a scalar product, let 
 $\left \| \cdot \right \| \! \colon \R^d \to [0,\infty)$ 
 be the function which satisfies for all $\theta \in \R^d$ that $\norm{\theta} = \sqrt{\lll \theta,\theta \rrr}$, 
and let 
$
g \colon \R^d \to \R^d
$ 
be a function which satisfies for all $\theta \in \R^d$ that
\begin{equation}
\label{fcond:eq1}
\lll \theta - \vartheta, g(\theta)  \rrr  \leq - \max\!\left\{c_1 \norm{\theta - \vartheta}^2, c_2 \norm{g(\theta)}^2 \right\}.
\end{equation}
Then 
\begin{enumerate}[(i)]
\item \label{fcond:item1} it holds that
\begin{equation}
\{\theta \in \R^d \colon g(\theta) = 0 \} = \{\vartheta \},
\end{equation}
\item \label{fcond:item2a} it holds that $c_1c_2 \leq 1$,
\item \label{fcond:item2}
it holds for all $\theta \in \R^d$ that 
\begin{equation}
c_1 \norm{\theta-\vartheta} \leq \norm{g(\theta)} \leq \tfrac{1}{c_2}\norm{\theta-\vartheta},
\end{equation} 

\item \label{fcond:item3}
it holds for all $\theta \in \R^d$, $r \in [0,2c_2] $ that 
\begin{equation}
\norm{ \theta + r g(\theta) - \vartheta}^2 \leq \big( 1 - c_1r (2 - \tfrac{r}{c_2}) \big)\norm{\theta - \vartheta}^2,
\end{equation}
and 
\item\label{fcond:item4}
it holds for all $\theta \in \R^d$, $r \in [0,c_2] $ that 
\begin{equation}
\norm{ \theta + r g(\theta) - \vartheta}^2 \leq \left( 1 - c_1 r   \right)\norm{\theta - \vartheta}^2.
\end{equation}
\end{enumerate}
\end{lemma}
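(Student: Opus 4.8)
The plan is to extract everything from the single hypothesis \eqref{fcond:eq1}, which simultaneously controls the inner product $\lll\theta-\vartheta,g(\theta)\rrr$ from above by $-c_1\norm{\theta-\vartheta}^2$ and by $-c_2\norm{g(\theta)}^2$. First I would handle item~\eqref{fcond:item1}: if $g(\theta)=0$ then \eqref{fcond:eq1} forces $-c_1\norm{\theta-\vartheta}^2\geq 0$, hence $\theta=\vartheta$; conversely, plugging $\theta=\vartheta$ into \eqref{fcond:eq1} gives $0\leq -\max\{0,c_2\norm{g(\vartheta)}^2\}\leq 0$, so $g(\vartheta)=0$. Next, for item~\eqref{fcond:item2}, I would apply the Cauchy--Schwarz inequality to the left-hand side of \eqref{fcond:eq1}: on the one hand $c_1\norm{\theta-\vartheta}^2\leq -\lll\theta-\vartheta,g(\theta)\rrr\leq\norm{\theta-\vartheta}\norm{g(\theta)}$, which after dividing by $\norm{\theta-\vartheta}$ (the case $\theta=\vartheta$ being trivial by item~\eqref{fcond:item1}) yields $c_1\norm{\theta-\vartheta}\leq\norm{g(\theta)}$; on the other hand $c_2\norm{g(\theta)}^2\leq\norm{\theta-\vartheta}\norm{g(\theta)}$, which gives $\norm{g(\theta)}\leq\tfrac{1}{c_2}\norm{\theta-\vartheta}$. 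Item~\eqref{fcond:item2a} then follows immediately by chaining these two bounds: $c_1\norm{\theta-\vartheta}\leq\norm{g(\theta)}\leq\tfrac1{c_2}\norm{\theta-\vartheta}$ for any $\theta\neq\vartheta$ forces $c_1\leq\tfrac1{c_2}$, i.e.\ $c_1c_2\leq1$.

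For item~\eqref{fcond:item3}, the key computation is to expand the square using bilinearity: for $\theta\in\R^d$ and $r\geq0$,
\begin{equation}
\norm{\theta+rg(\theta)-\vartheta}^2 = \norm{\theta-\vartheta}^2 + 2r\lll\theta-\vartheta,g(\theta)\rrr + r^2\norm{g(\theta)}^2.
\end{equation}
Now I would bound the middle term using the first part of \eqref{fcond:eq1}, namely $\lll\theta-\vartheta,g(\theta)\rrr\leq -c_1\norm{\theta-\vartheta}^2$, but — and this is the point of having the $c_2$-bound available — I would also use \eqref{fcond:eq1} in the form $\norm{g(\theta)}^2\leq -\tfrac{1}{c_2}\lll\theta-\vartheta,g(\theta)\rrr$ to absorb the $r^2$-term back into the (negative) cross term. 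Concretely, for $r\in[0,2c_2]$ one has $r^2\norm{g(\theta)}^2\leq -\tfrac{r^2}{c_2}\lll\theta-\vartheta,g(\theta)\rrr$, so
\begin{equation}
\norm{\theta+rg(\theta)-\vartheta}^2 \leq \norm{\theta-\vartheta}^2 + \big(2r-\tfrac{r^2}{c_2}\big)\lll\theta-\vartheta,g(\theta)\rrr,
\end{equation}
and since $2r-\tfrac{r^2}{c_2}=r(2-\tfrac{r}{c_2})\geq0$ for $r\in[0,2c_2]$, I can now safely apply $\lll\theta-\vartheta,g(\theta)\rrr\leq -c_1\norm{\theta-\vartheta}^2$ to this nonnegative coefficient, obtaining exactly $\big(1-c_1r(2-\tfrac{r}{c_2})\big)\norm{\theta-\vartheta}^2$. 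Finally, item~\eqref{fcond:item4} is a special case: for $r\in[0,c_2]$ we have $\tfrac{r}{c_2}\leq1$, hence $2-\tfrac{r}{c_2}\geq1$, so $c_1r(2-\tfrac{r}{c_2})\geq c_1r$ and thus $1-c_1r(2-\tfrac{r}{c_2})\leq 1-c_1r$; combining with item~\eqref{fcond:item3} gives the claim.

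The only subtlety — the step that needs the most care rather than being "the main obstacle" — is the sign bookkeeping in item~\eqref{fcond:item3}: one must keep $\lll\theta-\vartheta,g(\theta)\rrr$ as a (negative) quantity and only replace it by its upper bound $-c_1\norm{\theta-\vartheta}^2$ after the coefficient $r(2-\tfrac{r}{c_2})$ has been shown to be nonnegative, since an upper bound multiplied by a possibly-negative number would flip the inequality. Handling the degenerate case $\theta=\vartheta$ (where division by $\norm{\theta-\vartheta}$ is illegal) is routine by invoking item~\eqref{fcond:item1}. Everything else is a direct expansion and substitution.
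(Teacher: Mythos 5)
Your proposal is correct and follows essentially the same route as the paper's proof: Cauchy--Schwarz applied to \eqref{fcond:eq1} for items~\eqref{fcond:item1}--\eqref{fcond:item2}, and for item~\eqref{fcond:item3} the expansion of the square with the $r^2\norm{g(\theta)}^2$ term absorbed via $\norm{g(\theta)}^2\leq-\tfrac{1}{c_2}\lll\theta-\vartheta,g(\theta)\rrr$ before bounding the nonnegative-coefficient cross term by $-c_1\norm{\theta-\vartheta}^2$. The only cosmetic difference is that you obtain item~\eqref{fcond:item1} directly from \eqref{fcond:eq1} rather than from the two-sided bound in item~\eqref{fcond:item2}, which changes nothing of substance.
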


\begin{proof}[Proof of Lemma \ref{fcond}] 
First, note that (\ref{fcond:eq1}) (with $\theta = \vartheta$ in the notation of (\ref{fcond:eq1})) implies that 
\begin{equation}
0\leq -\max\{0,c_2\norm{g(\vartheta)}^2\}\leq -c_2\norm{g(\vartheta)}^2.
\end{equation}
Hence, we obtain that $0\geq \norm{g(\vartheta)}^2$.
 This assures that $g(\vartheta) = 0$. Next observe that (\ref{fcond:eq1}) and the Cauchy-Schwarz inequality ensure that for all $\theta \in \R^d$ it holds that
\begin{equation}
\label{fcond:eq3}
c_2 \norm{g(\theta)}^2 
\leq \max\big\{ c_1 \norm{\theta-\vartheta}^2, c_2 \norm{g(\vartheta)}^2 \big\}
\leq -\lll \theta - \vartheta, g(\theta) \rrr \leq \norm{\theta - \vartheta}\norm{g(\theta)}.
\end{equation}
Therefore, we obtain  that for all $\theta \in \R^d$ it holds that
\begin{equation}
c_1\norm{\theta-\vartheta}^2 \leq -\lll \theta-\vartheta, g(\theta) \rrr \leq \norm{\theta-\vartheta} \norm{g(\theta)}.
\end{equation}
Combining this with (\ref{fcond:eq3}) and the fact that $g(\vartheta) = 0$ proves that for all $ \theta \in \R^d$ it holds that
\begin{equation}\label{fcond:eq4}
c_1 \norm{\theta-\vartheta} \leq \norm{g(\theta)} \leq \tfrac{1}{c_2}\norm{\theta-\vartheta}.
\end{equation}
Therefore, we obtain that for all $\theta \in \R^d$ it holds that $c_1 c_2 \norm{\theta - \vartheta} \leq \norm{\theta - \vartheta}$.  This demonstrates that $c_1 c_2 \leq 1$. Combining (\ref{fcond:eq4}) and the fact that $g(\vartheta) = 0$ hence establishes items (\ref{fcond:item1})--(\ref{fcond:item2}).
Next observe that for all $r \in [0,2c_2]$ it holds that $r(2 - \frac{r}{c_2}) \geq 0$. This and (\ref{fcond:eq3}) imply that for all $\theta \in \R^d$, $r \in [0,2c_2]$ it holds that
\begin{equation}
\begin{split}
\norm{\theta + r g(\theta)-\vartheta}^2 
&= \norm{\theta - \vartheta}^2 + 2r\lll \theta - \vartheta , g(\theta) \rrr + r ^2 \norm{g(\theta)}^2 \\
&\leq \norm{\theta - \vartheta}^2 + 2r\lll \theta - \vartheta , g(\theta) \rrr  -\tfrac{r^2}{c_2} \lll \theta - \vartheta , g(\theta) \rrr \\
&= \norm{\theta - \vartheta}^2 + r\left(2 - \frac{r}{c_2}\right) \lll \theta - \vartheta , g(\theta) \rrr \\
&\leq \norm{\theta - \vartheta}^2 -  r \left( 2 -  \frac{r}{c_2} \right) c_1 \norm{\theta - \vartheta}^2  \\
&=  \left( 1 - c_1r  \left( 2 - \frac{r}{c_2}   \right) \right) \norm{\theta - \vartheta}^2.
\end{split}
\end{equation}
This proves item (\ref{fcond:item3}). Moreover, note that item (\ref{fcond:item3}) and the fact that for all $r \in [0,c_2]$ it holds that $2-\frac{r}{c_2} \geq 1$ establish item (\ref{fcond:item4}).
The proof of Lemma \ref{fcond} is thus completed.
\end{proof}


\begin{lemma}[On the monotonicity of a property]
\label{monotonicity}
Let $d \in \N$, $\vartheta \in \R^d$, $c,\varrho \in (0,\infty)$, let 
$\lll \cdot,\cdot \rrr \colon \R^d \times \R^d \to \R$
 be a scalar product, let 
 $\left \| \cdot \right \| \! \colon \R^d \to [0,\infty)$ 
 be the function which satisfies for all $\theta \in \R^d$ that $\norm{\theta} = \sqrt{\lll \theta, \theta \rrr}$, and let $g \colon \R^d \to \R^d$ be a function which satisfies for all $\theta \in \R^d$ that
\begin{equation}
\label{monotonicity:eq1}
\norm{ \theta + \varrho g(\theta) - \vartheta}^2 \leq ( 1 - c\varrho) \norm{\theta - \vartheta}^2.
\end{equation}
Then it holds for all $\theta \in \R^d$, $r \in [0,\varrho]$ that
\begin{equation}
\norm{ \theta + r g(\theta) - \vartheta}^2 \leq ( 1 - cr) \norm{\theta - \vartheta}^2.
\end{equation}
\end{lemma}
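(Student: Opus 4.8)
The plan is to reduce both the hypothesis \eqref{monotonicity:eq1} and the desired conclusion to inequalities involving only the scalar product, by expanding the squared norms. Abbreviating $u=\theta-\vartheta$ and $v=g(\theta)$, I would first observe that \eqref{monotonicity:eq1} reads
\[
\norm{u}^2 + 2\varrho\,\lll u, v\rrr + \varrho^2\norm{v}^2 \;\leq\; (1-c\varrho)\norm{u}^2 ,
\]
so that cancelling $\norm{u}^2$ and dividing by $\varrho\in(0,\infty)$ yields the equivalent estimate
\[
2\,\lll u, v\rrr + \varrho\norm{v}^2 \;\leq\; -\,c\norm{u}^2 .
\]
In the same way, for $r\in(0,\varrho]$ the target inequality $\norm{\theta+rg(\theta)-\vartheta}^2\leq(1-cr)\norm{\theta-\vartheta}^2$ is, after the identical manipulation, equivalent to $2\,\lll u, v\rrr + r\norm{v}^2 \leq -\,c\norm{u}^2$.

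The key step is then merely the observation that, since $0\leq r\leq\varrho$ and $\norm{v}^2\geq 0$, one has $2\,\lll u,v\rrr + r\norm{v}^2 \leq 2\,\lll u,v\rrr + \varrho\norm{v}^2 \leq -\,c\norm{u}^2$, which is exactly the required bound; the remaining case $r=0$ is immediate, as both sides of the claimed inequality then equal $\norm{\theta-\vartheta}^2$. Reassembling the squared norm on the left-hand side completes the argument.

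I do not expect any genuine obstacle: once the squared norms are expanded, the statement amounts to the remark that the coefficient of $\norm{g(\theta)}^2$ is nondecreasing in $r$ and that this term is nonnegative. (Alternatively, one could note that $[0,\varrho]\ni r\mapsto \norm{\theta+rg(\theta)-\vartheta}^2 + cr\norm{\theta-\vartheta}^2$ is a convex quadratic in $r$ whose values at the endpoints $r=0$ and $r=\varrho$ are both at most $\norm{\theta-\vartheta}^2$, hence it is at most $\norm{\theta-\vartheta}^2$ on all of $[0,\varrho]$; but the direct expansion above is cleaner and avoids invoking convexity.)
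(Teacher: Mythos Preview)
Your proposal is correct and follows essentially the same route as the paper: both expand the squared norms, extract the equivalent inequality $2\lll\theta-\vartheta,g(\theta)\rrr+\varrho\norm{g(\theta)}^2\leq -c\norm{\theta-\vartheta}^2$ from the hypothesis, and then use $r\leq\varrho$ together with $\norm{g(\theta)}^2\geq 0$ to obtain the claim. The only cosmetic difference is that the paper factors out $r$ rather than dividing by it, so it does not need to treat $r=0$ separately.
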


\begin{proof}[Proof of Lemma \ref{monotonicity}]
First, observe that (\ref{monotonicity:eq1}) implies that for all $\theta \in \R^d$ it holds that
\begin{equation}
\begin{split}
&\norm{\theta - \vartheta}^2 + 2 \varrho \lll \theta-\vartheta,g(\theta) \rrr + \varrho^2 \norm{g(\theta)}^2\\
&=\norm{\theta - \vartheta}^2 + 2 \lll \theta-\vartheta,\varrho  g(\theta) \rrr +  \norm{ \varrho g(\theta)}^2 \\
&=\norm{ (\theta - \vartheta) + \varrho g(\theta)}^2\\
&=\norm{ \theta + \varrho g(\theta) - \vartheta}^2\\
&\leq ( 1 - c\varrho) \norm{\theta - \vartheta}^2\\
&= \norm{\theta - \vartheta}^2 - c\varrho \norm{\theta - \vartheta}^2.
\end{split}
\end{equation}
Therefore, we obtain that for all $ \theta \in \R^d$ it holds that 
\begin{equation}
2 \lll \theta-\vartheta,g(\theta) \rrr + \varrho \norm{g(\theta)}^2 \leq -c\norm{\theta - \vartheta}^2.
\end{equation}
This ensures that for all $\theta \in \R^d$, $r \in [0,\varrho]$ it holds that 
\begin{equation}
\begin{split}
\norm{ \theta + r g(\theta) - \vartheta}^2 
&=\norm{ (\theta - \vartheta) + r g(\theta)}^2\\
&=\norm{\theta - \vartheta}^2 + 2 \lll \theta-\vartheta,r  g(\theta) \rrr +  \norm{ r g(\theta)}^2 \\
&=\norm{\theta - \vartheta}^2 + 2r \lll \theta-\vartheta, g(\theta) \rrr +  r^2\norm{g(\theta)}^2 \\
&= \norm{\theta - \vartheta}^2 + r \left(2 \lll \theta-\vartheta,g(\theta) \rrr + r \norm{g(\theta)}^2 \right) \\
&\leq \norm{\theta - \vartheta}^2 + r \left(2 \lll \theta-\vartheta,g(\theta) \rrr + \varrho \norm{g(\theta)}^2 \right) \\
& \leq \norm{\theta - \vartheta}^2 + r  (-c\norm{\theta - \vartheta}^2) \\
&= (1-cr)\norm{\theta - \vartheta}^2.
\end{split}
\end{equation}
The proof of Lemma \ref{monotonicity} is thus completed.
\end{proof}

\begin{lemma}\label{le:EoP-l1}
	Let $d \in \N$, $\vartheta \in \R^d$, $c,\varrho \in (0,\infty)$, let 
	$\lll \cdot,\cdot \rrr \colon \R^d \times \R^d \to \R$
	be a scalar product, let 
	$\left \| \cdot \right \| \! \colon \R^d \to [0,\infty)$ 
	be the function which satisfies for all $\theta \in \R^d$ that $\norm{\theta} = \sqrt{\lll \theta, \theta \rrr}$, and let $g \colon \R^d \to \R^d$ be a function which satisfies for all $\theta \in \R^d$, $r \in [0,\varrho]$ that
	\begin{equation}\label{le:Equivalence_of_properties:eq1}
	\norm{ \theta + r g(\theta) - \vartheta}^2 \leq ( 1 - cr) \norm{\theta - \vartheta}^2.
	\end{equation}
Then it holds that $g(\vartheta) = 0$ and 
\begin{equation}
\inf_{r \in (0,\infty)} \left(\sup_{\theta \in \R^d \setminus \{\vartheta\}} \left[ \frac{\norm{ \theta + r g(\theta) - \vartheta}^2}{\norm{\theta - \vartheta}^2} \right] \right) \leq 1-c\varrho< 1.
\end{equation}
\end{lemma}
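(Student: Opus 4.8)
The plan is to simply specialize the hypothesis \eqref{le:Equivalence_of_properties:eq1} to convenient choices of $\theta$ and $r$; no genuine work beyond bookkeeping is needed.

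First I would establish that $g(\vartheta)=0$. Choosing $\theta=\vartheta$ and $r=\varrho$ in \eqref{le:Equivalence_of_properties:eq1} yields $\varrho^2\norm{g(\vartheta)}^2=\norm{\vartheta+\varrho g(\vartheta)-\vartheta}^2\leq(1-c\varrho)\norm{\vartheta-\vartheta}^2=0$, and since $\varrho>0$ this forces $\norm{g(\vartheta)}=0$, i.e.\ $g(\vartheta)=0$.

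Next I would prove the displayed estimate, the point being that the single value $r=\varrho\in(0,\infty)$ already realizes the claimed bound. Indeed, \eqref{le:Equivalence_of_properties:eq1} applied with $r=\varrho$ shows that for every $\theta\in\R^d\setminus\{\vartheta\}$ it holds that $\norm{\theta+\varrho g(\theta)-\vartheta}^2\leq(1-c\varrho)\norm{\theta-\vartheta}^2$, and dividing by $\norm{\theta-\vartheta}^2>0$ gives $\norm{\theta+\varrho g(\theta)-\vartheta}^2/\norm{\theta-\vartheta}^2\leq 1-c\varrho$. Taking the supremum over $\theta\in\R^d\setminus\{\vartheta\}$ then yields $\sup_{\theta\in\R^d\setminus\{\vartheta\}}[\norm{\theta+\varrho g(\theta)-\vartheta}^2/\norm{\theta-\vartheta}^2]\leq 1-c\varrho$, and hence the infimum over $r\in(0,\infty)$ is at most this value, i.e.\ at most $1-c\varrho$. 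Finally, the hypotheses $c,\varrho\in(0,\infty)$ give $c\varrho>0$, so $1-c\varrho<1$, which finishes the argument.

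There is essentially no obstacle here; the only point worth flagging is that one should know $\R^d\setminus\{\vartheta\}\neq\emptyset$ so that the supremum in the conclusion is taken over a non-empty set, but this is automatic from $d\in\N$ (hence $d\geq1$). As a side remark, applying \eqref{le:Equivalence_of_properties:eq1} with any fixed $\theta\neq\vartheta$ and $r=\varrho$ also shows $0\leq\norm{\theta+\varrho g(\theta)-\vartheta}^2\leq(1-c\varrho)\norm{\theta-\vartheta}^2$, so in fact $1-c\varrho\geq0$ and the displayed bound is a statement about a number in the interval $[0,1)$.
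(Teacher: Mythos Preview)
Your proof is correct and follows essentially the same approach as the paper: specialize \eqref{le:Equivalence_of_properties:eq1} to $\theta=\vartheta$, $r=\varrho$ to obtain $g(\vartheta)=0$, then bound the infimum by the supremum at $r=\varrho$, which is at most $1-c\varrho<1$ by the hypothesis. Your additional remarks about $\R^d\setminus\{\vartheta\}\neq\emptyset$ and $1-c\varrho\geq 0$ are accurate but not strictly needed for the stated conclusion.
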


\begin{proof}[Proof of Lemma \ref{le:EoP-l1}]
Observe that (\ref{le:Equivalence_of_properties:eq1}) (with $\theta = \vartheta$, $r=\varrho$ in the notation of (\ref{le:Equivalence_of_properties:eq1})) implies that $\norm{\varrho g(\vartheta)} \leq 0$. The hypothesis that
$\varrho \in (0,\infty)$ hence demonstrates that $g(\vartheta) = 0$. Moreover, note that (\ref{le:Equivalence_of_properties:eq1}) ensures that 
\begin{equation}
\begin{split}
\inf_{r \in (0,\infty)} \left(\sup_{\theta \in \R^d \setminus \{\vartheta\}} \left[ \frac{\norm{ \theta + r g(\theta) - \vartheta}^2}{\norm{\theta - \vartheta}^2} \right] \right)
&\leq \sup_{\theta \in \R^d \setminus \{\vartheta\}} \left[ \frac{\norm{ \theta + \varrho g(\theta) - \vartheta}^2}{\norm{\theta - \vartheta}^2} \right]\\
&\leq 1-c\varrho \\
&< 1.
\end{split}
\end{equation}
The proof of Lemma \ref{le:EoP-l1} is thus completed.
\end{proof}

\begin{lemma}\label{le:EoP-l2}
	Let $d \in \N$, $\vartheta \in \R^d$, $C,r \in (0,\infty)$, let 
	$\lll \cdot,\cdot \rrr \colon \R^d \times \R^d \to \R$
	be a scalar product, let 
	$\left \| \cdot \right \| \! \colon \R^d \to [0,\infty)$ 
	be the function which satisfies for all $\theta \in \R^d$ that $\norm{\theta} = \sqrt{\lll \theta, \theta \rrr}$, and let $g \colon \R^d \to \R^d$ be a function which satisfies that $g(\vartheta) = 0$ and 
	\begin{equation}\label{le:EoP-l2-eq1}
	\sup_{\theta \in \R^d \setminus \{\vartheta\}} \left[  \frac{2 \lll\theta - \vartheta, g(\theta) \rrr + r \norm{  g(\theta)}^2}{\norm{\theta - \vartheta}^2} \right] \leq -C.
	\end{equation}
Then it holds for all $\theta \in \R^d$ that
\begin{equation}
\lll\theta - \vartheta, g(\theta) \rrr \leq - \left[\tfrac{\min\{C,r \}}{2}\right] \max\! \big\{\norm{\theta - \vartheta}^2,  \norm{  g(\theta)}^2\big\}.
\end{equation}
\end{lemma}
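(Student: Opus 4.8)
The plan is to split into the trivial case $\theta=\vartheta$ and the generic case $\theta\neq\vartheta$, and in the generic case to extract two separate one-sided bounds from \eqref{le:EoP-l2-eq1} by discarding in turn a nonnegative and a nonpositive summand, then to merge them via $\min$.

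First I would dispose of $\theta=\vartheta$: since $g(\vartheta)=0$ we have $\lll\theta-\vartheta,g(\theta)\rrr=\lll 0,g(\vartheta)\rrr=0$ and likewise $\max\{\norm{\theta-\vartheta}^2,\norm{g(\theta)}^2\}=0$, so the claimed inequality reduces to $0\leq 0$ and holds.

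Next, fix $\theta\in\R^d\setminus\{\vartheta\}$. Since the supremum in \eqref{le:EoP-l2-eq1} dominates the individual quotient at $\theta$, multiplying by $\norm{\theta-\vartheta}^2\in(0,\infty)$ gives
\begin{equation}
2\lll\theta-\vartheta,g(\theta)\rrr+r\norm{g(\theta)}^2\leq-C\norm{\theta-\vartheta}^2 .
\end{equation}
Because $r\norm{g(\theta)}^2\geq 0$, omitting this term and dividing by $2$ yields $\lll\theta-\vartheta,g(\theta)\rrr\leq-\tfrac{C}{2}\norm{\theta-\vartheta}^2$; because $-C\norm{\theta-\vartheta}^2\leq 0$, omitting that term instead and dividing by $2$ yields $\lll\theta-\vartheta,g(\theta)\rrr\leq-\tfrac{r}{2}\norm{g(\theta)}^2$. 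Using $\min\{C,r\}\leq C$ and $\min\{C,r\}\leq r$ to weaken both constants to $\min\{C,r\}$ and then taking the stronger of the two resulting estimates gives
\begin{equation}
\lll\theta-\vartheta,g(\theta)\rrr\leq-\left[\tfrac{\min\{C,r\}}{2}\right]\max\!\big\{\norm{\theta-\vartheta}^2,\norm{g(\theta)}^2\big\},
\end{equation}
which, together with the case $\theta=\vartheta$, is the assertion.

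I do not expect any genuine obstacle here; the only point requiring a little care is the bookkeeping in the two ``discard one summand'' steps and verifying that the resulting pair of inequalities can indeed be combined into the single $\max$-inequality with the common constant $\tfrac{\min\{C,r\}}{2}$.
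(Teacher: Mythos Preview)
Your proof is correct and essentially the same as the paper's. The only cosmetic difference is that the paper keeps both terms together, bounding $\lll\theta-\vartheta,g(\theta)\rrr\leq -\tfrac{r}{2}\norm{g(\theta)}^2-\tfrac{C}{2}\norm{\theta-\vartheta}^2$, then weakens both coefficients to $\tfrac{\min\{C,r\}}{2}$ and uses $a+b\geq\max\{a,b\}$; you instead split into two separate one-sided bounds and recombine via the $\max$, which is the same argument organized slightly differently.
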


\begin{proof}[Proof of Lemma \ref{le:EoP-l2}]
	First, note that (\ref{le:EoP-l2-eq1}) implies that
for all $\theta \in \R^d \setminus \{\vartheta\}$ it holds that 
\begin{equation}
2\lll\theta - \vartheta, g(\theta) \rrr + r\norm{  g(\theta)}^2 \leq  - C\norm{\theta - \vartheta}^2.
\end{equation}
Therefore, we obtain that
for all $\theta \in \R^d \setminus \{\vartheta\}$ it holds that 
\begin{equation}
\begin{split}
\lll\theta - \vartheta, g(\theta) \rrr 
&\leq -\frac{r}{2}\norm{  g(\theta)}^2 - \frac{C}{2}\norm{\theta - \vartheta}^2\\
&\leq -\left[\tfrac{\min\{C,r \}}{2}\right]\!\norm{  g(\theta)}^2 - \left[\tfrac{\min\{C,r \}}{2}\right]\!\norm{\theta - \vartheta}^2\\
&= -\left[\tfrac{\min\{C,r \}}{2}\right]\!\left[\norm{  g(\theta)}^2 + \norm{\theta - \vartheta}^2 \right]\\
&\leq - \left[\tfrac{\min\{C,r \}}{2}\right] \max\! \big\{\norm{\theta - \vartheta}^2,  \norm{  g(\theta)}^2\big\}.
\end{split}
\end{equation}
The assumption that $g(\vartheta) = 0$ hence shows that for all $\theta \in \R^d$  it holds that
\begin{equation}
\lll\theta - \vartheta, g(\theta) \rrr 
\leq - \left[\tfrac{\min\{C,r \}}{2}\right] \max\! \big\{\norm{\theta - \vartheta}^2,  \norm{  g(\theta)}^2\big\}.
\end{equation}
This completes the proof of Lemma \ref{le:EoP-l2}.	
\end{proof}

\begin{prop}[Equivalence of properties]
\label{Equivalence_of_properties}
Let $d \in \N$, $\vartheta \in \R^d$, let 
$\lll \cdot,\cdot \rrr \colon \R^d \times \R^d \to \R$
 be a scalar product, let 
 $\left \| \cdot \right \| \! \colon \R^d \to [0,\infty)$ 
 be the function which satisfies for all $\theta \in \R^d$ that $\norm{\theta} = \sqrt{\lll \theta, \theta \rrr}$, and let $g \colon \R^d \to \R^d$ be a function.
Then the following five statements are equivalent:
\begin{enumerate}[(i)]

\item \label{Equivalence_of_properties:item1}
There exists $c\in (0,\infty)$ such that for all $\theta \in \R^d$ it holds that
\begin{equation}
\lll \theta - \vartheta, g(\theta)  \rrr   \leq - c\max \!\big\{\norm{\theta - \vartheta}^2,  \norm{g(\theta)}^2 \big\}.
\end{equation}

\item \label{Equivalence_of_properties:item2}
There exist $c,\varrho \in (0,\infty)$ such that for all $\theta \in \R^d$ it holds that
\begin{equation}
\norm{ \theta + \varrho g(\theta) - \vartheta}^2 \leq ( 1 - c\varrho) \norm{\theta - \vartheta}^2.
\end{equation}

\item \label{Equivalence_of_properties:item3}
There exist $c,\varrho \in (0,\infty)$ such that for all $\theta \in \R^d$, $r \in[0,\varrho]$ it holds that
\begin{equation}
\norm{ \theta + r g(\theta) - \vartheta}^2 \leq ( 1 - cr) \norm{\theta - \vartheta}^2.
\end{equation}

\item \label{Equivalence_of_properties:item4}
It holds that $g(\vartheta) = 0$ and 
\begin{equation}
\label{Equivalence_of_properties:eq1}
\inf_{r \in (0,\infty)} \left(\sup_{\theta \in \R^d \setminus \{\vartheta\}} \left[ \frac{\norm{ \theta + r g(\theta) - \vartheta}^2}{\norm{\theta - \vartheta}^2} \right] \right) < 1.
\end{equation}

\item  \label{Equivalence_of_properties:item5}
It holds that $g(\vartheta) = 0$ and
\begin{equation}
\inf_{r \in (0,\infty) } \left( \sup_{\theta \in \R^d \setminus \{\vartheta\}} \left[  \frac{2 \lll\theta - \vartheta, g(\theta) \rrr + r \norm{  g(\theta)}^2}{\norm{\theta - \vartheta}^2} \right] \right) < 0.
\end{equation}
\end{enumerate}
\end{prop}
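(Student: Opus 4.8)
The plan is to prove the five equivalences by establishing the cycle of implications (\ref{Equivalence_of_properties:item1}) $\Rightarrow$ (\ref{Equivalence_of_properties:item3}) $\Rightarrow$ (\ref{Equivalence_of_properties:item2}) $\Rightarrow$ (\ref{Equivalence_of_properties:item3}) $\Rightarrow$ (\ref{Equivalence_of_properties:item4}) $\Rightarrow$ (\ref{Equivalence_of_properties:item5}) $\Rightarrow$ (\ref{Equivalence_of_properties:item1}), which makes the implication graph among the statements (\ref{Equivalence_of_properties:item1})--(\ref{Equivalence_of_properties:item5}) strongly connected. Almost every single arrow is an immediate consequence of one of the auxiliary lemmas already established in this subsection, so the argument is mostly a matter of feeding the appropriate constants into the appropriate lemma.

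Concretely, I would obtain (\ref{Equivalence_of_properties:item1}) $\Rightarrow$ (\ref{Equivalence_of_properties:item3}) by applying Lemma~\ref{fcond} with $c_1 = c_2 = c$, where $c \in (0,\infty)$ is the constant appearing in statement~(\ref{Equivalence_of_properties:item1}); item~(\ref{fcond:item4}) of Lemma~\ref{fcond} then yields statement~(\ref{Equivalence_of_properties:item3}) with $\varrho = c$. The implication (\ref{Equivalence_of_properties:item3}) $\Rightarrow$ (\ref{Equivalence_of_properties:item2}) is trivial (specialize $r = \varrho$), the implication (\ref{Equivalence_of_properties:item2}) $\Rightarrow$ (\ref{Equivalence_of_properties:item3}) is precisely Lemma~\ref{monotonicity}, and the implication (\ref{Equivalence_of_properties:item3}) $\Rightarrow$ (\ref{Equivalence_of_properties:item4}) is precisely Lemma~\ref{le:EoP-l1} (which in addition furnishes the quantitative bound $\leq 1 - c\varrho$ for the infimum in \eqref{Equivalence_of_properties:eq1}). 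For the closing arrow (\ref{Equivalence_of_properties:item5}) $\Rightarrow$ (\ref{Equivalence_of_properties:item1}), I would use statement~(\ref{Equivalence_of_properties:item5}) to select $r \in (0,\infty)$ and $C \in (0,\infty)$ with $\sup_{\theta \in \R^d \setminus \{\vartheta\}} [ ( 2 \lll\theta - \vartheta, g(\theta) \rrr + r \norm{g(\theta)}^2 ) / \norm{\theta - \vartheta}^2 ] \leq -C$, and then apply Lemma~\ref{le:EoP-l2}, which delivers statement~(\ref{Equivalence_of_properties:item1}) with constant $\tfrac{1}{2}\min\{C,r\}$.

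The only arrow that requires an actual computation is (\ref{Equivalence_of_properties:item4}) $\Rightarrow$ (\ref{Equivalence_of_properties:item5}). Here I would expand the squared norm to get, for all $r \in (0,\infty)$ and all $\theta \in \R^d \setminus \{\vartheta\}$,
\[
\frac{\norm{\theta + r g(\theta) - \vartheta}^2}{\norm{\theta - \vartheta}^2} = 1 + r \left( \frac{2 \lll\theta - \vartheta, g(\theta) \rrr + r \norm{g(\theta)}^2}{\norm{\theta - \vartheta}^2} \right),
\]
whence, since $r > 0$ is fixed while taking the supremum over $\theta$,
\[
\sup_{\theta \in \R^d \setminus \{\vartheta\}} \left[ \frac{\norm{\theta + r g(\theta) - \vartheta}^2}{\norm{\theta - \vartheta}^2} \right] = 1 + r \sup_{\theta \in \R^d \setminus \{\vartheta\}} \left[ \frac{2 \lll\theta - \vartheta, g(\theta) \rrr + r \norm{g(\theta)}^2}{\norm{\theta - \vartheta}^2} \right].
\]
By \eqref{Equivalence_of_properties:eq1} there is some $r_0 \in (0,\infty)$ for which the left-hand supremum is strictly less than $1$; the displayed identity then forces the supremum on the right to be strictly negative at $r = r_0$, so the infimum over $r \in (0,\infty)$ in statement~(\ref{Equivalence_of_properties:item5}) is strictly negative, which together with the equality $g(\vartheta) = 0$ (already part of statement~(\ref{Equivalence_of_properties:item4})) gives statement~(\ref{Equivalence_of_properties:item5}).

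I do not anticipate a genuine obstacle, since the substantive work has been front-loaded into Lemmas~\ref{fcond}, \ref{monotonicity}, \ref{le:EoP-l1}, and \ref{le:EoP-l2}. The only points demanding a little care are translating the strict inequalities ``$\inf(\cdot) < 1$'' and ``$\inf(\cdot) < 0$'' into the existence of a concrete admissible radius $r_0$, respectively a concrete admissible pair $(r,C)$, realizing them (which is possible precisely because the infima are strict), and keeping track of the constants $c$, $\varrho$, $r_0$, $C$ as they are threaded around the cycle; this constant bookkeeping is the main thing one has to get right.
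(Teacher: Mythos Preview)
Your proposal is correct and follows essentially the same route as the paper: the paper proves the cycle (\ref{Equivalence_of_properties:item1}) $\Rightarrow$ (\ref{Equivalence_of_properties:item2}) $\Rightarrow$ (\ref{Equivalence_of_properties:item3}) $\Rightarrow$ (\ref{Equivalence_of_properties:item4}) $\Leftrightarrow$ (\ref{Equivalence_of_properties:item5}) $\Rightarrow$ (\ref{Equivalence_of_properties:item1}) via Lemmas~\ref{fcond}, \ref{monotonicity}, \ref{le:EoP-l1}, the same squared-norm expansion you wrote down, and Lemma~\ref{le:EoP-l2}, respectively. The only cosmetic difference is that the paper observes the expansion identity yields the full equivalence (\ref{Equivalence_of_properties:item4}) $\Leftrightarrow$ (\ref{Equivalence_of_properties:item5}) in one stroke, whereas you use only the direction (\ref{Equivalence_of_properties:item4}) $\Rightarrow$ (\ref{Equivalence_of_properties:item5}) and close the loop through (\ref{Equivalence_of_properties:item1}); both are fine.
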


\begin{proof}[Proof of Proposition \ref{Equivalence_of_properties}]
First, note that item (\ref{fcond:item4}) in Lemma \ref{fcond} ensures that ((\ref{Equivalence_of_properties:item1}) $\Rightarrow$ (\ref{Equivalence_of_properties:item2})). Next observe  that Lemma \ref{monotonicity} implies that ((\ref{Equivalence_of_properties:item2}) $\Rightarrow$ (\ref{Equivalence_of_properties:item3})). Moreover, note that Lemma \ref{le:EoP-l1} demonstrates that ((\ref{Equivalence_of_properties:item3}) $\Rightarrow$ (\ref{Equivalence_of_properties:item4})). 
In addition, observe that the fact that for all $r \in (0,\infty)$ and all functions $h \colon \R^d \to \R^d$ it holds that 
\begin{equation}
\begin{split}
&\sup_{\theta \in \R^d \setminus \{\vartheta\}} \left[ \frac{\norm{ \theta + r h(\theta) - \vartheta}^2}{\norm{\theta - \vartheta}^2} \right] \\
& = 
\sup_{\theta \in \R^d \setminus \{\vartheta\}} \left[ \frac{\norm{(\theta-\vartheta) + r h(\theta)}^2}{\norm{\theta - \vartheta}^2} \right] \\
& = 
\sup_{\theta \in \R^d \setminus \{\vartheta\}} \left[ \frac{\norm{\theta-\vartheta}^2 + 2 \lll \theta- \vartheta, r h(\theta) \rrr  + \norm{r h(\theta)}^2}{\norm{\theta - \vartheta}^2} \right] \\
& = 
\sup_{\theta \in \R^d \setminus \{\vartheta\}} \left[ 1+ \frac{2r \lll \theta- \vartheta, h(\theta) \rrr  + r^2 \norm{h(\theta)}^2}{\norm{\theta - \vartheta}^2} \right] \\
& = 
1 + \sup_{\theta \in \R^d \setminus \{\vartheta\}} \left[  \frac{2 r \lll\theta - \vartheta, h(\theta) \rrr + r^2 \norm{  h(\theta)}^2}{\norm{\theta - \vartheta}^2} \right]  \\
&=
 1 + r \left(\sup_{\theta \in \R^d \setminus \{\vartheta\}} \left[  \frac{2 \lll\theta - \vartheta, h(\theta) \rrr + r \norm{  h(\theta)}^2}{\norm{\theta - \vartheta}^2} \right] \right)
\end{split}
\end{equation}
implies that ((\ref{Equivalence_of_properties:item4}) $\Leftrightarrow$ (\ref{Equivalence_of_properties:item5})).
Furthermore, note that Lemma \ref{le:EoP-l2} implies that ((\ref{Equivalence_of_properties:item5}) $\Rightarrow$ (\ref{Equivalence_of_properties:item1})). 
The proof of Proposition \ref{Equivalence_of_properties} is thus completed.
\end{proof}

\subsection{A Gronwall-type inequality}
\label{subsection:Gronwall}
In this subsection we establish in Lemma~\ref{main_estimate} a certain  Gronwall-type inequality. Lemma~\ref{main_estimate} is used in our strong error analysis in Proposition~\ref{Lyapunov_convergence_explicit} in Subsection~\ref{subsection:Lyapunov} below.
\begin{lemma}
\label{main_estimate}
Let $N \in \N_0$, $k,\kappa,c, C \in (0,\infty)$, $(e_n)_{n \in \N_0} \subseteq [0,\infty)$, $(\gamma_n)_{n \in \N_0} \subseteq (0,\infty)$ satisfy for all $n \in \N \cap (N,\infty)$ that
\begin{equation}
\label{main_estimate:assumption1}
e_n \leq (1-c\gamma_n)e_{n-1} +  \kappa (\gamma_n)^{k+1}, \qquad \sup_{l \in  \N \cap (N,\infty)}\gamma_l \leq \nicefrac{1}{c},
\end{equation}
\begin{equation}
\label{main_estimate:assumption2}
\mbox{and } \qquad C= \inf_{l \in  \N \cap (N,\infty)}\left[\tfrac{(\gamma_l)^{k}-(\gamma_{l-1})^{k}}{(\gamma_l)^{k+1}} + \tfrac{c (\gamma_{l-1})^k}{(\gamma_{l})^k} \right].
\end{equation}
Then it holds for all $n \in \N_0$ that
\begin{equation}
e_n \leq \left[\max\!\left\{\frac{e_0}{(\gamma_0)^{k}}, \frac{e_1}{(\gamma_1)^{k}}, \ldots, \frac{e_{N}}{(\gamma_{N})^{k}}, \frac{\kappa} {C}\right\}\right]\,(\gamma_n)^{k}.
\end{equation}
\end{lemma}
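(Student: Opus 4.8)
The plan is a straightforward induction on $n\in\N_0$. Write $M=\max\!\big\{e_0(\gamma_0)^{-k},\,e_1(\gamma_1)^{-k},\,\dots,\,e_N(\gamma_N)^{-k},\,\nicefrac{\kappa}{C}\big\}$ for the bracketed quantity on the right-hand side, so that the claim becomes the assertion that $e_n\leq M(\gamma_n)^k$ holds for every $n\in\N_0$. Since $C\in(0,\infty)$ by hypothesis, note at the outset that $M\geq\nicefrac{\kappa}{C}>0$.

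For the base cases $n\in\{0,1,\dots,N\}$ (only $n=0$ if $N=0$) the inequality $e_n\leq M(\gamma_n)^k$ is immediate, because $e_n(\gamma_n)^{-k}\leq M$ by the very definition of $M$. For the induction step I would fix $n\in\N\cap(N,\infty)$, assume $e_{n-1}\leq M(\gamma_{n-1})^k$, and deduce the bound for $e_n$. Here the second condition in \eqref{main_estimate:assumption1} gives $\gamma_n\leq\nicefrac1c$ and hence $1-c\gamma_n\geq 0$, so substituting the induction hypothesis into the recursion in \eqref{main_estimate:assumption1} yields $e_n\leq(1-c\gamma_n)M(\gamma_{n-1})^k+\kappa(\gamma_n)^{k+1}$. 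It then remains to verify that this is at most $M(\gamma_n)^k$, which after rearranging and dividing by $(\gamma_n)^{k+1}>0$ is exactly the inequality
\begin{equation*}
\kappa\leq M\left[\frac{(\gamma_n)^k-(\gamma_{n-1})^k}{(\gamma_n)^{k+1}}+\frac{c(\gamma_{n-1})^k}{(\gamma_n)^k}\right].
\end{equation*}
By the definition of $C$ in \eqref{main_estimate:assumption2} the bracketed expression is $\geq C$, and combining this with $M\geq\nicefrac{\kappa}{C}$ and $C>0$ gives $M\cdot[\,\cdot\,]\geq(\nicefrac{\kappa}{C})\,C=\kappa$, which closes the induction.

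I do not anticipate a genuine obstacle: the argument is elementary algebra plus a one-step induction. The only points that need care are the nonnegativity of $1-c\gamma_n$ (this is precisely where the step-size restriction $\sup_{l}\gamma_l\leq\nicefrac1c$ is used, so that the induction hypothesis can be multiplied through), the strict positivity of $C$ (so that $\nicefrac{\kappa}{C}$ and the final chain of inequalities are legitimate), and correctly treating the degenerate case $N=0$ in the base step.
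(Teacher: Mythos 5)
Your proposal is correct and follows essentially the same route as the paper: induct on $n$ using the recursion, substitute the induction hypothesis (which requires $1-c\gamma_n\geq 0$ from the step-size bound), and reduce the induction step to the algebraic inequality $\kappa\leq M\bigl[\tfrac{(\gamma_n)^k-(\gamma_{n-1})^k}{(\gamma_n)^{k+1}}+\tfrac{c(\gamma_{n-1})^k}{(\gamma_n)^k}\bigr]$, which follows from the definition of $C$ and $M\geq\nicefrac{\kappa}{C}$. The only cosmetic difference is that the paper inducts from $n=N$ onward with $\lambda=\max\{e_N(\gamma_N)^{-k},\nicefrac{\kappa}{C}\}$ and folds in the indices $0,\dots,N-1$ at the end, whereas you carry the full maximum throughout; the substance is identical.
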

\begin{proof}[Proof of Lemma \ref{main_estimate}]
Throughout this proof let $\lambda \in (0,\infty)$ satisfy
\begin{equation}\label{main_estimate:eq0}
\lambda = \max\!\left\{ \frac{e_{N}}{(\gamma_{N})^{k}}, 
 \frac{\kappa}{C}\right\}.
\end{equation}
We claim that for all $n \in \{N,N+1, \ldots \}$ it holds that
\begin{equation}
\label{main_estimate:eq1}
e_n\leq \lambda(\gamma_n)^{k}.
\end{equation}
We now prove (\ref{main_estimate:eq1}) by induction on $n \in \{N,N+1, \ldots \}$.
For the base case $n= N$ observe that 
\begin{equation}
e_{N}=  \left[\frac{e_{N}}{(\gamma_{N})^{k}}\right](\gamma_{N})^{k} \leq \lambda (\gamma_{N})^{k} .
\end{equation} 
This proves (\ref{main_estimate:eq1}) in the base case $n=N$. For the induction step $\{N,N+1,\ldots\} \ni (n-1) \to n \in \N\cap(N,\infty)$ note that (\ref{main_estimate:assumption1}), (\ref{main_estimate:assumption2}), and (\ref{main_estimate:eq0}) demonstrate that for all $n \in \N\cap(N,\infty)$ with $e_{n-1}\leq \lambda(\gamma_{n-1})^{k}$ it holds that
\begin{equation}
\begin{split}
e_{n} 
&\leq
 (1-c\gamma_{n})e_{n-1} +  \kappa (\gamma_{n})^{k+1} \\
&\leq 
(1-c\gamma_{n})\lambda (\gamma_{n-1})^{k} +  \kappa (\gamma_{n})^{k+1} \\
&= 
\lambda(\gamma_{n-1})^k- \lambda c \gamma_n (\gamma_{n-1})^k + \kappa (\gamma_n)^{k+1}\\
&=  \lambda(\gamma_{n})^k - (\gamma_n)^{k+1} \left( \frac{\lambda c \gamma_n (\gamma_{n-1})^k}{(\gamma_n)^{k+1}} + \frac{\lambda (\gamma_{n})^k}{(\gamma_n)^{k+1}}- \frac{\lambda (\gamma_{n-1})^k}{(\gamma_n)^{k+1}} -\kappa\right)\\
&=  \lambda(\gamma_{n})^k - (\gamma_n)^{k+1} \left( \lambda\left[ \frac{c (\gamma_{n-1})^k}{(\gamma_n)^{k}} + \frac{(\gamma_{n})^k}{(\gamma_n)^{k+1}}- \frac{(\gamma_{n-1})^k}{(\gamma_n)^{k+1}}\right] -\kappa\right)\\
&=
\lambda (\gamma_{n})^{k} - (\gamma_{n})^{k+1}\left(\lambda \left[ \frac{(\gamma_{n})^{k}-(\gamma_{n-1})^{k}}{(\gamma_{n})^{k+1}} + \frac{c(\gamma_{n-1})^k}{(\gamma_{n})^k}  \right] - \kappa \right).
\end{split}
\end{equation}
Hence, we obtain that for all $n \in \N\cap(N,\infty)$ with $e_{n-1}\leq \lambda(\gamma_{n-1})^{k}$ it holds that
\begin{equation}
\begin{split}
e_{n} 
&\leq \lambda(\gamma_{n})^k - (\gamma_n)^{k+1} (\lambda C-\kappa)\\
&\leq \lambda(\gamma_{n})^k - (\gamma_n)^{k+1} \left(\left[\frac{\kappa}{C}\right] C-\kappa\right)\\
&=\lambda (\gamma_{n})^{k} - (\gamma_{n})^{k+1} \big(\kappa - \kappa\big) = \lambda (\gamma_{n})^{k}.
\end{split}
\end{equation}
Induction thus proves (\ref{main_estimate:eq1}). Next note that (\ref{main_estimate:eq1}) ensures that for all $n \in \N_0$ it holds that
\begin{equation}
\begin{split}
e_n \leq 
\bigg[\max&\!\left\{\frac{e_0}{(\gamma_0)^{k}}, \frac{e_1}{(\gamma_1)^{k}}, \ldots, \frac{e_{N-1}}{(\gamma_{N-1})^{k}}, \lambda \right\}\!\bigg] \,(\gamma_n)^{k} \\
= \bigg[\max&\!\left\{ \frac{e_0}{(\gamma_0)^{k}}, \frac{e_1}{(\gamma_1)^{k}}, \ldots, \frac{e_{N}}{(\gamma_{N})^{k}}, \frac{\kappa}{C} 
\right\}\!\bigg]\,(\gamma_n)^{k}.
\end{split}
\end{equation}
The proof of Lemma \ref{main_estimate} is thus completed.
\end{proof}

\begin{cor}
\label{main_estimate_asymptotic}
Let $k,\kappa,c \in (0,\infty)$, $(e_n)_{n \in \N_0} \subseteq [0,\infty)$,  $(\gamma_n)_{n \in \N_0} \subseteq (0,\infty)$  satisfy for all $n \in \N$ that
\begin{equation}
\label{main_estimate_asymptotic:assumption1}
e_n \leq (1-c\gamma_n)e_{n-1} +  \kappa (\gamma_n)^{k+1}\qquad \text{and}
\end{equation}
\begin{equation}
\label{main_estimate_asymptotic:assumption2}
\limsup_{l \to \infty} \gamma_l = 0 < \liminf_{l \to \infty}\left[\frac{(\gamma_l)^{k}-(\gamma_{l-1})^{k}}{(\gamma_l)^{k+1}} + \frac{c(\gamma_{l-1})^k}{(\gamma_{l})^k} \right].
\end{equation}
Then there exists $C \in (0,\infty)$ such that for all $n \in \N_0$ it holds that
\begin{equation}
\label{main_estimate_asymptotic:conclusion}
e_n \leq C(\gamma_n)^{k}.
\end{equation}
\end{cor}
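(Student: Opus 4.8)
The plan is to deduce this corollary from Lemma~\ref{main_estimate} by choosing a suitable cutoff index $N$ and a suitable constant to play the role of $C$ in that lemma. First I would extract $N$ from the two asymptotic hypotheses in \eqref{main_estimate_asymptotic:assumption2}: since $(\gamma_l)_{l\in\N_0}\subseteq(0,\infty)$ and $\limsup_{l\to\infty}\gamma_l=0$, it follows that $\lim_{l\to\infty}\gamma_l=0$, so there exists $N_1\in\N_0$ with $\gamma_l\leq\nicefrac 1c$ for all $l\in\N\cap(N_1,\infty)$. Moreover, writing $\rho = \liminf_{l\to\infty}\big[\tfrac{(\gamma_l)^{k}-(\gamma_{l-1})^{k}}{(\gamma_l)^{k+1}} + \tfrac{c(\gamma_{l-1})^k}{(\gamma_{l})^k}\big] \in (0,\infty]$, the definition of $\liminf$ yields $N_2\in\N_0$ such that for all $l\in\N\cap(N_2,\infty)$ it holds that $\tfrac{(\gamma_l)^{k}-(\gamma_{l-1})^{k}}{(\gamma_l)^{k+1}} + \tfrac{c(\gamma_{l-1})^k}{(\gamma_{l})^k}\geq\tfrac{\min\{\rho,1\}}{2}$. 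Setting $N=\max\{N_1,N_2\}$ then ensures both $\sup_{l\in\N\cap(N,\infty)}\gamma_l\leq\nicefrac1c$ and that $C_0:=\inf_{l\in\N\cap(N,\infty)}\big[\tfrac{(\gamma_l)^{k}-(\gamma_{l-1})^{k}}{(\gamma_l)^{k+1}} + \tfrac{c(\gamma_{l-1})^k}{(\gamma_{l})^k}\big]$ satisfies $C_0\geq\tfrac{\min\{\rho,1\}}{2}>0$, hence $C_0\in(0,\infty)$.

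Next I would invoke Lemma~\ref{main_estimate} with this $N$, with $C_0$ in the role of the constant $C$ there, and with the given $k,\kappa,c,(e_n)_{n\in\N_0},(\gamma_n)_{n\in\N_0}$; the hypotheses \eqref{main_estimate:assumption1}--\eqref{main_estimate:assumption2} of that lemma hold by the construction of $N$ and $C_0$ together with \eqref{main_estimate_asymptotic:assumption1}. Lemma~\ref{main_estimate} then yields that for all $n\in\N_0$ it holds that $e_n\leq\big[\max\{\tfrac{e_0}{(\gamma_0)^k},\dots,\tfrac{e_N}{(\gamma_N)^k},\tfrac{\kappa}{C_0}\}\big](\gamma_n)^k$. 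Finally I would set $C=\max\{\tfrac{e_0}{(\gamma_0)^k},\dots,\tfrac{e_N}{(\gamma_N)^k},\tfrac{\kappa}{C_0}\}$, which is a maximum of finitely many nonnegative reals and is bounded below by $\tfrac{\kappa}{C_0}>0$, so that $C\in(0,\infty)$; this establishes \eqref{main_estimate_asymptotic:conclusion} and completes the proof.

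The argument is essentially a bookkeeping reduction, so there is no serious obstacle; the only point requiring a little care is extracting from the strict inequality $0<\liminf_{l\to\infty}[\cdots]$ a uniform positive lower bound on the tail that survives passing to the infimum, which is why one should fix a constant strictly below the liminf (the $\tfrac{\min\{\rho,1\}}{2}$ above also handles the possibility $\rho=\infty$) before forming $C_0$. One should also observe at the outset that $(\gamma_0)^k,\dots,(\gamma_N)^k$ are strictly positive, so that the quotients $\tfrac{e_j}{(\gamma_j)^k}$ appearing in the constant are well-defined.
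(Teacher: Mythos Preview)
Your proposal is correct and follows essentially the same approach as the paper: extract an index $N$ from the asymptotic hypotheses so that the tail conditions of Lemma~\ref{main_estimate} are met, apply that lemma, and read off the constant. The paper's proof is terser (it simply asserts the existence of such an $N$ without splitting into $N_1,N_2$ or explicitly handling $\rho=\infty$), but the logical content is identical.
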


\begin{proof}[Proof of Corollary \ref{main_estimate_asymptotic}]
Observe that (\ref{main_estimate_asymptotic:assumption2}) ensures that there exists $N \in \N_0$ such that
\begin{equation}
\sup_{l \in  \N \cap (N,\infty)}\gamma_l \leq \nicefrac{1}{c} \qandq \inf_{l \in  \N \cap (N,\infty)}\left[\frac{(\gamma_l)^{k}-(\gamma_{l-1})^{k}}{(\gamma_l)^{k+1}} + \frac{c(\gamma_{l-1})^k}{(\gamma_{l})^k} \right] > 0.
\end{equation}
Lemma \ref{main_estimate} therefore assures that for all $n \in \N_0$ it holds that
\begin{equation}
e_n \leq \Bigg[\max\!\left\{ \frac{e_0}{(\gamma_0)^{k}}, \frac{e_1}{(\gamma_1)^{k}}, \ldots, \frac{e_{N}}{(\gamma_{N})^{k}}, \tfrac{\kappa}{\inf_{l\in \N \cap (N,\infty)}\left[\frac{(\gamma_l)^{k}-(\gamma_{l-1})^{k}}{(\gamma_l)^{k+1}} + \frac{c(\gamma_{l-1})^k}{(\gamma_{l})^k} \right]}\right\}\!\Bigg] \,(\gamma_n)^{k}.  
\end{equation}
This completes the proof of Corollary \ref{main_estimate_asymptotic}.
\end{proof}

\section{Error analysis for stochastic approximation algorithms (SAAs)}
\label{section:main_section}
In this section we establish in Theorem~\ref{Lp_theorem} in Subsection~\ref{subsection:Lp} below for every $p \in (0,\infty)$ strong $L^p$-convergence rates for stochastic approximation algorithms.
\subsection{Main setting for the strong error analysis}
\label{setting}
Throughout this section the following setting is frequently used.
\begin{setting}\label{setting-Def}
Let $d \in \N$, $(\gamma_n)_{n \in \N_0} \subseteq (0,\infty)$, let $g \colon \R^d \to \R^d$ be $\mathcal{B}(\R^d)/ \mathcal{B}(\R^d)$-measurable, let $(\Omega , \mathcal{F}, \P , ( \mathbb{F}_n )_{ n \in \N_0 })$ be a filtered probability space, let
$D \colon  \N \times \Omega \to \R^d$ be an  $ ( \mathbb{F}_n )_{n \in \N } / \mathcal{B}(\R^d) $-adapted stochastic process, let $\Theta \colon  \N_0 \times \Omega \to \R^d$ be a function, assume that $\Theta_0$ is $\mathbb{F}_0/ \mathcal{B}(\R^d)$-measurable, and assume  for all $n \in \N$  that
\begin{equation}
\label{setting:eq1}
\Theta_n = \Theta_{n-1} + \gamma_n (g(\Theta_{n-1})+ D_n).
\end{equation}
\end{setting}
Note that in Setting \ref{setting-Def} the hypothesis that the function $g$ is $\mathcal{B}(\R^d)/ \mathcal{B}(\R^d) $-measurable, the hypothesis that the function $\Theta_0$ is $\mathbb{F}_0/ \mathcal{B}(\R^d)$-measurable, the hypothesis that $D$ is an $ ( \mathbb{F}_n )_{n \in \N } / \mathcal{B}(\R^d) $-adapted stochastic process, and (\ref{setting:eq1}) imply that $\Theta$ is an $ ( \mathbb{F}_n )_{n \in \N_0 } / \mathcal{B}(\R^d) $-adapted stochastic process.

\subsection{Lyapunov based convergence for SAAs}
\label{subsection:Lyapunov}

\begin{prop}[Lyapunov based convergence for stochastic approximation]
\label{Lyapunov_convergence_explicit}
Assume Setting \ref{setting-Def} and let  $N \in \N_0$, $k, \kappa, c, C \in (0,\infty)$, $V \in C^1(\R^d, [0,\infty))$ satisfy for all $m \in \N_0$, $n\in \N \cap (N,\infty)$, $t \in [0,1]$, $\theta\in \R^d$ that
\begin{equation}
\label{Lyapunov_convergence_explicit:assumption1}
\EXP{V(\Theta_m) + \abs{V'(\Theta_{n-1}+ \gamma_ng(\Theta_{n-1}))(D_n)} }< \infty,
\end{equation}
\begin{equation}
\label{Lyapunov_convergence_explicit:assumption2}
\textstyle\int_0^1\EXP{ \abs{V'(\Theta_{n-1}+ \gamma_n(g(\Theta_{n-1})+sD_n))(D_n)} }\,\mathrm{d}s < \infty, 
\end{equation}
\begin{equation}
\label{Lyapunov_convergence_explicit:assumption3}
\EXP{V'(\Theta_{n-1}+ \gamma_ng(\Theta_{n-1}))(D_n)} = 0, \qquad V(\theta + \gamma_n g(\theta)) \leq (1-c\gamma_n)V(\theta), 
\end{equation}
\begin{equation}
\label{Lyapunov_convergence_explicit:assumption4}
\begin{split}
&\EXP{\abs{V'(\Theta_{n-1}+ \gamma_n(g(\Theta_{n-1})+tD_n))(D_n) - V'(\Theta_{n-1}+ \gamma_n g(\Theta_{n-1}))(D_n)} } \\
&\leq \kappa\!\left((\gamma_n)^{k}+ \gamma_n\EXP{V(\Theta_{n-1})}\right), 
\end{split} 
\end{equation}
\begin{equation}
\label{Lyapunov_convergence_explicit:assumption5}
\sup_{l \in  \N \cap (N,\infty)}\gamma_l \leq \min\!\big\{\tfrac{c}{2\kappa}, \tfrac{2}{c} \big\}, \ \ \ \mbox{ and } \ \ \ C=\inf_{l \in  \N \cap (N,\infty)}\left[\tfrac{(\gamma_l)^{k}-(\gamma_{l-1})^{k}}{(\gamma_l)^{k+1}} + \tfrac{c (\gamma_{l-1})^k}{2 (\gamma_{l})^k} \right].
\end{equation}
Then it holds for all $n \in  \N_0$ that
\begin{equation}
\EXP{V(\Theta_n)}  \leq \Bigg[\max\!\left(\left\{\frac{\kappa}{C}\right\} \cup
\left\{\frac{\EXP{V(\Theta_l)} }{(\gamma_l)^{k}} : l \in \{0,1,\ldots, N\} \right\}  
\right) \Bigg](\gamma_n)^{k}<\infty.
\end{equation}
\end{prop}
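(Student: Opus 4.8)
The plan is to convert the hypotheses into a single one-step recursion for the sequence $e_n:=\E[V(\Theta_n)]$, $n\in\N_0$, and then to feed that recursion into Lemma~\ref{main_estimate}. First I would fix $n\in\N\cap(N,\infty)$, abbreviate $a:=\Theta_{n-1}+\gamma_n g(\Theta_{n-1})$, and observe that \eqref{setting:eq1} gives $\Theta_n=a+\gamma_n D_n$. Since $V\in C^1(\R^d,[0,\infty))$ and each $V'(x)$ is a linear functional, applying the fundamental theorem of calculus to $[0,1]\ni s\mapsto V(a+s\gamma_n D_n)$ yields, pointwise on $\Omega$,
\begin{equation*}
V(\Theta_n)=V(a)+\gamma_n\!\int_0^1 V'\big(\Theta_{n-1}+\gamma_n(g(\Theta_{n-1})+sD_n)\big)(D_n)\,\mathrm{d}s .
\end{equation*}
Adding and subtracting $V'(a)(D_n)$ under the integral rewrites the right-hand side as $V(a)+\gamma_n V'(a)(D_n)$ plus a remainder whose $\mathrm{d}s$-integrand is controlled by \eqref{Lyapunov_convergence_explicit:assumption4}.

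Next I would take expectations. Here \eqref{Lyapunov_convergence_explicit:assumption1} and \eqref{Lyapunov_convergence_explicit:assumption2} supply exactly the $L^1$-control needed to apply Fubini's theorem and to split the $\mathrm{d}s$-integral into separately integrable pieces. The term $\gamma_n\E[V'(a)(D_n)]$ vanishes by the first identity in \eqref{Lyapunov_convergence_explicit:assumption3}; the Lyapunov inequality in \eqref{Lyapunov_convergence_explicit:assumption3} gives $\E[V(a)]\le(1-c\gamma_n)e_{n-1}$; and \eqref{Lyapunov_convergence_explicit:assumption4} bounds the remainder in absolute value by $\gamma_n\kappa((\gamma_n)^k+\gamma_n e_{n-1})$. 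Combining these estimates,
\begin{equation*}
e_n\le(1-c\gamma_n)e_{n-1}+\kappa\gamma_n\big((\gamma_n)^k+\gamma_n e_{n-1}\big)=\big(1-c\gamma_n+\kappa(\gamma_n)^2\big)e_{n-1}+\kappa(\gamma_n)^{k+1}
\end{equation*}
for every $n\in\N\cap(N,\infty)$.

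Since \eqref{Lyapunov_convergence_explicit:assumption5} forces $\gamma_n\le\tfrac{c}{2\kappa}$, one has $\kappa(\gamma_n)^2\le\tfrac{c}{2}\gamma_n$ and hence $1-c\gamma_n+\kappa(\gamma_n)^2\le1-\tfrac{c}{2}\gamma_n$, so that $e_n\le(1-\tfrac{c}{2}\gamma_n)e_{n-1}+\kappa(\gamma_n)^{k+1}$ for all $n\in\N\cap(N,\infty)$. This is precisely the recursion appearing in Lemma~\ref{main_estimate} with the contraction rate $c$ there replaced by $\tfrac{c}{2}$: the bound $\gamma_l\le\tfrac{2}{c}$ from \eqref{Lyapunov_convergence_explicit:assumption5} yields $\sup_{l\in\N\cap(N,\infty)}\gamma_l\le\tfrac{1}{(c/2)}$, the constant $C$ defined in \eqref{Lyapunov_convergence_explicit:assumption5} coincides with the infimum featured in Lemma~\ref{main_estimate} for this rate (because $\tfrac{c(\gamma_{l-1})^k}{2(\gamma_l)^k}=\tfrac{(c/2)(\gamma_{l-1})^k}{(\gamma_l)^k}$), and $e_l=\E[V(\Theta_l)]<\infty$ for $l\in\{0,1,\dots,N\}$ by \eqref{Lyapunov_convergence_explicit:assumption1}. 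Lemma~\ref{main_estimate} then gives $e_n\le\big[\max(\{\kappa/C\}\cup\{e_l/(\gamma_l)^k:l\in\{0,\dots,N\}\})\big](\gamma_n)^k$ for all $n\in\N_0$, and this quantity is finite since $C>0$ and $e_0,\dots,e_N<\infty$; this is exactly the claimed bound.

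The one genuinely delicate point is the integrability bookkeeping in the expectation step: one must check that $(\omega,s)\mapsto V'(\Theta_{n-1}(\omega)+\gamma_n(g(\Theta_{n-1}(\omega))+sD_n(\omega)))(D_n(\omega))$ is jointly measurable (it is continuous in $s$ and measurable in $\omega$), that \eqref{Lyapunov_convergence_explicit:assumption2} provides the $L^1(\mathrm{d}s\otimes\P)$-bound that licenses Fubini's theorem, and that \eqref{Lyapunov_convergence_explicit:assumption1} makes $V'(a)(D_n)$ separately integrable so that the splitting of the integral survives taking expectations. Everything else is elementary algebra together with the already-established Lemma~\ref{main_estimate}.
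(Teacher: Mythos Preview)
Your proposal is correct and follows essentially the same approach as the paper's own proof: both derive the pointwise integral representation via the fundamental theorem of calculus, take expectations using Fubini's theorem (justified by \eqref{Lyapunov_convergence_explicit:assumption2}), kill the linear term via \eqref{Lyapunov_convergence_explicit:assumption3}, bound the remainder through \eqref{Lyapunov_convergence_explicit:assumption4}, absorb the $\kappa(\gamma_n)^2$ term using $\gamma_n\le\tfrac{c}{2\kappa}$, and finish by invoking Lemma~\ref{main_estimate} with contraction rate $\tfrac{c}{2}$. The only cosmetic difference is that the paper first splits $\E[V(\Theta_n)]$ into $\E[V(a)]$ plus a difference before applying the fundamental theorem of calculus, whereas you apply it directly and then add and subtract $V'(a)(D_n)$; the resulting computation is identical.
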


\begin{proof}[Proof of Proposition \ref{Lyapunov_convergence_explicit}]
Throughout this proof let $(e_n)_{n \in \N_0}\subseteq [0,\infty]$ satisfy for all $n \in \N_0$ that 
\begin{equation}\label{Lyapunov_convergence_explicit:eq1}
e_n = \EXP{V(\Theta_{n})}.
\end{equation}
Note that \eqref{Lyapunov_convergence_explicit:assumption1} ensures that for all $n \in \N_0$ it holds that $e_n<\infty$. Moreover, observe that (\ref{Lyapunov_convergence_explicit:assumption1}) and (\ref{Lyapunov_convergence_explicit:assumption3}) assure that for all $n \in \N \cap (N,\infty)$ it holds that
\begin{equation}
\label{Lyapunov_convergence_explicit:eq1b}
\EXP{V(\Theta_{n-1}+ \gamma_{n}g(\Theta_{n-1}))}
\leq
 \EXP{(1-c\gamma_n) V(\Theta_{n-1})} 
\leq \EXP{V(\Theta_{n-1})}
<\infty.
\end{equation}
This, (\ref{setting:eq1}), and (\ref{Lyapunov_convergence_explicit:eq1}) imply that for all $n \in \N \cap (N,\infty)$ it holds that
\begin{equation}
\begin{split}
e_n&=\EXP{V(\Theta_{n})}\\
&= \EXP{V(\Theta_{n-1}+ \gamma_n(g(\Theta_{n-1})+D_n))} \\
&= \EXP{V(\Theta_{n-1}+ \gamma_{n}(g(\Theta_{n-1})+D_n))- V(\Theta_{n-1}+ \gamma_{n}g(\Theta_{n-1}))} \\
&\quad + \EXP{V(\Theta_{n-1}+ \gamma_{n}g(\Theta_{n-1}))}.
\end{split}
\end{equation}
Combining this with (\ref{Lyapunov_convergence_explicit:eq1b}) assures that for all $n \in \N \cap (N,\infty)$ it holds that
\begin{equation}
\label{Lyapunov_convergence_explicit:eq2}
\begin{split}
\EXP{V(\Theta_{n})} 
& \leq \EXP{V(\Theta_{n-1}+ \gamma_{n}(g(\Theta_{n-1})+D_n))- V(\Theta_{n-1}+ \gamma_{n}g(\Theta_{n-1}))}\\
&\quad + \EXP{(1-c\gamma_n)V(\Theta_{n-1})}\\
& = \EXP{V(\Theta_{n-1}+ \gamma_{n} g(\Theta_{n-1})+\gamma_{n} D_n)- V(\Theta_{n-1}+ \gamma_{n}g(\Theta_{n-1}))}\\
&\quad + \EXP{(1-c\gamma_n)V(\Theta_{n-1})}.
\end{split}
\end{equation}
Next note that the assumption that $V \in C^1(\R^d, [0,\infty))$, the chain rule, and the fundamental theorem of calculus ensure that for all $x,y \in \R^d$ it holds that $(\R\ni t\mapsto V(x+ty) \in [0,\infty)) \in  C^1(\R, [0,\infty))$ and
\begin{equation}
V(x+y) -V(x) 
=\big[V(x+ty)\big]_{t=0}^{t=1} 
= \int_0^1 V'(x+sy)(y)\,\mathrm{d}s.
\end{equation}
Combining (\ref{Lyapunov_convergence_explicit:assumption2}), (\ref{Lyapunov_convergence_explicit:assumption3}), and (\ref{Lyapunov_convergence_explicit:eq2}) with Fubini's theorem hence shows that for all $n \in \N \cap (N,\infty)$ it holds that
\begin{equation}
\begin{split}
&\EXP{V(\Theta_{n})} \\
& \leq
\EXPPP{\textstyle\int\limits_0^1 V'(\Theta_{n-1}+ \gamma_{n} g(\Theta_{n-1})+s\gamma_{n} D_n)(\gamma_n D_n)\,\mathrm{d}s} + (1-c\gamma_n) \,\EXP{V(\Theta_{n-1})}\\
&= 
\int\limits_0^1\EXP{ V'(\Theta_{n-1}+ \gamma_{n}(g(\Theta_{n-1})+sD_n))(\gamma_n D_n)}\,\mathrm{d}s + (1-c\gamma_n) \,\EXP{V(\Theta_{n-1})}
 \\
&= \gamma_n\int_0^1\EXP{ V'(\Theta_{n-1}+ \gamma_{n}(g(\Theta_{n-1})+ sD_n))( D_n)} \,\mathrm{d}s\\ 
 &\quad -\gamma_n \int_0^1 \EXP{ V'(\Theta_{n-1}+ \gamma_{n}g(\Theta_{n-1}))( D_n)}  \,\mathrm{d}s
 +(1-c\gamma_n)\,\EXP{V(\Theta_{n-1})}\\
&\leq  \gamma_n\sup_{s \in [0,1]}\EXP{ \abs{ V'(\Theta_{n-1}+ \gamma_{n}(g(\Theta_{n-1})+sD_n))(D_n) - V'(\Theta_{n-1}+ \gamma_{n}g(\Theta_{n-1}))( D_n)}}\\
&\quad + (1-c\gamma_n)\,\EXP{V(\Theta_{n-1})} .
\end{split}
\end{equation}
The fact that $\sup_{l \in \N \cap (N,\infty)}\gamma_{l} \leq \frac{c}{2\kappa}$ and (\ref{Lyapunov_convergence_explicit:assumption4}) therefore ensure that for all $n \in \N \cap (N,\infty)$ it holds that
\begin{equation}
\begin{split}
\EXP{V(\Theta_{n})} &\leq (1-c\gamma_n)\,\EXP{V(\Theta_{n-1})}+\gamma_n\kappa\big((\gamma_n)^{k}+ \gamma_n \EXP{V(\Theta_{n-1})}\big) \\[3pt]
&= \big(1-c\gamma_n + \kappa (\gamma_n)^2\big)\,\EXP{V(\Theta_{n-1})}+\kappa(\gamma_n)^{k+1} \\[3pt]
&\leq \Big(1-c\gamma_n + \frac{\gamma_n \kappa c}{2\kappa} \Big)\,\EXP{V(\Theta_{n-1})}+\kappa(\gamma_n)^{k+1} \\[3pt]
&= \Big(1-\frac{\gamma_n c}{2}\Big)\,\EXP{V(\Theta_{n-1})}+\kappa(\gamma_n)^{k+1}.
\end{split}
\end{equation}
Hence, we obtain that  for all $n \in \N \cap (N,\infty)$ it holds that
\begin{equation}
\begin{split}
e_n &= \EXP{V(\Theta_{n})} \leq  \Big(1-\frac{\gamma_{n} c}{2}\Big)\,\EXP{V(\Theta_{{n}-1})}+\kappa(\gamma_{n})^{k+1} \\
&=\Big(1-\frac{\gamma_{n} c}{2}\Big)e_{n-1}+\kappa(\gamma_{n})^{k+1} .
\end{split}
\end{equation}
Combining this with (\ref{Lyapunov_convergence_explicit:assumption5}) and Lemma \ref{main_estimate} (with $N = N$, $k = k$, $\kappa = \kappa$, $c = \nicefrac{c}{2}$, $e_n = e_n$, $\gamma_n = \gamma_{n}$ for $n \in \N_0$ in the notation of Lemma \ref{main_estimate}) demonstrates that for all $n \in \N_0$ it holds that
\begin{equation}
\label{139intro}
\begin{split}
\EXP{V(\Theta_{n})}&= e_n \\ 
 &\leq \bigg[\max\!\left\{ \frac{e_0}{(\gamma_0)^{k}}, \frac{e_1}{(\gamma_1)^{k}}, \ldots, \frac{e_{N}}{(\gamma_{N})^{k}}, \frac{\kappa}{C} \right\}\!\bigg]\,(\gamma_n)^{k} \\
&=  \Bigg[\max\!\left(\left\{\frac{\kappa}{C}\right\} \cup
\left\{\frac{\EXP{V(\Theta_l)} }{(\gamma_l)^{k}}: l \in \{0,1,\ldots, N\} \right\}
\right)\!\Bigg]\,(\gamma_n)^{k}.
\end{split}
\end{equation}
The proof of Proposition \ref{Lyapunov_convergence_explicit} is thus completed.
\end{proof}
\begin{cor}
	\label{Lyapunov_convergence_nonexplicit}
	Assume Setting \ref{setting-Def} and let $N \in \N_0$, $k, \kappa, c \in (0,\infty)$, $\varrho \in (0,\nicefrac{1}{c}]$, $V \in C^1(\R^d, [0,\infty))$ satisfy for all $m \in \N_0$, $n\in \N \cap (N,\infty)$, $r \in [0,\varrho]$, $t \in [0,1]$, $\theta\in \R^d$ that
	\begin{equation}
	\label{Lyapunov_convergence_nonexplicit:assumption1}
	\EXP{V(\Theta_m) + \abs{V'(\Theta_{n-1}+ \gamma_n g(\Theta_{n-1}))(D_n)} }< \infty,
	\end{equation}
	\begin{equation}
	\label{Lyapunov_convergence_nonexplicit:assumption2}
	\textstyle\int_0^1 \EXP{ \abs{V'(\Theta_{n-1}+ \gamma_n(g(\Theta_{n-1})+sD_n))(D_n)}}\, \mathrm{d}s < \infty, 
	\end{equation}
	\begin{equation}
	\label{Lyapunov_convergence_nonexplicit:assumption3}
	\EXP{V'(\Theta_{n-1}+ \gamma_n g(\Theta_{n-1}))(D_n)} = 0, \qquad V(\theta + rg(\theta)) \leq (1-c r)V(\theta), 
	\end{equation}
	\begin{equation}
	\label{Lyapunov_convergence_nonexplicit:assumption4}
	\begin{split}
	&\EXP{\abs{V'(\Theta_{n-1}+ \gamma_n(g(\Theta_{n-1})+tD_n))(D_n) - V'(\Theta_{n-1}+ \gamma_n g(\Theta_{n-1}))(D_n)} } \\
	&\leq \kappa\!\left((\gamma_n)^{k}+ \gamma_n\EXP{V(\Theta_{n-1})}\right), 
	\end{split} 
	\end{equation}
	\begin{equation}
	\label{Lyapunov_convergence_nonexplicit:assumption5}
	\text{and } \qquad \limsup_{l\to\infty} \gamma_l = 0 < \liminf_{l \to \infty}\left[\tfrac{(\gamma_l)^{k}-(\gamma_{l-1})^{k}}{(\gamma_l)^{k+1}} +\tfrac{c(\gamma_{l-1})^k}{2(\gamma_{l})^k} \right].
	\end{equation}
	Then there exists $C \in (0,\infty)$ such that for all $n \in \N_0$ it holds that
	\begin{equation}
	\label{Lyapunov_convergence_nonexplicit:conclusion1}
	\EXP{V(\Theta_n)} \leq C (\gamma_n)^{k}.
	\end{equation}
\end{cor}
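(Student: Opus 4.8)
The plan is to deduce Corollary~\ref{Lyapunov_convergence_nonexplicit} from Proposition~\ref{Lyapunov_convergence_explicit} in exactly the same manner in which Corollary~\ref{main_estimate_asymptotic} was deduced from Lemma~\ref{main_estimate}: the asymptotic hypotheses in \eqref{Lyapunov_convergence_nonexplicit:assumption5} will be used to produce a (possibly enlarged) index beyond which the quantitative smallness and summability conditions \eqref{Lyapunov_convergence_explicit:assumption5} of Proposition~\ref{Lyapunov_convergence_explicit} hold, and then Proposition~\ref{Lyapunov_convergence_explicit} will be applied with that index.

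First I would use $(\gamma_l)_{l\in\N_0}\subseteq(0,\infty)$ together with $\limsup_{l\to\infty}\gamma_l = 0$ to infer $\lim_{l\to\infty}\gamma_l = 0$, and then, using in addition the hypotheses $\liminf_{l\to\infty}\big[\tfrac{(\gamma_l)^{k}-(\gamma_{l-1})^{k}}{(\gamma_l)^{k+1}} + \tfrac{c(\gamma_{l-1})^k}{2(\gamma_{l})^k}\big] > 0$ and $\varrho\in(0,\nicefrac{1}{c}]$, to choose $\widetilde N\in\{N,N+1,\ldots\}$ large enough so that
\begin{equation}
\sup_{l\in\N\cap(\widetilde N,\infty)}\gamma_l \leq \min\!\big\{\tfrac{c}{2\kappa},\tfrac2c,\varrho\big\}
\qandq
\inf_{l\in\N\cap(\widetilde N,\infty)}\left[\tfrac{(\gamma_l)^{k}-(\gamma_{l-1})^{k}}{(\gamma_l)^{k+1}} + \tfrac{c(\gamma_{l-1})^k}{2(\gamma_{l})^k}\right] > 0.
\end{equation}
I would then let $C_0\in(0,\infty)$ denote the infimum appearing on the right-hand side of the second relation above; it is positive by the choice of $\widetilde N$ and finite since the infimum is bounded above by any single term of the sequence.

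Next I would verify that, with $N$ replaced by $\widetilde N$ and with $C$ replaced by $C_0$, all hypotheses of Proposition~\ref{Lyapunov_convergence_explicit} are satisfied. Indeed, \eqref{Lyapunov_convergence_explicit:assumption1}, \eqref{Lyapunov_convergence_explicit:assumption2}, and \eqref{Lyapunov_convergence_explicit:assumption4} are precisely \eqref{Lyapunov_convergence_nonexplicit:assumption1}, \eqref{Lyapunov_convergence_nonexplicit:assumption2}, and \eqref{Lyapunov_convergence_nonexplicit:assumption4} (which hold a fortiori for $n\in\N\cap(\widetilde N,\infty)$ since $\widetilde N\geq N$); the first identity in \eqref{Lyapunov_convergence_explicit:assumption3} is the first identity in \eqref{Lyapunov_convergence_nonexplicit:assumption3}; the inequality $V(\theta+\gamma_n g(\theta))\leq(1-c\gamma_n)V(\theta)$ in \eqref{Lyapunov_convergence_explicit:assumption3} follows from the inequality in \eqref{Lyapunov_convergence_nonexplicit:assumption3} applied with $r=\gamma_n$ (which is admissible because $\gamma_n\leq\varrho$ for $n\in\N\cap(\widetilde N,\infty)$); and \eqref{Lyapunov_convergence_explicit:assumption5} holds by the display above. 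Proposition~\ref{Lyapunov_convergence_explicit} then yields for all $n\in\N_0$ that
\begin{equation}
\EXP{V(\Theta_n)} \leq \Bigg[\max\!\left(\left\{\tfrac{\kappa}{C_0}\right\}\cup\left\{\tfrac{\EXP{V(\Theta_l)}}{(\gamma_l)^{k}}:l\in\{0,1,\ldots,\widetilde N\}\right\}\right)\Bigg](\gamma_n)^{k}.
\end{equation}
Setting $C$ equal to the maximum appearing in square brackets on the right-hand side above, which is a finite positive real since $\widetilde N<\infty$ and since $\EXP{V(\Theta_l)}<\infty$ for every $l\in\{0,1,\ldots,\widetilde N\}$ by \eqref{Lyapunov_convergence_nonexplicit:assumption1}, then establishes \eqref{Lyapunov_convergence_nonexplicit:conclusion1}.

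I do not expect any genuine obstacle: the argument is a routine ``asymptotic-to-eventually-quantitative'' reduction, entirely parallel to the proof of Corollary~\ref{main_estimate_asymptotic}. The only point requiring a little care is that the choice of $\widetilde N$ must simultaneously make $\gamma_n$ no larger than $\varrho$ (so that the $r$-uniform decay hypothesis \eqref{Lyapunov_convergence_nonexplicit:assumption3} can be specialized to $r=\gamma_n$, as Proposition~\ref{Lyapunov_convergence_explicit} requires) and no larger than $\min\{c/(2\kappa),2/c\}$; this is possible precisely because $\varrho>0$ and $\gamma_l\to0$ as $l\to\infty$.
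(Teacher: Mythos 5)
Your proposal is correct and follows essentially the same route as the paper's own proof: choose an enlarged index beyond which $\gamma_l\leq\min\{\nicefrac{c}{2\kappa},\varrho\}$ (hence also $\leq\nicefrac{2}{c}$, since $\varrho\leq\nicefrac{1}{c}$) and the infimum in \eqref{Lyapunov_convergence_nonexplicit:assumption5} is positive, specialize the decay hypothesis in \eqref{Lyapunov_convergence_nonexplicit:assumption3} to $r=\gamma_n$, and apply Proposition~\ref{Lyapunov_convergence_explicit} with that index, using \eqref{Lyapunov_convergence_nonexplicit:assumption1} to see that the resulting constant is finite. No gaps.
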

\begin{proof}[Proof of Corollary \ref{Lyapunov_convergence_nonexplicit}]
First, note that (\ref{Lyapunov_convergence_nonexplicit:assumption5}) ensures that there exists $M \in \{N,N+1,\ldots \}$ such that 
$\sup_{l \in  \N \cap (M,\infty)}\gamma_l \leq \min\{\nicefrac{c}{2\kappa}, \varrho \}$ and
\begin{equation}
\label{Lyapunov_convergence_nonexplicit:eq1}
\inf_{l \in  \N \cap (M,\infty)}\left[\frac{(\gamma_l)^{k}-(\gamma_{l-1})^{k}}{(\gamma_l)^{k+1}} + \frac{c (\gamma_{l-1})^k}{2 (\gamma_{l})^k} \right] > 0.
\end{equation}
Next observe that (\ref{Lyapunov_convergence_nonexplicit:assumption3})
and the fact that $\forall\, n \in \N \cap (M,\infty) \colon \gamma_n \leq \varrho$
demonstrate that for all $n \in \N \cap (M,\infty)$, $\theta \in \R^d$ it holds that
\begin{equation}
V(\theta + \gamma_n g(\theta) ) \leq (1-c\gamma_n)V(\theta).
\end{equation}
The fact that $\sup_{l \in  \N \cap (M,\infty)}\gamma_l \leq \min\{\tfrac{c}{2\kappa}, \varrho\} \leq \min\{\tfrac{c}{2\kappa}, \tfrac{2}{c}\}$, (\ref{Lyapunov_convergence_nonexplicit:assumption1})--(\ref{Lyapunov_convergence_nonexplicit:assumption4}), (\ref{Lyapunov_convergence_nonexplicit:eq1}), and Proposition \ref{Lyapunov_convergence_explicit} (with $N = M$ in the notation of Proposition  \ref{Lyapunov_convergence_explicit}) hence assure that for all $n \in \N_0$ it holds that
\begin{equation}
\begin{split}
&\EXP{V(\Theta_n)}\\
&\leq\max\!\Bigg( \Bigg\{\tfrac{\kappa}{\inf\limits_{l\in \N \cap (M,\infty)}\left[\frac{(\gamma_l)^{k}-(\gamma_{l-1})^{k}}{(\gamma_l)^{k+1}} + \frac{c (\gamma_{l-1})^k}{2 (\gamma_{l})^k} \right]}\Bigg\} \cup
\Bigg\{\tfrac{\mathbb{E}[{V(\Theta_l)}]}{(\gamma_l)^{k}} : l \in \{0,1,\ldots, M\}\! \Bigg\}
\Bigg)\,(\gamma_n)^{k}.
\end{split}
\end{equation}
Combining this with \eqref{Lyapunov_convergence_nonexplicit:assumption1} establishes \eqref{Lyapunov_convergence_nonexplicit:conclusion1}. Corollary \ref{Lyapunov_convergence_nonexplicit} is thus completed.
\end{proof}

\subsection[Strong $L^2$-convergence rate for SAAs]{Strong $L^2$-convergence rate for SAAs}
\label{subsection:L2}
\begin{prop}[Mean square error of stochastic approximation]
\label{L2_convergence_explicit}
Assume Setting~\ref{setting-Def}, let 
$\lll \cdot,\cdot \rrr \colon \R^d \times \R^d \to \R$
 be a scalar product, let 
 $\left \| \cdot \right \| \! \colon \R^d \to [0,\infty)$ 
 be the function which satisfies for all $\theta \in \R^d$ that $\norm{\theta} = \sqrt{\lll \theta,\theta \rrr}$, let $N\in\N_0$, $c, \kappa, C\in (0,\infty)$, $\vartheta \in \R^d$, assume for all $n \in \N\cap(N,\infty)$, $A \in \mathbb{F}_{n-1}$ with $\Exp{\norm{D_n}} < \infty$ that $\Exp{D_n \ind{A}} = 0$, 
 and assume for all $n \in \N$, $\theta \in \R^d$ that 
\begin{equation}
\label{L2_convergence_explicit:assumption1}
 \EXP{\norm{\Theta_0}^2 } < \infty, \qquad \EXP{\norm{D_n}^2 } \leq \kappa \big(1 +\EXP{\norm{\Theta_{n-1}-\vartheta}^2}\big),
 \end{equation}
\begin{equation}
\label{L2_convergence_explicit:assumption2}
\sup_{l \in  \N \cap (N,\infty)}\gamma_l \leq \min\!\big\{\tfrac{c}{4\kappa}, c \big\}, \qquad C=\inf_{l \in  \N \cap (N,\infty)}\left[\tfrac{\gamma_l-\gamma_{l-1}}{(\gamma_l)^{2}} + \tfrac{c \gamma_{l-1}}{2 \gamma_{l}} \right],
\end{equation}
\begin{equation}
\label{L2_convergence_explicit:assumption3}
\text{and} \qquad \lll \theta - \vartheta, g(\theta)  \rrr   \leq - c\max\! \left\{ \norm{\theta - \vartheta}^2, \norm{g(\theta)}^2 \right\}.
\end{equation}
Then it holds for all $n \in  \N_0$ that
\begin{equation}
\begin{split}
\EXP{\norm{\Theta_n-\vartheta}^2}
&\leq \gamma_n \max\!\left(\left\{\tfrac{2 \kappa}{C} \right\} \cup
\left\{\tfrac{\E[\norm{\Theta_l-\vartheta}^2] }{\gamma_l} : l \in \{0,1,\ldots, N\} \right\}  
\right)<\infty.
\end{split}
\end{equation}
\end{prop}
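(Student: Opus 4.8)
The plan is to obtain Proposition~\ref{L2_convergence_explicit} as a special case of Proposition~\ref{Lyapunov_convergence_explicit}, applied to the Lyapunov-type function $V \colon \R^d \to [0,\infty)$ given by $V(\theta) = \norm{\theta - \vartheta}^2$ and with the parameters of Proposition~\ref{Lyapunov_convergence_explicit} chosen to be $N$, $k = 1$, $2\kappa$ (in place of $\kappa$), $c$, and $C$. As preparation I would first extract from \eqref{L2_convergence_explicit:assumption3} the structural consequences provided by Lemma~\ref{fcond} (used with $c_1 = c_2 = c$): one obtains $g(\vartheta) = 0$, the inequality $c^2 \le 1$ (so that $c \le 1 \le \nicefrac{2}{c}$), the two-sided linear bound $c \norm{\theta - \vartheta} \le \norm{g(\theta)} \le \tfrac1c \norm{\theta - \vartheta}$ for all $\theta \in \R^d$, and the Euler-stability estimate $\norm{\theta + r g(\theta) - \vartheta}^2 \le (1 - cr) \norm{\theta - \vartheta}^2$ for all $\theta \in \R^d$ and all $r \in [0,c]$. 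Combining the linear growth of $g$ with \eqref{L2_convergence_explicit:assumption1}, the recursion \eqref{setting:eq1}, and the Minkowski and Cauchy--Schwarz inequalities, a straightforward induction shows that $\Exp{\norm{\Theta_m - \vartheta}^2} < \infty$ for every $m \in \N_0$; this is needed so that Proposition~\ref{Lyapunov_convergence_explicit} is applicable at all (note also that $\Exp{\norm{D_n}^2}<\infty$ gives $\Exp{\norm{D_n}}<\infty$, so the conditional-centering hypothesis of Proposition~\ref{L2_convergence_explicit} yields $\Exp{D_n \ind{A}} = 0$ for all $A \in \mathbb{F}_{n-1}$ and all $n \in \N \cap (N,\infty)$). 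Moreover, Lemma~\ref{derivative_of_norm} (applied with $-\vartheta$ in place of $\vartheta$ and $p = 2$) gives that $V \in C^1(\R^d, [0,\infty))$ and that $V'(\theta)(v) = 2 \lll \theta - \vartheta, v \rrr$ for all $\theta, v \in \R^d$.

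Next I would verify hypotheses \eqref{Lyapunov_convergence_explicit:assumption1}--\eqref{Lyapunov_convergence_explicit:assumption5} of Proposition~\ref{Lyapunov_convergence_explicit} in turn. The integrability conditions \eqref{Lyapunov_convergence_explicit:assumption1} and \eqref{Lyapunov_convergence_explicit:assumption2} follow from the finiteness of the second moments of $\Theta_{n-1}$, $g(\Theta_{n-1})$, and $D_n$ together with the Cauchy--Schwarz inequality, using $\abs{V'(x)(v)} \le 2 \norm{x - \vartheta} \norm{v}$. For the first identity in \eqref{Lyapunov_convergence_explicit:assumption3} I would use that $\Theta_{n-1} + \gamma_n g(\Theta_{n-1}) - \vartheta$ is $\mathbb{F}_{n-1}/\mathcal{B}(\R^d)$-measurable (by Setting~\ref{setting-Def} and the remark following it) and that $\Exp{D_n \ind{A}} = 0$ for all $A \in \mathbb{F}_{n-1}$, so that Lemma~\ref{Orthogonality} (with $X = D_n$, $Y = \Theta_{n-1} + \gamma_n g(\Theta_{n-1}) - \vartheta$, $\mathcal{G} = \mathbb{F}_{n-1}$, and $A = \Omega$) yields $\Exp{V'(\Theta_{n-1} + \gamma_n g(\Theta_{n-1}))(D_n)} = 2 \Exp{\lll \Theta_{n-1} + \gamma_n g(\Theta_{n-1}) - \vartheta, D_n \rrr} = 0$; the second inequality in \eqref{Lyapunov_convergence_explicit:assumption3}, namely $V(\theta + \gamma_n g(\theta)) \le (1 - c\gamma_n) V(\theta)$, is the Euler-stability estimate noted above, which applies since $\gamma_n \le \sup_{l \in \N \cap (N,\infty)} \gamma_l \le c$. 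For \eqref{Lyapunov_convergence_explicit:assumption4} the crucial observation is the identity $V'(a)(v) - V'(b)(v) = 2 \lll a - b, v \rrr$; taking $a = \Theta_{n-1} + \gamma_n(g(\Theta_{n-1}) + t D_n)$ and $b = \Theta_{n-1} + \gamma_n g(\Theta_{n-1})$ gives $a - b = t \gamma_n D_n$ and hence an integrand equal to $2 t \gamma_n \norm{D_n}^2 \le 2\gamma_n \norm{D_n}^2$, whose expectation is bounded by $2\gamma_n \kappa (1 + \Exp{\norm{\Theta_{n-1} - \vartheta}^2}) = (2\kappa) \big( (\gamma_n)^1 + \gamma_n \Exp{V(\Theta_{n-1})} \big)$, which is exactly the right-hand side of \eqref{Lyapunov_convergence_explicit:assumption4} with $\kappa$ replaced by $2\kappa$ and $k = 1$. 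Finally, \eqref{Lyapunov_convergence_explicit:assumption5} holds because $\sup_{l \in \N \cap (N,\infty)} \gamma_l \le \min\{\tfrac{c}{4\kappa}, c\} \le \min\{\tfrac{c}{2(2\kappa)}, \tfrac{2}{c}\}$ (here $c \le \nicefrac2c$ since $c \le 1$) and because, for $k = 1$ and the same $c$, the constant $C$ appearing in \eqref{L2_convergence_explicit:assumption2} coincides verbatim with the one prescribed in \eqref{Lyapunov_convergence_explicit:assumption5}.

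With all hypotheses checked, Proposition~\ref{Lyapunov_convergence_explicit} yields for every $n \in \N_0$ the estimate $\Exp{V(\Theta_n)} \le \big[\max\big(\{\tfrac{2\kappa}{C}\} \cup \{\tfrac{\Exp{V(\Theta_l)}}{\gamma_l} : l \in \{0,1,\dots,N\}\}\big)\big] \gamma_n < \infty$, which, upon substituting $V(\theta) = \norm{\theta - \vartheta}^2$, is precisely the claimed bound. The proof is therefore essentially a matter of bookkeeping; the only steps requiring genuine care are the a priori $L^2$-integrability of each $\Theta_n$ (which relies on the two-sided linear bound on $g$ from Lemma~\ref{fcond} and an induction over $n$) and the constant-chasing needed to match \eqref{Lyapunov_convergence_explicit:assumption4}--\eqref{Lyapunov_convergence_explicit:assumption5}, in particular the appearance of $2\kappa$ in place of $\kappa$ and the use of $c \le 1$ to absorb the factor $c$ into $\min\{\tfrac{c}{2(2\kappa)}, \tfrac{2}{c}\}$.
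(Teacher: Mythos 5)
Your proposal is correct and follows essentially the same route as the paper's own proof: the same Lyapunov function $V(\theta)=\norm{\theta-\vartheta}^2$, the same preparatory use of Lemma~\ref{fcond} and Lemma~\ref{derivative_of_norm}, the inductive $L^2$-integrability of $\Theta_n$ and $D_n$, the centering via Lemma~\ref{Orthogonality}, and the application of Proposition~\ref{Lyapunov_convergence_explicit} with $k=1$ and $2\kappa$ in place of $\kappa$. The constant-matching you describe (including $c\le 1$ so that $\min\{\tfrac{c}{4\kappa},c\}\le\min\{\tfrac{c}{4\kappa},\tfrac{2}{c}\}$, and the coincidence of $C$ with the constant in \eqref{Lyapunov_convergence_explicit:assumption5} for $k=1$) is exactly how the paper proceeds.
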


\begin{proof}[Proof of Proposition \ref{L2_convergence_explicit}]
Throughout this proof let $V \colon \R^d\to [0,\infty)$ be the function which satisfies for all $\theta\in \R^d$ that 
\begin{equation}
V(\theta) = \norm{\theta-\vartheta}^2.
\end{equation}
Observe that (\ref{L2_convergence_explicit:assumption3}) and Lemma \ref{fcond} imply that for all $\theta \in \R^d$, $r\in [0,c]$ it holds that
\begin{equation}
\label{L2_convergence_explicit:eq1}
c \leq 1 \leq \nicefrac{1}{c}, \qquad \norm{g(\theta)} \leq \tfrac{1}{c} \norm{\theta-\vartheta}, \qquad\text{and} 
\end{equation}
\begin{equation}
V(\theta + r g(\theta)) = \norm{\theta + r g(\theta)-\vartheta}^2 \leq (1-cr)\norm{\theta -\vartheta}^2 = (1-cr)V(\theta).
\end{equation} 
This and \eqref{L2_convergence_explicit:assumption2} ensure that  for all $n \in \N \cap (N,\infty)$, $\theta \in \R^d$ it holds that
\begin{equation}
\label{L2_convergence_explicit:eq7}
V(\theta + \gamma_n g(\theta)) \leq (1-c\gamma_n)V(\theta).
\end{equation}
In addition, note that \eqref{L2_convergence_explicit:assumption2} and \eqref{L2_convergence_explicit:eq1} show that
\begin{equation}
\sup_{l \in  \N \cap (N,\infty)}\gamma_l 
\leq 
\min\!\big\{\tfrac{c}{4\kappa}, c \big\}
\leq 
\min\!\big\{\tfrac{c}{4\kappa}, 1 \big\}
\leq 
\min\!\big\{\tfrac{c}{4\kappa}, \tfrac{1}{c} \big\}
\leq 
\min\!\big\{\tfrac{c}{4\kappa}, \tfrac{2}{c} \big\}.
\end{equation}
Next we claim that for all $n \in \N$ it holds that
\begin{equation}
\label{L2_convergence_explicit:eq3}
\EXP{V(\Theta_{n-1})} = \EXP{\norm{\Theta_{n-1}-\vartheta}^2} < \infty \qquad \text{and} \qquad \EXP{\norm{D_n}^2} < \infty.
\end{equation} 
We now prove (\ref{L2_convergence_explicit:eq3}) by induction on $n \in \N$. For the base case $n = 1$ note that (\ref{L2_convergence_explicit:assumption1}) implies that
\begin{equation}
\EXP{\norm{\Theta_{0}-\vartheta}^2} < \infty \qquad \text{and} \qquad \EXP{\norm{D_1}^2} \leq  \kappa \big(1 +\EXP{\norm{\Theta_{0}-\vartheta}^2}\big) < \infty.
\end{equation}
This establishes (\ref{L2_convergence_explicit:eq3}) in the base case $n=1$.
For the induction step $\N \ni n \to n+1 \in \{ 2,3, \ldots \}$ observe that
\eqref{setting:eq1} and
 (\ref{L2_convergence_explicit:eq1}) ensure that for all $n \in \N$ with
$\EXP{V(\Theta_{n-1})+\norm{D_n}^2}<\infty$ it holds that
\begin{equation}
\begin{split}
\EXP{\norm{\Theta_{n}-\vartheta}^2} &= \EXP{\norm{\Theta_{n-1} +\gamma_n (g(\Theta_{n-1})+D_n) - \vartheta}^2}  \\
&\leq \EXPP{\big(\norm{\Theta_{n-1}  - \vartheta}+\gamma_n\norm{g(\Theta_{n-1})} + \gamma_n\norm{D_n}\big)^2} \\ 
&\leq \EXPP{ \big( (1+\tfrac{\gamma_n}{c})\norm{\Theta_{n-1} - \vartheta} + \gamma_n\norm{D_n}\big)^2} < \infty.
\end{split}
\end{equation}
This and (\ref{L2_convergence_explicit:assumption1}) imply that for all $n \in \N$ with
$\EXP{V(\Theta_{n-1})+\norm{D_n}^2}<\infty$ it holds that
\begin{equation}
\EXP{\norm{D_{n+1}}^2} 
\leq 
\kappa \big(1 +\EXP{\norm{\Theta_{n}-\vartheta}^2}\big)
 =\kappa \big(1 +\Exp{V(\Theta_n)}\big)  < \infty.
\end{equation}
Induction thus proves (\ref{L2_convergence_explicit:eq3}).
Next note that Lemma \ref{derivative_of_norm} implies that for all $\theta,v \in \R^d$ it holds that
\begin{equation}
\label{L2_convergence_explicit:eq2}
V \in C^1(\R^d,[0,\infty)) \qandq V'(\theta)(v)= 2\lll\theta-\vartheta, v\rrr.
\end{equation}
Furthermore, observe that (\ref{L2_convergence_explicit:eq1}) and (\ref{L2_convergence_explicit:eq3}) prove that for all $n \in \N$ it holds that
\begin{equation}
\begin{split}
\EXP{\norm{\Theta_{n-1} +\gamma_n g(\Theta_{n-1}) - \vartheta}^2} 
&\leq \EXP{(\norm{\Theta_{n-1}- \vartheta} +\gamma_n \norm{g(\Theta_{n-1})})^2} \\
&\leq \EXP{(\norm{\Theta_{n-1}- \vartheta} +\tfrac{\gamma_n}{c} \norm{\Theta_{n-1}- \vartheta})^2} \\
&= \EXP{([1+\tfrac{\gamma_n}{c}] \,\norm{\Theta_{n-1}- \vartheta})^2} \\
&=[1+\tfrac{\gamma_n}{c}]^2 \,\EXP{\norm{\Theta_{n-1}- \vartheta}^2} \\
&=[1+\tfrac{\gamma_n}{c}]^2 \,\EXP{V(\Theta_{n-1})}<\infty.
\end{split}
\end{equation}
The Cauchy-Schwarz inequality, (\ref{L2_convergence_explicit:eq3}), and (\ref{L2_convergence_explicit:eq2}) therefore ensure that for all $n \in \N$ it holds that
\begin{equation}
\label{L2_convergence_explicit:eq4}
\begin{split}
&\EXP{\abs{V'(\Theta_{n-1}+ \gamma_n g(\Theta_{n-1}))(D_n)}}  = \EXP{2\abs{\lll\Theta_{n-1}+ \gamma_n g(\Theta_{n-1})-\vartheta,D_n \rrr}}  \\
& \leq 2\,\EXP{\norm{\Theta_{n-1}+ \gamma_n g(\Theta_{n-1})-\vartheta}\norm{D_n}} \\
& \leq 2\,\big(\EXP{\norm{\Theta_{n-1}+ \gamma_n g(\Theta_{n-1})-\vartheta}^2}\big)^{\nicefrac{1}{2}} \, \big(\EXP{\norm{D_n}^2}\big)^{\nicefrac{1}{2}} < \infty.
\end{split}
\end{equation}
Combining \eqref{L2_convergence_explicit:eq3} and \eqref{L2_convergence_explicit:eq2} hence demonstrates that for all $n \in \N$ it holds that
\begin{equation}
\label{L2_convergence_explicit:eq5}
\begin{split}
&\int_0^1 \EXP{\abs{V'(\Theta_{n-1}+ \gamma_n(g(\Theta_{n-1})+sD_n))(D_n)}}\,\mathrm{d}s \\
&  = \int_0^1  \EXP{2\abs{\lll\Theta_{n-1}+ \gamma_n (g(\Theta_{n-1}) +s D_n)-\vartheta,D_n \rrr}}\,\mathrm{d}s \\
&  = \int_0^1  \EXP{2\abs{\lll\Theta_{n-1}+ \gamma_n g(\Theta_{n-1})-\vartheta,D_n \rrr + s\gamma_n \norm{D_n}^2}}\,\mathrm{d}s \\
&  \leq \EXP{2\abs{\lll\Theta_{n-1}+ \gamma_n g(\Theta_{n-1})-\vartheta,D_n \rrr}} + \gamma_n \,\EXP{\norm{D_n}^2} \\
&  \leq 2\big(\,\EXP{\norm{\Theta_{n-1}+ \gamma_n g(\Theta_{n-1})-\vartheta}^2}\big)^{\nicefrac{1}{2}} \, \big(\EXP{\norm{D_n}^2}\big)^{\nicefrac{1}{2}} + \gamma_n\,\EXP{\norm{D_n}^2} < \infty.
\end{split}
\end{equation}
Moreover, observe that the fact that for all $n \in \N$ it holds that the function $\Theta_{n-1}$ is $\mathbb{F}_{n-1}/\mathcal{B}(\R^d)$-measurable, \eqref{L2_convergence_explicit:eq3}, the fact that for all $n\in \N\cap (N,\infty)$, $A \in \F_{n-1}$ it holds that $\Exp{D_n\ind{A}} = 0$, \eqref{L2_convergence_explicit:eq2}, (\ref{L2_convergence_explicit:eq4}), and Lemma \ref{Orthogonality} prove that for all $n \in \N \cap (N,\infty)$ it holds that
\begin{equation}
\label{L2_convergence_explicit:eq6}
\EXP{V'(\Theta_{n-1}+ \gamma_n g(\Theta_{n-1}))(D_n)} = 2\,\EXP{\lll\Theta_{n-1}+ \gamma_n g(\Theta_{n-1})-\vartheta,D_n \rrr} = 0.
\end{equation}
Furthermore, note that (\ref{L2_convergence_explicit:assumption1}) and \eqref{L2_convergence_explicit:eq2} imply that for all $n \in \N$, $t \in [0,1]$ it holds that
\begin{equation}
\label{L2_convergence_explicit:eq8}
\begin{split}
&\EXP{\abs{V'(\Theta_{n-1}+ \gamma_n(g(\Theta_{n-1})+tD_n))(D_n) - V'(\Theta_{n-1}+ \gamma_n g(\Theta_{n-1}))(D_n)} } \\
 &= \EXP{2\abs{\lll\Theta_{n-1}+ \gamma_n(g(\Theta_{n-1})+tD_n)- (\Theta_{n-1}+ \gamma_n g(\Theta_{n-1}) ),D_n \rrr}} \\
&=\EXP{2t\gamma_n\norm{D_n}^2} =2t\gamma_n\,\EXP{\norm{D_n}^2} \leq 2\gamma_n\,\EXP{\norm{D_n}^2}\\
&\leq 2\gamma_n \kappa\big(1+\EXP{\norm{\Theta_{n-1}-\vartheta}^2}\big) = 2\kappa\big(\gamma_n + \gamma_n \,\EXP{V(\Theta_{n-1})}\big).
\end{split}
\end{equation}
Combining the fact that $\sup_{l \in  \N \cap (N,\infty)}\gamma_l 
\leq 
\min\!\big\{\tfrac{c}{4\kappa}, \tfrac{2}{c} \big\}$, (\ref{L2_convergence_explicit:assumption2}), (\ref{L2_convergence_explicit:eq7}), (\ref{L2_convergence_explicit:eq3}), (\ref{L2_convergence_explicit:eq4}), (\ref{L2_convergence_explicit:eq5}), and (\ref{L2_convergence_explicit:eq6}) with Proposition \ref{Lyapunov_convergence_explicit} (with $N = N$, $k = 1$, $\kappa = 2 \kappa$, $c = c$, and $V= V$ in the notation of Proposition \ref{Lyapunov_convergence_explicit}) therefore demonstrates that for all $n \in \N_0$ it holds that
\begin{equation}
\label{168intro}
\begin{split}
&\EXP{\norm{\Theta_n-\vartheta}^2}\\
& = \EXP{V(\Theta_n)} \\
 &\leq \gamma_n \max\!\left(\left\{\frac{2 \kappa}{C}\right\}  \cup
\left\{\frac{\EXP{V(\Theta_l)} }{\gamma_l} : l \in \{0,1,\ldots, N\} \right\}  
\right) \\
&= \gamma_n \max\!\left(\left\{\frac{2 \kappa}{C}\right\} \cup
\left\{\frac{\EXP{\norm{\Theta_l-\vartheta}^2} }{\gamma_l} : l \in \{0,1,\ldots, N\} \right\}  
\right) <\infty.
\end{split}
\end{equation}
The proof of Proposition \ref{L2_convergence_explicit} is thus completed.
\end{proof}

\begin{cor}
\label{L2_convergence_nonexplicit}
Assume Setting~\ref{setting-Def}, let 
$\lll \cdot,\cdot \rrr \colon \R^d \times \R^d \to \R$
 be a scalar product, let 
 $\left \| \cdot \right \| \! \colon \R^d \to [0,\infty)$ 
 be the function which satisfies for all $\theta \in \R^d$ that $\norm{\theta} = \sqrt{\lll \theta,\theta \rrr}$, let $c, \kappa\in (0,\infty)$, $\vartheta \in \R^d$, assume for all $n \in \N$, $A \in \mathbb{F}_{n-1}$ with $\Exp{\norm{D_n}} < \infty$ that $\Exp{D_n \ind{A}} = 0$, 
 and assume for all $n \in \N$, $\theta \in \R^d$ that 
\begin{equation}
\label{L2_convergence_nonexplicit:assumption1}
 \EXP{\norm{\Theta_0}^2 } < \infty, \qquad \EXP{\norm{D_n}^2 } \leq \kappa \big(1 +\EXP{\norm{\Theta_{n-1}-\vartheta}^2}\big),
 \end{equation}
\begin{equation}
\label{L2_convergence_nonexplicit:assumption2}
\limsup_{l\to\infty} \gamma_l = 0 < \liminf_{l \to \infty}\left[\tfrac{\gamma_l-\gamma_{l-1}}{(\gamma_l)^2} + \tfrac{c \gamma_{l-1}}{2 \gamma_{l}} \right],
\end{equation}
\begin{equation}
\label{L2_convergence_nonexplicit:assumption3}
\text{and} \qquad \lll \theta - \vartheta, g(\theta)  \rrr   \leq - c\max\! \left\{ \norm{\theta - \vartheta}^2, \norm{g(\theta)}^2 \right\}.
\end{equation}
Then there exists $C \in (0,\infty)$ such that for all $n \in \N_0$ it holds that
\begin{equation}
\EXP{\norm{\Theta_n-\vartheta}^2} \leq C \gamma_n.
\end{equation}
\end{cor}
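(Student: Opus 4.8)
The plan is to reduce Corollary~\ref{L2_convergence_nonexplicit} to the explicit estimate in Proposition~\ref{L2_convergence_explicit}, in exactly the same spirit in which Corollary~\ref{main_estimate_asymptotic} is deduced from Lemma~\ref{main_estimate} and Corollary~\ref{Lyapunov_convergence_nonexplicit} is deduced from Proposition~\ref{Lyapunov_convergence_explicit}. The point is that the asymptotic hypothesis \eqref{L2_convergence_nonexplicit:assumption2} is tailor-made to produce a finite cutoff index beyond which the uniform bounds \eqref{L2_convergence_explicit:assumption2} of Proposition~\ref{L2_convergence_explicit} hold, while the remaining hypotheses of Corollary~\ref{L2_convergence_nonexplicit} are literally the remaining hypotheses of Proposition~\ref{L2_convergence_explicit}.

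First I would use \eqref{L2_convergence_nonexplicit:assumption2} to fix the cutoff. Since $\limsup_{l\to\infty}\gamma_l = 0$, there exists $N_1 \in \N_0$ with $\gamma_l \le \min\{\nicefrac{c}{4\kappa}, c\}$ for all $l \in \N \cap (N_1,\infty)$; since $\liminf_{l\to\infty}[\tfrac{\gamma_l - \gamma_{l-1}}{(\gamma_l)^2} + \tfrac{c\gamma_{l-1}}{2\gamma_l}] > 0$, there exists $N_2 \in \N_0$ with $\inf_{l \in \N \cap (N_2,\infty)}[\tfrac{\gamma_l - \gamma_{l-1}}{(\gamma_l)^2} + \tfrac{c\gamma_{l-1}}{2\gamma_l}] > 0$. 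Setting $N = \max\{N_1, N_2\}$ and $\tilde C = \inf_{l \in \N \cap (N,\infty)}[\tfrac{\gamma_l - \gamma_{l-1}}{(\gamma_l)^2} + \tfrac{c\gamma_{l-1}}{2\gamma_l}] \in (0,\infty)$, the hypotheses \eqref{L2_convergence_explicit:assumption1}--\eqref{L2_convergence_explicit:assumption3} of Proposition~\ref{L2_convergence_explicit} are then in force with this choice of $N$ and with $C = \tilde C$ (indeed \eqref{L2_convergence_nonexplicit:assumption1} and \eqref{L2_convergence_nonexplicit:assumption3} coincide with \eqref{L2_convergence_explicit:assumption1} and \eqref{L2_convergence_explicit:assumption3}, the orthogonality hypothesis on $D_n$ for $A \in \mathbb{F}_{n-1}$ carries over verbatim, and \eqref{L2_convergence_explicit:assumption2} holds by the choice of $N$ and $\tilde C$).

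Applying Proposition~\ref{L2_convergence_explicit} then yields for all $n \in \N_0$ that
\begin{equation}
\EXP{\norm{\Theta_n - \vartheta}^2} \leq \gamma_n \max\!\left(\left\{\tfrac{2\kappa}{\tilde C}\right\} \cup \left\{\tfrac{\EXP{\norm{\Theta_l - \vartheta}^2}}{\gamma_l} : l \in \{0,1,\ldots,N\}\right\}\right) < \infty ,
\end{equation}
so it suffices to put $C = \max\big(\{\tfrac{2\kappa}{\tilde C}\} \cup \{\EXP{\norm{\Theta_l - \vartheta}^2}/\gamma_l : l \in \{0,1,\ldots,N\}\}\big) \in (0,\infty)$, where finiteness of this maximum is part of the conclusion of Proposition~\ref{L2_convergence_explicit} (equivalently, it follows from \eqref{L2_convergence_nonexplicit:assumption1}, the recursion \eqref{setting:eq1}, and item~\eqref{fcond:item2} of Lemma~\ref{fcond}, which together give $\EXP{\norm{\Theta_l - \vartheta}^2} < \infty$ for every $l \in \N_0$). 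There is essentially no obstacle in this argument; the only step requiring a line of care is the passage from the $\limsup$/$\liminf$ conditions in \eqref{L2_convergence_nonexplicit:assumption2} to the existence of a single index $N$ validating the uniform bounds \eqref{L2_convergence_explicit:assumption2}, which is immediate from the definitions of $\limsup$ and $\liminf$.
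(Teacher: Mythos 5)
Your proposal is correct and follows essentially the same route as the paper: extract a cutoff index $N$ from the $\limsup$/$\liminf$ hypothesis \eqref{L2_convergence_nonexplicit:assumption2} so that the uniform conditions \eqref{L2_convergence_explicit:assumption2} hold beyond $N$, and then apply Proposition~\ref{L2_convergence_explicit} with that $N$, taking the resulting maximum (finite by the proposition's conclusion) as the constant $C$.
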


\begin{proof}[Proof of Corollary \ref{L2_convergence_nonexplicit}]
Observe that (\ref{L2_convergence_nonexplicit:assumption2}) ensures that there exists $N \in \N_0$ such that
\begin{equation}
\sup_{l \in  \N \cap (N,\infty)}\gamma_l \leq \min\!\bigg\{\frac{c}{4\kappa}, c \bigg\} \qandq \inf_{l \in  \N \cap (N,\infty)}\!\left[\frac{\gamma_l-\gamma_{l-1}}{(\gamma_l)^{2}} + \frac{c \gamma_{l-1}}{2 \gamma_{l}} \right] > 0.
\end{equation}
Proposition \ref{L2_convergence_explicit} therefore establishes that for all $n \in \N_0$ it holds that
\begin{equation}
\begin{split}
&\EXP{\norm{\Theta_n-\vartheta}^2}  \\
&\leq \gamma_n \max\!\left(\left\{\frac{2 \kappa}{\inf\limits_{l\in \N \cap (N,\infty)}\left[\frac{\gamma_l-\gamma_{l-1}}{(\gamma_l)^2} +\frac{c \gamma_{l-1}}{2\gamma_{l}} \right]}\right\} \cup
\Bigg\{\tfrac{\mathbb E[\norm{\Theta_l-\vartheta}^2] }{\gamma_l} : l \in \{0,1,\ldots, N\} \Bigg\}  
\right) <\infty.
\end{split}
\end{equation}
This completes the proof of Corollary \ref{L2_convergence_nonexplicit}.
\end{proof}

\subsection[Strong $L^p$-convergence rate for SAAs]{Strong $L^p$-convergence rate for SAAs}
\label{subsection:Lp}
\begin{prop}[$L^p$-convergence rate for stochastic approximation]
\label{Lp_convergence}
Assume Setting~\ref{setting-Def}, let 
$\lll \cdot,\cdot \rrr \colon \R^d \times \R^d \to \R$
 be a scalar product, let 
 $\left \| \cdot \right \| \! \colon \R^d \to [0,\infty)$ 
 be the function which satisfies for all $\theta \in \R^d$ that $\norm{\theta} = \sqrt{\lll \theta,\theta \rrr}$, let $p \in \{2,4,6,\ldots \}$, $c, \kappa \in (0,\infty)$, $\vartheta \in \R^d$, assume for all $n \in \N$, $A \in \mathbb{F}_{n-1}$ with $\Exp{\norm{D_n}} < \infty$ that $\Exp{D_n \mathbbm{1}_A} = 0$,
and assume for all $n \in \N$, $A \in \mathbb{F}_{n-1}$, $\theta \in \R^d$ that 
\begin{equation}
\label{Lp_convergence:assumption1}
 \EXP{\norm{\Theta_0}^p} < \infty, \qquad \EXP{\norm{D_n}^p \ind{A}} \leq \kappa\,\EXP{(1 +\norm{\Theta_{n-1}-\vartheta}^p) \mathbbm{1}_A},
 \end{equation}
\begin{equation}
\label{Lp_convergence:assumption2}
\quad \limsup_{l\to\infty} \gamma_l = 0 < \min_{k \in \{1,2,\ldots,\nicefrac{p}{2}\}}\!\left(\liminf_{l \to \infty}\left[\tfrac{(\gamma_l)^{k}-(\gamma_{l-1})^{k}}{(\gamma_l)^{k+1}} +\tfrac{c(\gamma_{l-1})^k}{2(\gamma_{l})^k} \right] \right),
\end{equation}
\begin{equation}
\label{Lp_convergence:assumption3}
\text{and} \qquad \lll \theta - \vartheta, g(\theta)  \rrr   \leq - c\max\!\big\{ \norm{\theta - \vartheta}^2, \norm{g(\theta)}^2 \big\}.
\end{equation}
Then there exists $C \in (0,\infty)$ such that for all $n \in \N_0$ it holds that
\begin{equation}
\big(\EXP{\norm{\Theta_n-\vartheta}^p}\big)^{\nicefrac{1}{p}} \leq C (\gamma_n)^{\nicefrac{1}{2}}.
\end{equation}
\end{prop}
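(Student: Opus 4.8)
The plan is to establish, by induction along the even integers $q\in\{2,4,6,\dots\}\cap[2,p]$, that there exists $C_q\in(0,\infty)$ with $\EXP{\norm{\Theta_n-\vartheta}^q}\leq C_q(\gamma_n)^{q/2}$ for all $n\in\N_0$; the case $q=p$ combined with Jensen's inequality then gives the claim. Two preliminary facts are used throughout. First, Lemma~\ref{fcond} applied to \eqref{Lp_convergence:assumption3} with $c_1=c_2=c$ (item~(\ref{fcond:item2})) yields $\norm{g(\theta)}\leq\tfrac1c\norm{\theta-\vartheta}$ for all $\theta\in\R^d$; together with \eqref{Lp_convergence:assumption1} (with $A=\Omega$) an induction on $n$ then shows that $\EXP{\norm{\Theta_n-\vartheta}^p}<\infty$ and $\EXP{\norm{D_n}^p}<\infty$ for all $n\in\N$ (cf.\ \eqref{L2_convergence_explicit:eq3}). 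Second, for every $r\in(0,p]$ conditional Jensen's inequality and the subadditivity of $t\mapsto t^{r/p}$ turn \eqref{Lp_convergence:assumption1} into $\EXP{\norm{D_n}^r\ind{A}}\leq\kappa^{r/p}\EXP{(1+\norm{\Theta_{n-1}-\vartheta}^r)\ind{A}}$ for all $A\in\mathbb{F}_{n-1}$; specialising to $r=2$, $A=\Omega$ shows that Corollary~\ref{L2_convergence_nonexplicit} is applicable (its learning-rate hypothesis being \eqref{Lp_convergence:assumption2} with $k=1$), which proves the base case $q=2$.

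For the induction step fix an even $q\in\{4,6,\dots\}\cap[2,p]$, assume the claim for $q-2$, and apply Corollary~\ref{Lyapunov_convergence_nonexplicit} with $k=q/2$, $\varrho=\min\{c,\tfrac1c\}$, and the Lyapunov function $V\colon\R^d\to[0,\infty)$ given by $V(\theta)=\norm{\theta-\vartheta}^q$. Lemma~\ref{derivative_of_norm} gives $V\in C^1(\R^d,[0,\infty))$ and $V'(\theta)(v)=q\norm{\theta-\vartheta}^{q-2}\lll\theta-\vartheta,v\rrr$. The contraction $V(\theta+rg(\theta))\leq(1-cr)V(\theta)$ for $r\in[0,\varrho]$ follows from item~(\ref{fcond:item4}) in Lemma~\ref{fcond} together with $0\leq1-cr\leq1$ and $(1-cr)^{q/2}\leq1-cr$. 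The centring identity $\EXP{V'(\Theta_{n-1}+\gamma_n g(\Theta_{n-1}))(D_n)}=0$ follows from Lemma~\ref{Orthogonality} applied with $X=D_n$ and the $\mathbb{F}_{n-1}$-measurable vector $Y=q\norm{\Theta_{n-1}+\gamma_n g(\Theta_{n-1})-\vartheta}^{q-2}\big(\Theta_{n-1}+\gamma_n g(\Theta_{n-1})-\vartheta\big)$, using $\lll X,Y\rrr=V'(\Theta_{n-1}+\gamma_n g(\Theta_{n-1}))(D_n)$ and $\EXP{D_n\ind{A}}=0$ for $A\in\mathbb{F}_{n-1}$; the integrability requirements \eqref{Lyapunov_convergence_nonexplicit:assumption1}--\eqref{Lyapunov_convergence_nonexplicit:assumption2} then follow from the preliminary moment bounds via the Cauchy--Schwarz and Hölder inequalities, in analogy with \eqref{L2_convergence_explicit:eq4}--\eqref{L2_convergence_explicit:eq5}.

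The key step is the remainder bound \eqref{Lyapunov_convergence_nonexplicit:assumption4}. Writing $Z_n=\Theta_{n-1}+\gamma_n g(\Theta_{n-1})-\vartheta$ and using $\lll Z_n+t\gamma_n D_n,D_n\rrr=\lll Z_n,D_n\rrr+t\gamma_n\norm{D_n}^2$, the integrand in \eqref{Lyapunov_convergence_nonexplicit:assumption4} equals $q\big(\norm{Z_n+t\gamma_n D_n}^{q-2}-\norm{Z_n}^{q-2}\big)\lll Z_n,D_n\rrr+qt\gamma_n\norm{Z_n+t\gamma_n D_n}^{q-2}\norm{D_n}^2$. Bounding the first summand via Lemma~\ref{weak_reverse_triangle2} (with exponent $q-2$) and Cauchy--Schwarz and the second via Lemma~\ref{weak_triangle}, its absolute value is at most a constant multiple of $\gamma_n\norm{D_n}^2\norm{Z_n}^{q-2}+(\gamma_n)^{q-2}\norm{D_n}^{q-1}\norm{Z_n}+(\gamma_n)^{q-1}\norm{D_n}^q$. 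Since $\norm{Z_n}\leq(1+\tfrac{\gamma_n}c)\norm{\Theta_{n-1}-\vartheta}$, I would take conditional expectations given $\mathbb{F}_{n-1}$, insert the preliminary bound $\E[\norm{D_n}^r\,|\,\mathbb{F}_{n-1}]\leq\kappa^{r/p}(1+\norm{\Theta_{n-1}-\vartheta}^r)$ for $r\in\{2,q-1,q\}$, and use $\norm{\Theta_{n-1}-\vartheta}^a\leq1+\norm{\Theta_{n-1}-\vartheta}^{a'}$ ($0\leq a\leq a'$) to reduce each resulting expectation to a constant, to $\EXP{\norm{\Theta_{n-1}-\vartheta}^{q-2}}$, or to $\EXP{\norm{\Theta_{n-1}-\vartheta}^q}=\EXP{V(\Theta_{n-1})}$. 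Observing that \eqref{Lp_convergence:assumption2} forces $\limsup_{l\to\infty}\gamma_{l-1}/\gamma_l<\infty$ (since $\gamma_l\to0$), the induction hypothesis gives $\EXP{\norm{\Theta_{n-1}-\vartheta}^{q-2}}\leq C_{q-2}(\gamma_{n-1})^{(q-2)/2}\leq\mathrm{const}\cdot(\gamma_n)^{(q-2)/2}$ for all large $n$; combining this with $q-1\geq q-2\geq q/2$ (valid for even $q\geq4$) and $\gamma_n\to0$ collapses all ``constant'' contributions into $\leq\mathrm{const}\cdot(\gamma_n)^{q/2}$ and all $\EXP{V(\Theta_{n-1})}$-contributions into $\leq\mathrm{const}\cdot\gamma_n\EXP{V(\Theta_{n-1})}$, which is \eqref{Lyapunov_convergence_nonexplicit:assumption4} with $k=q/2$. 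Since \eqref{Lp_convergence:assumption2} with the choice $k=q/2$ supplies \eqref{Lyapunov_convergence_nonexplicit:assumption5}, Corollary~\ref{Lyapunov_convergence_nonexplicit} yields $\EXP{V(\Theta_n)}\leq C_q(\gamma_n)^{q/2}$, completing the induction.

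I expect the main obstacle to be the precise bookkeeping in the remainder estimate: matching each mixed term $(\gamma_n)^b\norm{D_n}^b\norm{\Theta_{n-1}-\vartheta}^{q-b}$ against the target $(\gamma_n)^{q/2}+\gamma_n\EXP{V(\Theta_{n-1})}$ and recognising exactly where the sharper induction hypothesis at level $q-2$ (rather than a crude a priori bound) is needed; the remaining ingredients are essentially the $L^2$-argument of Proposition~\ref{L2_convergence_explicit} carried out with the polynomial Lyapunov function $\norm{\cdot-\vartheta}^q$ in place of $\norm{\cdot-\vartheta}^2$.
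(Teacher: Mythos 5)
Your proposal is correct and follows essentially the same route as the paper's proof: induction over the even powers $q\in\{2,4,\dots\}\cap[2,p]$ with the Lyapunov functions $\theta\mapsto\norm{\theta-\vartheta}^q$, the base case handled by Corollary~\ref{L2_convergence_nonexplicit}, the induction step by Corollary~\ref{Lyapunov_convergence_nonexplicit} with $k=\nicefrac q2$, and the remainder estimate obtained from exactly the same decomposition via Lemmas~\ref{weak_triangle}, \ref{weak_reverse_triangle2}, \ref{derivative_of_norm}, \ref{fcond}, and \ref{Orthogonality} together with the conditional moment bound derived from \eqref{Lp_convergence:assumption1}. Your explicit observation that \eqref{Lp_convergence:assumption2} forces $\limsup_{l\to\infty}\gamma_{l-1}/\gamma_l<\infty$, used to pass from the induction hypothesis at index $n-1$ to a bound in terms of $(\gamma_n)^{\nicefrac{(q-2)}{2}}$, is a small point the paper treats more tersely, but it is a refinement of the same argument rather than a different approach.
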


\begin{proof}[Proof of Proposition \ref{Lp_convergence}]
Throughout this proof assume w.l.o.g.\ that $\kappa \geq 1$, let $N \in \N$ satisfy $\sup_{n \in \N \cap (N,\infty)}\gamma_n \leq c$, and let $V_q \colon \R^d\to [0,\infty)$, $q \in \N$, be the functions which satisfy for all $q \in \N$, $\theta\in \R^d$ that 
\begin{equation}
\qquad V_q(\theta) = \norm{\theta-\vartheta}^q.
\end{equation}
Note that Lemma \ref{fcond} implies that for all $q \in \{2,4,6,\ldots\}\cap [2,p]$, $\theta \in \R^d$, $r\in [0,c]$ it holds that
\begin{equation}
\label{Lp_convergence:eq1}
c \leq 1 \leq \nicefrac{1}{c}, \qquad \norm{g(\theta)} \leq \tfrac{1}{c} \norm{\theta-\vartheta}, \qquad\text{and} 
\end{equation}
\begin{equation}
\label{Lp_convergence:eq2}
V_q(\theta + r g(\theta)) = \norm{\theta + r g(\theta)-\vartheta}^q \leq (1-cr)^{\nicefrac{q}{2}}\norm{\theta -\vartheta}^q \leq (1-cr)V_q(\theta).
\end{equation}
In the next step we claim that for all $n \in \N$ it holds that
\begin{equation}
\label{Lp_convergence:eq3}
\EXP{\norm{\Theta_{n-1}-\vartheta}^p}  < \infty \qquad\text{and}\qquad \EXP{\norm{D_n}^p} < \infty.
\end{equation}
We now prove (\ref{Lp_convergence:eq3}) by induction on $n \in \N$. 
For the base case $n = 1$ note that (\ref{Lp_convergence:assumption1}) implies that
\begin{equation}
\EXP{\norm{\Theta_{0}-\vartheta}^p} < \infty \qquad \text{and} \qquad \EXP{\norm{D_1}^p} \leq  \kappa \big(1 +\EXP{\norm{\Theta_{0}-\vartheta}^p }\big) < \infty.
\end{equation}
This establishes (\ref{Lp_convergence:eq3}) in the base case $n=1$.
For the induction step $\N \ni n \to n+1 \in \{ 2,3, \ldots \}$ observe that \eqref{setting:eq1} and (\ref{Lp_convergence:eq1}) ensure that for all  $n \in \N$ with
$\EXP{\norm{\Theta_{n-1}-\vartheta}^p + \norm{D_n}^p}<\infty$ it holds that
\begin{equation}
\begin{split}
\EXP{\norm{\Theta_{n}-\vartheta}^p} &= \EXP{\norm{\Theta_{n-1} +\gamma_n (g(\Theta_{n-1}) + D_n) - \vartheta}^p}  \\
&\leq \EXP{\big(\norm{\Theta_{n-1}  - \vartheta}+\gamma_n\norm{g(\Theta_{n-1})} + \gamma_n\norm{D_n}\big)^p} \\ 
&\leq \EXP{ \big( (1+\tfrac{\gamma_n}{c})\norm{\Theta_{n-1} - \vartheta} + \gamma_n\norm{D_n}\big)^p} < \infty.
\end{split}
\end{equation}
This and (\ref{Lp_convergence:assumption1}) imply that for all  $n \in \N$ with
$\EXP{\norm{\Theta_{n-1}-\vartheta}^p + \norm{D_n}^p}<\infty$ it holds that
\begin{equation}
\label{Lp_convergence:eq4a}
\EXP{\norm{D_{n+1}}^p} \leq \kappa \big(1 +\EXP{\norm{\Theta_{n}-\vartheta}^p }\big) < \infty.
\end{equation}
Induction thus proves (\ref{Lp_convergence:eq3}).
%
Next note that the conditional Jensen inequality (see, e.g., Klenke \cite[Theorem~8.20]{Klenke}) and the fact that for all $q \in \{1,2,\ldots\}\cap[0,p]$ it holds that the function $ \R \ni z \mapsto |z|^{\nicefrac{p}{q}} \in [0,\infty)$ is convex ensure that for all $q \in \{1,2,\ldots\} \cap[0,p]$, $n \in \N$  it holds $\P$-a.s.\ that
\begin{equation}
\big|\EXP{\norm{D_n}^q \!\mid\! \F_{n-1}}\big|^{\nicefrac{p}{q}} \leq \EXP{\norm{D_n}^p \!\mid\! \F_{n-1}}.
\end{equation}
Hence, we obtain that for all $q\in \{1,2,\dots\}\cap[0,p]$, $n\in \N$ it holds $\mathbb{P}$-a.s.\ that
\begin{equation}
\label{NRX}
\EXP{\norm{D_n}^q \!\mid\! \F_{n-1}} \leq \big|\EXP{\norm{D_n}^p \!\mid\! \F_{n-1}}\big|^{\nicefrac{q}{p}}.
\end{equation}
Moreover, observe that (\ref{Lp_convergence:assumption1}) and (\ref{Lp_convergence:eq3}) demonstrate that for all $n \in \N$ it holds $\mathbb{P}$-a.s.\ that
\begin{equation}
\EXP{\norm{D_n}^p \!\mid\! \F_{n-1}}\leq \kappa\big(1 +\norm{\Theta_{n-1}-\vartheta}^p\big).
\end{equation}
Combining this with \eqref{NRX} proves that for all $q\in \{1,2,\dots\}\cap[0,p]$, $n\in \N$ it holds $\mathbb{P}$-a.s.\ that
\begin{equation}
\EXP{\norm{D_n}^q \!\mid\! \F_{n-1}} \leq \big|\EXP{\norm{D_n}^p \!\mid\! \F_{n-1}}\big|^{\nicefrac{q}{p}} \leq \big[\kappa\big(1 +\norm{\Theta_{n-1}-\vartheta}^p\big) \big]^{\nicefrac{q}{p}}.
\end{equation}
The fact that $\forall\, x,y \in [0,\infty)$, $r \in (0,1] \colon (x+y)^r \leq x^r + y^r$ and the assumption that $\kappa\geq 1$ hence assure that for all $q\in \{1,2,\dots\}\cap[0,p]$, $n \in \N$  it holds $\P$-a.s.\ that
\begin{equation}
\label{Lp_convergence:eq4}
\begin{split}
\EXP{\norm{D_n}^q \!\mid\! \F_{n-1}} & \leq \big[\kappa\big(1 +\norm{\Theta_{n-1}-\vartheta}^p\big) \big]^{\nicefrac{q}{p}} \\
& \leq \kappa^{\nicefrac{q}{p}}\big(1 +\norm{\Theta_{n-1}-\vartheta}^q\big)\\
&\leq \kappa\big(1 +\norm{\Theta_{n-1}-\vartheta}^q\big).
\end{split}
\end{equation}
The tower property for conditional expectations therefore shows that for all $q\in \{1,2,\dots\}\cap[0,p]$, $n \in \N$ it holds that
\begin{equation}
\label{V}
\begin{split}
\Exp{\norm{D_n}^q}
 &= \EXP{\Exp{\norm{D_n}^q \!\mid\! \F_{n-1}}\!}\\
& \leq \EXP{\kappa\big(1 +\norm{\Theta_{n-1}-\vartheta}^q\big)}\\
 &=\kappa\big(1+\EXP{\norm{\Theta_{n-1}-\vartheta}^q}\big).
 \end{split}
\end{equation}
Next we claim that for all $q \in \{2,4,6,\ldots\}\cap[0,p]$ there exists $C \in (0,\infty)$ such that for all $n\in \N_0$ it holds that
\begin{equation}
\label{Lp_convergence:eq10}
\EXP{\norm{\Theta_n-\vartheta}^q} \leq C (\gamma_n)^{\nicefrac{q}{2}}.
\end{equation}
We now prove (\ref{Lp_convergence:eq10}) by induction on $q \in \{2,4,6,\ldots\}\cap[0,p]$.
Observe that Corollary~\ref{L2_convergence_nonexplicit} and (\ref{V}) establish (\ref{Lp_convergence:eq10}) in the base case $q = 2$. For the induction step $\{2,4,6,\ldots\}\cap[0,p-2] \ni (q-2) \to q \in \{4,6,8,\ldots\}\cap[0,p]$ let $q \in \{4,6,8,\ldots\}\cap[0,p]$, $C \in (0,\infty)$ satisfy
for all $n\in \N_0$ that 
\begin{equation}
\label{Lp_convergence:eq5}
\EXP{\norm{\Theta_n-\vartheta}^{q-2}} \leq C (\gamma_n)^{\nicefrac{(q-2)}{2}}.
\end{equation}
Note that (\ref{Lp_convergence:eq3}) and Jensen's inequality ensure that for all $n \in \N_0$ it holds that
\begin{equation}
\label{Lp_convergence:eq6}
\EXP{V_q(\Theta_n)} = \EXP{\norm{\Theta_n-\vartheta}^q}  < \infty.
\end{equation}
Next observe that \eqref{Lp_convergence:eq1} implies that for all $\theta\in\R^d$, $n\in\N$ it holds that
\begin{equation}
\label{prop36:new1}
\begin{split}
\norm{\theta+\gamma_ng(\theta)-\vartheta}^{q-1}
&\leq (\norm{\theta-\vartheta}+\norm{\gamma_ng(\theta)})^{q-1}\\
&\leq \left(\norm{\theta-\vartheta}+\abs{\tfrac{\gamma_n}{c}}\norm{\theta-\vartheta}\right)^{q-1}\\
&=\left(1+\tfrac{\gamma_n}c\right)^{q-1}\norm{\theta-\vartheta}^{q-1}.
\end{split}
\end{equation}
Combining this and \eqref{Lp_convergence:eq3} with Jensen's inequality ensures that for all $n\in\N$ it holds that
\begin{equation}
\label{prop36:new5}
\E\!\left[\norm{\Theta_{n-1}+\gamma_ng(\Theta_{n-1})-\vartheta}^{q-1}\right]
\leq \left(1+\tfrac{\gamma_n}c\right)^{q-1}\E\!\left[\norm{\Theta_{n-1}-\vartheta}^{q-1}\right]
<\infty.
\end{equation}
Moreover, note that \eqref{prop36:new1}, the tower property for conditional expectations,
and the fact that for all $n\in\N$ it holds that $\Theta_{n-1}$ is $\mathbb F_{n-1}/\mathcal B(\R^d)$-measurable
assure that for all $n\in\N$ it holds that
\begin{equation}
\begin{split}
&\E\!\left[\norm{\Theta_{n-1}+\gamma_ng(\Theta_{n-1})-\vartheta}^{q-1}\norm{D_n}\right]\\
&\leq \left(1+\tfrac{\gamma_n}c\right)^{q-1}\E\!\left[\norm{\Theta_{n-1}-\vartheta}^{q-1}\norm{D_n}\right]\\
&=\left(1+\tfrac{\gamma_n}c\right)^{q-1}\E\!\left[\norm{\Theta_{n-1}-\vartheta}^{q-1}\E\!\left[\norm{D_n}\!\mid\!\mathbb F_{n-1}\right]\right].
\end{split}
\end{equation}
Combining this with \eqref{Lp_convergence:eq3}, \eqref{Lp_convergence:eq4}, and Jensen's inequality proves that for all $n\in\N$ it holds that
\begin{equation}
\begin{split}
&\E\!\left[\norm{\Theta_{n-1}+\gamma_ng(\Theta_{n-1})-\vartheta}^{q-1}\norm{D_n}\right]\\
&\leq \kappa\left(1+\tfrac{\gamma_n}c\right)^{q-1}\E\!\left[\norm{\Theta_{n-1}-\vartheta}^{q-1}\left(1+\norm{\Theta_{n-1}-\vartheta}\right)\right]\\
&=\kappa\left(1+\tfrac{\gamma_n}c\right)^{q-1}\bigl(\E[\norm{\Theta_{n-1}-\vartheta}^{q-1}]+\E\left[\norm{\Theta_{n-1}-\vartheta}^q\right]\bigr)<\infty.
\label{prop36:new2}
\end{split}
\end{equation}
Furthermore, observe that Lemma \ref{derivative_of_norm} implies that for all 
 $\theta, v \in \R^d$ it holds that
\begin{equation}
\label{Lp_convergence:eqV'}
V_q \in C^1(\R^d,[0,\infty)) \qandq V_q'(\theta)(v)= q\norm{\theta-\vartheta}^{q-2}\lll\theta-\vartheta, v\rrr.
\end{equation}
This, \eqref{prop36:new2}, and the Cauchy-Schwarz inequality demonstrate that for all $n\in\N$ it holds that
\begin{equation}
\label{prop36:new6}
\begin{split}
&\E\!\left[\abs{V_q'(\Theta_{n-1}+\gamma_ng(\Theta_{n-1}))(D_n)}\right]\\
&=q\,\E\!\left[\norm{\Theta_{n-1}+\gamma_ng(\Theta_{n-1})-\vartheta}^{q-2}\abs{\lll\Theta_{n-1}+\gamma_ng(\Theta_{n-1})-\vartheta,D_n\rrr}\right]\\
&\leq q\,\E\!\left[\norm{\Theta_{n-1}+\gamma_ng(\Theta_{n-1})-\vartheta}^{q-1}\norm{D_n}\right]<\infty.
\end{split}
\end{equation}
%
Furthermore, note that  \eqref{Lp_convergence:eqV'}, the Cauchy-Schwarz inequality, and Lemma \ref{weak_triangle} (with $p = q-1$ in the notation of Lemma \ref{weak_triangle}) imply that for all $n \in \N$ it holds that
\begin{equation}
\label{Lp_convergence:eq8a}
\begin{split}
&\int_0^1\EXP{\abs{V_q'(\Theta_{n-1}+ \gamma_n(g(\Theta_{n-1})+sD_n))(D_n)}}\, \mathrm{d}s \\
&  = \int_0^1\EXP{q\norm{\Theta_{n-1}+ \gamma_n(g(\Theta_{n-1})+sD_n)-\vartheta}^{q-2} \\
&\quad \cdot\abs{\lll\Theta_{n-1}+ \gamma_n(g(\Theta_{n-1})+sD_n)-\vartheta,D_n \rrr}}\, \mathrm{d}s \\
&  \leq q\int_0^1\EXP{\norm{\Theta_{n-1}+ \gamma_n(g(\Theta_{n-1})+sD_n)-\vartheta}^{q-1} \norm{D_n}} \, \mathrm{d}s \\
&  \leq q 2^{q-1}\,\int_0^1 \EXP{\big(\norm{\Theta_{n-1}+ \gamma_n g(\Theta_{n-1})-\vartheta}^{q-1} +s^{q-1} |\gamma_n|^{q-1} \norm{D_n}^{q-1} \big) \norm{D_n}} \,\mathrm{d}s.
\end{split}
\end{equation}
%
This and \eqref{prop36:new1} ensure that for all $n\in \N$ it holds that
\begin{equation}
\label{Lp_convergence:eq8b}
\begin{split}
&\int_0^1\EXP{\abs{V_q'(\Theta_{n-1}+ \gamma_n(g(\Theta_{n-1})+sD_n))(D_n)}}\, \mathrm{d}s \\
&  \leq q 2^{q-1}\,\int_0^1 \EXPP{\big(\big[1+\tfrac{\gamma_n}{c}\big]^{q-1}\norm{\Theta_{n-1}-\vartheta}^{q-1} +s^{q-1} |\gamma_n|^{q-1} \norm{D_n}^{q-1} \big) \norm{D_n}} \,\mathrm{d}s\\
& \leq q2^{q-1} \Big(\big[1+\tfrac{\gamma_n}{c}\big]^{q-1}
\EXP{\norm{\Theta_{n-1}-\vartheta}^{q-1} \norm{D_n}} + |\gamma_n|^{q-1} \EXP{\norm{D_n}^q}
\Big).
\end{split}
\end{equation}
The tower property for conditional expectations, \eqref{Lp_convergence:eq3}, \eqref{Lp_convergence:eq4}, and Jensen's inequality hence demonstrate that for all $n \in \N$ it holds that
\begin{equation}
\label{Lp_convergence:eq8}
\begin{split}
&\int_0^1\EXP{\abs{V_q'(\Theta_{n-1}+ \gamma_n(g(\Theta_{n-1})+sD_n))(D_n)}}\, \mathrm{d}s \\
& \leq q2^{q-1} \big[1+\tfrac{\gamma_n}{c}\big]^{q-1}
\EXPP{\norm{\Theta_{n-1}-\vartheta}^{q-1} \EXP{\norm{D_n} \!\mid\! \mathbb{F}_{n-1}}} + q2^{q-1} |\gamma_n|^{q-1} \EXP{\norm{D_n}^q}\\
& \leq \kappa q2^{q-1} \big[1+\tfrac{\gamma_n}{c}\big]^{q-1}
\EXPP{\norm{\Theta_{n-1}-\vartheta}^{q-1} \big(1 + \norm{\Theta_{n-1}-\vartheta}\big)} + q2^{q-1} |\gamma_n|^{q-1} \EXP{ \norm{D_n}^q}\\
& \leq q2^{q-1}\big[1+\tfrac{\gamma_n}{c}\big]^{q-1} \max\{\kappa,(\gamma_n)^{q-1}\} \, \EXP{\norm{\Theta_{n-1}-\vartheta}^{q-1} + \norm{\Theta_{n-1}-\vartheta}^{q} +\norm{D_n}^{q}}\\
&<\infty.
\end{split}
\end{equation}
%
Next observe that Jensen's inequality, the hypothesis that for all $n\in\N$, $A\in\mathbb F_{n-1}$ with $\E[\norm{D_n}]<\infty$
it holds that $\E[D_n\mathbbm{1}_A]=0$, and \eqref{Lp_convergence:eq3} ensure that for all $n\in\N$, $A\in\mathbb F_{n-1}$ it holds that
\begin{equation}
\E[\norm{D_n}]<\infty \qandq \E[D_n\mathbbm{1}_A]=0.
\end{equation}
This, the fact that for all $n\in\N$ it holds that the function $\Theta_{n-1}$ is 
$\mathbb F_{n-1}/\mathcal B(\R^d)$-measurable, \eqref{Lp_convergence:eq3}, \eqref{prop36:new5}, \eqref{prop36:new2}, \eqref{Lp_convergence:eqV'}, \eqref{prop36:new6},
and Lemma~\ref{Orthogonality}
assure that for all $n\in\N$ it holds that
\begin{equation}
\label{prop36:new7}
\begin{split}
&\E[V_q'(\Theta_{n-1}+\gamma_ng(\Theta_{n-1}))(D_n)]\\
&=q\,\E\!\left[\bigl\langle\norm{\Theta_{n-1}+\gamma_ng(\Theta_{n-1})-\vartheta}^{q-2}(\Theta_{n-1}+\gamma_ng(\Theta_{n-1})-\vartheta),D_n\bigr\rangle\right]=0.
\end{split}
\end{equation}
In addition, note that \eqref{Lp_convergence:eqV'} ensures that for all  $n \in \N \cap (N,\infty)$, $t \in [0,1]$ it holds that
\begin{equation}
\begin{split}
&\EXPP{\Abs{V_q'(\Theta_{n-1}+ \gamma_n(g(\Theta_{n-1})+tD_n))(D_n) - V_q'(\Theta_{n-1}+ \gamma_n g(\Theta_{n-1}))(D_n)} } \\
&= \E\Big[q\, \Abs{ \norm{\Theta_{n-1}+ \gamma_n(g(\Theta_{n-1})+tD_n)-\vartheta}^{q-2}\lll\Theta_{n-1}+ \gamma_n(g(\Theta_{n-1})+tD_n)-\vartheta,D_n \rrr \\
	&\quad - \norm{\Theta_{n-1}+ \gamma_n g(\Theta_{n-1})-\vartheta}^{q-2}\lll\Theta_{n-1}+ \gamma_n g(\Theta_{n-1})-\vartheta,D_n \rrr } \Big] \\
&\leq  q \, \EXPP{\Abs{\norm{\Theta_{n-1}+ \gamma_n(g(\Theta_{n-1})+tD_n)-\vartheta}^{q-2} -  \norm{\Theta_{n-1}+ \gamma_n g(\Theta_{n-1})-\vartheta}^{q-2}} \\
	&\quad  \cdot \abs{\lll\Theta_{n-1}+ \gamma_n g(\Theta_{n-1})-\vartheta,D_n \rrr}} \\
	& \quad + q \, \EXPP{\norm{\Theta_{n-1}+ \gamma_n(g(\Theta_{n-1})+tD_n)-\vartheta}^{q-2} |\lll \gamma_n t D_n, D_n \rrr|}\\
	&=  q \, \EXPP{\Abs{\norm{\Theta_{n-1}+ \gamma_n(g(\Theta_{n-1})+tD_n)-\vartheta}^{q-2} -  \norm{\Theta_{n-1}+ \gamma_n g(\Theta_{n-1})-\vartheta}^{q-2}} \\
		&\quad  \cdot \abs{\lll\Theta_{n-1}+ \gamma_n g(\Theta_{n-1})-\vartheta,D_n \rrr}} \\
	& \quad + q  \gamma_n t \,\EXPP{\norm{\Theta_{n-1}+ \gamma_n(g(\Theta_{n-1})+tD_n)-\vartheta}^{q-2} \norm{D_n}^2}.
\end{split}
\end{equation}
Lemma \ref{weak_triangle} (with $p = q-2$ in the notation of Lemma \ref{weak_triangle}), Lemma \ref{weak_reverse_triangle2} (with $p = q-2$ in the notation of Lemma \ref{weak_reverse_triangle2}), and the Cauchy-Schwarz inequality  hence demonstrate that for all  $n \in \N \cap (N,\infty)$, $t \in [0,1]$ it holds that
\begin{equation}
\label{Lp_convergence:eq9b}
\begin{split}
&\EXPP{\Abs{V_q'(\Theta_{n-1}+ \gamma_n(g(\Theta_{n-1})+tD_n))(D_n) - V_q'(\Theta_{n-1}+ \gamma_n g(\Theta_{n-1}))(D_n)} } \\
&\leq  q\,\EXPP{2^{q-2} \norm{\gamma_n t D_n}\big(\norm{\Theta_{n-1}+ \gamma_n g(\Theta_{n-1})-\vartheta}^{q-3} + \norm{\gamma_n tD_n}^{q-3} \big) \\
	& \quad  \cdot \norm{\Theta_{n-1}+ \gamma_n g(\Theta_{n-1})-\vartheta} \norm{D_n}} \\
	& \quad + q \gamma_n t \,\EXPP{2^{q-2}\big(\norm{\Theta_{n-1}+ \gamma_n g(\Theta_{n-1})-\vartheta}^{q-2} + \norm{\gamma_n tD_n}^{q-2} \big) \norm{D_n}^2}\\
&=  q2^{q-2}\,\EXPP{\gamma_n t \norm{D_n}^2\big(\norm{\Theta_{n-1}+ \gamma_n g(\Theta_{n-1})-\vartheta}^{q-3} + (\gamma_n t)^{q-3}\norm{D_n}^{q-3} \big) \\
& \quad  \cdot \norm{\Theta_{n-1}+ \gamma_n g(\Theta_{n-1})-\vartheta}  \\
& \quad + \gamma_n t\big( \norm{\Theta_{n-1}+ \gamma_n g(\Theta_{n-1})-\vartheta}^{q-2} + (\gamma_n t)^{q-2}\norm{D_n}^{q-2} \big) \norm{D_n}^2}.
\end{split}
\end{equation}
In addition, observe that \eqref{Lp_convergence:eq1} and \eqref{Lp_convergence:eq2} ensure that for all $r \in [0,c]$, $\theta \in \R^d$ it holds that 
$
\norm{\theta + r g(\theta) - \vartheta} \leq \norm{\theta-\vartheta}.
$
This, the fact that $\sup_{n \in \N \cap(N,\infty)} \gamma_n \leq c$, and \eqref{Lp_convergence:eq9b} assure that for all  $n \in  \N \cap (N,\infty)$, $t \in [0,1]$ it holds that
\begin{equation}
\begin{split}
&\EXPP{\Abs{V_q'(\Theta_{n-1}+ \gamma_n(g(\Theta_{n-1})+tD_n))(D_n) - V_q'(\Theta_{n-1}+ \gamma_n g(\Theta_{n-1}))(D_n)} } \\
&\leq  q2^{q-2} \gamma_n\,\EXPP{ t \norm{D_n}^2\big(\norm{\Theta_{n-1}-\vartheta}^{q-3} + (\gamma_n t)^{q-3}\norm{D_n}^{q-3} \big) \norm{\Theta_{n-1}-\vartheta}  \\
	& \quad + t\big( \norm{\Theta_{n-1}-\vartheta}^{q-2} + (\gamma_n t)^{q-2}\norm{D_n}^{q-2} \big) \norm{D_n}^2}\\
&=  q2^{q-2} \gamma_n t\,\EXPP{ \norm{D_n}^2\norm{\Theta_{n-1}-\vartheta}^{q-2} + (\gamma_n t)^{q-3}\norm{D_n}^{q-1}\norm{\Theta_{n-1}-\vartheta}  \\
	& \quad +  \norm{D_n}^2 \norm{\Theta_{n-1}-\vartheta}^{q-2} + (\gamma_n t)^{q-2}\norm{D_n}^{q}}\\
&\leq q2^{q-1} \gamma_n \,\EXPP{\norm{D_n}^2 \norm{\Theta_{n-1}-\vartheta}^{q-2} +
(\gamma_n)^{q-3}\norm{D_n}^{q-1}\norm{\Theta_{n-1}-\vartheta} + (\gamma_n)^{q-2}\norm{D_n}^{q}}.
\end{split}
\end{equation}
The tower property for conditional expectations and (\ref{Lp_convergence:eq4}) hence imply that for all $n \in \N \cap (N,\infty)$, $t \in [0,1]$ it holds that
\begin{equation}
\begin{split}
&\EXPP{\Abs{V_q'(\Theta_{n-1}+ \gamma_n(g(\Theta_{n-1})+tD_n))(D_n) - V_q'(\Theta_{n-1}+ \gamma_n g(\Theta_{n-1}))(D_n)} } \\
&\leq q2^{q-1} \gamma_n \,\EXPP{\EXP{\norm{D_n}^2 \!\mid\! \mathbb{F}_{n-1}} \norm{\Theta_{n-1}-\vartheta}^{q-2} \\
&\quad +(\gamma_n)^{q-3}\,\EXP{\norm{D_n}^{q-1} \!\mid\! \mathbb{F}_{n-1}} \norm{\Theta_{n-1}-\vartheta} + (\gamma_n)^{q-2}\,\EXP{\norm{D_n}^q \!\mid\! \mathbb{F}_{n-1}}} \\
&\leq q2^{q-1} \gamma_n \,\EXPP{\kappa\big(1+\norm{\Theta_{n-1}-\vartheta}^2\big)\norm{\Theta_{n-1}-\vartheta}^{q-2} \\
& \quad +(\gamma_n)^{q-3}\kappa\big(1+\norm{\Theta_{n-1}-\vartheta}^{q-1}\big) \norm{\Theta_{n-1}-\vartheta}+ (\gamma_n)^{q-2}\kappa\big(1+\norm{\Theta_{n-1}-\vartheta}^q\big)}\\
&= q2^{q-1}\kappa \gamma_n \Big( \EXP{\norm{\Theta_{n-1}-\vartheta}^{q-2}} + (\gamma_n)^{q-3} \,\EXP{\norm{\Theta_{n-1}-\vartheta}} + (\gamma_n)^{q-2} \\
&\quad + \big(1 + (\gamma_n)^{q-3} +  (\gamma_n)^{q-2}\big)\, \EXP{\norm{\Theta_{n-1}-\vartheta}^{q}}\Big).
\end{split}
\end{equation}
The fact that $\forall\, x \in [0,\infty) \colon x \leq 1 + x^q$ 
and the induction hypothesis (see \eqref{Lp_convergence:eq5}) therefore ensure that for all $n \in \N \cap (N,\infty)$, $t \in [0,1]$ it holds that
\begin{equation}
\begin{split}
&\EXPP{\Abs{V_q'(\Theta_{n-1}+ \gamma_n(g(\Theta_{n-1})+tD_n))(D_n) - V_q'(\Theta_{n-1}+ \gamma_n g(\Theta_{n-1}))(D_n)} } \\
&\leq q2^{q-1}\kappa \gamma_n \Big( C(\gamma_n)^{\nicefrac{(q-2)}{2}} + (\gamma_n)^{q-3} \,\EXP{1+\norm{\Theta_{n-1}-\vartheta}^q} + (\gamma_n)^{q-2} \\
&\quad + \big(1 + (\gamma_n)^{q-3} +  (\gamma_n)^{q-2}\big)\, \EXP{\norm{\Theta_{n-1}-\vartheta}^{q}}\Big)\\
&= q2^{q-1}\kappa \gamma_n \Big(  C(\gamma_n)^{\nicefrac{(q-2)}{2}} + (\gamma_n)^{q-3} + (\gamma_n)^{q-2}  \\
&\quad +\big(1+ 2(\gamma_n)^{q-3}+  (\gamma_n)^{q-2}\big)\, \EXP{\norm{\Theta_{n-1}-\vartheta}^{q}}\Big).
\end{split}
\end{equation}
The fact that $\sup_{n \in \N \cap (N,\infty)}\gamma_n \leq c \leq 1$ and the fact that $\nicefrac{(q-2)}{2} \leq q-3$ hence demonstrate that for all $n \in \N \cap (N,\infty)$, $t \in [0,1]$ it holds that
\begin{equation}
\begin{split}
&\EXPP{\Abs{V_q'(\Theta_{n-1}+ \gamma_n(g(\Theta_{n-1})+tD_n))(D_n) - V_q'(\Theta_{n-1}+ \gamma_n g(\Theta_{n-1}))(D_n)} } \\
&\leq q2^{q-1} \kappa\gamma_n \big( (C + 2) (\gamma_n)^{\nicefrac{(q-2)}{2}}
 + 4 \,\EXP{\norm{\Theta_{n-1}-\vartheta}^{q}} \big) \\
 &= q2^{q-1} \kappa \big( (C + 2) (\gamma_n)^{\nicefrac{q}{2}}
 + 4 \gamma_n \,\EXP{\norm{\Theta_{n-1}-\vartheta}^{q}} \big) \\
  &\leq q2^{q-1} \kappa \max\{C+2,4\} \big((\gamma_n)^{\nicefrac{q}{2}}
 +  \EXP{\norm{\Theta_{n-1}-\vartheta}^{q}} \big) \\
   &\leq q2^{q+1} \kappa \max\{C,1\} \big((\gamma_n)^{\nicefrac{q}{2}}
 +  \EXP{\norm{\Theta_{n-1}-\vartheta}^{q}} \big) \\
& = q2^{q+1}\kappa\max\{C,1\}\big((\gamma_n)^{\nicefrac{q}{2}} + \EXP{V_q(\Theta_{n-1})}\big).
\end{split}
\end{equation}
Combining this, (\ref{Lp_convergence:assumption2}), (\ref{Lp_convergence:eq1}), (\ref{Lp_convergence:eq2}), (\ref{Lp_convergence:eq6}), (\ref{Lp_convergence:eqV'}), \eqref{prop36:new6}, (\ref{Lp_convergence:eq8}), and (\ref{prop36:new7}) with Corollary \ref{Lyapunov_convergence_nonexplicit} (with $N = N$, $k = \nicefrac{q}{2}$, $\kappa = q2^{q+1}\kappa\max\{1,C\}$, $c = c$, $\varrho= c$, $V= V_q$ in the notation of Corollary \ref{Lyapunov_convergence_nonexplicit}) yields that there exists $\mathfrak{C} \in (0,\infty)$ such that for all $n \in \N_0$ it holds that
\begin{equation}
\label{212intro}
\EXP{\norm{\Theta_n-\vartheta}^{q}} = \EXP{V_q(\Theta_n)} \leq \mathfrak{C} (\gamma_n)^{\nicefrac{q}{2}}. 
\end{equation}
Induction thus proves (\ref{Lp_convergence:eq10}).
Next note that (\ref{Lp_convergence:eq10}) demonstrates that for all $q \in \{2,4,6,\ldots\}\cap[0,p]$ there exists
$C \in (0,\infty)$ such that for all $n \in \N_0$ it holds that
\begin{equation}
\big(\EXP{\norm{\Theta_n-\vartheta}^q}\big)^{\nicefrac{1}{q}} \leq \big(C (\gamma_n)^{\nicefrac{q}{2}} \big)^{\nicefrac{1}{q}} = C^{\nicefrac{1}{q}}\,(\gamma_n)^{\nicefrac{1}{2}}.
\end{equation}
This completes the proof of Proposition \ref{Lp_convergence}.
\end{proof}

\begin{theorem}
\label{Lp_theorem}
Let $d \in \N$, $p \in \{2,4,6,\ldots \}$, $\kappa,c \in (0,\infty)$, $(\gamma_n)_{n \in \N} \subseteq (0,\infty)$, $\vartheta \in \R^d$, let 
$\lll \cdot,\cdot \rrr \colon \R^d \times \R^d \to \R$
 be a scalar product, let 
 $\left \| \cdot \right \| \! \colon \R^d \to [0,\infty)$ 
 be the function which satisfies for all $\theta \in \R^d$ that $\norm{\theta} = \sqrt{\lll \theta,\theta \rrr}$, 
let $g \colon \R^d \to \R^d$ be $\mathcal{B}(\R^d)/ \mathcal{B}(\R^d) $-measurable, let $(\Omega , \mathcal{F}, \P , ( \mathbb{F}_n )_{ n \in \N_0 })$ be a filtered probability space, let $D \colon  \N \times \Omega \to \R^d$ be an $ ( \mathbb{F}_n )_{n \in \N } / \mathcal{B}(\R^d) $-adapted stochastic process which satisfies for all $n \in \N$, $A \in \mathbb{F}_{n-1}$ with $\Exp{\norm{D_n}} < \infty$ that $\Exp{D_n \mathbbm{1}_A} = 0$, let $\Theta  \colon  \N_0 \times \Omega \to \R^d$ be a function, assume that  $\Theta_0$ is $\mathbb{F}_0/ \mathcal{B}(\R^d)$-measurable, and assume for all $n \in \N$, $A \in \mathbb{F}_{n-1}$, $\theta \in \R^d$ that 
\begin{equation}
\label{Lp_theorem:assumption1}
\lll \theta - \vartheta, g(\theta)  \rrr   \leq - c\max\!\big\{ \norm{\theta - \vartheta}^2, \norm{g(\theta)}^2 \big\}, 
\end{equation}
\begin{equation}
\label{Lp_theorem:assumption2}
\quad \limsup_{l\to\infty} \gamma_l = 0 < \min_{k \in \{1,2,\ldots,\nicefrac{p}{2}\}}\!\left(\liminf_{l \to \infty}\left[\tfrac{(\gamma_l)^{k}-(\gamma_{l-1})^{k}}{(\gamma_l)^{k+1}} +\tfrac{c (\gamma_{l-1})^k}{2 (\gamma_{l})^k} \right] \right),
\end{equation}
\begin{equation}
\label{Lp_theorem:assumption3}
\Theta_n = \Theta_{n-1} + \gamma_n (g(\Theta_{n-1})+ D_n) , \qquad \EXP{\norm{\Theta_0}^p} < \infty,
\end{equation}
\begin{equation}
\label{Lp_theorem:assumption4}
\andq \EXP{\norm{D_n}^p \ind{A}} \leq \kappa\,\EXP{(1 +\norm{\Theta_{n-1}}^p)\ind{A}}.
\end{equation}
Then there exists $C \in (0,\infty)$ such that for all $n \in \N$ it holds that
\begin{equation}
\label{Lp_theorem:conclusion}
\{\theta \in \R^d \colon g(\theta)=0 \} = \{\vartheta\} \qandq \big(\EXP{\norm{\Theta_n-\vartheta}^p}\big)^{\nicefrac{1}{p}} \leq C (\gamma_n)^{\nicefrac{1}{2}}.
\end{equation}
\end{theorem}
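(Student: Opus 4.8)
The plan is to deduce Theorem~\ref{Lp_theorem} directly from Proposition~\ref{Lp_convergence} and Lemma~\ref{fcond}, since essentially all of the analytic work has already been carried out; the only genuine task is to match hypotheses. First I would check that the data of the theorem falls under Setting~\ref{setting-Def}: extending the sequence $(\gamma_n)_{n\in\N}$ arbitrarily to $n=0$ (say $\gamma_0=\gamma_1$, which is irrelevant because the recursion uses $\gamma_n$ only for $n\in\N$), the $\mathcal B(\R^d)/\mathcal B(\R^d)$-measurability of $g$, the $\mathbb F_0/\mathcal B(\R^d)$-measurability of $\Theta_0$, the $(\mathbb F_n)_{n\in\N}/\mathcal B(\R^d)$-adaptedness of $D$, and the recursion \eqref{Lp_theorem:assumption3} are precisely the requirements of Setting~\ref{setting-Def}; in particular $\Theta$ is then $(\mathbb F_n)_{n\in\N_0}/\mathcal B(\R^d)$-adapted.

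Next I would establish the identity $\{\theta\in\R^d\colon g(\theta)=0\}=\{\vartheta\}$. This is immediate from item~(\ref{fcond:item1}) in Lemma~\ref{fcond} applied with $c_1=c_2=c$, since \eqref{Lp_theorem:assumption1} is exactly the hypothesis \eqref{fcond:eq1} of Lemma~\ref{fcond} for that choice of constants.

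The only step requiring a short computation is converting the moment bound \eqref{Lp_theorem:assumption4}, which controls $\EXP{\norm{D_n}^p\ind A}$ in terms of $\norm{\Theta_{n-1}}^p$, into the form \eqref{Lp_convergence:assumption1} required by Proposition~\ref{Lp_convergence}, which controls it in terms of $\norm{\Theta_{n-1}-\vartheta}^p$. For this I would apply Lemma~\ref{weak_triangle} (with $p=p$, $v=\Theta_{n-1}-\vartheta$, $w=\vartheta$) to obtain $\norm{\Theta_{n-1}}^p\le 2^{p-1}(\norm{\Theta_{n-1}-\vartheta}^p+\norm{\vartheta}^p)$ pointwise, hence $1+\norm{\Theta_{n-1}}^p\le \mathfrak c\,(1+\norm{\Theta_{n-1}-\vartheta}^p)$ with $\mathfrak c=\max\{1,2^{p-1}\}+2^{p-1}\norm{\vartheta}^p\in(0,\infty)$; multiplying by $\ind A$, taking expectations, and inserting into \eqref{Lp_theorem:assumption4} yields $\EXP{\norm{D_n}^p\ind A}\le(\kappa\mathfrak c)\,\EXP{(1+\norm{\Theta_{n-1}-\vartheta}^p)\ind A}$ for all $n\in\N$, $A\in\mathbb F_{n-1}$. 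The same pointwise bound (with $A=\Omega$ and $\Theta_{n-1}$ replaced by $\Theta_0$) turns $\EXP{\norm{\Theta_0}^p}<\infty$ from \eqref{Lp_theorem:assumption3} into $\EXP{\norm{\Theta_0-\vartheta}^p}<\infty$, so the first part of \eqref{Lp_convergence:assumption1} holds as well.

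Finally I would invoke Proposition~\ref{Lp_convergence} with $\lll\cdot,\cdot\rrr$, $\norm{\cdot}$, $p$, $c$, $\vartheta$ as given and $\kappa$ replaced by $\kappa\mathfrak c$: its hypotheses \eqref{Lp_convergence:assumption1}, \eqref{Lp_convergence:assumption2}, \eqref{Lp_convergence:assumption3} are, respectively, the two inequalities just derived, the assumption \eqref{Lp_theorem:assumption2}, and the assumption \eqref{Lp_theorem:assumption1}, while the hypothesis on $D$ (that $\EXP{D_n\ind A}=0$ whenever $\EXP{\norm{D_n}}<\infty$, $A\in\mathbb F_{n-1}$) appears verbatim in both statements. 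Proposition~\ref{Lp_convergence} then supplies $C\in(0,\infty)$ with $(\EXP{\norm{\Theta_n-\vartheta}^p})^{\nicefrac{1}{p}}\le C(\gamma_n)^{\nicefrac{1}{2}}$ for all $n\in\N_0$; restricting to $n\in\N$ and combining with the identity $\{\theta\in\R^d\colon g(\theta)=0\}=\{\vartheta\}$ gives \eqref{Lp_theorem:conclusion}. I do not expect any real obstacle here: the substance lives in Proposition~\ref{Lp_convergence} (and, beneath it, Proposition~\ref{Lyapunov_convergence_explicit}, Corollary~\ref{L2_convergence_nonexplicit}, and the induction along the powers $q\in\{2,4,6,\dots\}\cap[0,p]$), and the proof of the theorem itself is bookkeeping — the only mildly delicate point being to keep the moment bound in its conditional, "$\ind A$-integrated" form throughout, since that is exactly what Proposition~\ref{Lp_convergence} relies on in order to pass, via the conditional Jensen inequality, from the $p$-th moment bound down to the lower-order moment bounds.
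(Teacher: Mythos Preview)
Your proposal is correct and follows essentially the same route as the paper: convert \eqref{Lp_theorem:assumption4} via Lemma~\ref{weak_triangle} into the $\norm{\Theta_{n-1}-\vartheta}^p$-form required by Proposition~\ref{Lp_convergence}, then read off the zero set of $g$ from item~(\ref{fcond:item1}) of Lemma~\ref{fcond} and the $L^p$-rate from Proposition~\ref{Lp_convergence}. One small remark: Proposition~\ref{Lp_convergence} already assumes $\EXP{\norm{\Theta_0}^p}<\infty$ (not $\EXP{\norm{\Theta_0-\vartheta}^p}<\infty$), so your conversion of the initial moment is unnecessary, though harmless.
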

\begin{proof}[Proof of Theorem \ref{Lp_theorem}]
Observe that  Lemma~\ref{weak_triangle} and \eqref{Lp_theorem:assumption4} ensure that for all $n \in \N$, $A\in \mathbb{F}_{n-1}$ it holds that
\begin{equation}
\begin{split}
\EXP{\norm{D_n}^p \ind{A}} &\leq \kappa\,\EXP{(1 +\norm{\Theta_{n-1}}^p)\ind{A}}\\
& =\kappa\,\EXP{(1 +\norm{\Theta_{n-1}-\vartheta + \vartheta}^p)\ind{A}}\\
& \leq \kappa\,\EXP{(1 +2^p\norm{\Theta_{n-1}-\vartheta} + 2^p \norm{\vartheta}^p)\ind{A}}\\
& \leq \kappa\,\EXP{(\max\{1+ 2^p \norm{\vartheta}^p, 2^p\} +2^p\norm{\Theta_{n-1}-\vartheta}^p)\ind{A}}\\
& \leq \kappa\,\max\{1+ 2^p \norm{\vartheta}^p, 2^p\}\,\EXP{(1+\norm{\Theta_{n-1}-\vartheta}^p)\ind{A}}.
\end{split}
\end{equation}	
Combining item (\ref{fcond:item1}) in Lemma \ref{fcond} and Proposition \ref{Lp_convergence} hence establishes \eqref{Lp_theorem:conclusion}.
The proof of Theorem \ref{Lp_theorem} is thus completed.
\end{proof}

\begin{remark}[A comment on assumption (\ref{Lp_theorem:assumption4})]
\label{comment_condexp}
Let $d \in \N$, $p \in \{2,4,6,\ldots \}$, $\kappa \in (0,\infty)$, 
let $D \colon  \N \times \Omega \to \R^d$ be an $ ( \mathbb{F}_n )_{n \in \N} / \mathcal{B}(\R^d) $-adapted stochastic process, and let $\Theta  \colon  \N_0 \times \Omega \to \R^d$ be an $ ( \mathbb{F}_n )_{n \in \N_0 } / \mathcal{B}(\R^d) $-adapted stochastic process. Then the following two statements are equivalent:
\begin{enumerate}[(i)]
\item \label{comment_condexp:item1}
For all $n\in \N$, $A \in \F_{n-1}$ it holds that
\begin{equation}
\EXP{\norm{D_n}^p \ind{A}} \leq \kappa\,\EXP{(1 +\norm{\Theta_{n-1}}^p)\ind{A}}.
\end{equation}
\item \label{comment_condexp:item2}
For all $n \in \N$ it holds $\P$-a.s.\ that
\begin{equation}
\EXP{\norm{D_n}^p \!\mid\! \F_{n-1}} \leq \kappa\big(1 +\norm{\Theta_{n-1}}^p\big).
\end{equation}
\end{enumerate}
\end{remark}

\section{Applications}
\label{section:applications}
In this section we present several consequences of Theorem~\ref{Lp_theorem} above. In particular, we prove in this section for every arbitrarily small $\varepsilon \in (0,\infty)$ and every arbitrarily large $p\in (0,\infty)$ that SGD optimization algorithms converge in the strong $L^p$-sense with order $\nicefrac{1}{2}-\varepsilon$ to the global minimum  of the objective function of a suitable stochastic optimization problem.
\subsection[Strong $L^p$-convergence for a specific type of SAAs]{Strong $L^p$-convergence rate for a specific type of SAAs}
\label{section:applications1}
In order to apply Theorem~\ref{Lp_theorem} to a SGD optimization algorithm we need to verify that the sequence $\gamma_n \in (0,\infty)$, $n \in \N$, of learning rates of the considered SGD optimization algorithm satisfies the hypothesis in \eqref{Lp_theorem:assumption2} in Theorem~\ref{Lp_theorem}. For this we employ the next result, Lemma~\ref{example_learningrate} below, which, in particular, provides explicit examples of sequences that satisfy the hypothesis in \eqref{Lp_theorem:assumption2} in Theorem~\ref{Lp_theorem}.
\begin{lemma}[Example of suitable learning rates]
\label{example_learningrate}
Let $k, \alpha, c\in (0,\infty)$, $\nu \in \R\setminus\{1\}$, $(\gamma_n)_{n \in \N} \subseteq (0,\infty)$ satisfy for all $n \in \N$ that $\gamma_n = \alpha n^{-\nu}$.
Then the following two statements are equivalent:
\begin{enumerate}
	\item[(i)] It holds that $\nu \in (0,1)$.
	\item[(ii)] It holds that 
\begin{equation}
\label{example_learningrate:eq1}
\limsup_{n \to \infty} \gamma_n  = 0  <\liminf_{n \to \infty}\left[\tfrac{(\gamma_n)^{k}-(\gamma_{n-1})^{k}}{(\gamma_n)^{k+1}} + \tfrac{c(\gamma_{n-1})^k}{(\gamma_{n})^k} \right].
\end{equation}
\end{enumerate}
\end{lemma}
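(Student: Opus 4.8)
The plan is to prove the two implications separately, after isolating the two summands $a_n := \tfrac{(\gamma_n)^{k}-(\gamma_{n-1})^{k}}{(\gamma_n)^{k+1}}$ and $b_n := \tfrac{c(\gamma_{n-1})^k}{(\gamma_{n})^k}$ appearing inside the $\liminf$ in \eqref{example_learningrate:eq1}. From $\gamma_n = \alpha n^{-\nu}$ one gets $\tfrac{\gamma_{n-1}}{\gamma_n} = (1-\tfrac1n)^{-\nu}$, which tends to $1$ as $n\to\infty$ for \emph{every} real $\nu$; hence $b_n = c\,(1-\tfrac1n)^{-k\nu}\to c$ regardless of the value of $\nu$, so the analysis of the $\liminf$ reduces to understanding $a_n = \tfrac1\alpha\cdot\tfrac{n^{-k\nu}-(n-1)^{-k\nu}}{n^{-(k+1)\nu}}$. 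Note also that $\limsup_{n\to\infty}\gamma_n = 0$ holds if and only if $\nu>0$, since $\gamma_n = \alpha n^{-\nu}$.

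For the implication (i) $\Rightarrow$ (ii), assume $\nu\in(0,1)$. Then $\limsup_{n\to\infty}\gamma_n = 0$ because $\nu>0$. To control $a_n$ I would apply Lemma~\ref{decay_fraction} with $\delta := k\nu$ and $\beta := (k+1)\nu$; this is admissible because $\delta,\beta\in(0,\infty)$ and $\beta = (k+1)\nu < k\nu + 1 = \delta + 1$, the latter inequality being equivalent to $\nu<1$. Lemma~\ref{decay_fraction} then gives $\lim_{n\to\infty}\tfrac{|n^{-k\nu}-(n-1)^{-k\nu}|}{n^{-(k+1)\nu}} = 0$, hence $a_n\to 0$, hence $a_n + b_n\to c$, and therefore $\liminf_{n\to\infty}(a_n+b_n) = c > 0 = \limsup_{n\to\infty}\gamma_n$, which is precisely \eqref{example_learningrate:eq1}.

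For the implication (ii) $\Rightarrow$ (i) I would argue by contraposition, using that $\nu\in\R\setminus\{1\}$, so that failure of (i) means $\nu\le 0$ or $\nu>1$. If $\nu\le 0$, then $\gamma_n = \alpha n^{-\nu}\ge\alpha$ for every $n\in\N$, so $\limsup_{n\to\infty}\gamma_n\ge\alpha>0$ and the first equality in \eqref{example_learningrate:eq1} already fails. If $\nu>1$, invoke the fundamental theorem of calculus (exactly as in the proof of Lemma~\ref{decay_fraction}): for all $n\in\{2,3,\dots\}$,
\begin{equation*}
n^{-k\nu}-(n-1)^{-k\nu} = -k\nu\int_{n-1}^{n} x^{-k\nu-1}\,\mathrm{d}x \le -k\nu\,n^{-k\nu-1},
\end{equation*}
and dividing by $\alpha\, n^{-(k+1)\nu}>0$ yields $a_n\le -\tfrac{k\nu}{\alpha}\,n^{\nu-1}\to-\infty$ since $\nu-1>0$; together with the boundedness of $b_n$ this forces $a_n+b_n\to-\infty$, so $\liminf_{n\to\infty}(a_n+b_n) = -\infty$ and the strict inequality in \eqref{example_learningrate:eq1} fails. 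Hence in either case (ii) fails, which establishes the contrapositive and completes the proof.

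The argument is entirely elementary; the only real care needed is in the bookkeeping of exponents --- verifying that the hypothesis $\beta<\delta+1$ of Lemma~\ref{decay_fraction} translates exactly into $\nu<1$, that the case division $\nu\le 0$, $0<\nu<1$, $\nu>1$ (with $\nu=1$ excluded by hypothesis) is exhaustive, and keeping track of the sign of the monotone difference $n^{-k\nu}-(n-1)^{-k\nu}$ so that $a_n$ is seen to tend to $-\infty$ (and not $+\infty$) in the case $\nu>1$.
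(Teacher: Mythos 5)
Your proposal is correct and follows essentially the same route as the paper: the same split into the difference-quotient term and the ratio term, the same application of Lemma~\ref{decay_fraction} with $\delta=k\nu$, $\beta=(k+1)\nu$ for $\nu\in(0,1)$, and the same fundamental-theorem-of-calculus bound $a_n\le-\tfrac{k\nu}{\alpha}n^{\nu-1}\to-\infty$ for $\nu>1$. If anything, your treatment of the case $\nu\le 0$ via $\gamma_n\ge\alpha$ is slightly cleaner, since it explicitly covers $\nu=0$, which the paper's case distinction ($r<0$ versus $r>0$) passes over only implicitly.
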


\begin{proof}[Proof of Lemma \ref{example_learningrate}]
	Throughout this proof let $(\Gamma_{n,r})_{n\in \N, r \in \R}\subseteq (0,\infty)$ satisfy for all $n \in \N$, $r\in \R$ that $\Gamma_{n,r}= \alpha n^{-r}$.
Note that for all $r \in (0,\infty)$ it holds that
\begin{equation}
\label{example_learningrate:eqa}
\limsup_{n\to \infty} \Gamma_{n,r} =\limsup_{n\to \infty} \left[ \frac{\alpha}{n^r}\right]=0.
\end{equation}
 Moreover, observe that Lemma \ref{decay_fraction} (with $\beta = (k+1)r$, $\delta = kr$ for $r \in (0,1)$ in the notation of Lemma \ref{decay_fraction}) and the fact that for all $r \in (0,1)$ it holds that $(k+1)r= kr +r < kr + 1$ prove that for all $r\in(0,1)$ it holds that
\begin{equation}
\label{example_learningrate:eq2}
\begin{split}
\liminf_{n \to \infty}\left[\frac{(\Gamma_{n,r})^{k}-(\Gamma_{n-1,r})^{k}}{(\Gamma_{n,r})^{k+1}}\right]
&=\liminf_{n \to \infty} \left[
\frac{\big(\frac{\alpha}{n^r}\big)^k-\big(\frac{\alpha}{(n-1)^r}\big)^k}{\big(\frac{\alpha}{n^r}\big)^{k+1}}
\right]\\
& = 
\frac{1}{\alpha} \liminf_{n \to \infty}\left[\frac{n^{-kr}-(n-1)^{-kr}}{n^{-(k+1)r}}\right]\\
&= 0.
\end{split}
\end{equation}
In addition, note that for all $r \in (0,\infty)$ it holds that
\begin{equation}
\begin{split}
\liminf_{n \to \infty} \left[\frac{(\Gamma_{n-1,r})^k}{(\Gamma_{n,r})^k}\right]
&=\liminf_{n \to \infty} \left[  \frac{\big(\frac{\alpha}{(n-1)^r}\big)^k}{\big(\frac{\alpha}{n^r}\big)^k}\right]\\
 &= \liminf_{n \to \infty} \left[\frac{n^{kr}}{(n-1)^{kr}}\right] \\
 &= \liminf_{n \to \infty} \left[\frac{(n+1)^{kr}}{n^{kr}}\right] \\
 &= \liminf_{n \to \infty}  \left[(1+\nicefrac{1}{n})^{kr}\right] = 1 .
\end{split}
\end{equation}
Therefore, we obtain that for all $r \in (0,1)$ it holds that
\begin{equation}
\label{example_learningrate:eqb}
\begin{split}
&\liminf_{n \to \infty}  \left[\frac{(\Gamma_{n,r})^{k}-(\Gamma_{n-1,r})^{k}}{(\Gamma_{n,r})^{k+1}} + \frac{c(\Gamma_{n-1,r})^k}{(\Gamma_{n,r})^k} \right]  \\
&\geq \liminf_{n \to \infty}\left[\frac{(\Gamma_{n,r})^{k}-(\Gamma_{n-1,r})^{k}}{(\Gamma_{n,r})^{k+1}}\right] + \liminf_{n \to \infty} \left[\frac{c(\Gamma_{n-1,r})^k}{(\Gamma_{n,r})^k}\right] = c > 0.
\end{split}
\end{equation}
Next observe that for all $r \in (-\infty,0)$ it holds that
\begin{equation}
\label{example_learningrate:eqc}
\begin{split}
\limsup_{n\to \infty} \Gamma_{n,r} 
=\limsup_{n\to \infty} \left[\alpha n^{|r|}\right]
=\infty.
\end{split}
\end{equation}
Moreover, note that for all $r \in (1,\infty)$, $n\in\{2,3,\dots\}$ it holds that
\begin{equation}
\begin{split}
\frac{(\Gamma_{n,r})^{k}-(\Gamma_{n-1,r})^{k}}{(\Gamma_{n,r})^{k+1}}  
&=\frac{1}{\alpha} \left[\frac{n^{-kr}-(n-1)^{-kr}}{n^{-(k+1)r}}\right]\\
&= \frac{n^{(k+1)r}}{\alpha}\left[\frac{1}{n^{kr}}-\frac{1}{(n-1)^{kr}}\right]\\
& =  -\frac{(kr)n^{(k+1)r}}{\alpha}\int_{n-1}^n\frac{1}{x^{kr+1}}\, \mathrm{d}x\\
&\leq -\frac{(kr)n^{(k+1)r}}{\alpha n^{kr+1}}\\
&=-\frac{(kr)n^{r-1}}{\alpha}.
\end{split}
\end{equation}
This  assures that for all $r\in (1,\infty)$ it holds that
\begin{equation}
\begin{split}
&\liminf_{n \to \infty}  \left[\frac{(\Gamma_{n,r})^{k}-(\Gamma_{n-1,r})^{k}}{(\Gamma_{n,r})^{k+1}} + \frac{c(\Gamma_{n-1,r})^k}{(\Gamma_{n,r})^k} \right]\\
&\leq 
\liminf_{n \to \infty} \left[-\frac{kr n^{r-1}}{\alpha}+\frac{c \big(\frac{\alpha}{(n-1)^r}\big)^k}{\big(\frac{\alpha}{n^r}\big)^k} \right]\\
&=
\liminf_{n \to \infty} \left[-\frac{kr n^{r-1}}{\alpha}+\frac{cn^{rk}}{(n-1)^{rk}} \right]\\
&=
\liminf_{n \to \infty} \left[-\frac{kr (n+1)^{r-1}}{\alpha}+\frac{c(n+1)^{rk}}{n^{rk}} \right]\\
&= 
\liminf_{n \to \infty} \left[-\frac{kr (n+1)^{r-1}}{\alpha}+c \left[1 +\nicefrac{1}{n}\right]^{rk} \right]\\
&= c + \liminf_{n \to \infty} \left[-\frac{kr n^{r-1}}{\alpha}\right]=-\infty.
\end{split}
\end{equation}
Combining this, \eqref{example_learningrate:eqa}, \eqref{example_learningrate:eqb}, and \eqref{example_learningrate:eqc} completes the proof of Lemma \ref{example_learningrate}.
\end{proof}
\begin{lemma}
\label{le:FhB}
Let $(\Omega, \mathcal{F},\mu)$ be a sigma-finite measure space, let $(\mathbb{X},\mathcal{X})$ and $(\mathbb{Y},\mathcal{Y})$ be measurable spaces, let $Y\colon \Omega \to \mathbb{Y}$ be $\mathcal{F}/\mathcal{Y}$ measurable, let $d \in \N$, let $G \colon \mathbb{X} \times \mathbb{Y} \to \R^d$ be $(\mathcal{X} \otimes \mathcal{Y})/\mathcal{B}(\R^d)$-measurable, let $\left \| \cdot \right \| \! \colon \R^d \to [0,\infty)$ be a norm, and assume  for all $x\in \mathbb{X}$ that $\int_\Omega \norm{G(x,Y(\omega))}\, \mu(\mathrm{d}\omega)<\infty$. Then it holds that the function 
\begin{equation}
\mathbb{X} \ni x \mapsto \int_{\Omega} G(x,Y(\omega)) \, \mu(\mathrm{d}\omega) \in \R^d
\end{equation}
is $\mathcal{X}/\mathcal{B}(\R^d)$-measurable.
\end{lemma}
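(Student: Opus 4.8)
The plan is to reduce the $\R^d$-valued claim to a scalar, nonnegative statement and then to invoke the measurability assertion contained in the Fubini--Tonelli theorem (cf., e.g., Klenke \cite[Theorem~14.16]{Klenke}). First I would write $G = (G_1,\dots,G_d)$ with each $G_i \colon \mathbb{X}\times \mathbb{Y} \to \R$ being $(\mathcal{X}\otimes\mathcal{Y})/\mathcal{B}(\R)$-measurable, and recall that a function into $\R^d$ is $\mathcal{X}/\mathcal{B}(\R^d)$-measurable if and only if each of its $d$ coordinate functions is $\mathcal{X}/\mathcal{B}(\R)$-measurable. Since all norms on $\R^d$ are equivalent, there exists $c\in(0,\infty)$ such that for all $v=(v_1,\dots,v_d)\in\R^d$, $i\in\{1,\dots,d\}$ it holds that $\abs{v_i}\leq c\norm{v}$; hence the hypothesis $\int_\Omega \norm{G(x,Y(\omega))}\,\mu(\mathrm{d}\omega)<\infty$ ensures that $\int_\Omega \abs{G_i(x,Y(\omega))}\,\mu(\mathrm{d}\omega)<\infty$ for every $i\in\{1,\dots,d\}$ and every $x\in\mathbb{X}$. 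It therefore suffices to establish, for each fixed $i\in\{1,\dots,d\}$, that the function $\mathbb{X}\ni x\mapsto \int_\Omega G_i(x,Y(\omega))\,\mu(\mathrm{d}\omega)\in\R$ is $\mathcal{X}/\mathcal{B}(\R)$-measurable.

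Second, for such a fixed real-valued, integrable (in the above sense) $G_i$, I would decompose $G_i=G_i^{+}-G_i^{-}$ with $G_i^{+}=\max\{G_i,0\}$ and $G_i^{-}=\max\{-G_i,0\}$, which are nonnegative and $(\mathcal{X}\otimes\mathcal{Y})/\mathcal{B}([0,\infty))$-measurable. The integrability hypothesis guarantees that both $\int_\Omega G_i^{+}(x,Y(\omega))\,\mu(\mathrm{d}\omega)$ and $\int_\Omega G_i^{-}(x,Y(\omega))\,\mu(\mathrm{d}\omega)$ are finite for every $x\in\mathbb{X}$ and that $\int_\Omega G_i(x,Y(\omega))\,\mu(\mathrm{d}\omega)$ coincides with their difference. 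As a difference of two $\mathcal{X}/\mathcal{B}(\R)$-measurable real-valued functions is again $\mathcal{X}/\mathcal{B}(\R)$-measurable, it remains only to treat a nonnegative $(\mathcal{X}\otimes\mathcal{Y})/\mathcal{B}([0,\infty))$-measurable integrand.

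Third, for such a nonnegative integrand, say $H\colon\mathbb{X}\times\mathbb{Y}\to[0,\infty)$, I would verify that the map $\mathbb{X}\times\Omega\ni(x,\omega)\mapsto(x,Y(\omega))\in\mathbb{X}\times\mathbb{Y}$ is $(\mathcal{X}\otimes\mathcal{F})/(\mathcal{X}\otimes\mathcal{Y})$-measurable: its first coordinate $(x,\omega)\mapsto x$ is $(\mathcal{X}\otimes\mathcal{F})/\mathcal{X}$-measurable and its second coordinate $(x,\omega)\mapsto Y(\omega)$ is $(\mathcal{X}\otimes\mathcal{F})/\mathcal{Y}$-measurable because $Y$ is $\mathcal{F}/\mathcal{Y}$-measurable. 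Hence the composition $\mathbb{X}\times\Omega\ni(x,\omega)\mapsto H(x,Y(\omega))\in[0,\infty)$ is $(\mathcal{X}\otimes\mathcal{F})/\mathcal{B}([0,\infty))$-measurable. Since $\mu$ is sigma-finite, the measurability part of the Fubini--Tonelli theorem then yields that $\mathbb{X}\ni x\mapsto \int_\Omega H(x,Y(\omega))\,\mu(\mathrm{d}\omega)\in[0,\infty]$ is $\mathcal{X}/\mathcal{B}([0,\infty])$-measurable, and the finiteness of these partial integrals for every $x\in\mathbb{X}$ (established in the first step) upgrades this to $\mathcal{X}/\mathcal{B}([0,\infty))$- and hence to $\mathcal{X}/\mathcal{B}(\R)$-measurability. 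Chaining the three reductions gives the claim.

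I do not expect a genuine obstacle here; the one point that requires care is the bookkeeping of sigma-algebras in the third step --- in particular ensuring that $(x,\omega)\mapsto H(x,Y(\omega))$ is measurable with respect to the \emph{product} sigma-algebra $\mathcal{X}\otimes\mathcal{F}$ so that the Fubini--Tonelli theorem applies verbatim, and that the passage from $\mathcal{B}([0,\infty])$-valued to $\mathcal{B}(\R)$-valued measurability is justified by the pointwise finiteness of the partial integrals.
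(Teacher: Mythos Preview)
Your proposal is correct and follows essentially the same approach as the paper: reduce to coordinates via norm equivalence, split each coordinate into positive and negative parts, and invoke the measurability assertion of Fubini--Tonelli for the nonnegative pieces. The only cosmetic difference is that the paper equips $\mathbb{X}$ with an auxiliary Dirac measure $\nu$ so as to cite Fubini for the sigma-finite product $\nu\otimes\mu$, whereas you verify directly that $(x,\omega)\mapsto H(x,Y(\omega))$ is $(\mathcal{X}\otimes\mathcal{F})$-measurable and then use the sigma-finiteness of $\mu$ alone; both routes yield the same conclusion.
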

\begin{proof}
Throughout this proof assume w.l.o.g.\ that $\mathbb{X} \neq \emptyset$, let $G_i \colon \mathbb{X} \times \mathbb{Y} \to \R$, $i \in \{1,2,\dots,d\}$, be the functions which satisfy for all $x \in \mathbb{X}$, $y\in \mathbb{Y}$ that 
\begin{equation}
\label{NR1}
G(x,y)=(G_1(x,y), G_2(x,y), \dots, G_d(x,y)),
\end{equation}
let $c\in (0,\infty)$ satisfy
\begin{equation}
\label{NR2}
c= \sup_{\theta=(\theta_1,\dots, \theta_d) \in \R^d\setminus\{0\}} \left( \frac{\big(\sum_{i=1}^d |\theta_i|\big)}{\norm{\theta}}\right),
\end{equation}
let $v \in \mathbb{X}$, and let $\nu\colon \mathcal{X} \to [0,\infty]
$ be the measure which satisfies for all $A \in \mathcal{X}$ that 
\begin{equation}
\nu(A)=
\begin{cases}
1 & \colon v \in A\\
0 & \colon  v \in \mathbb{X} \setminus A.
\end{cases}
\end{equation}
Observe that  \eqref{NR1}, \eqref{NR2}, and the hypothesis that for all $x \in \mathbb{X}$ it holds that $\int_\Omega \norm{G(x,Y(\omega))}\,\mu(\mathrm{d}\omega)<\infty$ ensure that for all $i \in \{1,2,\dots,d\}$, $x\in\mathbb{X}$ it holds that
\begin{equation}
\label{A}
\begin{split}
&\int_\Omega \max\!\big\{G_i(x,Y(\omega)),0\big\} \,\mu(\mathrm{d}\omega)
+ \int_\Omega \max\!\big\{\!-G_i(x,Y(\omega)),0\big\} \,\mu(\mathrm{d}\omega)\\
&=\int_\Omega \big|G_i(x,Y(\omega))\big| \,\mu(\mathrm{d}\omega)\\
&\leq \int_\Omega \bigg[\textstyle\sum\limits_{j=1}^d \big|G_j(x,Y(\omega))\big|\bigg] \,\mu(\mathrm{d}\omega)\\
& \leq c \int_\Omega \norm{G(x,Y(\omega))} \,\mu(\mathrm{d}\omega) <\infty.
\end{split}
\end{equation}
Hence, we obtain that for all $i \in \{1,2,\dots,d\}$, $x\in\mathbb{X}$ it holds that
\begin{equation}
\label{K}
\begin{split}
&\int_\Omega G_i(x,Y(\omega)) \,\mu(\mathrm{d}\omega)\\
&= \int_\Omega \Big[\max\!\big\{G_i(x,Y(\omega)),0\big\} + \min\!\big\{G_i(x,Y(\omega)),0\big\} \Big] \,\mu(\mathrm{d}\omega)\\
&= \int_\Omega \Big[\max\!\big\{G_i(x,Y(\omega)),0\big\} - \max\!\big\{\!-G_i(x,Y(\omega)),0\big\} \Big] \,\mu(\mathrm{d}\omega)\\
&= \int_\Omega \max\!\big\{G_i(x,Y(\omega)),0\big\} \,\mu(\mathrm{d}\omega)
- \int_\Omega \max\!\big\{\!-G_i(x,Y(\omega)),0\big\} \,\mu(\mathrm{d}\omega).
\end{split}
\end{equation}
Next note that Fubini's theorem and the fact that the measure $(\nu \otimes \mu) \colon (\mathcal{X}\otimes \mathcal{F}) \to [0,\infty]$ is sigma-finite prove that for all $i \in \{1,2,\dots,d\}$ it holds that the functions
\begin{equation}
\label{B}
\mathbb{X} \ni x \mapsto \int_\Omega \max\!\big\{G_i(x,Y(\omega)),0\big\} \,\mu(\mathrm{d}\omega) \in [0,\infty]
\end{equation}
and  
\begin{equation}
\label{C}
\mathbb{X} \ni x \mapsto \int_\Omega \max\!\big\{\!-G_i(x,Y(\omega)),0\big\} \,\mu(\mathrm{d}\omega) \in [0,\infty]
\end{equation}
are $\mathcal{X}/\mathcal{B}([0,\infty])$-measurable. Combining this with \eqref{A} demonstrates that for all $i \in \{1,2,\dots,d\}$ it holds that the functions
\begin{equation}
\mathbb{X} \ni x \mapsto \int_\Omega \max\!\big\{G_i(x,Y(\omega)),0\big\} \,\mu(\mathrm{d}\omega) \in [0,\infty)
\end{equation}
and  
\begin{equation}
\mathbb{X} \ni x \mapsto \int_\Omega \max\!\big\{\!-G_i(x,Y(\omega)),0\big\} \,\mu(\mathrm{d}\omega) \in [0,\infty)
\end{equation}
are $\mathcal{X}/\mathcal{B}([0,\infty))$-measurable. This and \eqref{K} ensure that for all $i \in \{1,2,\dots,d\}$ it holds that the function $G_i\colon \mathbb{X} \to \R$ is $\mathcal{X}/\mathcal{B}(\R)$-measurable. The proof of Lemma~\ref{le:FhB} is thus completed.
\end{proof}
\begin{prop}
\label{convergence_of_SA}
Let $d \in \N$, $p \in \{2,4,6,\ldots \}$, $\alpha,\kappa,c \in (0,\infty)$, $ \nu \in (0,1)$, $\xi, \vartheta \in \R^d$, let 
$\lll \cdot,\cdot \rrr \colon \R^d \times \R^d \to \R$
 be a scalar product, let 
 $\left \| \cdot \right \| \! \colon \R^d \to [0,\infty)$ 
 be the function which satisfies for all $\theta \in \R^d$ that $\norm{\theta} = \sqrt{\lll \theta,\theta \rrr}$, let $ ( \Omega , \mathcal{F}, \P) $ be a probability space, let $(S, \mathcal{S})$ be a measurable space, let $X_{n} \colon \Omega \to S$, $n \in \N$, be i.i.d.\ random variables,
let $G \colon \R^d \times S \to \R^d$ be $(\mathcal{B}(\R^{d}) \otimes \mathcal{S}) / \mathcal{B}(\R^{d})$-measurable, let $g \colon \R^d \to \R^d$ be a function, assume for all $\theta \in \R^d$ that
\begin{equation}
\label{convergence_of_SA:assumption1}
\EXP{\norm{G(\theta, X_1) - g(\theta)}^p } \leq \kappa \big(1+ \norm{\theta}^p\big), \qquad g(\theta) = \EXP{G(\theta, X_1)}, 
\end{equation}
\begin{equation}
\label{convergence_of_SA:assumption2}
\text{and} \qquad \lll \theta - \vartheta, g(\theta)  \rrr   \leq - c\max\!\big\{\norm{\theta - \vartheta}^2, \norm{g(\theta)}^2 \big\},  
\end{equation}
and let $\Theta \colon \N_0 \times \Omega \to \R^d$ be the stochastic process which satisfies for all $n \in \N$ that 
\begin{equation}
\label{convergence_of_SA:assumption3}
\Theta_0 = \xi   \qquad \text{ and } 
\qquad \Theta_n = \Theta_{n-1} + \tfrac{\alpha}{n^\nu} G(\Theta_{n-1},X_n).
\end{equation}
Then there exists $C \in (0,\infty)$ such that for all $n \in \N$ it holds that
\begin{equation}
\label{convergence_of_SA:conclusion}
\{\theta \in \R^d \colon g(\theta)=0 \} = \{\vartheta\} \qandq \left(\EXP{ \norm{\Theta_n-\vartheta}^p }\right)^{\nicefrac{1}{p}} \leq C n^{-\nicefrac{\nu}{2}}.
\end{equation}
\end{prop}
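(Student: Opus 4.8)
The plan is to exhibit Proposition~\ref{convergence_of_SA} as a direct instance of Theorem~\ref{Lp_theorem}. First I would equip $\Omega$ with the filtration $\mathbb{F}_0 = \{\emptyset,\Omega\}$, $\mathbb{F}_n = \sigma(X_1,\dots,X_n)$ for $n\in\N$, and set $D_n = G(\Theta_{n-1},X_n) - g(\Theta_{n-1})$, so that \eqref{convergence_of_SA:assumption3} reads $\Theta_n = \Theta_{n-1} + \gamma_n(g(\Theta_{n-1}) + D_n)$ with $\gamma_n = \alpha n^{-\nu}$, which is exactly \eqref{Lp_theorem:assumption3}. For this to make sense I need $g$ to be $\mathcal{B}(\R^d)/\mathcal{B}(\R^d)$-measurable: assumption \eqref{convergence_of_SA:assumption1} forces $\EXP{\norm{G(\theta,X_1)}}<\infty$ for every $\theta\in\R^d$ (since $g(\theta)$ is a fixed vector), so $g(\theta) = \EXP{G(\theta,X_1)}$ is the integral of a product-measurable map against $\P\circ X_1^{-1}$, and Lemma~\ref{le:FhB} (with $\mu = \P$, $Y = X_1$) yields the desired measurability. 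An induction on $n$ then shows that $\Theta_{n-1}$ is $\mathbb{F}_{n-1}/\mathcal{B}(\R^d)$-measurable and hence that $D$ is $(\mathbb{F}_n)_{n\in\N}$-adapted; moreover $\Theta_0 = \xi$ is $\mathbb{F}_0$-measurable as a constant, and $\EXP{\norm{\Theta_0}^p} = \norm{\xi}^p < \infty$, which verifies the last part of \eqref{Lp_theorem:assumption3}.

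Next I would verify the two probabilistic hypotheses of Theorem~\ref{Lp_theorem}. Because the $X_k$ are i.i.d., $X_n$ is independent of $\mathbb{F}_{n-1}$ while $\Theta_{n-1}$ is $\mathbb{F}_{n-1}$-measurable, so the factorization machinery applies with $X = X_n$ and $Y = \Theta_{n-1}$. For the martingale-difference condition, I fix $n\in\N$ and $A\in\mathbb{F}_{n-1}$ with $\EXP{\norm{D_n}}<\infty$; applying Corollary~\ref{condexp_and_independence-integrable} componentwise to $\Phi_i(x,\theta) = G_i(\theta,x) - g_i(\theta)$ and using that $\EXP{\Phi_i(X_n,\theta)} = \EXP{G_i(\theta,X_1)} - g_i(\theta) = 0$ for every $\theta\in\R^d$, I obtain $\EXP{D_n\ind{A}} = 0$. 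For the moment bound \eqref{Lp_theorem:assumption4}, I would apply Lemma~\ref{condexp_and_independence-nonneg} to the nonnegative product-measurable map $(x,\theta)\mapsto \norm{G(\theta,x) - g(\theta)}^p$ to get $\EXP{\norm{D_n}^p\mid\mathbb{F}_{n-1}} = \psi(\Theta_{n-1})$ $\P$-a.s., where $\psi(\theta) = \EXP{\norm{G(\theta,X_1) - g(\theta)}^p} \leq \kappa(1+\norm{\theta}^p)$ by \eqref{convergence_of_SA:assumption1}; the equivalence recorded in Remark~\ref{comment_condexp} then turns $\EXP{\norm{D_n}^p\mid\mathbb{F}_{n-1}}\leq\kappa(1+\norm{\Theta_{n-1}}^p)$ into \eqref{Lp_theorem:assumption4}.

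Finally, for the deterministic hypotheses: \eqref{Lp_theorem:assumption1} is literally \eqref{convergence_of_SA:assumption2}, and for the learning-rate condition \eqref{Lp_theorem:assumption2}, since $\gamma_n = \alpha n^{-\nu}$ with $\nu\in(0,1)$, Lemma~\ref{example_learningrate} --- applied separately for each $k\in\{1,2,\dots,\nicefrac{p}{2}\}$ with the constant $c$ there replaced by $\nicefrac{c}{2}$ --- yields $\limsup_{l\to\infty}\gamma_l = 0$ together with $\liminf_{l\to\infty}\big[\tfrac{(\gamma_l)^{k}-(\gamma_{l-1})^{k}}{(\gamma_l)^{k+1}} + \tfrac{c(\gamma_{l-1})^k}{2(\gamma_{l})^k}\big] > 0$, and the minimum over the finitely many such $k$ is therefore also positive. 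Theorem~\ref{Lp_theorem} now applies and provides $\{\theta\in\R^d\colon g(\theta)=0\} = \{\vartheta\}$ and a constant $\tilde C\in(0,\infty)$ with $(\EXP{\norm{\Theta_n-\vartheta}^p})^{\nicefrac{1}{p}} \leq \tilde C(\gamma_n)^{\nicefrac{1}{2}} = \tilde C\alpha^{\nicefrac{1}{2}} n^{-\nicefrac{\nu}{2}}$ for all $n\in\N$; taking $C = \tilde C\alpha^{\nicefrac{1}{2}}$ finishes the proof. I expect the only genuinely delicate points to be the passage between the ``$\ind{A}$'' formulations and the conditional-expectation formulations via Corollary~\ref{condexp_and_independence-integrable}, Lemma~\ref{condexp_and_independence-nonneg}, and Remark~\ref{comment_condexp}, together with the measurability of $g$ through Lemma~\ref{le:FhB}; everything else is routine bookkeeping.
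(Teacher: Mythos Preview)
Your proposal is correct and follows essentially the same route as the paper's own proof: the paper also introduces $\mathbb{F}_n=\sigma(X_1,\dots,X_n)$, $D_n=G(\Theta_{n-1},X_n)-g(\Theta_{n-1})$, $\gamma_n=\alpha n^{-\nu}$, verifies measurability of $g$ via Lemma~\ref{le:FhB}, checks the martingale-difference and moment conditions through Corollary~\ref{condexp_and_independence-integrable} and Lemma~\ref{condexp_and_independence-nonneg}, invokes Lemma~\ref{example_learningrate} for the learning-rate hypothesis, and concludes with Theorem~\ref{Lp_theorem}. The only cosmetic difference is that the paper reads off the $\ind{A}$-form of the moment bound directly from item~(\ref{condexp_and_independence-nonneg:item2}) of Lemma~\ref{condexp_and_independence-nonneg} rather than passing through Remark~\ref{comment_condexp}, but this is the same argument.
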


\begin{proof}[Proof of Proposition \ref{convergence_of_SA}]
Throughout this proof let $(\gamma_n)_{n \in \N} \subseteq (0,\infty)$ satisfy for all $n \in \N$ that $\gamma_n = \alpha n^{-\nu}$, let $D \colon \N \times \Omega \to \R^d$ be the function which satisfies for all $n\in \N$ that 
\begin{equation}
\label{convergence_of_SA:eq0}
D_n = G(\Theta_{n-1},X_n) - g(\Theta_{n-1}),
\end{equation} 
let $\mathfrak{G} \colon \R^d \to [0,\infty)$ be the function which satisfies for all $\theta \in \R^d$ that
\begin{equation}
\mathfrak{G}(\theta)= \EXP{\norm{G(\theta,X_1)-g(\theta)}^p},
\end{equation} 
and let $\mathbb{F}_n \subseteq \mathcal{F}$, $n \in \N_0$, be the sigma-algebras which satisfy for all $n \in \N$ that
\begin{equation} \mathbb{F}_0 = \{ \{ \}, \Omega \} \qandq \mathbb{F}_n = \sigma_\Omega(X_1,X_2, \ldots, X_n).
\end{equation}
Observe that \eqref{convergence_of_SA:assumption1}, the hypothesis that the function $G \colon \R^d \times S \to \R^d$ is $(\mathcal{B}(\R^{d}) \otimes \mathcal{S}) / \mathcal{B}(\R^{d})$-measurable, and Lemma~\ref{le:FhB} ( with $\Omega=\Omega$, $\mathcal{F}=\mathcal{F}$, $\mu=\mathbb{P}$, $\mathbb{X}=\R^d$, $\mathcal{X}=\mathcal{B}(\R^d)$, $\mathbb{Y}=S$, $\mathcal{Y}=\mathcal{S}$, $Y=X_1$, $d=d$, $G=G$, $\left \| \cdot \right \| \! = \left \| \cdot \right \| \!$ in the notation of Lemma~\ref{le:FhB}) prove that the function $g\colon\R^d \to \R^d$ is $\mathcal{B}(\R^d)/\mathcal{B}(\R^d)$-measurable.
%
Next note that the hypothesis that the function $G \colon \R^d \times S \to \R^d$ is $(\mathcal{B}(\R^{d}) \otimes \mathcal{S}) / \mathcal{B}(\R^{d})$-measurable, the hypothesis that $\Theta_0=\xi$, and (\ref{convergence_of_SA:assumption3}) imply that $\Theta$ is an $ ( \mathbb{F}_n )_{n \in \N_0 } / \mathcal{B}(\R^d) $-adapted stochastic process.
Combining \eqref{convergence_of_SA:eq0} and the fact that the function $g\colon\R^d \to \R^d$ is $\mathcal{B}(\R^d)/\mathcal{B}(\R^d)$-measurable with 
 the hypothesis that the function  $G \colon \R^d \times S \to \R^d$ is $(\mathcal{B}(\R^{d}) \otimes \mathcal{S}) / \mathcal{B}(\R^{d})$-measurable hence demonstrates  that $D$ is an $(\mathbb{F}_n)_{n\in \N}/\mathcal{B}(\R^d)$-adapted stochastic process. Furthermore, note that (\ref{convergence_of_SA:assumption3}) proves that
\begin{equation}
\label{convergence_of_SA:eq1}
\EXP{\norm{\Theta_0}^p} = \norm{\xi}^p < \infty.
\end{equation}
In addition, observe that \eqref{convergence_of_SA:eq0} and (\ref{convergence_of_SA:assumption3}) ensure that for all $n\in \N$ it holds that
\begin{equation}
\begin{split}
\Theta_n 
&= 
\Theta_{n-1} + \tfrac{\alpha}{n^\nu} G(\Theta_{n-1},X_n)\\
&= 
\Theta_{n-1} + \tfrac{\alpha}{n^\nu} \big(g(\Theta_{n-1}) +\left[G(\Theta_{n-1},X_n)-g(\Theta_{n-1}) \right]\!\big)\\
&= 
\Theta_{n-1} + \tfrac{\alpha}{n^\nu} \big(g(\Theta_{n-1}) +D_n \big)\\
&= 
\Theta_{n-1} + \gamma_n \big(g(\Theta_{n-1}) +D_n \big).
\end{split}
\end{equation}
Next observe that (\ref{convergence_of_SA:assumption1}), item (\ref{condexp_and_independence-nonneg:item2}) in Lemma \ref{condexp_and_independence-nonneg}, the fact that for all $n \in \N$ it holds that the function $\Theta_{n-1}$ is $\mathbb{F}_{n-1}/\mathcal{B}(\R^d)$-measurable, and the fact that for all $n \in \N$ it holds that $X_n$ is independent of $\mathbb{F}_{n-1}$ ensure that for all $n \in \N$, $A \in \mathbb{F}_{n-1}$ it holds that 
\begin{equation}
\label{convergence_of_SA:eq2}
\begin{split}
\EXP{\norm{D_n}^p \ind{A}} 
&= \EXP{\norm{G(\Theta_{n-1},X_n) - g(\Theta_{n-1})}^p \ind{A}} \\
&= \EXPP{\EXP{\norm{G(\Theta_{n-1},X_n) - g(\Theta_{n-1})}^p \ind{A}\!\mid\! \F_{n-1}}}\\
&= \EXPP{\EXP{\norm{G(\Theta_{n-1},X_n) - g(\Theta_{n-1})}^p \!\mid\! \F_{n-1}}\ind{A}}\\
&= \EXP{\mathfrak{G}(\Theta_{n-1})\ind{A}}\\
&= \kappa \, \EXP{(1 + \norm{\Theta_{n-1}}^p)\ind{A} }.
\end{split}
\end{equation}
Moreover, note that 
Corollary \ref{condexp_and_independence-integrable}, 
the fact that for all $n \in \N$ it holds that the function $\Theta_{n-1}$ is $\mathbb{F}_{n-1}/\mathcal{B}(\R^d)$-measurable,
 the fact that for all $\theta \in \R^d$ it holds that $\mathbb{E}\big[\Vert G(\theta,X_1)- g(\theta)\Vert\big] < \infty$, and the fact that for all $n \in \N$ it holds that $X_n$ is independent of $\mathbb{F}_{n-1}$ prove that for all $n \in \N$, $A \in \F_{n-1}$  with $\Exp{\norm{D_n}} < \infty$ it holds that
\begin{equation}
\label{convergence_of_SA:eq3}
\begin{split}
\EXP{D_n \ind{A}} 
&= \EXPP{\big(G(\Theta_{n-1},X_n) - g(\Theta_{n-1})\big) \ind{A}} \\
&= \EXPP{\EXP{\big(G(\Theta_{n-1},X_n) - g(\Theta_{n-1})\big) \ind{A} \!\mid\! \F_{n-1}}} \\
&= \EXPP{\big(\EXP{G(\Theta_{n-1},X_n)\!\mid\! \F_{n-1}} - g(\Theta_{n-1}) \big)\ind{A}} \\
&= \EXPP{\big(g(\Theta_{n-1})-g(\Theta_{n-1})\big)\ind{A}} = 0.
\end{split}
\end{equation}
Furthermore, observe that Lemma \ref{example_learningrate} ensures that for all $k \in (0,\infty)$ it holds that
\begin{equation}
\limsup_{n \to \infty} \gamma_n  = 0  <\liminf_{n \to \infty}\left[\frac{(\gamma_n)^{k}-(\gamma_{n-1})^{k}}{(\gamma_n)^{k+1}} + \frac{c (\gamma_{n-1})^k}{2 (\gamma_{n})^k} \right].
\end{equation}
This implies that
\begin{equation}
\limsup_{l \to \infty} \gamma_l =0 < \min_{k \in \{1,2,\dots,\nicefrac{p}{2}\}} \left(\liminf_{l \to \infty} \left[\frac{(\gamma_l)^k-(\gamma_{l-1})^k}{(\gamma_l)^{k+1}} +\frac{c (\gamma_{l-1})^k}{2(\gamma_l)^k} \right] \right).
\end{equation}
Combining the fact that $D$ is an $(\mathbb{F}_n)_{n \in \N}/\mathcal{B}(\R^d)$-adapted stochastic process, the fact that the function $\Theta_0$ is $\mathbb{F}_0/\mathcal{B}(\R^d)$-measurable, (\ref{convergence_of_SA:assumption2}), and (\ref{convergence_of_SA:eq1})--(\ref{convergence_of_SA:eq3}) with Theorem \ref{Lp_theorem} hence demonstrates that there exists $C \in (0,\infty)$ such that for all $n \in \N$ it holds that
\begin{equation}
\{\theta \in \R^d \colon g(\theta)=0 \} = \{\vartheta\}
\end{equation} 
 and
\begin{equation}
\big(\EXP{ \norm{\Theta_n-\vartheta}^p }\big)^{\nicefrac{1}{p}} \leq C (\gamma_n)^{\nicefrac{1}{2}} = [C \sqrt{\alpha}] n^{-\nicefrac{\nu}{2}}.
\end{equation}
This establishes (\ref{convergence_of_SA:conclusion}). The proof of Proposition \ref{convergence_of_SA} is thus completed.
\end{proof}
\subsection{Strong $L^p$-convergence rate for stochastic gradient descent}
\label{subsection:SGD}
\begin{lemma}
\label{le:meas-grad}
Let $d \in \N$,  let $(S, \mathcal{S})$ be a measurable space, 
let $F = ( F(\theta,x) )_{\theta \in \R^d, x \in S}\colon$ $\R^d \times S \to \R$ be $(\mathcal{B}(\R^{d}) \otimes \mathcal{S}) / \mathcal{B}(\R)$-measurable, and assume for all $x \in S$ that
\begin{equation}
\label{le:meas-grad:assumption1}
 (\R^d \ni \theta \mapsto F(\theta,x) \in \R) \in C^1(\R^d, \R). 
\end{equation}
Then it holds that the function 
\begin{equation}
\R^d \times S \ni (\theta, x) \mapsto(\nabla_{\theta}F)(\theta,x) \in \R^d
\end{equation}
 is $(\mathcal{B}(\R^{d}) \otimes \mathcal{S}) / \mathcal{B}(\R^d)$-measurable.
\end{lemma}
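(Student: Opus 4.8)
The plan is to express each partial derivative of $F$ in the $\theta$-variable as a pointwise limit of difference quotients, each of which is manifestly jointly measurable, and then to invoke the fact that pointwise limits of measurable functions are measurable.

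First I would reduce to the scalar components: a function $\R^d \times S \ni (\theta,x) \mapsto G(\theta,x) \in \R^d$ is $(\mathcal{B}(\R^d) \otimes \mathcal{S})/\mathcal{B}(\R^d)$-measurable if and only if each of its $d$ coordinate functions is $(\mathcal{B}(\R^d) \otimes \mathcal{S})/\mathcal{B}(\R)$-measurable. Hence it suffices to prove that for every $i \in \{1,2,\ldots,d\}$ the function $\R^d \times S \ni (\theta,x) \mapsto (\tfrac{\partial}{\partial \theta_i}F)(\theta,x) \in \R$ is $(\mathcal{B}(\R^d) \otimes \mathcal{S})/\mathcal{B}(\R)$-measurable.

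Next, fix $i \in \{1,\ldots,d\}$, let $e_i \in \R^d$ denote the $i$-th standard basis vector, and for every $m \in \N$ let $h_m \colon \R^d \times S \to \R$ be the difference quotient given by $h_m(\theta,x) = m\big(F(\theta + \tfrac{1}{m}e_i, x) - F(\theta,x)\big)$. I would observe that the shift map $\R^d \times S \ni (\theta,x) \mapsto (\theta + \tfrac{1}{m}e_i, x) \in \R^d \times S$ is $(\mathcal{B}(\R^d) \otimes \mathcal{S})/(\mathcal{B}(\R^d) \otimes \mathcal{S})$-measurable, since its first component is continuous (hence $\mathcal{B}(\R^d)/\mathcal{B}(\R^d)$-measurable) and its second component is the identity on $S$. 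Composing with $F$ and using the hypothesis that $F$ is $(\mathcal{B}(\R^d) \otimes \mathcal{S})/\mathcal{B}(\R)$-measurable shows that $(\theta,x) \mapsto F(\theta + \tfrac{1}{m}e_i, x)$ is $(\mathcal{B}(\R^d) \otimes \mathcal{S})/\mathcal{B}(\R)$-measurable; since finite linear combinations of real-valued measurable functions are measurable, each $h_m$ is $(\mathcal{B}(\R^d) \otimes \mathcal{S})/\mathcal{B}(\R)$-measurable.

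Finally, assumption \eqref{le:meas-grad:assumption1} that $\theta \mapsto F(\theta,x)$ is of class $C^1$ for every $x \in S$ guarantees in particular that the directional derivative in direction $e_i$ exists and satisfies $\lim_{m \to \infty} h_m(\theta,x) = (\tfrac{\partial}{\partial \theta_i}F)(\theta,x)$ for all $(\theta,x) \in \R^d \times S$. Since a pointwise limit of $(\mathcal{B}(\R^d) \otimes \mathcal{S})/\mathcal{B}(\R)$-measurable functions is again $(\mathcal{B}(\R^d) \otimes \mathcal{S})/\mathcal{B}(\R)$-measurable, it follows that $(\theta,x) \mapsto (\tfrac{\partial}{\partial \theta_i}F)(\theta,x)$ is $(\mathcal{B}(\R^d) \otimes \mathcal{S})/\mathcal{B}(\R)$-measurable for every $i$, which together with the first reduction step completes the proof. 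There is essentially no serious obstacle here; the only point requiring a moment's care is the joint (product-$\sigma$-algebra) measurability of the shifted evaluation $(\theta,x) \mapsto F(\theta + \tfrac{1}{m}e_i, x)$, which is why I would isolate the measurability of the shift map as a separate observation.
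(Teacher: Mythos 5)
Your proposal is correct and follows essentially the same route as the paper's proof: both express each partial derivative $(\tfrac{\partial}{\partial\theta_i}F)(\theta,x)$ as the pointwise limit of jointly measurable difference quotients along a sequence of step sizes tending to zero (the paper uses $2^{-n}$, you use $\nicefrac{1}{m}$) and then invoke measurability of pointwise limits together with the coordinatewise reduction. Your explicit justification of the joint measurability of $(\theta,x)\mapsto F(\theta+\tfrac1m e_i,x)$ via the shift map is a detail the paper leaves tacit, but the argument is the same.
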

\begin{proof}[Proof of Lemma \ref{le:meas-grad}]
Throughout this proof let $G = (G_1, \dots, G_d) \colon \R^d \times S \to \R^d$ be the function which satisfies for all $\theta \in \R^d$, $x\in S$ that
\begin{equation}
\label{le:meas-grad:eq1}
G(\theta,x)= (\nabla_\theta F)(\theta,x).
\end{equation}
The hypothesis that the function $F \colon \R^d \times S \to \R$ is $(\mathcal{B}(\R^{d}) \otimes \mathcal{S}) / \mathcal{B}(\R)$-measurable implies that for all $i \in \{1,\dots,d\}$, $h\in \R\backslash\{0\}$ it holds that the function
\begin{equation}
\begin{split}
&\R^d \times S \ni(\theta,x)=((\theta_1,\dots,\theta_d),x) \mapsto \left(\tfrac{F((\theta_1,\dots,\theta_{i-1},\theta_{i}+h,\theta_{i+1},\dots,\theta_d),x)-F(\theta,x)}{h}
\right) \in \R
\end{split}
\end{equation}
 is $(\mathcal{B}(\R^d) \otimes \mathcal{S}) / \mathcal{B}(\R)$-measurable. The fact that for all $i \in \{1,\dots,d\}$, $\theta=(\theta_1,\dots,\theta_d) \in \R^d$, $x\in S$ it holds that
\begin{equation}
G_i(\theta,x)= \lim\limits_{n \to \infty} \left(\tfrac{F((\theta_1,\dots,\theta_{i-1},\theta_{i}+2^{-n},\theta_{i+1},\dots,\theta_d),x)-F(\theta,x)}{2^{-n}}
\right)
\end{equation}
hence ensures that  for all $i \in \{1,\dots,d\}$ it holds that the function $G_i \colon \R^d \times S \to \R$ is $(\mathcal{B}(\R^{d}) \otimes \mathcal{S}) / \mathcal{B}(\R)$-measurable. 
This and (\ref{le:meas-grad:eq1}) complete the proof of Lemma~\ref{le:meas-grad}.
\end{proof}
\begin{cor}
	\label{convergence_of_SA-Vers-G-F}
	Let $d \in \N$, $p \in \{2,4,6,\ldots \}$, $\alpha,\kappa,c \in (0,\infty)$, $ \nu \in (0,1)$, $\xi, \vartheta \in \R^d$, let 
	$\lll \cdot,\cdot \rrr \colon \R^d \times \R^d \to \R$
	be a scalar product, let 
	$\left \| \cdot \right \| \! \colon \R^d \to [0,\infty)$ 
	be the function which satisfies for all $\theta \in \R^d$ that $\norm{\theta} = \sqrt{\lll \theta,\theta \rrr}$, let $ ( \Omega , \mathcal{F}, \P) $ be a probability space, let $(S, \mathcal{S})$ be a measurable space, let $X_{n} \colon \Omega \to S$, $n \in \N$, be i.i.d.\ random variables,
	let $F = ( F(\theta,x) )_{\theta \in \R^d, x \in S} \colon \R^d \times S \to \R$ be $(\mathcal{B}(\R^{d}) \otimes \mathcal{S}) / \mathcal{B}(\R)$-measurable, let $g \colon \R^d \to \R^d$ be a function,  assume for all $x \in S$ that $(\R^d \ni \theta \mapsto F(\theta,x) \in \R) \in C^1(\R^d, \R)$, assume for all $\theta \in \R^d$ that
	\begin{equation}
	\label{convergence_of_SA-Vers-G-F:assumption1}
	\EXP{\norm{(\nabla_\theta F)(\theta, X_1) - g(\theta)}^p } \leq \kappa \big(1+ \norm{\theta}^p\big), 
	\end{equation}
	\begin{equation}
	\label{convergence_of_SA-Vers-G-F:assumption2}
	\lll \theta - \vartheta, g(\theta)  \rrr   \leq  -c\max\!\big\{\norm{\theta - \vartheta}^2, \norm{g(\theta)}^2 \big\},  
	\end{equation}
	and $g(\theta) = \EXP{(\nabla_\theta F)(\theta, X_1)}$,
	and let $\Theta \colon \N_0 \times \Omega \to \R^d$ be the function which satisfies for all $n \in \N$ that 
	\begin{equation}
	\label{convergence_of_SA-Vers-G-F:assumption3}
	\Theta_0 = \xi   \qquad \text{ and } 
	\qquad \Theta_n = \Theta_{n-1} + \tfrac{\alpha}{n^\nu} (\nabla_\theta F)(\Theta_{n-1},X_n).
	\end{equation}
	Then there exists $C \in (0,\infty)$ such that for all $n \in \N$ it holds that
	\begin{equation}
	\label{convergence_of_SA-Vers-G-F:conclusion}
	\{\theta \in \R^d \colon g(\theta)=0 \} = \{\vartheta\} \qandq \left(\EXP{ \norm{\Theta_n-\vartheta}^p }\right)^{\nicefrac{1}{p}} \leq C n^{-\nicefrac{\nu}{2}}.
	\end{equation}
\end{cor}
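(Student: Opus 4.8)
The plan is to reduce Corollary~\ref{convergence_of_SA-Vers-G-F} to Proposition~\ref{convergence_of_SA} by identifying the function $G\colon\R^d\times S\to\R^d$ there with the gradient map $(\theta,x)\mapsto(\nabla_\theta F)(\theta,x)$ here. First I would invoke Lemma~\ref{le:meas-grad} to obtain that the function $\R^d\times S\ni(\theta,x)\mapsto(\nabla_\theta F)(\theta,x)\in\R^d$ is $(\mathcal B(\R^d)\otimes\mathcal S)/\mathcal B(\R^d)$-measurable; this is exactly the measurability hypothesis on $G$ required by Proposition~\ref{convergence_of_SA}.

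Next I would check that the remaining hypotheses of Proposition~\ref{convergence_of_SA} hold verbatim with $G(\theta,x)=(\nabla_\theta F)(\theta,x)$. Assumption~\eqref{convergence_of_SA:assumption1} in Proposition~\ref{convergence_of_SA} asks that $\EXP{\norm{G(\theta,X_1)-g(\theta)}^p}\leq\kappa(1+\norm{\theta}^p)$ and that $g(\theta)=\EXP{G(\theta,X_1)}$ for all $\theta\in\R^d$; both are precisely \eqref{convergence_of_SA-Vers-G-F:assumption1} and the hypothesis $g(\theta)=\EXP{(\nabla_\theta F)(\theta,X_1)}$ of Corollary~\ref{convergence_of_SA-Vers-G-F}. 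Assumption~\eqref{convergence_of_SA:assumption2} in Proposition~\ref{convergence_of_SA} is the coercivity/convexity condition, which is exactly \eqref{convergence_of_SA-Vers-G-F:assumption2}. Finally, \eqref{convergence_of_SA:assumption3} in Proposition~\ref{convergence_of_SA} prescribes the recursion $\Theta_0=\xi$, $\Theta_n=\Theta_{n-1}+\tfrac{\alpha}{n^\nu}G(\Theta_{n-1},X_n)$, which matches \eqref{convergence_of_SA-Vers-G-F:assumption3}.

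With all hypotheses verified, Proposition~\ref{convergence_of_SA} (applied with $G$ replaced by $\R^d\times S\ni(\theta,x)\mapsto(\nabla_\theta F)(\theta,x)\in\R^d$, and with $d=d$, $p=p$, $\alpha=\alpha$, $\kappa=\kappa$, $c=c$, $\nu=\nu$, $\xi=\xi$, $\vartheta=\vartheta$, $g=g$ otherwise unchanged) immediately yields the existence of $C\in(0,\infty)$ such that for all $n\in\N$ one has $\{\theta\in\R^d\colon g(\theta)=0\}=\{\vartheta\}$ and $(\EXP{\norm{\Theta_n-\vartheta}^p})^{\nicefrac{1}{p}}\leq C n^{-\nicefrac{\nu}{2}}$, which is exactly \eqref{convergence_of_SA-Vers-G-F:conclusion}.

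There is essentially no obstacle here: the statement is a near-verbatim specialization of Proposition~\ref{convergence_of_SA} obtained by taking the stochastic gradient estimator $G$ to be the exact gradient $\nabla_\theta F$ of the random loss $F(\cdot,X_n)$. The only nontrivial point — ensuring that the gradient of a jointly measurable, fibrewise-$C^1$ function is jointly measurable — has already been isolated as Lemma~\ref{le:meas-grad}, so the proof is a short bookkeeping argument matching hypotheses and quoting the proposition.
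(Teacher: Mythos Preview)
Your proposal is correct and matches the paper's proof essentially verbatim: the paper also invokes Lemma~\ref{le:meas-grad} for the joint measurability of $(\theta,x)\mapsto(\nabla_\theta F)(\theta,x)$ and then applies Proposition~\ref{convergence_of_SA} with $G(\theta,x)=(\nabla_\theta F)(\theta,x)$ and $g=g$ to obtain~\eqref{convergence_of_SA-Vers-G-F:conclusion}.
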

\begin{proof}[Proof of Corollary \ref{convergence_of_SA-Vers-G-F}]
Combining Lemma~\ref{le:meas-grad} and Proposition \ref{convergence_of_SA} (with $G(\theta,x) = \nabla_{\theta}F(\theta,x)$, $g(\theta) =  g(\theta)$ for $\theta \in \R^d$, $x\in S$ in the notation of Proposition \ref{convergence_of_SA}) establishes (\ref{convergence_of_SA-Vers-G-F:conclusion}). The proof of Corollary~\ref{convergence_of_SA-Vers-G-F} is thus completed.
\end{proof}
\begin{cor}
	\label{convergence_of_SA-Vers-G-F-2}
	Let $d \in \N$, $p \in \{2,4,6,\ldots \}$, $\alpha,\kappa,c \in (0,\infty)$, $ \nu \in (0,1)$, $\xi, \vartheta \in \R^d$, let 
	$\lll \cdot,\cdot \rrr \colon \R^d \times \R^d \to \R$
	be a scalar product, let 
	$\left \| \cdot \right \| \! \colon \R^d \to [0,\infty)$ 
	be the function which satisfies for all $\theta \in \R^d$ that $\norm{\theta} = \sqrt{\lll \theta,\theta \rrr}$, let $ ( \Omega , \mathcal{F}, \P) $ be a probability space, let $(S, \mathcal{S})$ be a measurable space, let $X_{n} \colon \Omega \to S$, $n \in \N$, be i.i.d.\ random variables,
	 and let $F = ( F(\theta,x) )_{\theta \in \R^d, x \in S} \colon \R^d \times S \to \R$ be $(\mathcal{B}(\R^{d}) \otimes \mathcal{S}) / \mathcal{B}(\R)$-measurable,  assume for all $x \in S$ that $(\R^d \ni \theta \mapsto F(\theta,x) \in \R) \in C^1(\R^d, \R)$, assume for all $\theta \in \R^d$ that
	\begin{equation}
	\label{convergence_of_SA-Vers-G-F-2:assumption0}
	\EXP{\norm{(\nabla_\theta F)(\theta,X_1)}}<\infty,
	\end{equation}
\vspace{-.6cm}
\begin{equation}
\label{convergence_of_SA-Vers-G-F-2:assumption2}
\lll \theta - \vartheta, \Exp{(\nabla_\theta F)(\theta, X_1)}\rrr  \leq  -c\max\!\big\{\norm{\theta - \vartheta}^2, \norm{\Exp{(\nabla_\theta F)(\theta, X_1)}\!}^2\!\big\},  
\end{equation}
	\begin{equation}
	\label{convergence_of_SA-Vers-G-F-2:assumption1}
	\EXP{\norm{(\nabla_\theta F)(\theta, X_1) - \Exp{(\nabla_\theta F)(\theta, X_1)}\!}^p } \leq \kappa \big(1+ \norm{\theta}^p\big), 
	\end{equation}
	and let $\Theta \colon \N_0 \times \Omega \to \R^d$ be the function which satisfies for all $n \in \N$ that 
	\begin{equation}
	\label{convergence_of_SA-Vers-G-F-2:assumption3}
	\Theta_0 = \xi   \qquad \text{ and } 
	\qquad \Theta_n = \Theta_{n-1} + \tfrac{\alpha}{n^\nu} (\nabla_\theta F)(\Theta_{n-1},X_n).
	\end{equation}
	Then 
	\begin{enumerate}[(i)]
	\item it holds that
	 $\big\{\theta \in \R^d \colon \Exp{(\nabla_\theta F)(\theta, X_1)}=0 \big\} = \{\vartheta\}$ and
	 \item there exists $C \in (0,\infty)$ such that for all $n \in \N$ it holds that
	\begin{equation}
	\label{convergence_of_SA-Vers-G-F-2:conclusion}
	\left(\EXP{ \norm{\Theta_n-\vartheta}^p }\right)^{\nicefrac{1}{p}} \leq C n^{-\nicefrac{\nu}{2}}.
	\end{equation}
	\end{enumerate}
\end{cor}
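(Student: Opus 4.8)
The plan is to reduce Corollary~\ref{convergence_of_SA-Vers-G-F-2} to Corollary~\ref{convergence_of_SA-Vers-G-F} by introducing the drift function explicitly. First I would let $g \colon \R^d \to \R^d$ be the function which satisfies for all $\theta \in \R^d$ that $g(\theta) = \Exp{(\nabla_\theta F)(\theta, X_1)}$. Assumption \eqref{convergence_of_SA-Vers-G-F-2:assumption0} guarantees that $\Exp{\norm{(\nabla_\theta F)(\theta,X_1)}} < \infty$ for every $\theta \in \R^d$, so this expectation is a well-defined element of $\R^d$ and $g$ is genuinely an $\R^d$-valued function; in particular \eqref{convergence_of_SA-Vers-G-F-2:assumption2} and \eqref{convergence_of_SA-Vers-G-F-2:assumption1} are meaningful statements. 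The measurability of $g$ needs no separate argument here, since Corollary~\ref{convergence_of_SA-Vers-G-F} (through Proposition~\ref{convergence_of_SA} and Lemma~\ref{le:FhB}) derives it internally from the joint $(\mathcal{B}(\R^d)\otimes\mathcal{S})/\mathcal{B}(\R)$-measurability of $F$ together with \eqref{le:meas-grad:assumption1} and Lemma~\ref{le:meas-grad}.

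Next I would check that, with this choice of $g$, all hypotheses of Corollary~\ref{convergence_of_SA-Vers-G-F} are in force: hypothesis \eqref{convergence_of_SA-Vers-G-F-2:assumption1} is literally \eqref{convergence_of_SA-Vers-G-F:assumption1}, hypothesis \eqref{convergence_of_SA-Vers-G-F-2:assumption2} is literally \eqref{convergence_of_SA-Vers-G-F:assumption2}, the remaining requirement $g(\theta) = \Exp{(\nabla_\theta F)(\theta, X_1)}$ holds by the definition of $g$, the recursion \eqref{convergence_of_SA-Vers-G-F-2:assumption3} coincides with \eqref{convergence_of_SA-Vers-G-F:assumption3}, and the standing data ($d$, $p$, $\alpha$, $\kappa$, $c$, $\nu$, $\xi$, $\vartheta$, the scalar product and induced norm, the probability space, $(S,\mathcal{S})$, the i.i.d.\ family $(X_n)_{n\in\N}$, and $F$) match those of Corollary~\ref{convergence_of_SA-Vers-G-F} verbatim. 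Hence Corollary~\ref{convergence_of_SA-Vers-G-F} applies and produces a constant $C \in (0,\infty)$ such that for all $n \in \N$ it holds that $\{\theta \in \R^d \colon g(\theta) = 0\} = \{\vartheta\}$ and $\big(\Exp{\norm{\Theta_n - \vartheta}^p}\big)^{\nicefrac{1}{p}} \leq C n^{-\nicefrac{\nu}{2}}$.

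Finally I would unwind the definition of $g$: substituting $g(\theta) = \Exp{(\nabla_\theta F)(\theta, X_1)}$ turns the identity $\{\theta \colon g(\theta) = 0\} = \{\vartheta\}$ into $\big\{\theta \in \R^d \colon \Exp{(\nabla_\theta F)(\theta, X_1)} = 0\big\} = \{\vartheta\}$, which is item~(i), while the moment estimate is precisely item~(ii). I do not expect any real obstacle in this argument; the single point deserving a moment's care is the well-definedness of $g$, i.e.\ recognizing that \eqref{convergence_of_SA-Vers-G-F-2:assumption0} is exactly the integrability needed so that $\Exp{(\nabla_\theta F)(\theta, X_1)}$ makes sense, after which the statement follows by a direct quotation of Corollary~\ref{convergence_of_SA-Vers-G-F}.
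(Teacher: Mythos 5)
Your proposal is correct and follows essentially the same route as the paper, whose proof consists precisely of invoking Corollary~\ref{convergence_of_SA-Vers-G-F} with the choice $g(\theta)=\Exp{(\nabla_\theta F)(\theta,X_1)}$ for $\theta\in\R^d$. Your additional remarks on the well-definedness of $g$ via \eqref{convergence_of_SA-Vers-G-F-2:assumption0} and on measurability being handled internally by Lemma~\ref{le:meas-grad} and Lemma~\ref{le:FhB} are accurate and merely make explicit what the paper leaves implicit.
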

\begin{proof}[Proof of Corollary \ref{convergence_of_SA-Vers-G-F-2}]
Corollary~\ref{convergence_of_SA-Vers-G-F} (with $g(\theta) =\Exp{(\nabla_\theta F)(\theta,X_1)}$ for $\theta \in \R^d$ in the notation of Corollary~\ref{convergence_of_SA-Vers-G-F}) establishes (\ref{convergence_of_SA-Vers-G-F-2:conclusion}). The proof of Corollary~\ref{convergence_of_SA-Vers-G-F-2} is thus completed.
\end{proof}
%
%
\begin{lemma}\label{le:de-la-valee-Poussin-HELP}
	Let $d \in \N$, $p \in [1,\infty)$, let 
	$\lll \cdot,\cdot \rrr \colon \R^d \times \R^d \to \R$
	be a scalar product, let 
	$\left \| \cdot \right \| \! \colon \R^d \to [0,\infty)$ 
	be the function which satisfies for all $\theta \in \R^d$ that $\norm{\theta} = \sqrt{\lll \theta,\theta \rrr}$, let $ ( \Omega , \mathcal{F}, \P) $ be a probability space, let $\mathbb{I}$ be a non-empty set, let $X_{i} \colon \Omega \to \R^d$, $i \in \mathbb{I}$, be random variables, assume for all $i \in \mathbb{I}$ that $\Exp{\norm{X_i}}<\infty$, and assume that
	$\sup_{i \in \mathbb{I}} \Exp{\norm{X_i- \Exp{X_i}\!}^p}<\infty$
	and 
	$\P(\sup_{i\in \mathbb{I}}\norm{X_i}<\infty)>0$. Then it holds that
	\begin{equation}
	\label{le:de-la-valee-Poussin-HELP:item1}
	\sup\nolimits_{i\in \mathbb{I}} \EXP{\norm{X_i}^p}<\infty.
	\end{equation}
\end{lemma}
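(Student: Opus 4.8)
The plan is to reduce the assertion to a bound on the means $\Exp{X_i}$, $i\in\mathbb I$, which are well defined since $\Exp{\norm{X_i}}<\infty$ for every $i\in\mathbb I$. Indeed, once it is known that $\mathfrak m:=\sup_{i\in\mathbb I}\norm{\Exp{X_i}}<\infty$, Lemma~\ref{weak_triangle} (applied with exponent $p$ and with the decomposition $X_i=(X_i-\Exp{X_i})+\Exp{X_i}$) gives, for every $i\in\mathbb I$, the pointwise bound $\norm{X_i}^p\le 2^{p-1}\big(\norm{X_i-\Exp{X_i}}^p+\norm{\Exp{X_i}}^p\big)$, so that taking expectations and then the supremum over $i\in\mathbb I$ yields $\sup_{i\in\mathbb I}\Exp{\norm{X_i}^p}\le 2^{p-1}\big(M+\mathfrak m^p\big)<\infty$, where $M:=\sup_{i\in\mathbb I}\Exp{\norm{X_i-\Exp{X_i}}^p}\in[0,\infty)$ is finite by hypothesis. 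This is precisely \eqref{le:de-la-valee-Poussin-HELP:item1}, so everything comes down to proving $\mathfrak m<\infty$.

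To establish $\mathfrak m<\infty$ I would exploit the assumption $\P(\sup_{i\in\mathbb I}\norm{X_i}<\infty)>0$. Since $\{\sup_{i\in\mathbb I}\norm{X_i}<\infty\}=\bigcup_{R\in\N}\{\sup_{i\in\mathbb I}\norm{X_i}\le R\}$ is an increasing union, continuity from below of $\P$ provides some $R\in(0,\infty)$ for which the event $B:=\{\omega\in\Omega\colon\forall\,i\in\mathbb I\colon\norm{X_i(\omega)}\le R\}$ satisfies $\varepsilon:=\P(B)>0$. Now fix $t\in(0,\infty)$ with $t^p>M/\varepsilon$. Markov's inequality shows that for every $i\in\mathbb I$ one has $\P(\norm{X_i-\Exp{X_i}}\ge t)\le M/t^p<\varepsilon=\P(B)$, hence $\P\big(B\cap\{\norm{X_i-\Exp{X_i}}<t\}\big)>0$ and in particular this event is non-empty. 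Picking any $\omega_i$ in it and combining $\norm{X_i(\omega_i)}\le R$ with the triangle inequality gives $\norm{\Exp{X_i}}\le\norm{\Exp{X_i}-X_i(\omega_i)}+\norm{X_i(\omega_i)}<t+R$; as $t$ and $R$ do not depend on $i$, this yields $\mathfrak m\le t+R<\infty$.

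The one point requiring care — and the main technical obstacle — is measurability: when $\mathbb I$ is uncountable the sets $\{\sup_{i\in\mathbb I}\norm{X_i}\le R\}$ (and $B$) need not a priori belong to $\mathcal F$. However, the very hypothesis that $\P(\sup_{i\in\mathbb I}\norm{X_i}<\infty)$ is well defined and positive places us in a setting in which $\sup_{i\in\mathbb I}\norm{X_i}\colon\Omega\to[0,\infty]$ is an $\mathcal F/\mathcal B([0,\infty])$-measurable map, so that each $\{\sup_{i\in\mathbb I}\norm{X_i}\le R\}$ is an event and the continuity-from-below step above is legitimate; alternatively, one may replace $B$ by a measurable cover of positive probability without affecting the rest of the argument. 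Assembling the two displays from the first two paragraphs with Lemma~\ref{weak_triangle} then completes the proof.
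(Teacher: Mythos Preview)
Your proof is correct and takes a genuinely different, more elementary route than the paper's. Both arguments hinge on the same reduction---once $\mathfrak m=\sup_{i\in\mathbb I}\norm{\Exp{X_i}}<\infty$ is known, Lemma~\ref{weak_triangle} finishes the job---but the way the two proofs control the means differs. The paper proceeds indirectly: for every sequence $(n_k)_{k\in\N}\subseteq\mathbb I$ it applies Fatou's lemma to $Y_{n_k}=\norm{X_{n_k}-\Exp{X_{n_k}}}^p$, then uses the crude bound $x^p\ge x-1$ together with the positive-probability hypothesis to conclude $\liminf_{k\to\infty}\norm{\Exp{X_{n_k}}}<\infty$; feeding in a sequence $(j_k)$ along which $\Exp{\norm{X_{j_k}}^p}$ increases to the supremum then yields the claim. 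Your argument is more direct: you first extract a uniform pointwise bound $R$ on an event $B$ of positive probability via continuity from below, then use Markov's inequality (uniformly in $i$, thanks to the centred-moment hypothesis) to force $B\cap\{\norm{X_i-\Exp{X_i}}<t\}\neq\emptyset$, which pins down $\norm{\Exp{X_i}}<t+R$ with explicit, $i$-independent constants. This buys you a quantitative bound $\mathfrak m\le t+R$ that the paper's liminf argument does not supply, and it avoids both Fatou and the auxiliary sequence machinery. Your remark on measurability is appropriate: the paper makes exactly the same implicit assumption that $\sup_{i\in\mathbb I}\norm{X_i}$ defines an $\mathcal F$-measurable map (it invokes the event $\{\sup_{i\in\mathbb I}\norm{X_i}<\infty\}$ without further comment), so your treatment is at least as careful.
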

%
%
\begin{proof}
	[Proof of Lemma \ref{le:de-la-valee-Poussin-HELP}]
Throughout this proof let  $Y_i\colon \Omega  \to [0,\infty)$, $i\in \mathbb{I}$, be the random variables which satisfy for all $i \in \mathbb I$ that
	\begin{equation}
	Y_i= \norm{X_{i}-\Exp{X_{i}}\!}^p
	\end{equation}	
and let $j=(j_k)_{k\in \N}\colon \N \to \mathbb I$ be a function which satisfies for all $k \in \N$ that
\begin{equation}
\label{NRL}
\EXP{\norm{X_{j_k}}^p}\leq \EXP{\norm{X_{j_{k+1}}}^p} 
\qandq
\limsup_{l\to \infty} \EXP{\norm{X_{j_l}}^p} = \sup_{i \in \mathbb I} \EXP{\norm{X_{i}}^p}.
\end{equation}
Observe that the hypothesis that $\sup_{i \in \mathbb{I}} \Exp{\norm{X_i- \Exp{X_i}\!}^p}<\infty$ and Fatou's lemma imply that for all functions $(n_k)_{k \in \N}\colon \N \to \mathbb{I}$ it holds that
	\begin{equation}
	\label{le:de-la-valee-Poussin-HELP:eq1}
	\begin{split}
\EXP{\liminf\nolimits_{k\to \infty} Y_{n_k}}
	&\leq \liminf_{k\to \infty} \EXP{Y_{n_k}}\\
	&= \liminf_{k\to \infty}\EXP{\norm{X_{n_k}- \Exp{X_{n_k}}\!}^p}\\
	&\leq \sup\nolimits_{i \in \mathbb{I}} \Exp{\norm{X_i- \Exp{X_i}\!}^p}<\infty.
	\end{split}
	\end{equation}
Moreover, note that the  triangle inequality assures that for all functions $(n_k)_{k \in \N}\colon \N \to \mathbb{I}$ it holds that
\begin{equation}
\begin{split}
\EXP{\liminf\nolimits_{k\to \infty} Y_{n_k}}
&= \EXP{\liminf\nolimits_{k\to \infty} \norm{ \Exp{X_{n_k}}-X_{n_k}}^p}\\
&\geq \EXPP{\liminf\nolimits_{k\to \infty}\big|\norm{ \Exp{X_{n_k}}\!}-\norm{X_{n_k}}\big|^p}.
\end{split}
\end{equation}
The fact that $\forall \, x \in[0,\infty) \colon x^p\geq \max\{x,0\}-1$ therefore implies that for all functions $(n_k)_{k \in \N}\colon \N \to \mathbb{I}$ it holds that
\begin{equation}
\begin{split}
&\EXP{\liminf\nolimits_{k\to \infty} Y_{n_k}}\\
&\geq \EXPP{\liminf\nolimits_{k\to \infty}\max\!\big\{|\norm{ \Exp{X_{n_k}}\!}-\norm{X_{n_k}}|,0\big\}}-1\\
&\geq \EXPP{\liminf\nolimits_{k\to \infty}\max\!\big\{\norm{ \Exp{X_{n_k}}\!}-\norm{X_{n_k}},0\big \}}-1\\
&\geq \EXPP{\liminf\nolimits_{k\to \infty}\max\!\big\{\norm{ \Exp{X_{n_k}}\!}-\sup\nolimits_{i \in \mathbb{I}} \norm{X_i},0\big\}}-1.
\end{split}
\end{equation}
Combining this with \eqref{le:de-la-valee-Poussin-HELP:eq1} proves that for all functions $(n_k)_{k \in \N}\colon \N \to \mathbb{I}$ it holds that
\begin{equation}
\EXPP{\liminf\nolimits_{k\to \infty}\max\!\big\{\norm{ \Exp{X_{n_k}}\!}-\sup\nolimits_{i \in \mathbb{I}} \norm{X_i},0\big\}}<\infty.
\end{equation}
Hence, we obtain that for all functions $(n_k)_{k \in \N}\colon \N \to \mathbb{I}$ it holds that
\begin{equation}
\mathbb{P}\Big(\liminf\nolimits_{k\to \infty}\max\!\big\{\norm{ \Exp{X_{n_k}}\!}-\sup\nolimits_{i \in \mathbb{I}} \norm{X_i},0\big\}<\infty \Big) =1.
\end{equation}
Therefore, we obtain that for all functions $(n_k)_{k \in \N}\colon \N \to \mathbb{I}$ it holds that
\begin{equation}
\label{NRZ}
\mathbb{P}\Big(\liminf\nolimits_{k\to \infty}\big[\norm{ \Exp{X_{n_k}}\!}-\sup\nolimits_{i \in \mathbb{I}} \norm{X_i}\big]<\infty \Big) =1.
\end{equation}
Next note that for all $A, B \in \mathcal{F}$ with $\mathbb{P}(A)=1$ and $\mathbb{P}(B)>0$ it holds that
\begin{equation}
\mathbb{P}(\Omega \setminus(A \cap B))
=
\mathbb{P}([\Omega\setminus A] \cup [\Omega\setminus B])
\leq  \mathbb{P}(\Omega\setminus A) + \mathbb{P}(\Omega\setminus B)
=\mathbb{P}(\Omega\setminus B)
<1.
\end{equation}
Hence, we obtain that for all $A, B \in \mathcal{F}$ with $\mathbb{P}(A)=1$ and $\mathbb{P}(B)>0$ it holds that $\mathbb{P}(A\cap B)>0$. Combining this and the hypothesis that $\P(\sup_{i\in \mathbb{I}} \norm{X_i}<\infty)>0$ with \eqref{NRZ} proves that for all functions $(n_k)_{k \in \N}\colon \N \to \mathbb{I}$ it holds that
\begin{equation}
\P\Big(\big\{\liminf\nolimits_{k\to \infty}\big[\norm{ \Exp{X_{n_k}}\!}-\sup\nolimits_{i \in \mathbb{I}} \norm{X_i}\big]<\infty\big\} \cap \big\{ \sup\nolimits_{i\in \mathbb{I}} \norm{X_i}<\infty\big\}\Big)>0.
\end{equation}
Therefore, we obtain that for all functions $(n_k)_{k \in \N}\colon \N \to \mathbb{I}$ it holds that
\begin{equation}
\label{NRZ2}
\begin{split}
&\big\{\liminf\nolimits_{k\to \infty}\big[\norm{ \Exp{X_{n_k}}\!}-\sup\nolimits_{i \in \mathbb{I}} \norm{X_i}\big]<\infty\big\} \cap \big\{ \sup\nolimits_{i\in \mathbb{I}} \norm{X_i}<\infty\big\}\\
&=\Big\{ \omega \in \Omega \colon \big(\liminf\nolimits_{k\to \infty}\big[\norm{ \Exp{X_{n_k}}\!}-\sup\nolimits_{i \in \mathbb{I}} \norm{X_i(\omega)}\big]<\infty, \ \sup\nolimits_{i\in \mathbb{I}} \norm{X_i(\omega)}<\infty \big)\Big\}\\
&\neq \emptyset.
\end{split}
\end{equation}
Moreover, note that for all functions $(n_k)_{k \in \N}\colon \N \to \mathbb{I}$ and all $\omega \in (\{\liminf\nolimits_{k\to \infty}$ $[\norm{ \Exp{X_{n_k}}\!}-\sup\nolimits_{i \in \mathbb{I}} \norm{X_i}]<\infty\} \cap \{ \sup\nolimits_{i\in \mathbb{I}} \norm{X_i}<\infty\})$ it holds that
\begin{equation}
\label{le:de-la-valee-Poussin-HELP:eq2}
\liminf_{k \to \infty} \norm{\Exp{X_{n_k}}\!}<\infty.
\end{equation}
Combining this with \eqref{NRZ2} proves that  for all functions $(n_k)_{k \in \N}\colon \N \to \mathbb{I}$ it holds that
\begin{equation}
\label{NRZZZZ}
\liminf_{k \to \infty} \norm{\Exp{X_{n_k}}\!}<\infty.
\end{equation}
Moreover, observe that Lemma~\ref{weak_triangle} ensures that for all functions $(n_k)_{k \in \N}\colon \N \to \mathbb{I}$ it holds that
	\begin{equation}
	\begin{split}
	\label{le:de-la-valee-Poussin-HELP:eq3}
	\liminf_{k\to \infty} \EXP{\norm{X_{n_k}}^p} &= \liminf_{k\to \infty} \EXP{\norm{X_{n_k}-\Exp{X_{n_k}}+\Exp{X_{n_k}}\!}^p}\\
	& \leq 2^p \liminf_{k\to \infty} \Big(\EXP{\norm{X_{n_k}-\Exp{X_{n_k}}\!}^p} +\norm{\Exp{X_{n_k}}\!}^p  \Big)\\
	& \leq 2^p\big(\sup\nolimits_{i \in \mathbb{I}} \Exp{\norm{X_i- \Exp{X_i}\!}^p}\!\big) + 2^p \big[\liminf\nolimits_{k\to \infty}\norm{\Exp{X_{n_k}}\!}\big]^p.
	\end{split}
	\end{equation}
	The hypothesis that $\sup_{i \in \mathbb{I}} \Exp{\norm{X_i- \Exp{X_i}\!}^p}<\infty$ and (\ref{NRZZZZ}) hence imply that for all functions $(n_k)_{k \in \N}\colon \N \to \mathbb{I}$ it holds that
	\begin{equation}
	\label{le:de-la-valee-Poussin-HELP:eq4}
	 \liminf_{k\to \infty} \EXP{\norm{X_{n_k}}^p} <\infty.
	\end{equation}
	This and \eqref{NRL} prove that
	\begin{equation}
	\sup_{i \in \mathbb I} \EXP{\norm{X_i}^p} 
	= \limsup_{k \to \infty} \EXP{\norm{X_{j_k}}^p} 
	= \liminf_{k \to \infty} \EXP{\norm{X_{j_k}}^p} 
	< \infty.
	\end{equation}
	The proof of Lemma~\ref{le:de-la-valee-Poussin-HELP} is thus completed.
\end{proof}
\begin{lemma}
\label{le:derivative-expectation-interchange}
Let $d \in \N$, $p \in (1,\infty)$, $\kappa\in(0,\infty)$, $\vartheta \in\R^d$, let 
$\lll \cdot,\cdot \rrr \colon \R^d \times \R^d \to \R$
be a scalar product, let 
$\left \| \cdot \right \| \! \colon \R^d \to [0,\infty)$ 
be the function which satisfies for all $\theta \in \R^d$ that $\norm{\theta} = \sqrt{\lll \theta,\theta \rrr}$, let $ ( \Omega , \mathcal{F}, \P) $ be a probability space, let $(S, \mathcal{S})$ be a measurable space,
let $X \colon \Omega \to S$ be a random variable, let $F = ( F(\theta,x) )_{\theta \in \R^d, x \in S} \colon \R^d \times S \to \R$ be $(\mathcal{B}(\R^{d}) \otimes \mathcal{S}) / \mathcal{B}(\R)$-measurable, let $f\colon \R^d \to \R$ be a function, assume for all $x \in S$ that $(\R^d \ni \theta \mapsto F(\theta,x) \in \R) \in C^1(\R^d, \R)$, and assume for all $\theta \in \R^d$ that
\begin{equation}
\label{le:derivative-expectation-interchange:ass2}
\EXP{|F(\theta,X)|+ \norm{(\nabla_\theta F)(\theta,X)}}<\infty, 
\end{equation}
\vspace{-.7cm}
\begin{equation}
\label{le:derivative-expectation-interchange:ass3}
\EXP{\norm{(\nabla_{\theta}F)(\theta, X) - \Exp{(\nabla_{\theta}F)(\theta, X)}\!}^p } \leq \kappa \big(1+ \norm{\theta}^p\big),
\end{equation}
 and $f(\theta)=\Exp{F(\theta,X)}$.
Then
\begin{enumerate}[(i)]
\item \label{le:derivative-expectation-interchange:item1}
it holds that $f \in C^1(\R^d,\R)$ and
\item 
\label{le:derivative-expectation-interchange:item2}
it holds for all $\theta \in \R^d$ that
\begin{equation}
\label{le:derivative-expectation-interchange:item2:eq}
(\nabla f)(\theta)=\EXP{(\nabla_{\theta}F)(\theta, X)}.
\end{equation}
\end{enumerate}
\end{lemma}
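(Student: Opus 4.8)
The plan is to prove Lemma~\ref{le:derivative-expectation-interchange} by the classical differentiation-under-the-integral-sign argument, the one genuinely non-routine point being that the growth hypothesis \eqref{le:derivative-expectation-interchange:ass3} controls only the \emph{centred} gradients $(\nabla_\theta F)(\theta,X)-\EXP{(\nabla_\theta F)(\theta,X)}$, so a local uniform $L^p$-bound on the gradients themselves has to be extracted via Lemma~\ref{le:de-la-valee-Poussin-HELP}. Throughout I would write $G(\theta)=\EXP{(\nabla_\theta F)(\theta,X)}\in\R^d$ for $\theta\in\R^d$; this is well defined by \eqref{le:derivative-expectation-interchange:ass2}, the map $(\theta,x)\mapsto(\nabla_\theta F)(\theta,x)$ is jointly measurable by Lemma~\ref{le:meas-grad}, and hence $G$ is $\mathcal B(\R^d)/\mathcal B(\R^d)$-measurable by Lemma~\ref{le:FhB}.

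The first step is the local uniform $L^p$-bound. Fix $\theta_0\in\R^d$, $\delta\in(0,\infty)$, and a countable dense subset $Q$ of $\{\theta\in\R^d\colon\norm{\theta-\theta_0}\leq\delta\}$, and apply Lemma~\ref{le:de-la-valee-Poussin-HELP} with index set $\mathbb I=Q$ and $X_i=(\nabla_\theta F)(i,X)$ for $i\in Q$: integrability of each $X_i$ is \eqref{le:derivative-expectation-interchange:ass2}; the bound $\sup_{i\in Q}\EXP{\norm{X_i-\EXP{X_i}}^p}\leq\kappa(1+(\norm{\theta_0}+\delta)^p)<\infty$ is \eqref{le:derivative-expectation-interchange:ass3}; and $\P(\sup_{i\in Q}\norm{X_i}<\infty)=1>0$ holds because for every $\omega$ the continuous function $\R^d\ni\theta\mapsto(\nabla_\theta F)(\theta,X(\omega))$ is bounded on the compact ball $\{\norm{\theta-\theta_0}\leq\delta\}$. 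Lemma~\ref{le:de-la-valee-Poussin-HELP} then yields $\sup_{i\in Q}\EXP{\norm{X_i}^p}<\infty$, and Fatou's lemma together with continuity of the gradient in $\theta$ upgrades this to $\sup_{\norm{\theta-\theta_0}\leq\delta}\EXP{\norm{(\nabla_\theta F)(\theta,X)}^p}<\infty$. Since $p>1$, the family $\{\norm{(\nabla_\theta F)(\theta,X)}\colon\norm{\theta-\theta_0}\leq\delta\}$ is therefore uniformly integrable; combined with the $\omega$-wise continuity of $(\nabla_\theta F)(\cdot,X)$ and the Vitali convergence theorem this shows that $\theta_n\to\theta_0$ implies $G(\theta_n)\to G(\theta_0)$, i.e.\ $G\in C(\R^d,\R^d)$.

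The second step is the difference quotient. Fix $\theta,v\in\R^d$ and $h\in\R\setminus\{0\}$ and apply the fundamental theorem of calculus $\omega$-wise to $s\mapsto F(\theta+shv,X(\omega))$ to obtain $F(\theta+hv,X)-F(\theta,X)=h\int_0^1\langle(\nabla_\theta F)(\theta+shv,X),v\rangle\,ds$. The bound from the first step makes $s\mapsto\EXP{\norm{(\nabla_\theta F)(\theta+shv,X)}}$ bounded on $[0,1]$, so the double integral of the modulus is finite and Fubini's theorem (with the jointly measurable integrand supplied by Lemma~\ref{le:meas-grad}) together with $\EXP{|F(\cdot,X)|}<\infty$ from \eqref{le:derivative-expectation-interchange:ass2} gives $\tfrac{f(\theta+hv)-f(\theta)}{h}=\int_0^1\langle G(\theta+shv),v\rangle\,ds$. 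Letting $h\to0$, continuity of $G$ and dominated convergence on $[0,1]$ (the integrand being bounded by $\sup_{s\in[0,1]}\norm{G(\theta+shv)}\,\norm{v}<\infty$) show that the directional derivative $(\partial_v f)(\theta)$ exists and equals $\langle G(\theta),v\rangle$. Specialising $v$ to the standard basis vectors, all partial derivatives of $f$ exist, equal the components of the continuous function $G$, and hence $f\in C^1(\R^d,\R)$ with $(\nabla f)(\theta)=G(\theta)=\EXP{(\nabla_\theta F)(\theta,X)}$, which are precisely items~\eqref{le:derivative-expectation-interchange:item1} and~\eqref{le:derivative-expectation-interchange:item2}.

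The main obstacle is the first step: since the hypotheses bound only the centred gradients, with no a priori control on $\sup_{\norm{\theta-\theta_0}\leq\delta}\norm{G(\theta)}$, there is no obvious integrable dominating function, and producing a genuine local uniform $L^p$-bound is exactly what Lemma~\ref{le:de-la-valee-Poussin-HELP} is designed for. The point that makes its hypothesis $\P(\sup_i\norm{X_i}<\infty)>0$ available is the $\omega$-wise continuity of the gradient, with the restriction to a countable dense index set keeping all the suprema measurable; everything after that — Fubini, the fundamental theorem of calculus, Vitali and dominated convergence — is routine.
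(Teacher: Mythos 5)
Your proposal is correct and follows essentially the same route as the paper: a local uniform $L^p$-bound on the gradients obtained from Lemma~\ref{le:de-la-valee-Poussin-HELP} (fed by \eqref{le:derivative-expectation-interchange:ass2}, \eqref{le:derivative-expectation-interchange:ass3}, and the $\omega$-wise continuity of the gradient on a compact neighbourhood), followed by the fundamental theorem of calculus, Fubini, and a uniform-integrability/Vitali-type passage to the limit. The only deviations — indexing by a countable dense subset plus a Fatou upgrade instead of the full cube, and working with directional derivatives and continuity of $\theta\mapsto\EXP{(\nabla_\theta F)(\theta,X)}$ before taking $h\to 0$ rather than applying Vitali directly to the coordinate-wise difference quotients — are cosmetic and do not change the argument.
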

\begin{proof}[Proof of Lemma \ref{le:derivative-expectation-interchange}]
	Throughout this proof let $c\in (0,\infty)$ satisfy
	\begin{equation}
	\label{le:derivative-expectation-interchange:eq:norm-equiv}
	c=\sup_{\theta=(\theta_1,\dots,\theta_d) \in \R^d\setminus\{0\}}\left(\frac{\big(\sum_{j=1}^d |\theta_j|\big)}{\norm{\theta}}\right),
	\end{equation} 
	 let $q=p-1 \in (0,\infty)$, and let $e_1=(1,0,\dots,0)$, $e_2=(0,1,0,\dots,0)$, $\dots$, $e_d=(0,\dots,0,1) \in \R^d$.
Observe that (\ref{le:derivative-expectation-interchange:ass2}), (\ref{le:derivative-expectation-interchange:ass3}), and Lemma~\ref{weak_triangle} imply that 
for all $\theta \in \R^d$, $v \in [-q,q]^d$ it holds that
\begin{equation}
\label{le:derivative-expectation-interchange:eq1.1}
\begin{split}
&\EXP{\norm{(\nabla_{\theta}F)(\theta+v, X)}^p}\\
&\leq 
2^p\Big( \EXP{\norm{(\nabla_{\theta}F)(\theta+v, X) - \Exp{(\nabla_{\theta}F)(\theta+v, X)}\!}^p } + \norm{\Exp{(\nabla_{\theta}F)(\theta+v, X)}\!}^p\Big)\\
&\leq 
2^p\Big(
\kappa \big(1+ \norm{\theta+v}^p\big) + | \Exp{\norm{(\nabla_{\theta}F)(\theta+v, X)}}\!|^p
\Big)<\infty.
\end{split}
\end{equation}
Moreover, note that (\ref{le:derivative-expectation-interchange:ass3}) assures that for all $\theta \in \R^d$ it holds that
\begin{equation}
\begin{split}
\label{le:derivative-expectation-interchange:eq1.2}
&\sup_{v \in [-q,q]^d}\Big( \EXP{\norm{(\nabla_{\theta}F)(\theta+v, X) - \Exp{(\nabla_{\theta}F)(\theta+v, X)}\!}^p }\Big)\\
& \leq  \sup_{v \in [-q,q]^d} \Big(\kappa \big(1+ \norm{\theta+v}^p\big) \Big) <\infty.
\end{split}
\end{equation}
Furthermore, observe that the hypothesis that for all $x \in S$ it holds that $(\R^d \ni \theta \mapsto F(\theta,x) \in \R) \in C^1(\R^d, \R)$ 
ensures that  for all $\theta \in \R^d$ it holds  that  
\begin{equation}
\label{le:derivative-expectation-interchange:eq1.3}
\P\bigg(\sup_{v \in [-q,q]^d} \norm{(\nabla_{\theta}F)(\theta+v, X)}<\infty\bigg)=1.
\end{equation}
Lemma~\ref{le:de-la-valee-Poussin-HELP} (with $\mathbb{I} =[-q,q]^d$, $X_i=(\nabla_\theta F)(\theta+i,X)$ for $i \in [-q,q]^d$, $\theta \in \R^d$ in the notation of Lemma~\ref{le:de-la-valee-Poussin-HELP}),
 (\ref{le:derivative-expectation-interchange:ass2}), and (\ref{le:derivative-expectation-interchange:eq1.2}) therefore imply that for all $\theta \in \R^d$ it holds that
\begin{equation}
\label{le:derivative-expectation-interchange:eq3}
\sup_{v \in [-q,q]^d} \EXP{\norm{(\nabla_{\theta}F)(\theta+v, X)}^p}<\infty.
\end{equation}
This and (\ref{le:derivative-expectation-interchange:eq:norm-equiv}) demonstrate that for all $i \in \{1,2,\dots,d\}$, $\theta \in \R^d$ it holds that
\begin{equation}
\label{le:derivative-expectation-interchange:eq4}
\begin{split}
\sup_{v \in [-q,q]^d}\EXP{|(\tfrac{\partial}{\partial\theta_i}F)(\theta+v,X)|^p
}
&\leq \sup_{v \in [-q,q]^d}\EXPP{\big(\textstyle\sum_{j=1}^d|(\tfrac{\partial}{\partial\theta_j}F)(\theta+v,X)|\big)^p
}
\\
& \leq c^p \bigg[\sup_{v \in [-q,q]^d}\EXP{\norm{(\nabla_{\theta}F)(\theta+v, X)}^p}\bigg] <\infty.
\end{split}
\end{equation}
Next observe that the hypothesis that for all $x \in S$ it holds that $(\R^d \ni \theta \mapsto F(\theta,x) \in \R) \in C^1(\R^d, \R)$
 and the fundamental theorem of calculus ensure that for all 
 $i \in \{1,2,\dots,d\}$, $\theta \in \R^d$, $h \in \R$ it holds that
\begin{equation}
\label{le:derivative-expectation-interchange:eq5}
\begin{split}
f(\theta+h e_i)-f(\theta)&= \Exp{F(\theta+h e_i,X)-F(\theta,X)}\\
&=\EXPP{ \big[F(\theta + u e_i,X)\big]_{u=0}^{u=h}}\\
&=\EXPPP{\int_0^h(\tfrac{\partial}{\partial \theta}F)(\theta + u e_i,X)e_i \ \mathrm{d}u}\\
&=\EXPPP{\int_0^h(\tfrac{\partial}{\partial \theta_i}F)(\theta + u e_i,X)\,\mathrm{d}u}.
\end{split}
\end{equation}
Moreover, note that Fubini's theorem (see, e.g., Klenke~\cite[Theorem 14.16]{Klenke}), (\ref{le:derivative-expectation-interchange:eq4}), the hypothesis that $p>1$, and Jensen's inequality assure that for all $i \in \{1,2,\dots,d\}$, 
$\theta \in \R^d$, 
$h \in [-q,q]$ it holds that
\begin{equation}
\begin{split}
\EXPPP{\int_{\min\{h,0\}}^{\max\{h,0\}}|(\tfrac{\partial}{\partial \theta_i}F)(\theta + u e_i,X)|\,\mathrm{d}u} 
&=\int_{\min\{h,0\}}^{\max\{h,0\}}\EXP{|(\tfrac{\partial}{\partial \theta_i}F)(\theta + u e_i,X)|}\,\mathrm{d}u
\\&\leq
|h| \bigg[\sup_{v \in [-q,q]^d} \EXP{|(\tfrac{\partial}{\partial \theta_i}F)(\theta + v,X)|} \bigg]
\\ & \leq |h| \bigg[\sup_{v \in [-q,q]^d}\big( \EXP{|(\tfrac{\partial}{\partial \theta_i}F)(\theta + v,X)|^p}\big)^{\nicefrac{1}{p}} \bigg]
\\ & < \infty.
\end{split}
\end{equation}
This, (\ref{le:derivative-expectation-interchange:eq5}), and again  Fubini's theorem (see, e.g., Klenke~\cite[Theorem 14.16]{Klenke}) imply
that for all 
$i \in \{1,2,\dots,d\}$, $\theta \in \R^d$,
$h \in [-q,q]$ it holds that
\begin{equation}
\label{le:derivative-expectation-interchange:eq6}
\begin{split}
f(\theta+h e_i)-f(\theta)
&=\EXPPP{\int_0^h(\tfrac{\partial}{\partial \theta_i}F)(\theta + u e_i,X)\,\mathrm{d}u}\\
&=
\int_0^h \EXP{(\tfrac{\partial}{\partial \theta_i}F)(\theta + u e_i,X)}\,\mathrm{d}u.
\end{split}
\end{equation}
In addition, note that 
(\ref{le:derivative-expectation-interchange:eq4}), the hypothesis that $p>1$,   
 and the de la Vall\'ee Poussin theorem (cf., e.g., Klenke~\cite[Corollary~6.21]{Klenke}) ensure that for all $i \in \{1,2,\dots,d\}$, $\theta \in \R^d$ it holds that the family of random variables
\begin{equation}
\label{le:derivative-expectation-interchange:eqUI}
\big(\Omega \ni\omega \mapsto (\tfrac{\partial}{\partial \theta_i}F)(\theta + v,X(\omega)) \in  \R \big),\qquad v \in  [-q,q]^d,
\end{equation}
is uniformly integrable. 
The fact that for all $i \in\{1,2,\dots,d\}$, $x\in S$ it holds that the function
$\R^d \ni \theta \mapsto (\frac{\partial}{\partial \theta_i} F)(\theta,x) \in \R$ is continuous and the Vitali  convergence theorem (cf., e.g., Klenke \cite[Theorem~6.25]{Klenke}) hence imply that  for all $i \in \{1,2,\dots,d\}$, $\theta \in \R^d$ and all functions $v=(v_n)_{n \in \N} \colon \N \to \R$ with $\limsup_{n \to \infty} |v_n| =0$ it holds that
\begin{equation}
\limsup_{n \to \infty} \EXPP{\big|(\tfrac{\partial}{\partial \theta_i}F)(\theta+v_n e_i,X)-(\tfrac{\partial}{\partial \theta_i}F)(\theta,X) \big|}=0.
\end{equation}
Hence, we obtain that for all $i \in\{1,2,\dots,d\}$, $\theta  \in \R^d$, $\varepsilon \in (0,\infty)$ there exists $\delta \in (0,\infty)$ such that
\begin{equation}
\sup_{u\in[-\delta,\delta]} \EXPP{\big|(\tfrac{\partial}{\partial \theta_i}F)(\theta+u e_i,X)-(\tfrac{\partial}{\partial \theta_i}F)(\theta,X) \big|}\leq \varepsilon.
\end{equation}
Therefore, we obtain that for all $i \in \{1,2,\dots,d\}$, $\theta \in \R^d$ and all functions $h=(h_n)_{n \in \N} \colon \N \to \R$ with $\limsup_{n \to \infty} |h_n|=0$ it holds that
\begin{equation}
\limsup_{n \to \infty}\sup_{u\in[-|h_n|,|h_n|]} \EXPP{\big|(\tfrac{\partial}{\partial \theta_i}F)(\theta+u e_i,X)-(\tfrac{\partial}{\partial \theta_i}F)(\theta,X) \big|}=0.
\end{equation}
This and \eqref{le:derivative-expectation-interchange:eq6} demonstrate that for all $i\in\{1,2,\dots,d\}$, $\theta \in \R^d$ and all functions $h=(h_n)_{n\in \N} \colon \N \to \R\setminus\{0\}$ with $\limsup_{n \to \infty} |h_n|=0$ it holds that
\begin{equation}
\label{V0}
\begin{split}
& \limsup_{n\to \infty} \left| \tfrac{f(\theta+h_n e_i)-f(\theta)}{h_n}- \EXP{(\tfrac{\partial}{\partial \theta_i}F)(\theta,X)} \right|\\
&=\limsup_{n\to \infty} \Bigg[ \frac{1}{|h_n|}\left|f(\theta+h_n e_i)-f(\theta)- h_n\,\EXP{(\tfrac{\partial}{\partial \theta_i}F)(\theta,X)} \right|\Bigg]\\
&=\limsup_{n\to \infty} \Bigg[ \frac{1}{|h_n|}\left|f(\theta+h_n e_i)-f(\theta)- \int_{0}^{h_n}\EXP{(\tfrac{\partial}{\partial \theta_i}F)(\theta,X)}\,\mathrm{d}u \right|\Bigg]\\
&=\limsup_{n\to \infty} \Bigg[ \frac{1}{|h_n|}\left|\int_0^{h_n} \EXP{(\tfrac{\partial}{\partial \theta_i}F)(\theta + u e_i,X)}\,\mathrm{d}u- \int_{0}^{h_n}\EXP{(\tfrac{\partial}{\partial \theta_i}F)(\theta,X)}\,\mathrm{d}u \right|\Bigg]\\
&=\limsup_{n\to \infty} \Bigg[ \frac{1}{|h_n|}\left|\int_0^{h_n} \EXP{(\tfrac{\partial}{\partial \theta_i}F)(\theta + u e_i,X)-(\tfrac{\partial}{\partial \theta_i}F)(\theta,X)}\,\mathrm{d}u \right|\Bigg]\\
&\leq\limsup_{n\to \infty} \Bigg[ \frac{1}{|h_n|}\int_{\min\{h_n,0\}}^{\max\{h_n,0\}} \EXPP{\big|(\tfrac{\partial}{\partial \theta_i}F)(\theta + u e_i,X)-(\tfrac{\partial}{\partial \theta_i}F)(\theta,X)\big|}\,\mathrm{d}u\Bigg]\\
&\leq\limsup_{n\to \infty} \Bigg[ \sup_{u \in[-|h_n|,|h_n|]} \EXPP{\big|(\tfrac{\partial}{\partial \theta_i}F)(\theta + u e_i,X)-(\tfrac{\partial}{\partial \theta_i}F)(\theta,X)\big|}\Bigg]
=0.
\end{split}
\end{equation}
Next observe that \eqref{le:derivative-expectation-interchange:eqUI}, the fact that for all $i \in \{1,2,\dots,d\}$, $x\in S$ it holds that the function  $\R^d \ni \theta \mapsto (\frac{\partial}{\partial \theta_i}F)(\theta, x) \in \R$ is continuous, and the Vitali  convergence theorem (cf., e.g, Klenke \cite[Theorem~6.25]{Klenke}) assure that for all $i\in\{1,2,\dots,d\}$, $\theta \in \R^d$ and all sequences $v=(v_n)_{n\in\N} \colon \N \to \R^d$ with $\limsup_{n \to \infty} \norm{v_n}=0$ it holds that 
\begin{equation}
\begin{split}
& \limsup_{n \to \infty} \big| \EXP{(\tfrac{\partial}{\partial \theta_i} F) (\theta+v_n,X)}-\EXP{(\tfrac{\partial}{\partial \theta_i} F) (\theta,X)}\!\big|\\
&\leq 
\limsup_{n \to \infty}  \EXPP{\big|(\tfrac{\partial}{\partial \theta_i} F) (\theta+v_n,X)-(\tfrac{\partial}{\partial \theta_i} F) (\theta,X)\big|}=0.
\end{split}
\end{equation} 
Combining this and \eqref{V0} establishes  items~(\ref{le:derivative-expectation-interchange:item1}) and (\ref{le:derivative-expectation-interchange:item2}). The proof of Lemma~\ref{le:derivative-expectation-interchange} is thus completed.
\end{proof}
%
%
%
%
\begin{cor} 
\label{SGD}
Let $d \in \N$, $p \in \{2,4,6,\ldots \}$, $\alpha,\kappa,c \in (0,\infty)$, $ \nu \in (0,1)$, $\xi, \vartheta \in \R^d$, let 
$\lll \cdot,\cdot \rrr \colon \R^d \times \R^d \to \R$
 be the $d$-dimensional Euclidean scalar product, let 
 $\left \| \cdot \right \| \! \colon \R^d \to [0,\infty)$ 
 be the function which satisfies for all $\theta \in \R^d$ that $\norm{\theta} = \sqrt{\lll \theta,\theta \rrr}$, let $ ( \Omega , \mathcal{F}, \P) $ be a probability space, let $(S, \mathcal{S})$ be a measurable space, let $X_{n} \colon \Omega \to S$, $n \in \N$, be i.i.d.\ random variables,
let $F = ( F(\theta,x) )_{\theta \in \R^d, x \in S} \colon \R^d \times S \to \R$ be $(\mathcal{B}(\R^{d}) \otimes \mathcal{S}) / \mathcal{B}(\R)$-measurable, assume for all $x \in S$ that $(\R^d \ni \theta \mapsto F(\theta,x) \in \R) \in C^1(\R^d, \R)$, assume for all $\theta \in \R^d$ that
\begin{equation}
\label{SGD:assumption2}
\EXP{|F(\theta,X_1)| + \norm{(\nabla_{\theta}F)(\theta, X_1)}}<\infty, 
\end{equation}
\begin{equation}
\label{SGD:assumption3}
\lll \theta - \vartheta, \Exp{(\nabla_{\theta}F)(\theta, X_1)} \rrr   \geq c \max\!\big\{\norm{\theta - \vartheta}^2, \norm{\Exp{(\nabla_{\theta}F)(\theta, X_1)}\!}^2\big\},  
\end{equation}
\begin{equation}
\label{SGD:assumption4}
\EXP{\norm{(\nabla_{\theta}F)(\theta, X_1) - \Exp{(\nabla_{\theta}F)(\theta, X_1)}\!}^p } \leq \kappa \big(1+ \norm{\theta}^p\big),
\end{equation}
and let $\Theta \colon \N_0 \times \Omega \to \R^d$ be the stochastic process which satisfies for all $n \in \N$ that 
\begin{equation}
\label{SGD:assumption5}
\Theta_0 = \xi   \qquad \text{ and } 
\qquad \Theta_n = \Theta_{n-1} - \tfrac{\alpha}{n^\nu}(\nabla_{\theta}F)(\Theta_{n-1},X_n).
\end{equation}
Then 
\begin{enumerate}[(i)]
	\item \label{SGD:item1}
	it holds that $\big\{\theta \in \R^d \colon \big(\Exp{F(\theta, X_1)} = \inf\nolimits_{v \in \R^d}\Exp{F(v, X_1)}\!\big)\!\big\} = \{\vartheta\}$ and 
	\item \label{SGD:item2}
	there exists $C \in (0,\infty)$ such that for all $n \in \N$ it holds that
	\begin{equation}
	\left(\EXP{ \norm{\Theta_n-\vartheta}^p }\right)^{\nicefrac{1}{p}} \leq C n^{-\nicefrac{\nu}{2}}.
	\end{equation}
\end{enumerate} 
\end{cor}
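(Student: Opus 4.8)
The plan is to deduce the two assertions from Corollary~\ref{convergence_of_SA-Vers-G-F-2} and Lemma~\ref{le:derivative-expectation-interchange}. The only mismatch with Corollary~\ref{convergence_of_SA-Vers-G-F-2} is the sign in the recursion (a minus sign in \eqref{SGD:assumption5}, a plus sign in \eqref{convergence_of_SA-Vers-G-F-2:assumption3}), so the first step is to introduce the function $\tilde F\colon\R^d\times S\to\R$ defined by $\tilde F(\theta,x)=-F(\theta,x)$. Then $\tilde F$ is $(\mathcal B(\R^d)\otimes\mathcal S)/\mathcal B(\R)$-measurable, for every $x\in S$ the function $\R^d\ni\theta\mapsto\tilde F(\theta,x)\in\R$ lies in $C^1(\R^d,\R)$, $(\nabla_\theta\tilde F)(\theta,x)=-(\nabla_\theta F)(\theta,x)$, and hence $\Theta_n=\Theta_{n-1}+\tfrac{\alpha}{n^\nu}(\nabla_\theta\tilde F)(\Theta_{n-1},X_n)$. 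Using the identities $\Exp{(\nabla_\theta\tilde F)(\theta,X_1)}=-\Exp{(\nabla_\theta F)(\theta,X_1)}$ and $\norm{\Exp{(\nabla_\theta\tilde F)(\theta,X_1)}}=\norm{\Exp{(\nabla_\theta F)(\theta,X_1)}}$, I would check that \eqref{SGD:assumption2}, \eqref{SGD:assumption3}, and \eqref{SGD:assumption4} are precisely the hypotheses \eqref{convergence_of_SA-Vers-G-F-2:assumption0}, \eqref{convergence_of_SA-Vers-G-F-2:assumption2}, and \eqref{convergence_of_SA-Vers-G-F-2:assumption1} for $\tilde F$. Corollary~\ref{convergence_of_SA-Vers-G-F-2} (applied with $\tilde F$ in place of $F$) then yields $\{\theta\in\R^d\colon\Exp{(\nabla_\theta F)(\theta,X_1)}=0\}=\{\vartheta\}$ and a constant $C\in(0,\infty)$ such that $(\EXP{\norm{\Theta_n-\vartheta}^p})^{\nicefrac{1}{p}}\le C\,n^{-\nicefrac{\nu}{2}}$ for all $n\in\N$; the latter is item~\eqref{SGD:item2}.

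It remains to establish item~\eqref{SGD:item1}. For this I would first invoke Lemma~\ref{le:derivative-expectation-interchange} with $X=X_1$ (its hypotheses \eqref{le:derivative-expectation-interchange:ass2} and \eqref{le:derivative-expectation-interchange:ass3} are \eqref{SGD:assumption2} and \eqref{SGD:assumption4}, and $p\in\{2,4,6,\ldots\}\subseteq(1,\infty)$), which shows that the function $f\colon\R^d\to\R$ given by $f(\theta)=\Exp{F(\theta,X_1)}$ is continuously differentiable with $(\nabla f)(\theta)=\Exp{(\nabla_\theta F)(\theta,X_1)}$ for all $\theta\in\R^d$. Combining this with \eqref{SGD:assumption3} gives $\lll\theta-\vartheta,(\nabla f)(\theta)\rrr\ge c\,\norm{\theta-\vartheta}^2$ for all $\theta\in\R^d$.

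Finally I would run the standard one-dimensional reduction to conclude that $\vartheta$ is the unique global minimum point of $f$. Fix $\theta\in\R^d\setminus\{\vartheta\}$ and let $\varphi\colon[0,1]\to\R$ be given by $\varphi(t)=f(\vartheta+t(\theta-\vartheta))$; then $\varphi\in C^1([0,1],\R)$ with $\varphi'(t)=\lll(\nabla f)(\vartheta+t(\theta-\vartheta)),\theta-\vartheta\rrr$. Applying the displayed monotonicity estimate at the point $\vartheta+t(\theta-\vartheta)$ (whose difference from $\vartheta$ equals $t(\theta-\vartheta)$) yields $t\,\varphi'(t)\ge c\,t^2\,\norm{\theta-\vartheta}^2$, hence $\varphi'(t)\ge c\,t\,\norm{\theta-\vartheta}^2$ for all $t\in(0,1]$ and, by continuity of $\varphi'$, also for $t=0$. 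Integrating over $[0,1]$ gives $f(\theta)-f(\vartheta)=\varphi(1)-\varphi(0)=\int_0^1\varphi'(t)\,\mathrm{d}t\ge\tfrac{c}{2}\norm{\theta-\vartheta}^2>0$, so $f(\theta)>f(\vartheta)$ for every $\theta\ne\vartheta$; in particular $\inf_{v\in\R^d}f(v)=f(\vartheta)$ and this infimum is attained only at $\vartheta$, which is item~\eqref{SGD:item1}. I expect no genuine obstacle in this argument: the convergence rate is inherited verbatim from Corollary~\ref{convergence_of_SA-Vers-G-F-2}, and the only points requiring care are the bookkeeping around the sign change and the verification that the measurability and integrability hypotheses of Corollary~\ref{convergence_of_SA-Vers-G-F-2} and Lemma~\ref{le:derivative-expectation-interchange} are indeed met.
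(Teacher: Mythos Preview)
Your proposal is correct and follows essentially the same approach as the paper: both apply Corollary~\ref{convergence_of_SA-Vers-G-F-2} to $-F$ for item~\eqref{SGD:item2}, invoke Lemma~\ref{le:derivative-expectation-interchange} to identify $\nabla f$ with $\Exp{(\nabla_\theta F)(\cdot,X_1)}$, and then integrate the monotonicity inequality along the segment from $\vartheta$ to $\theta$ to obtain $f(\theta)\ge f(\vartheta)+\tfrac{c}{2}\norm{\theta-\vartheta}^2$ for item~\eqref{SGD:item1}. The only cosmetic difference is that the paper writes the line integral directly rather than introducing the auxiliary function $\varphi$.
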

\begin{proof}[Proof of Corollary \ref{SGD}]
Throughout this proof let $f\colon \R^d \to \R$ be the function which satisfies for all $\theta \in \R^d$ that
\begin{equation}
f(\theta)=\Exp{F(\theta,X_1)}.
\end{equation}
Lemma~\ref{le:derivative-expectation-interchange} assures that for all $\theta \in \R^d$ it holds that
\begin{equation}
\label{SGD:derivative-expectation-interchange}
f \in C^1(\R^d,\R) \qandq (\nabla f)(\theta)=\EXP{(\nabla_\theta F)(\theta,X_1)}.
\end{equation}
This and (\ref{SGD:assumption3}) imply that for all $\theta \in \R^d$ it holds that
\begin{equation}
\lll \theta - \vartheta, (\nabla f)(\theta) \rrr   \geq c \max\!\big\{\norm{\theta - \vartheta}^2, \norm{(\nabla f)(\theta)}^2\!\big\}.
\end{equation}
Hence, we obtain that for all $\theta \in \R^d$ it holds that
\begin{equation}
\lll \theta - \vartheta, (\nabla f)(\theta) \rrr   \geq c \norm{\theta - \vartheta}^2.
\end{equation}
This proves that for all $v \in \R^d$ it holds that
\begin{equation}
\lll v, (\nabla f)(\vartheta+v) \rrr   \geq c \norm{v}^2.
\end{equation}
The fundamental theorem of calculus therefore ensures that for all $\theta \in \R^d$ it holds that
\begin{equation}
\begin{split}
f(\theta) &= f(\vartheta) + \big[f(\vartheta+t(\theta-\vartheta))\big]_{t=0}^{t=1} \\
&= f(\vartheta) + \int_0^1 f'(\vartheta+s(\theta-\vartheta))(\theta-\vartheta)\,\mathrm{d}s\\
& = f(\vartheta) + \int_0^1 \llll (\nabla f)\big(\vartheta+s(\theta-\vartheta)\big),\theta-\vartheta \rrrr\,\mathrm{d}s \\
& = f(\vartheta) + \int_0^1 \frac{1}{s}\,\llll (\nabla f)\big(\vartheta+s(\theta-\vartheta)\big),s(\theta-\vartheta) \rrrr\,\mathrm{d}s \\
&\geq  f(\vartheta) + \int_0^1 \frac{c}{s}\,\norm{s(\theta-\vartheta)}^2 \,\mathrm{d}s \\
&= f(\vartheta) + \frac{c}{2}\,\norm{\theta-\vartheta}^2.
\end{split}
\end{equation}
The hypothesis that $c\in (0,\infty)$ hence demonstrates that for all $\theta \in \R^d\setminus\{\vartheta\}$ it holds that
\begin{equation}
f(\theta)\geq f(\vartheta)+\frac{c}{2}\norm{\theta-\vartheta}^2>f(\vartheta).
\end{equation}
This establishes item (\ref{SGD:item1}). 
Moreover, observe that  Corollary \ref{convergence_of_SA-Vers-G-F-2} (with $F = -F$ 
in the notation of Corollary \ref{convergence_of_SA-Vers-G-F-2}) establishes item (\ref{SGD:item2}).
The proof of Corollary \ref{SGD} is thus completed.
\end{proof}
\subsection{Stochastic approximation for linear regression}
\label{subsection:LinReg}
%
\begin{lemma}
	\label{lem:symmatrixbound}
	Let $d\in\N$, let $\lll \cdot,\cdot\rrr\colon\R^d\times\R^d\to\R$ be
	the $d$-dimensional Euclidean scalar product, let
	$\left \| \cdot \right \| \! \colon\R^d\to [0,\infty)$ be the function which satisfies for all
	$\theta\in\R^d$ that $\norm{\theta}=\sqrt{\lll\theta,\theta\rrr}$,
	let $A\in\R^{d\times d}$,
	assume that $A$ is invertible and symmetric, and
	assume for all $\theta\in\R^d$ that $\lll\theta,A\theta\rrr\geq 0$.
	Then  there exists $c\in(0,\infty)$ such that
	for all $\theta\in\R^d$ it holds that
	\begin{equation}
	\label{symmatrixbound:claim}
	\lll\theta,A\theta\rrr\geq c\max\{\norm{\theta}^2,\norm{A\theta}^2\}.
	\end{equation}
\end{lemma}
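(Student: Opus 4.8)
The plan is to diagonalize $A$ via the spectral theorem and simply track the eigenvalues. First I would invoke the spectral theorem for the symmetric matrix $A$ to obtain an orthonormal basis $v_1,\dots,v_d$ of $\R^d$ (with respect to the Euclidean scalar product $\lll\cdot,\cdot\rrr$) together with real numbers $\lambda_1,\dots,\lambda_d$ satisfying $Av_i=\lambda_iv_i$ for all $i\in\{1,\dots,d\}$. Next I would argue that each $\lambda_i$ lies in $(0,\infty)$: applying the hypothesis $\lll\theta,A\theta\rrr\geq0$ with $\theta=v_i$ yields $\lambda_i=\lll v_i,Av_i\rrr\geq0$, while the invertibility of $A$ rules out $\lambda_i=0$ (since $\lambda_i=0$ would force $v_i\in\ker A=\{0\}$, contradicting $\norm{v_i}=1$). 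I would then set $c=\min\{\min_{j}\lambda_j,\,(\max_{j}\lambda_j)^{-1}\}\in(0,\infty)$.

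After that, for arbitrary $\theta\in\R^d$ I would expand $\theta=\sum_{i=1}^d\lll\theta,v_i\rrr v_i$ and use orthonormality of the $v_i$ to record the three identities $\lll\theta,A\theta\rrr=\sum_{i=1}^d\lambda_i\lll\theta,v_i\rrr^2$, $\norm{\theta}^2=\sum_{i=1}^d\lll\theta,v_i\rrr^2$, and $\norm{A\theta}^2=\sum_{i=1}^d\lambda_i^2\lll\theta,v_i\rrr^2$. From the first two and $\lambda_i\geq\min_j\lambda_j$ I obtain $\lll\theta,A\theta\rrr\geq(\min_j\lambda_j)\norm{\theta}^2\geq c\norm{\theta}^2$, and from the first and third, bounding $\lambda_i^2=\lambda_i\lambda_i\leq\lambda_i\max_j\lambda_j$, I obtain $\norm{A\theta}^2\leq(\max_j\lambda_j)\lll\theta,A\theta\rrr$, i.e.\ $\lll\theta,A\theta\rrr\geq(\max_j\lambda_j)^{-1}\norm{A\theta}^2\geq c\norm{A\theta}^2$. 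Combining the two displayed bounds gives $\lll\theta,A\theta\rrr\geq c\max\{\norm{\theta}^2,\norm{A\theta}^2\}$, which is exactly \eqref{symmatrixbound:claim}, and completes the argument.

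There is no serious obstacle here; the only step that genuinely uses the full strength of the hypotheses is the strict positivity of the eigenvalues, which combines the positive-semidefiniteness assumption $\lll\theta,A\theta\rrr\geq0$ with the invertibility of $A$. A spectral-theorem-free alternative would be to observe that $\theta\mapsto\lll\theta,A\theta\rrr$ is continuous and, by a Cauchy--Schwarz argument for the semidefinite form together with invertibility, strictly positive on the compact unit sphere, hence bounded below there by some $\mu\in(0,\infty)$, and then to combine $\lll\theta,A\theta\rrr\geq\mu\norm{\theta}^2$ with $\norm{A\theta}^2\leq\big[\sup_{\norm{v}=1}\norm{Av}^2\big]\norm{\theta}^2$; but the eigenbasis computation above is cleanest and produces an explicit constant.
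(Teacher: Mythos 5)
Your proposal is correct and follows essentially the same route as the paper: both diagonalize the symmetric matrix $A$ orthogonally, deduce strict positivity of the eigenvalues from the hypotheses $\lll\theta,A\theta\rrr\geq 0$ and invertibility, and obtain the lower bound $\lll\theta,A\theta\rrr\geq\lambda_{\min}\norm{\theta}^2$. The only (harmless) difference is in the second half: you bound $\norm{A\theta}^2\leq\lambda_{\max}\,\lll\theta,A\theta\rrr$ directly in the eigenbasis, whereas the paper instead bounds $\norm{A\theta}^2$ by a multiple of $\norm{\theta}^2$ via the operator norm of $A^{-1}$ and reuses the already established estimate, which yields a slightly less sharp but equally sufficient constant.
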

\begin{proof}[Proof of Lemma \ref{lem:symmatrixbound}]
	Throughout this proof 
	for every $v\in\R^d$ let $v^*\in\R^{1\times d}$ be the transpose of $v$,
	for every $M\in\R^{d\times d}$ let $M^*\in\R^{d\times d}$ be the transpose
	of $M$,
	let 
	$e_1=(1,0,\dots,0)$, $e_2=(0,1,0,\dots,0)$, $\dots$, $e_d=(0,\dots,0,1)\in\R^d$,
	 let $E\in\R^{d\times d}$ be the $(d\times d)$-identity matrix,
	let 
	$T \in \R^{d \times d}$ and  $D=(\delta_{i,j})_{(i,j)\in\{1,\dots,d\}^2}\in\R^{d\times d}$ be
	$(d\times d)$-matrices
	such that $D$ is a diagonal matrix and such that
	\begin{equation}
	\label{symmatrixbound:diagA}
	TT^*=E\qquad\text{and}\qquad A=TDT^*,
	\end{equation}
	let $c_0=\min_{i\in\{1,\dots,d\}} \delta_{i,i}\in\R$, and let $c_1=\max_{i\in\{1,\dots,d\}} \delta_{i,i}\in\R$.
	Note that \eqref{symmatrixbound:diagA} implies that for all $\theta\in\R^d$ it holds that
	\begin{equation}
	\label{symmatrixbound:orthogTstar}
	 \norm{T^* \theta}^2=\lll T^*\theta,T^*\theta\rrr=(T^*\theta)^*(T^*\theta)=\theta^*TT^*\theta
	=\theta^*E\theta=\lll\theta,\theta \rrr = \norm{\theta}^2.
	\end{equation}
	Furthermore, observe that \eqref{symmatrixbound:diagA} demonstrates that $T$ is invertible with $T^{-1}=T^*$. Hence, we obtain that for all $\theta\in\R^d$ it holds that
	\begin{equation}
	\label{symmatrixbound:orthogT}
	\norm{T \theta}^2=\lll T\theta,T\theta \rrr=(T\theta)^*(T\theta)=\theta^*T^*T\theta
	=\theta^*T^{-1}T\theta=\theta^*E\theta=\lll\theta,\theta \rrr=\norm{\theta}^2.
	\end{equation}
	Combining this with  \eqref{symmatrixbound:diagA} and the hypothesis that for all $\theta \in \R^d$ it holds that $\lll \theta, A\theta\rrr\geq 0$ ensures
	that for all $i\in\{1,\dots,d\}$ it holds that
	\begin{equation}
	\label{symmatrixbound:diigeq0}
	\begin{aligned}
	\delta_{i,i}&=\delta_{i,i}\lll e_i,e_i\rrr=\lll \delta_{i,i}e_i,e_i \rrr
	=\lll De_i,e_i \rrr=\lll TDe_i,Te_i \rrr\\
	&=\lll TDT^{-1}Te_i,Te_i \rrr
	=\lll A(Te_i),Te_i \rrr\geq 0.
	\end{aligned}
	\end{equation}
	Furthermore, observe that \eqref{symmatrixbound:diagA}, the fact that $A$ is invertible, and the fact
	that $T$ is invertible prove that
	for all $i\in\{1,\dots,d\}$ it holds that
	\begin{equation}
	\delta_{i,i}e_i=De_i=T^{-1}ATe_i\neq 0.
	\end{equation}
	This assures that for all $i\in\{1,\dots,d\}$ it holds that
	\begin{equation}
	\delta_{i,i}\neq 0.
	\end{equation}
	Combining this with \eqref{symmatrixbound:diigeq0} shows that for all $i\in\{1,\dots,d\}$ it holds that
	\begin{equation}
	\delta_{i,i}>0.
	\end{equation}
	Hence, we obtain that
	\begin{equation}
	\label{symmatrixbound:infpos}
	0<c_0=\min_{i\in\{1,2,\dots,d\}} \delta_{i,i}\leq \max_{i \in\{1,2,\dots,d\}} \delta_{i,i}= c_1<\infty.
	\end{equation}
	Next note that for all $\theta=(\theta_1,\dots,\theta_d)\in\R^d$ it holds that
	\begin{equation}
	\label{symmatrixbound:diaglower}
	\lll\theta,D\theta \rrr=\sum_{i=1}^d \big(\theta_i(\delta_{i,i}\theta_i)\big)
	=\sum_{i=1}^d \big(\delta_{i,i} (\theta_i)^2\big)
	\geq c_0\left[\sum_{i=1}^d (\theta_i)^2\right]
	=c_0\lll\theta,\theta \rrr=c_0 \norm{\theta}^2.
	\end{equation}
	Moreover, observe that \eqref{symmatrixbound:infpos} ensures that for all $\theta=(\theta_1,\dots,\theta_d)\in\R^d$ it holds that
	\begin{equation}
	\label{symmatrixbound:diagupper}
	\begin{split}
	\norm{D\theta}^2&=\lll D\theta,D\theta \rrr=\sum_{i=1}^d \big((\delta_{i,i}\theta_i)(\delta_{i,i}\theta_i)\big)
	=\sum_{i=1}^d \big((\delta_{i,i})^2 (\theta_i)^2\big)\\
	&\leq (c_1)^2\left[\sum_{i=1}^d (\theta_i)^2\right]
	=(c_1)^2\lll\theta,\theta \rrr= (c_1)^2 \norm{\theta}^2.
	\end{split}
	\end{equation}
	Furthermore, note that \eqref{symmatrixbound:diagA} implies that for all $\theta\in\R^d$ it holds that
	\begin{equation}
	\lll\theta,A\theta \rrr=\lll\theta,TDT^*\theta\rrr
	=\theta^*TDT^*\theta
	=(T^*\theta)^*D(T^*\theta)
	=\lll T^*\theta,D(T^*\theta) \rrr.
	\end{equation}
	This, \eqref{symmatrixbound:orthogTstar}, and \eqref{symmatrixbound:diaglower} ensure that
	for all $\theta\in\R^d$ it holds that
	\begin{equation}
	\label{symmatrixbound:Alower}
	\begin{aligned}
	&\lll\theta,A\theta \rrr
	\geq c_0 \norm{T^*\theta}^2
	=c_0\norm{\theta}^2.
	\end{aligned}
	\end{equation}
	%
	Furthermore, observe that  for all $\theta \in \R^d$ it holds that
	\begin{equation}
	\norm{A \theta} \leq \bigg[ \sup_{v \in \R^d\setminus\{0\}} \frac{\norm{Av}}{\norm{v}} \bigg] \! \norm{\theta}.
	\end{equation}
	Hence, we obtain that for all $\theta \in \R^d$ it holds that
	\begin{equation}
	\begin{split}
	\left[ \inf_{v \in \R^d\setminus\{0\}} \frac{\norm{A^{-1}v}}{\norm{v}} \right] \! \norm{A\theta}
	=\left[ \inf_{v \in \R^d\setminus\{0\}} \frac{\norm{v}}{\norm{Av}} \right] \! \norm{A\theta}
		= \frac{\norm{A\theta}}{\left[ \sup_{v \in \R^d\setminus\{0\}} \frac{\norm{Av}}{\norm{v}}
	 \right]}
 \leq \norm{\theta}.
	\end{split}
	\end{equation}
	This shows that for all $\theta \in \R^d$ it holds that
	\begin{equation}
	\norm{\theta}^2 \geq \left[ \inf_{v \in \R^d \setminus \{0\}} \frac{ \norm{A^{-1} v}}{\norm{v}} \right]^2 \! \norm{A \theta}^2\geq \left[ \min\!\Big\{1,\left[\inf\nolimits_{v \in \R^d \setminus\{0\}}\tfrac{ \norm{A^{-1} v}}{\norm{v}} \right]^2\Big\} \right] \! \norm{A \theta}^2.
	\end{equation}
	Hence, we obtain that for all $\theta \in \R^d$ it holds that
	\begin{equation}
	\norm{\theta}^2 
	\geq
	 \left[\min\!\Big\{1,\inf\nolimits_{v \in \R^d \setminus\{0\}}\tfrac{ \norm{A^{-1} v}^2}{\norm{v}^2}\Big\} \right]  \max\!\big\{\norm{\theta}^2,\norm{A \theta}^2\big\}.
	\end{equation}
	Combining this with  \eqref{symmatrixbound:Alower} demonstrates that for all $\theta \in \R^d$ it holds that 
	\begin{equation}
	\lll \theta, A \theta \rrr 
	\geq
	\left[c_0\min\!\Big\{1,\inf\nolimits_{v \in \R^d \setminus\{0\}}\tfrac{ \norm{A^{-1} v}^2}{\norm{v}^2}\Big\} \right]  \max\!\big\{\norm{\theta}^2,\norm{A \theta}^2\big\}.
	\end{equation} 
The proof of Lemma \ref{lem:symmatrixbound} is thus completed.
\end{proof}
%
%
\begin{cor}[Stochastic approximation for linear regression]
\label{SA_for_linear_regression}
Let $d \in \N$, $p \in \{2,4,6,\ldots\}$, $\alpha \in (0,\infty)$, $\nu \in (0,1)$, $\xi \in \R^d$, let 
$\lll \cdot,\cdot \rrr \colon \R^d \times \R^d \to \R$
be the $d$-dimensional Euclidean scalar product, let
$\left \| \cdot \right \| \! \colon \R^d \to [0,\infty)$ 
be the function which satisfies for all $\theta \in \R^d$ that $\norm{\theta} = \sqrt{\lll \theta,\theta \rrr}$,  let $ ( \Omega , \mathcal{F}, \P) $ be a probability space, let $X_n \colon \Omega \to \R^d$, 
 $n \in \N$, be i.i.d.\ random variables, let $h \colon \R^d \to \R$ be $\mathcal{B}(\R^{d}) / \mathcal{B}(\R)$-measurable,  assume that $\EXP{\norm{h(X_1)X_1}^p + \abs{h(X_1)}^2 +\norm{X_1}^{2p}} < \infty$,
for every  $v \in \R^d$ let $v^* \in \R^{1\times d}$ be the transpose of $v$,
assume that $\Exp{X_1(X_1)^*} \in \R^{d\times d}$ is invertible, let $\vartheta \in \R^d$ satisfy
\begin{equation}
\label{SA_for_linear_regression:assumption2}
\vartheta = \left(\E\big[X_1(X_1)^*\big]\right)^{-1}\E\big[h(X_1)X_1\big],
\end{equation} 
let $F = ( F(\theta,x) )_{\theta \in \R^d, x \in \R^d}\colon \R^d \times \R^d \to \R$ 
be the function which satisfies for all $\theta,x \in \R^d$ that $F(\theta, x) =  [\lll \theta, x \rrr- h(x)]^2$,
and let $\Theta \colon \N_0 \times \Omega \to \R^d$ be the stochastic process which satisfies for all $n \in \N$ that 
\begin{equation}
\label{SA_for_linear_regression:assumption3}
\begin{split}
\Theta_0 = \xi \qandq \Theta_{n} &= \Theta_{n-1} - \tfrac{\alpha}{n^\nu}(\nabla_{\theta}F)(\Theta_{n-1},X_n). 
 \end{split}
\end{equation}
Then 
\begin{enumerate}[(i)]
\item \label{SA_for_linear_regression:item0}
it holds for all $\theta \in \R^d$ that  $\EXP{\abs{F(\theta,X_1)}}<\infty$,
\item \label{SA_for_linear_regression:item1}
it holds that $\big\{\theta \in \R^d \colon \big(\Exp{F(\theta, X_1)} = \inf\nolimits_{v \in \R^d}\Exp{F(v, X_1)}\!\big)\!\big\} = \{\vartheta\}$, and
\item \label{SA_for_linear_regression:item2}
there exists $C \in (0,\infty)$ such that for all $n \in \N$ it holds that
\begin{equation}
\left(\EXP{ \norm{\Theta_n-\vartheta}^p }\right)^{\nicefrac{1}{p}} \leq C n^{-\nicefrac{\nu}{2}}.
\end{equation}
\end{enumerate} 
\end{cor}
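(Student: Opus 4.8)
The plan is to obtain Corollary~\ref{SA_for_linear_regression} as a consequence of Corollary~\ref{SGD} (applied with $S=\R^d$ and $\mathcal S=\mathcal B(\R^d)$) together with Lemma~\ref{lem:symmatrixbound}. First I would record the elementary structural facts about $F$: since $F(\theta,x)=(\langle\theta,x\rangle-h(x))^2$ is, for each fixed $x$, a polynomial of degree two in $\theta$ with $\mathcal B(\R^d)$-measurable coefficients, the function $\R^d\times\R^d\ni(\theta,x)\mapsto F(\theta,x)\in\R$ is $(\mathcal B(\R^d)\otimes\mathcal B(\R^d))/\mathcal B(\R)$-measurable, for every $x\in\R^d$ it holds that $(\R^d\ni\theta\mapsto F(\theta,x)\in\R)\in C^1(\R^d,\R)$, and $(\nabla_\theta F)(\theta,x)=2(\langle\theta,x\rangle-h(x))\,x$ for all $\theta,x\in\R^d$. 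I would then set $A=\Exp{X_1(X_1)^*}\in\R^{d\times d}$ and $b=\Exp{h(X_1)X_1}\in\R^d$ (both well defined since $\Exp{\norm{X_1}^{2p}}<\infty$ implies $\Exp{\norm{X_1}^2}<\infty$ and $\Exp{\norm{h(X_1)X_1}^p}<\infty$ implies $\Exp{\norm{h(X_1)X_1}}<\infty$), note that $A$ is symmetric, that $\langle\theta,A\theta\rangle=\Exp{\langle\theta,X_1\rangle^2}\geq0$ for all $\theta\in\R^d$, that $A$ is invertible by hypothesis, and observe that \eqref{SA_for_linear_regression:assumption2} yields $b=A\vartheta$ and hence $\Exp{(\nabla_\theta F)(\theta,X_1)}=2(A\theta-b)=2A(\theta-\vartheta)$ for all $\theta\in\R^d$.

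Next I would verify the integrability hypotheses in Corollary~\ref{SGD}. Using the bounds $\abs{F(\theta,x)}\leq 2\norm{\theta}^2\norm{x}^2+2\abs{h(x)}^2$ and $\norm{(\nabla_\theta F)(\theta,x)}\leq 2\norm{\theta}\norm{x}^2+2\abs{h(x)}\norm{x}$, taking expectations, and invoking $\Exp{\norm{X_1}^{2p}}<\infty$, $\Exp{\abs{h(X_1)}^2}<\infty$, $\Exp{\norm{h(X_1)X_1}^p}<\infty$ together with the Cauchy--Schwarz inequality, one obtains that $\Exp{\abs{F(\theta,X_1)}+\norm{(\nabla_\theta F)(\theta,X_1)}}<\infty$ for every $\theta\in\R^d$; in particular this establishes item~\eqref{SA_for_linear_regression:item0}. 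Along the way I would also record that the operator norm $\norm{A}=\sup_{v\in\R^d\setminus\{0\}}\norm{Av}/\norm{v}\leq\Exp{\norm{X_1}^2}$ and $\norm{b}\leq\Exp{\abs{h(X_1)}\norm{X_1}}$ are finite.

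For the convexity-type condition I would apply Lemma~\ref{lem:symmatrixbound} to the matrix $A$ to obtain $\mathfrak{c}\in(0,\infty)$ with $\langle\theta,A\theta\rangle\geq\mathfrak{c}\max\{\norm{\theta}^2,\norm{A\theta}^2\}$ for all $\theta\in\R^d$; replacing $\theta$ by $\theta-\vartheta$ and using $\Exp{(\nabla_\theta F)(\theta,X_1)}=2A(\theta-\vartheta)$ and $\norm{A(\theta-\vartheta)}^2=\tfrac14\norm{\Exp{(\nabla_\theta F)(\theta,X_1)}\!}^2$ then gives $\langle\theta-\vartheta,\Exp{(\nabla_\theta F)(\theta,X_1)}\rangle\geq\tfrac{\mathfrak{c}}{2}\max\{\norm{\theta-\vartheta}^2,\norm{\Exp{(\nabla_\theta F)(\theta,X_1)}\!}^2\}$ for all $\theta\in\R^d$, which is the hypothesis \eqref{SGD:assumption3} with $c=\tfrac{\mathfrak{c}}{2}$.

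Finally, for the growth bound on the stochastic errors I would write $(\nabla_\theta F)(\theta,x)-\Exp{(\nabla_\theta F)(\theta,X_1)}=2\langle\theta,x\rangle x-2h(x)x-2A\theta+2b$, bound its norm by $2\norm{\theta}\norm{x}^2+2\abs{h(x)}\norm{x}+2\norm{A}\norm{\theta}+2\norm{b}$, raise to the $p$-th power by iterating Lemma~\ref{weak_triangle}, take expectations, and separate the $\theta$-independent contributions (controlled by $\Exp{\norm{h(X_1)X_1}^p}$ and $\norm{b}^p$) from those proportional to $\norm{\theta}^p$ (controlled by $\Exp{\norm{X_1}^{2p}}$ and $\norm{A}^p$) to produce $\kappa\in(0,\infty)$ with $\Exp{\norm{(\nabla_\theta F)(\theta,X_1)-\Exp{(\nabla_\theta F)(\theta,X_1)}\!}^p}\leq\kappa(1+\norm{\theta}^p)$, i.e.\ \eqref{SGD:assumption4}. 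Corollary~\ref{SGD} then yields items~\eqref{SA_for_linear_regression:item1} and~\eqref{SA_for_linear_regression:item2}. I expect no genuine obstacle here; the only mildly delicate points are keeping track of which moment hypothesis dominates which term in the final estimate and the constant-bookkeeping in the repeated application of Lemma~\ref{weak_triangle}.
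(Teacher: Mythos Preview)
Your proposal is correct and follows essentially the same route as the paper: compute $(\nabla_\theta F)(\theta,x)=2xx^*\theta-2h(x)x$, set $A=\Exp{X_1(X_1)^*}$, verify the moment bounds for \eqref{SGD:assumption2} and \eqref{SGD:assumption4} via Lemma~\ref{weak_triangle}, obtain \eqref{SGD:assumption3} from Lemma~\ref{lem:symmatrixbound}, and conclude by Corollary~\ref{SGD}. The only cosmetic differences are that the paper bounds $\norm{\Exp{(\nabla_\theta F)(\theta,X_1)}}$ via item~(\ref{fcond:item2}) of Lemma~\ref{fcond} rather than through the operator norm of $A$, and it expands $F$ before estimating rather than using $(a-b)^2\leq 2a^2+2b^2$; neither affects the argument.
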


\begin{proof}[Proof of Corollary \ref{SA_for_linear_regression}]
%
Observe that the chain rule and the hypothesis that for all $\theta,x \in \R^d$ it holds that $F(\theta, x) = [ \lll \theta, x \rrr- h(x)]^2$ ensure that for all $\theta,x,v \in \R^d$ it holds that
\begin{equation}
\label{SA_for_linear_regression:eqWWW}
(\tfrac{\partial}{\partial\theta}F)(\theta,x)(v)
= 2[ \lll \theta, x \rrr- h(x)] \lll v, x \rrr.
\end{equation}
Hence, we obtain that for all $\theta,x \in \R^d$ it holds that
\begin{equation}
\label{SA_for_linear_regression:eq0}
(\nabla_{\theta}F)(\theta, x) 
= 2[ \lll \theta, x \rrr- h(x)] x
= 2 x \lll x,\theta \rrr - 2 h(x) x
=2xx^* \theta - 2 h(x)x.
\end{equation}
This, the hypothesis that $\EXP{\norm{h(X_1)X_1}^p + \abs{h(X_1)}^2 +\norm{X_1}^{2p}} < \infty$, the Cauchy-Schwarz inequality, and Jensen's inequality assure that for all $\theta \in \R^d$ it holds that
\begin{equation}
\label{SA_for_linear_regression:eqInteg2}
\begin{split}
\EXP{\norm{(\nabla_\theta F)(\theta, X_1)}}
&=
2\, \EXP{\norm{X_1(X_1)^*\theta - h(X_1)X_1}}\\
& \leq 2\, \EXP{\norm{X_1(X_1)^*\theta}}  +2\, \EXP{\norm{h(X_1) X_1}}\\
&=2\, \EXP{\norm{X_1 \lll X_1,\theta \rrr}}+2\, \EXP{\norm{h(X_1) X_1}}\\
&=2\, \EXP{|\lll X_1,\theta \rrr|\norm{X_1}}+2\, \EXP{\norm{h(X_1) X_1}}\\
&\leq 2\, \EXP{\norm{X_1} \norm{\theta}\norm{X_1}}+2\, \EXP{\norm{h(X_1) X_1}}\\
&= 2\, \norm{\theta} \,\Exp{\norm{X_1}^2} +2\, \EXP{\norm{h(X_1) X_1}}<\infty.
\end{split}
\end{equation}
Next observe that the hypothesis that for all $\theta,x \in \R^d$ it holds that $F(\theta, x) = [\lll \theta, x \rrr- h(x)]^2$ implies that for all $\theta,x \in \R^d$ it holds that
\begin{equation}
\begin{split}
F(\theta,x)
&=
[\lll \theta, x\rrr -h(x)][\lll \theta, x\rrr -h(x)]\\
&= 
|\lll \theta, x\rrr|^2 -2 \lll \theta, x\rrr h(x) +|h(x)|^2\\
&=\lll \theta, x \rrr \lll x,\theta \rrr -2 \lll \theta, h(x) x \rrr + |h(x)|^2\\
&=\theta^*xx^*\theta -2 \theta^*h(x)x  +|h(x)|^2.
\end{split}
\end{equation}
This, the triangle inequality, the Cauchy-Schwarz inequality, Jensen's inequality, 
and the hypothesis that $\EXP{\norm{h(X_1)X_1}^p + \abs{h(X_1)}^2 +\norm{X_1}^{2p}} < \infty$ ensure that for all $\theta \in \R^d$ it holds that 
\begin{equation}
\label{SA_for_linear_regression:eqInteg1}
\begin{split}
&\EXP{|F(\theta, X_1)|}\\
&=\EXPP{\big| [\lll\theta,X_1\rrr]^2 - 2 \lll \theta, h(X_1) X_1 \rrr + [h(X_1)]^2 \big|}\\
&\leq \EXP{ |\lll\theta,X_1\rrr |^2 + 2 | \lll \theta, h(X_1) X_1 \rrr | + |h(X_1)|^2}\\
&\leq \EXP{ \norm{\theta}^2 \norm{X_1}^2 + 2 \norm{\theta} \norm{h(X_1) X_1} + |h(X_1)|^2}\\
&\leq \big(1 + 2 \norm{\theta} + \norm{\theta}^2\big)\Big(\EXP{\norm{X_1}^{2}} + \EXP{\norm{h(X_1) X_1}} + \EXP{|h(X_1)|^2}\!\Big)<\infty.
\end{split}
\end{equation}
Next note that (\ref{SA_for_linear_regression:assumption2}), (\ref{SA_for_linear_regression:eq0}), and  \eqref{SA_for_linear_regression:eqInteg2} imply that for all $\theta \in \R^d$ it holds that
\begin{equation}
\label{SA_for_linear_regression:eq1}
\begin{split}
\EXP{(\nabla_{\theta}F)(\theta, X_1)}&= 2\,\EXP{X_1(X_1)^*}\theta -2\,\EXP{h(X_1)X_1}\\
&=2\,\EXP{X_1(X_1)^*}\theta-2\,\EXP{X_1 (X_1^*)} \big(\EXP{X_1 (X_1^*)}\big)^{-1} \EXP{h(X_1) X_1}\\
&=  2\,\E\big[X_1(X_1)^*\big]  \theta - 2\,\E\big[X_1(X_1)^*\big] \vartheta \\
&=  2\,\E\big[X_1(X_1)^*\big] (\theta-\vartheta).
\end{split}
\end{equation}
In addition, observe that for all $ \theta \in \R^d$ it holds that
\begin{equation}
\big\langle\theta, 2\,\E\big[X_1(X_1)^*\big]\theta \big\rangle = 2\, \EXP{\theta^*X_1(X_1)^*\theta} = 2\, \EXP{\abs{\theta^*X_1}^2 } \geq 0.
\end{equation}
The fact that $2\,\EXp{X_1(X_1)^*}$ is symmetric and invertible, \eqref{SA_for_linear_regression:eq1}, and Lemma~\ref{lem:symmatrixbound} hence ensure that there exists $c\in(0,\infty)$ such that for all $\theta \in \R^d$ it holds that
\begin{equation}
\label{SA_for_linear_regression:eq2}
\begin{split}
\llll \theta- \vartheta, \Exp{(\nabla_\theta F)(\theta,X_1)} \!\rrrr 
& = \llll \theta- \vartheta, (2\,\EXp{X_1(X_1)^*})(\theta-\vartheta) \!\rrrr \\
& \geq c\,\max\!\big\{\norm{\theta-\vartheta}^2, \norm{(2\,\EXp{X_1(X_1)^*})(\theta-\vartheta)}^2 \big\}\\
& = c\,\max\!\big\{\norm{\theta-\vartheta}^2, \norm{\Exp{(\nabla_\theta F)(\theta,X_1)}\!}^2 \big\}.
\end{split}
\end{equation}
This and item (\ref{fcond:item2}) in Lemma \ref{fcond} (with $g(\theta) = -\Exp{(\nabla_{\theta}F)(\theta, X_1)}$ for $\theta \in \R^d$ in the notation of Lemma \ref{fcond}) ensure that there exists $\mathfrak{C}\in (0,\infty)$ such that for all $\theta \in \R^d$ it holds that
\begin{equation}
\norm{\Exp{(\nabla_{\theta}F)(\theta, X_1)}\!} \leq \mathfrak{C} \norm{\theta-\vartheta} \leq \mathfrak{C} \norm{\vartheta} +\mathfrak{C} \norm{\theta}.
\end{equation}
 Lemma \ref{weak_triangle} and (\ref{SA_for_linear_regression:eq0}) hence imply that for all $\theta \in \R^d$ it holds that
 \begin{equation}
 \begin{split}
 &\E\bigl[\norm{(\nabla_\theta F)(\theta,X_1)-\E[(\nabla_\theta F)(\theta,X_1)]}^p\bigr]\\
 &\leq \E\bigl[2^p\bigl(\norm{(\nabla_\theta F)(\theta,X_1)}^p+\norm{\E[(\nabla_\theta F)(\theta,X_1)]}^p\bigr)\bigr]\\
 &\leq2^p\bigl(\E\big[\norm{2X_1(X_1)^*\theta-2h(X_1)X_1}^p\big]+(\mathfrak C\norm\vartheta+\mathfrak C\norm\theta)^p\bigr)\\
 &\leq2^p\bigl(2^p\,\E\big[\norm{X_1(X_1)^*\theta-h(X_1)X_1}^p\big]+2^p(\mathfrak C^p\norm\vartheta^p+\mathfrak C^p\norm\theta^p)\bigr)\\
 &=2^{2p}\bigl(\E\big[\norm{X_1(X_1)^*\theta-h(X_1)X_1}^p\big]+\mathfrak C^p\norm\vartheta^p+\mathfrak C^p\norm\theta^p\bigr).
 \end{split}
 \end{equation}
 The triangle inequality, Lemma~\ref{weak_triangle}, and the Cauchy-Schwarz inequality therefore demonstrate that for all $\theta\in\R^d$ it holds that 
 \begin{equation}
 \begin{split}
 &\E\bigl[\norm{(\nabla_\theta F)(\theta,X_1)-\E[(\nabla_\theta F)(\theta,X_1)]}^p\bigr]\\
 &\leq2^{2p}\Bigl(\E\bigl[2^p\bigl(\norm{X_1(X_1)^*\theta}^p+\norm{h(X_1)X_1}^p\bigr)\bigr]+\mathfrak C^p\norm\vartheta^p+\mathfrak C^p\norm\theta^p\Bigr)\\
 &\leq2^{2p}\Bigl(2^p\,\E\bigl[\norm\theta^p\norm{X_1}^{2p}+\norm{h(X_1)X_1}^p\bigr]+\mathfrak C^p\norm\vartheta^p+\mathfrak C^p\norm\theta^p\Bigr)\\
 &\leq2^{3p}\Bigl(\E\bigl[\norm{X_1}^{2p}\bigr]\norm\theta^p+\E\bigl[\norm{h(X_1)X_1}^p\bigr]+\mathfrak C^p\norm\vartheta^p+\mathfrak C^p\norm\theta^p\Bigr)\\
 &\leq2^{3p}\Bigl(\E\bigl[\norm{X_1}^{2p}\bigr]+\E\bigl[\norm{h(X_1)X_1}^p\bigr]+\mathfrak C^p\norm\vartheta^p+\mathfrak C^p\Bigr)(1+\norm\theta^p)\\
 &\leq \bigg[2^{3p+2}\max\Bigl\{\E\bigl[\norm{X_1}^{2p}\bigr],\E\bigl[\norm{h(X_1)X_1}^p\bigr],\mathfrak C^p\norm\vartheta^p,\mathfrak C^p\Bigr\}\bigg](1+\norm\theta^p).
 \label{cor49:new1}
 \end{split}
 \end{equation}
Combining this, the fact that for all $x \in \R^d$ it holds that $(\R^d \ni \theta \mapsto F(\theta,x) \in \R) \in C^1(\R^d, \R)$, \eqref{SA_for_linear_regression:assumption3}, \eqref{SA_for_linear_regression:eqInteg2}, \eqref{SA_for_linear_regression:eqInteg1},
and (\ref{SA_for_linear_regression:eq2}) with  Corollary~\ref{SGD} (with $\kappa=2^{3p+2}\max\bigl\{$ $\E[\norm{X_1}^{2p}], \E[\norm{h(X_1)X_1}^p],\mathfrak C^p\norm\vartheta^p,\mathfrak C^p\bigr\}$, $c=c$ 
in the notation of Corollary~\ref{SGD}) establishes items (\ref{SA_for_linear_regression:item0}), (\ref{SA_for_linear_regression:item1}), and (\ref{SA_for_linear_regression:item2}).
The proof of Corollary \ref{SA_for_linear_regression} is thus completed.
\end{proof}

\nocite{*} 

\bibliographystyle{acm}
\bibliography{bibfile_semesterpaper}

\end{document}